\title[Poincar\'e profiles of Lie groups and a coarse geometric dichotomy]{Poincar\'e profiles of Lie groups and a coarse geometric dichotomy}
\thanks{The first author was supported by a Titchmarsh Research Fellowship at the University of Oxford and a Heilbronn Research Fellowship at the University of Bristol.  
  The second author was supported in part by EPSRC grant EP/P010245/1.}
\author{David Hume}
\address{School of Mathematics, University of Bristol, Bristol, BS8 1TX.}
\email{david.hume@bristol.ac.uk}
\author{John M. Mackay}
\address{School of Mathematics, University of Bristol, Bristol, BS8 1TX.}
\email{john.mackay@bristol.ac.uk}
\author{Romain Tessera}
\address{Université de Paris, Sorbonne Université, CNRS, Institut de Mathématiques
de Jussieu-Paris Rive Gauche, F-75013 Paris, France.}
\email{romatessera@gmail.com}
\date{\today}
\pgfplotsset{compat = newest}
\numberwithin{equation}{section}
\newtheorem{theorem}[equation]{Theorem}
\newtheorem{proposition}[equation]{Proposition}
\newtheorem{corollary}[equation]{Corollary}
\newtheorem{lemma}[equation]{Lemma}
\theoremstyle{definition}
\newtheorem{fact}[equation]{Fact}
\newtheorem{example}[equation]{Example}
\newtheorem{examples}[equation]{Examples}
\newtheorem{question}[equation]{Question}
\newtheorem{conjecture}[equation]{Conjecture}
\newtheorem{definition}[equation]{Definition}
\newtheorem{remark}[equation]{Remark}
\newtheorem*{theorem*}{Theorem}
\newtheorem*{assump*}{Standing assumption}
\newtheorem*{remark*}{Remark}
\newtheorem*{claim*}{Claim}
\newtheoremstyle{citing}
  {3pt}
  {3pt}
  {\itshape}
  {}
  {\bfseries}
  {}
  {.5em}
  {\thmnote{#3}}
\theoremstyle{citing}
\DeclareMathOperator{\diam}{diam}
\DeclareMathOperator{\asdim}{asdim}
\DeclareMathOperator{\Am}{Am}
\DeclareMathOperator{\GL}{GL}
\DeclareMathOperator{\PSL}{PSL}
\DeclareMathOperator{\SSL}{SL}
\DeclareMathOperator{\cut}{cut}
\DeclareMathOperator{\Aut}{Aut}
\DeclareMathOperator{\Aff}{Aff}
\DeclareMathOperator{\Hom}{Hom}
\DeclareMathOperator{\Osc}{Osc}
\DeclareMathOperator{\arccosh}{arccosh}
\newcommand{\bdry}{\partial_\infty}
\DeclareMathOperator{\Isom}{Isom}
\DeclareMathOperator{\Heis}{Heis}
\DeclareMathOperator{\DL}{DL}
\DeclareMathOperator{\BS}{BS}
\DeclareMathOperator{\ad}{ad}
\newcommand{\set}[1]{\left\{#1\right\}}
\newcommand{\setcon}[2]{\left\{#1 \, :\, #2\right\}}
\newcommand{\abs}[1]{\left\lvert#1\right\rvert}
\newcommand{\norm}[1]{\left\lvert\left\lvert#1\right\rvert\right\rvert}
\newcommand{\SOL}{\mathrm{SOL}}
\newcommand{\Sol}{\mathrm{SOL}}
\newcommand{\NIL}{\text{NIL}}
\newcommand{\bbK}{\mathbb{K}}
\newcommand{\cT}{\mathcal{T}}
\newcommand{\ra}{\rightarrow}
\newcommand{\R}{\mathbb{R}}
\newcommand{\C}{\mathbb{C}}
\newcommand{\N}{\mathbb{N}}
\newcommand{\Z}{\mathbb{Z}}
\newcommand{\HH}{\mathbb{H}}
\def\XXint#1#2#3{{\setbox0=\hbox{$#1{#2#3}{\int}$}
\vcenter{\hbox{$#2#3$}}\kern-.5\wd0}}
\numberwithin{equation}{section}
\begin{document}

\begin{abstract}
 Poincar\'e profiles are analytically defined invariants, which provide obstructions to the existence of coarse embeddings between metric spaces. We calculate them for all connected unimodular Lie groups, Baumslag--Solitar groups and Thurston geometries, demonstrating two substantially different types of behaviour. For Lie groups, our dichotomy extends both the rank one versus higher rank dichotomy for semisimple Lie groups and the polynomial versus exponential growth dichotomy for solvable unimodular Lie groups. We provide equivalent algebraic, quasi-isometric and coarse geometric formulations of this dichotomy. 

As a consequence, we deduce that for groups of the form $N\times S$, where $N$ is a connected nilpotent Lie group, and $S$ is a rank one simple Lie group, both the growth exponent of $N$, and the conformal dimension of $S$ are non-decreasing under coarse embeddings. These results are new even for quasi-isometric embeddings and give obstructions which in many cases improve those previously obtained by Buyalo--Schroeder.  
\end{abstract}

\maketitle
\tableofcontents

\section{Introduction}

The notion of coarse embedding is very natural, 
since the inclusion of one compactly generated locally compact group as a closed subgroup of another automatically yields a coarse embedding with respect to the relevant word metrics.  
While remarkable progress has been made on the much more restrictive class of \emph{quasi-isometric} embeddings -- especially for high rank symmetric spaces and their lattices \cite{KL-97-qi-sym-space, Eskin-Farb-97-symm-spaces, Fisher-Whyte-18-QI-emb-symm-spaces, Fisher-Nguyen-20-QI-emb-nonunif-lattices} -- the techniques involved typically say nothing about coarse embeddings. As a consequence, many natural questions have been so far intractable;  for instance, whether there is a coarse embedding $\HH_\R^3\to\HH_\R^2\times\R^d$ for some $d\in\N$ (cf.\ \cite[Question 5.4]{BenSchTim-12-separation-graphs}).\footnote{Using asymptotic dimension, there are no coarse (or even regular) embeddings for $d=0$. In the case $d=1$, coarse embeddings do not exist by \cite[Corollary 4.49]{Licohom}. }

The separation profile was introduced by Benjamini, Schramm and Timar in \cite{BenSchTim-12-separation-graphs} as a new tool to provide obstructions to {\it regular maps} between bounded degree graphs: in this setting, coarse embeddings are examples of regular maps. In \cite{HumeMackTess}, we introduced a new family of invariants: the $L^p$-Poincar\'e profiles, which interpolate between the separation profile (for $p=1$) and a function of the volume growth (for $p=\infty$). We computed all the $L^p$-Poincar\'e profiles in a number of instances, including rank 1 simple Lie groups and groups of polynomial growth. This already produced new obstructions to coarse embeddings: e.g.\ from $\HH_\HH^2$ to $\HH_\R^{10}$ (see \cite[Corollary 15]{HumeMackTess} for a general statement). In this paper we push this study much further by computing the $L^p$ profiles for all connected unimodular Lie groups, deducing in particular a negative answer to the question above (see Corollary \ref{corIntro:BuySch}).  

For the rest of this introduction, let us call a metric space \textbf{standard} if it is quasi-isometric to a bounded degree graph. The class of such spaces includes bounded degree graphs themselves, which in this paper are assumed to be connected, but also Riemannian manifolds with bounded geometry, and compactly generated locally compact groups equipped with their word metric. Our main focus will be on connected Lie groups, which are duly compactly generated. 

This introduction is organized as follows: 
in \S \ref{secIntro:Background}, we recall the definitions of Poincaré profiles, and of regular, coarse and quasi-isometric embeddings. We then introduce the notions of analytically thin/analytically thick metric spaces.
From there on, we state our results. 
 \S \ref{secIntro:Dicho} contains our first main contribution: we show that the $L^p$-profiles have two distinct types of asymptotic behaviour (analytically thin/thick), and we characterize each one in terms of the Lie algebra of the group (algebraically thin/thick).
 In \S \ref{secIntro:calculationsThin}, the complete calculation of the $L^p$ profiles of unimodular connected Lie groups is given. We also obtain a range of new obstructions to coarse embeddings which mainly (but not exclusively) follow from these calculations of $L^p$-profiles.  
\subsection{Background}\label{secIntro:Background}

\subsubsection{Poincar\'e profiles} 
Poincar\'e inequalities are fundamental tools in analysis, controlling function norms by the norm of their derivatives on a given space.  
For a finite graph $\Gamma$, with vertex set $V\Gamma$ and edge set $E\Gamma$ we can quantify the extent to which an $L^p$-Poincar\'e inequality holds by defining its \textbf{$L^p$--Poincar\'{e} constant}, for $p\in[1,\infty]$:
\[
 h^p(\Gamma) = \inf\setcon{\frac{\norm{\nabla f}_p}{\norm{f}_p}}{f:V\Gamma\to\R,\ \sum_{v\in V\Gamma} f(v)=0, f \not\equiv 0}
\]
where $\nabla f:V\Gamma\to\R$ is defined by $\nabla f(x)=\max\setcon{\abs{f(x)-f(y)}}{xy\in E\Gamma}.$  
For $p=1$, we recover the Cheeger constant of the graph, while for $p=2$, $h^2(\Gamma)^2$ is comparable to the first positive eigenvalue of the graph Laplacian (and indeed would equal it for a different choice of gradient norm). This constant is usually interpreted as a measure of how ``well-connected'' the graph $\Gamma$ is (in particular it is positive if and only if the graph is connected).

Inspired by Benjamini--Schramm--Timar's ``separation profile''~\cite{BenSchTim-12-separation-graphs}, in a previous paper~\cite{HumeMackTess} we used $L^p$-Poincar\'e constants to define a family of invariants for infinite graphs.

\begin{definition}
For an infinite bounded degree graph $X$, we define its \textbf{$L^p$--Poincar\'{e} profile} $\Lambda^p_X:\N\to\R$ to be
\[
 \Lambda^p_X(r) = \sup\setcon{\abs{V\Gamma}h^p(\Gamma)}{\Gamma\leq X,\ \abs{V\Gamma}\leq r}.
\]
\end{definition}
We consider functions up to the natural order $\lesssim$ where $f\lesssim g$ if there exists a constant $C$ such that $f(r)\leq Cg(Cr+C) + C$ for all $r$, and $f\simeq g$ if $f\lesssim g$ and $g\lesssim f$.
As mentioned above, $L^p$-Poincar\'e profiles interpolate between the separation profile (for $p=1$) and a function of the volume growth (for $p=\infty$) \cite{HumeMackTess}.

It turns out that the asymptotic behaviours of Poincaré profiles are invariant under quasi-isometry~\cite{BenSchTim-12-separation-graphs,HumeMackTess}. Hence one can define the Poincaré profiles of a standard metric space to be those of a fixed bounded degree graph which is quasi-isometric to it.  
This definition is indirect and not always useful in practice, but we shall stick to it in this introduction in order to keep the presentation as elementary as possible.  Let us simply mention that it is possible to generalize the definition of Poincar\'e profiles to a class of metric {\it measure} spaces including bounded degree graphs, Riemannian manifolds with bounded geometry and compactly generated locally compact groups equipped with their word metric and Haar measure \cite[\S 4]{HumeMackTess} (we shall use this definition in \S \ref{sec:HeisRtimesZ}).

\subsubsection{Poincar\'e profiles as obstructions to embeddings}

In addition to their natural interest, $L^p$-Poincar\'e profiles are of use as obstructions to regular maps in the sense of \cite{BenSchTim-12-separation-graphs}: if $X$ and $X'$ are bounded degree graphs and there exists a regular map $X\to X'$, then $\Lambda^p_X\lesssim  \Lambda^p_{X'}$ for all $p \in [1,\infty]$ (\cite[Lemma 1.3]{BenSchTim-12-separation-graphs} for $p=1$, \cite[Theorem 1]{HumeMackTess} for all $p$). Recall 

\begin{definition}[{\cite[\S 1.1]{BenSchTim-12-separation-graphs} and \cite[Definition 1.3]{Ben-Sch-Inventiones}}]  
A map $\phi:X\to Y$ between bounded degree graphs is \textbf{regular} if it is Lipschitz and (at most $m$)-to-one for some $m\in \N$, i.e.\  for all $y\in Y$,  $|\phi^{-1}(\{y\}|\leq m$. 
\end{definition}
Note that regularity is stable under post and pre-composition by quasi-isometries. This allows us to define a regular map $\phi:X\to Y$ between two standard metric spaces as follows: if $\Gamma_X$ and $\Gamma_Y$ are bounded degree graphs and $i_X:\Gamma_X\to X$  and $p_Y:Y\to \Gamma_Y$ are quasi-isometries, then $\phi:X\to Y$ is regular if and only if $p_Y\circ\phi\circ i_X$ is regular. By the remark above, this definition is independent of the choice of $\Gamma_X, \Gamma_Y,i_X, p_Y$. 

The prototypical example of regular map between bounded degree graphs is an injective Lipschitz map. In fact, it is easy to see that on replacing $Y$ by $Y\times F$ for some finite graph  $F$, every regular map is at bounded distance from an injective Lipschitz map.

Recall that a map $\phi:(X,d_X)\to (Y,d_Y)$ between metric spaces is a \textbf{coarse embedding} if there exist increasing functions $\rho_{\pm}:[0,\infty)\to [0,\infty)$ such that $\rho_-(r)\to\infty$ as $r\to\infty$ and for all $x,x'\in X$
\[
 \rho_-(d_X(x,x')) \leq d_Y(\phi(x),\phi(x')) \leq \rho_+(d_X(x,x')).
\]
When $\rho_-, \rho_+$ are affine functions $\phi$ is called a \textbf{quasi-isometric embedding}.  In the context of graphs, coarse embeddings are obviously $\rho_+(1)$-Lipschitz. Moreover, for all $y=f(x)$, $\phi^{-1}(\{y\})$ is contained in $B(x,\rho_-^{-1}(0))$. Hence coarse embeddings between bounded degree graphs are regular maps. More generally we deduce that coarse embeddings between standard metric spaces are regular maps.

\subsubsection{Analytically thin versus thick metric spaces}
Now let us focus for the moment on the $p=1$ case, where the Poincar\'e profile is equivalent to the separation profile of Benjamini--Schramm--Tim\'ar.  It follows by definition that for all bounded degree graph $X$, one has $\Lambda^1_X(r)\lesssim r$. 
For virtually nilpotent groups, or Gromov hyperbolic groups, the separation profile has a bound $\Lambda_G^1(r) \lesssim r^a$ for some $a<1$~\cite{BenSchTim-12-separation-graphs}.
Second, for the product of two non-abelian free groups $F \times F$, the separation profile is $\Lambda_{F\times F}^1(r) \simeq r/\log(r)$~\cite{BenSchTim-12-separation-graphs}. The lower bound of $r/\log(r)$ therefore holds for the separation profile of any finitely generated group containing $F\times F$ as a subgroup. More generally, all examples of groups whose separation profile has been calculated exactly satisfy exactly one of the following two properties.

\begin{definition}\label{def:algthickthin}
We say that a standard metric space is {\bf analytically thin} if there exists $a<1$ such that $\Lambda^1_X(r)\lesssim r^a$. On the other hand we call it {\bf analytically thick} if $\Lambda^1_X(r)\gtrsim \frac{r}{\log r}.$
\end{definition}
The corresponding version of Definition \ref{def:algthickthin} for the $L^\infty$ profile is that ``thin'' spaces have (at most) polynomial growth and ``thick'' spaces have exponential growth (this follows from \cite[Proposition 6.1]{HumeMackTess}). Many classes of groups such as linear and elementary amenable groups do not contain any intermediate growth groups. More specifically for connected Lie groups, this dichotomy has an elegant algebraic formulation which can be read off the Lie algebra \cite{Gui-73-crois-poly}.  One of the main objectives of this paper is to show that connected unimodular Lie groups are either analytically thick or analytically thin, and to provide nice and workable algebraic translations of these properties.

\subsection{An analytic, geometric and algebraic dichotomy}\label{secIntro:Dicho}

\subsubsection{A dichotomy for connected unimodular Lie groups}\label{secIntro:dichoLie}
\label{ssec:intro-main}
Connected Lie groups offer a fascinating playground for exploring the relationship between the algebraic properties of a group and the geometric properties of the metric spaces on which it acts, as their algebraic properties are conveniently encoded in the Lie algebra. 
There are many examples of these relationships, including the already mentioned  algebraic characterization of Lie groups of polynomial growth; Varopoulos's classification according to the large time behaviour of symmetric random walks \cite{VSC} using both analytic and geometric methods; 
Pansu's $L^p$-cohomology methods characterizing Gromov hyperbolicity for such groups \cite{PanCohLp,CorTes-LpCoh};  and Cornulier--Tessera's algebraic characterization of Lie groups whose Dehn
function is polynomially bounded \cite{CorTesIHES}. 

Our main result has a similar flavour as it consists in identifying algebraic counterparts of being analytically thin/thick.  
\begin{definition}\label{defn:algthickthin}
	A connected Lie group $G$ with solvable radical $R$ and Levi factor $S$ is \textbf{algebraically thin} if 
	\begin{itemize}
\item its $\R$-rank is  at most $1$;
\item $[S_{\mathrm{nc}},R]=1$, where $S_{\mathrm{nc}}$ is the non-compact part of $S$;
\item $R$ is an NC-group. 
	\end{itemize}
Otherwise it is called \textbf{algebraically thick}.
\end{definition}
The concept of NC-group appears in various articles by Varopoulos, for instance \cite[\S 1.2]{Varo96}; we refer to \S\ref{ssec:NC-groups} for the full definitions of this and of $\R$-rank.
In the case $R$ is a solvable connected real Lie group, then $R$ is an NC-group if it admits a closed normal subgroup $E$ such that $R/E$ has polynomial growth and some element of $R$ acts on $E$ as a contraction, see Lemma~\ref{lem:NC}.  This includes the case $R$ itself has polynomial growth, since any $\alpha$ acts on $E=\{1\}$ as a contraction (since $\alpha^n$ converges to the identity on compact sets).  If such solvable connected $R$ is unimodular, then it is NC if and only if has polynomial growth.

The examples of algebraically thin groups that will be most important to us are direct products $R\times S$ where $R$ has polynomial growth and $S$ is either trivial or semisimple of rank $1$. While we already know that rank $1$ semisimple Lie groups and connected Lie groups of polynomial growth are analytically thin, we are now able to show that their direct product is as well. More generally, we have the following dichotomy.

\begin{theorem}\label{thmIntro:unimodularDicho} Let $G$ be a connected unimodular Lie group. Then $G$ is algebraically thin (resp.\ thick) if and only if it is analytically thin (resp.\ thick). Moreover, if it is algebraically thick, then $\Lambda^p_G(r)\simeq r/\log(r)$ for every $p\in[1,\infty]$.
\end{theorem}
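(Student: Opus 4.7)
The plan is to decompose the theorem into three tasks: (A) algebraically thin implies analytically thin, (B) the upper bound $\Lambda^p_G(r)\lesssim r/\log r$ in the thick case, and (C) the lower bound $\Lambda^1_G(r)\gtrsim r/\log r$ in the thick case; monotonicity of $\Lambda^p$ in $p$ established in \cite{HumeMackTess} then promotes (C) to every $p\in[1,\infty]$, and likewise reduces (B) to the $p=\infty$ case.

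Task (B) is the softest: each of the three failure modes of algebraic thinness forces $G$ to have exponential volume growth (by Guivarc'h's theorem, together with the equivalence between NC and polynomial growth for unimodular solvable radicals). For spaces of exponential growth one has $\Lambda^\infty_G(r)\simeq r/\log r$ by \cite{HumeMackTess}, whence the upper bound for every $p$.

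For task (A), the Levi decomposition combined with the NC/polynomial-growth equivalence reduces the analysis to a direct product $G = N\times S$ where $N$ has polynomial growth and $S$ is either trivial or a rank one simple Lie group (compact factors being inconsequential). The profiles of $N$ and of a rank one $S$ are both strictly sublinear by \cite{HumeMackTess}, and combining them via a product estimate for Poincar\'e profiles, built from tensor-type test functions on the two factors, yields $\Lambda^p_G(r) \lesssim r^a$ for some $a<1$.

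Task (C) is the main content, and I would split by which algebraic condition fails. If $\R$-rank $\geq 2$, then $G$ contains a closed subgroup quasi-isometric to a product of two rank-one symmetric spaces such as $\HH_\R^2\times\HH_\R^2$, which contains a quasi-isometric copy of $F_2\times F_2$, giving $\Lambda^1\gtrsim r/\log r$ by \cite{BenSchTim-12-separation-graphs}. If $[S_{\mathrm{nc}},R]\neq 1$, then the nontrivial action of $S_{\mathrm{nc}}$ on $R$ yields, after passing to a suitable closed subgroup, a semidirect product $\R^n\rtimes S_{\mathrm{nc}}'$ whose geometry contains a thick witness again of product-hyperbolic type. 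If $R$ is not NC — equivalently, in the unimodular setting, the solvable radical has exponential growth — then the radical contains a witness such as $\Heis\rtimes\Z$ with $\Z$ acting by a hyperbolic automorphism, whose profile $\simeq r/\log r$ is precisely the new calculation of \S\ref{sec:HeisRtimesZ}. The main obstacle is task (C), and within it the middle subcase $[S_{\mathrm{nc}},R]\neq 1$: extracting a universally present thick substructure from the algebra of a unimodular semisimple-by-solvable extension requires careful Lie-theoretic bookkeeping, since unimodularity restricts but does not trivialize the possible interactions. The profile lower bound for $\Heis\rtimes\Z$-type witnesses is the other substantial new technical input, requiring construction of subgraphs with good isoperimetric properties at scales near the full volume growth.
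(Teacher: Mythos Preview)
Your decomposition into (A), (B), (C) matches the paper's, but there are genuine gaps in (B) and (A). For (B): the monotonicity $\Lambda^p \lesssim \Lambda^q$ from \cite{HumeMackTess} is established only for $p \leq q < \infty$ and does \emph{not} extend to $q=\infty$. A bounded-degree expander $\Gamma$ has $h^p(\Gamma)\asymp 1$ for finite $p$ but $h^\infty(\Gamma)\asymp 1/\diam(\Gamma)\asymp 1/\log|\Gamma|$, so $\Lambda^p \lesssim \Lambda^\infty$ is false in general and cannot be used to transfer the exponential-growth bound down. The paper gets $\Lambda^p_G(r)\lesssim r/\log r$ for $p<\infty$ instead from the finite Assouad--Nagata dimension of connected Lie groups (Proposition~\ref{prop:think-profiles-general-upper}, Corollary~\ref{cor:general-upper-bound}). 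For (A): ``tensor-type test functions on the two factors'' cannot work, because an arbitrary subgraph $\Gamma\subset N\times S$ need not be (or resemble) a product, and it is $\Gamma$ on which you must produce a good function. This upper bound is precisely where the paper's main new machinery for the thin case lies: \S\ref{sec:capacityProfile} develops weighted \emph{capacity profiles}, which dominate Poincar\'e profiles and behave well under projection to a factor (Theorem~\ref{thm:wt-prof-product-vnilp}); combining this with a capacity bound for hyperbolic cones (Theorem~\ref{prop:wt-prof-hyp-cone}) yields the product upper bound Theorem~\ref{thm:hyp-times-nilp-upper-bound}.

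For (C) your case split is in the right spirit but less systematic than the paper's, and your middle case $[S_{\mathrm{nc}},R]\neq 1$ remains essentially unaddressed. Rather than analysing each failure mode of algebraic thinness separately, the paper first reduces via commability (Theorem~\ref{thm:reduc}) to linear groups with real-triangulable radical, and then proves uniformly (Proposition~\ref{prop:algThick}, via the classification of \emph{minimal} algebraically thick real-triangulable Lie algebras in Lemma~\ref{lem:SufficientCLieAlg}) that any algebraically thick such group contains a closed undistorted copy of $\SOL_a$ for some $a>0$ or of $\Osc$. The lower bound then follows from the quasi-isometric embedding $\DL(2,2)\hookrightarrow\SOL_a$ (Theorem~\ref{thm:embSgraph}) together with the two profile computations Theorems~\ref{DLlowerbound} and \ref{thm:HeisExt}. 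Note in particular that in your ``$R$ not NC'' case the minimal witness may be $\SOL_a$ rather than $\Osc=\Heis_3\rtimes\Z$, so both computations are genuinely required; and your $\R$-rank $\geq 2$ case as stated only covers the situation where the \emph{semisimple} rank is $\geq 2$, not the mixed case where $S_{\mathrm{nc}}$ and $R$ each contribute rank $1$.
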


As each polycyclic group is virtually a uniform lattice in a connected unimodular solvable Lie group, such groups satisfy 
a similar dichotomy.

\begin{corollary}\label{corIntro:polycyclic}
	Let $G$ be a polycyclic group. If $G$ has polynomial growth, then it is analytically thin. Otherwise, it is analytically thick, and moreover satisfies $\Lambda^p_G(r)\simeq r/\log(r)$ for every $p\in[1,\infty]$.
	\end{corollary}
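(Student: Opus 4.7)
The plan is to deduce Corollary~\ref{corIntro:polycyclic} from Theorem~\ref{thmIntro:unimodularDicho} via the observation stated immediately before the corollary: every polycyclic group $G$ has a finite-index subgroup $G_0$ that embeds as a uniform lattice in a connected simply connected solvable Lie group $H$ (Mostow/Auslander). Any locally compact group containing a lattice is unimodular, so $H$ is unimodular. Since finite-index inclusions and inclusions of uniform lattices into their ambient Lie group are both quasi-isometries, and Poincar\'e profiles are quasi-isometry invariants, we obtain $\Lambda^p_G \simeq \Lambda^p_{G_0} \simeq \Lambda^p_H$ for every $p \in [1,\infty]$.

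Next, I would specialise Theorem~\ref{thmIntro:unimodularDicho} to the solvable group $H$. Because $H$ is solvable, its Levi factor $S$ is trivial and its $\R$-rank is zero, so the first two bullets of Definition~\ref{defn:algthickthin} hold vacuously; thus $H$ is algebraically thin if and only if its solvable radical $R = H$ is an NC-group. By Lemma~\ref{lem:NC} (i.e.\ the discussion following Definition~\ref{defn:algthickthin}), a unimodular connected solvable Lie group is NC if and only if it has polynomial growth. Since polynomial growth is a quasi-isometry invariant, $H$ has polynomial growth if and only if $G$ does.

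Putting these ingredients together: if $G$ has polynomial growth, then $H$ is algebraically thin, so Theorem~\ref{thmIntro:unimodularDicho} makes $H$—and hence, by the quasi-isometry, $G$—analytically thin; otherwise $H$ is algebraically thick, and Theorem~\ref{thmIntro:unimodularDicho} yields $\Lambda^p_H(r) \simeq r/\log(r)$ for every $p \in [1,\infty]$, which transfers to $G$. There is no real obstacle here: essentially all the work is carried by Theorem~\ref{thmIntro:unimodularDicho}, and the remaining supporting facts—Mostow's embedding, automatic unimodularity of groups containing lattices, and quasi-isometry invariance of growth type and of $\Lambda^p$—are standard.
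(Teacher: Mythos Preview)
Your approach is essentially the same as the paper's: pass to a uniform lattice in a unimodular solvable connected Lie group $H$, then invoke Theorem~\ref{thmIntro:unimodularDicho}. One slip: you assert that because the Levi factor $S$ is trivial, the $\R$-rank bullet of Definition~\ref{defn:algthickthin} holds vacuously. By Definition~\ref{def:total real rank} the $\R$-rank of $H$ is the sum of the ranks of its semisimple part \emph{and} its solvable radical, so it need not vanish merely because $S=1$ (e.g.\ $\SOL_a$ has rank~$1$). Your conclusion survives, however: for unimodular solvable $H$, NC is equivalent to polynomial growth, and polynomial growth forces $\R$-rank~$0$ by Remark~\ref{rem:rank0}; conversely, if $H$ fails polynomial growth then it fails NC and is algebraically thick irrespective of rank. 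The paper's proof sidesteps this by citing Proposition~\ref{prop:algthinDescrip}, whose trichotomy makes the equivalence ``algebraically thin $\iff$ polynomial growth'' for unimodular solvable groups explicit.
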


Some remarks are in order.
\begin{enumerate} 
\item
	Since connected Lie groups have finite Assouad--Nagata dimension, from \cite[\S 9]{HumeMackTess} we deduce that every connected Lie group $G$ satisfies $\Lambda^p_G(r)\lesssim r/\log(r)$ for every $p\in[1,\infty]$, giving sharp upper bounds for analytically thick groups.

	\item Further examples of NC-groups include all direct products $R=H\times N$ where  $N$ is a connected real nilpotent Lie group and $H$ is a \textbf{Heintze group} (i.e.\ of the form $E\rtimes\R$ with every positive element of $\R$ acting as a contraction on $E$). To see this, write $R= E\rtimes (\R\times N)$: it is clear that $E$ is a closed normal subgroup of $R$ and that $R/E$ has polynomial growth. Finally, any non-trivial element (or its inverse) of the $\R$ factor acts on $E$ as a contraction.
	\item The condition $[S_{\mathrm{nc}},R]=1$ appears in various works dealing with algebraic characterizations of certain analytic properties of Lie groups. A first occurrence of this condition appears in Varopoulos's work on the diffusion of the heat kernel in \cite[\S 1.8]{Varo96}, as reported in \cite[Theorem 7.1]{CPS}: in his context, $G$ is assumed to be unimodular, and $R$ of polynomial growth. 
It appears in the characterization of the Haagerup property \cite{CCV},  and of weak amenability \cite{CDSW}: there $S_{\mathrm{nc}}$ is assumed to contain only certain rank one factors. More recently, 
Chatterji, Pittet and Saloff-Coste proved that a connected Lie
		group has Property RD if and only if its Lie algebra has the form $\mathfrak{r} \rtimes \mathfrak{s}$ with $[\mathfrak{s}_{\mathrm{nc}}, \mathfrak{r}] = 0$ and $\mathfrak{r}$ has type R \cite[Theorem 0.1.]{CPS}.
\end{enumerate}

\subsubsection{More characterizations of algebraically thin groups}\label{secIntro:thin}

Let $G$ be an algebraically thin connected Lie group with solvable radical $R$ and Levi factor $S$. Then exactly one of the following holds (see Proposition \ref{prop:algthinDescrip}):
	\begin{itemize}
		\item[(a)] $G$ has polynomial growth;
		\item[(b)] $S$ has $\R$-rank $1$, $[S_{\mathrm{nc}},R]=1$ and $R$ has polynomial growth; or
		\item[(c)] $S$ is compact and $R$ is an NC-group with $\R$-rank $1$.
	\end{itemize}
We observe that (a) and (b) exactly correspond to the case where $G$ is unimodular (as an NC-group is unimodular if and only if it has polynomial growth). We prove that unimodular algebraically thin groups reduce up to quasi-isometry to the following class of groups (see Corollary \ref{cor:algthinGeneral} for a more algebraic statement which implies this one).
\begin{proposition}\label{propIntro:thinQIproduct}
An algebraically thin connected unimodular Lie group is quasi-isometric to either $P$ or a direct product $P\times \HH_\bbK^m$, where $P$ is a connected Lie group of polynomial growth, $\bbK\in \{\R,\C,\HH,\mathbb{O}\}$ and $m\geq 2$ with $m=2$ when $\bbK=\mathbb{O}$. 
\end{proposition}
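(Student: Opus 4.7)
The plan is to leverage Proposition \ref{prop:algthinDescrip}, which classifies algebraically thin connected Lie groups into three cases (a), (b), (c). Since a unimodular NC-group has polynomial growth, in our unimodular setting case (c) either reduces to case (a) or is ruled out. In case (a), $G$ itself has polynomial growth, and we simply take $P := G$. The substantive work is in case (b), where $G$ has solvable radical $R$ of polynomial growth and Levi factor $S$ with $[S_{\mathrm{nc}}, R] = 1$ and $\R$-rank $1$.

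In case (b), the key is a Lie-algebra-level direct-product decomposition. Since $\mathfrak{s}$ is semisimple, it decomposes into simple ideals as $\mathfrak{s} = \mathfrak{s}_c \oplus \mathfrak{s}_{\mathrm{nc}}$, with $[\mathfrak{s}_c, \mathfrak{s}_{\mathrm{nc}}] = 0$; combined with the hypothesis $[\mathfrak{s}_{\mathrm{nc}}, \mathfrak{r}] = 0$ this yields
\[
   \mathfrak{g} \;=\; (\mathfrak{r} \oplus \mathfrak{s}_c) \;\oplus\; \mathfrak{s}_{\mathrm{nc}} \;=:\; \mathfrak{h} \oplus \mathfrak{s}_{\mathrm{nc}}
\]
as a direct sum of Lie algebras. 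Integrating, the connected Lie subgroups $H, S^0 \le G$ integrating $\mathfrak{h}$ and $\mathfrak{s}_{\mathrm{nc}}$ commute, have discrete central intersection, and jointly generate $G$; hence $G \cong (H \times S^0)/D$ for some discrete central subgroup $D$.

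The group $H$ has polynomial growth (its Lie algebra is the sum of a polynomial-growth solvable piece and a compact semisimple ideal). The group $S^0$ has rank-one simple real Lie algebra and therefore acts properly cocompactly on $\HH_\bbK^m \times \R^k$ for some $\bbK \in \{\R,\C,\HH,\mathbb O\}$, $m \ge 2$ (with $m=2$ when $\bbK = \mathbb O$), and $k \in \{0,1\}$; the case $k=1$ arises exactly when the adjoint simple group has infinite fundamental group, namely for $\mathfrak{s}_{\mathrm{nc}} \in \{\mathfrak{sl}_2(\R), \mathfrak{su}(n,1)\}$ (where an extra central $\R$ is present in $S^0$). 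Combining these actions, $G$ acts properly cocompactly on a product quasi-isometric to $(H \times \R^k) \times \HH_\bbK^m$, and we take $P := H \times \R^k$, which is still a connected Lie group of polynomial growth.

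The main obstacle I expect is that the discrete central subgroup $D$ can itself be infinite when $\pi_1(S^0) \supseteq \mathbb Z$, so $G$ is not automatically quasi-isometric to $H \times S^0$. Handling this requires carefully tracking how $D$ projects onto each factor, then invoking standard facts about quasi-isometric invariance of connected Lie groups under compact-normal quotients and central extensions, together with the observation that a central $\R$ (coming from the fundamental group of $S^0$) can be harmlessly absorbed into the polynomial-growth factor without disturbing the rank-one hyperbolic piece.
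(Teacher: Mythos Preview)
Your approach is correct in outline but takes a different route from the paper.  The paper does not work directly with $G$; instead it invokes its general commability reduction (Theorem~\ref{thm:reduc}) to replace $G$ by a commable---hence quasi-isometric---group $G'$ that is linear with real-triangulable radical and semisimple part having \emph{finite} center.  In that model the infinite-center obstacle you identify simply does not arise, and Proposition~\ref{prop:algthinRealTriang} gives $G' = R \times S$ with $R$ simply connected nilpotent and $S$ rank~$1$ with finite center (or trivial), whence $G' \simeq_{\mathrm{QI}} R \times \HH_\bbK^m$; this is Corollary~\ref{cor:algthinGeneral}.

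Your direct argument is more elementary in that it avoids the full strength of Theorem~\ref{thm:reduc}, but the price is exactly the obstacle you flag: when $Z(S^0)$ is infinite and $D = H \cap S^0$ is infinite, the passage from $(H \times S^0)/D$ to $P \times \HH_\bbK^m$ requires embedding $D \cong \Z$ into an $\R$-factor and rearranging---which is precisely the content of the paper's Proposition~\ref{prop:reducLinear} specialised to this situation (compare Example~\ref{ex:centershit}).  So your ``main obstacle'' paragraph, once made rigorous, essentially reconstructs that part of Theorem~\ref{thm:reduc}.  A minor imprecision: $S^0$ with infinite center does not literally act properly cocompactly by isometries on $\HH_\bbK^m \times \R$; rather it is commable (hence quasi-isometric) to such a product, which is all you need.
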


 Below is an easy consequence of Theorem \ref{thmIntro:unimodularDicho} -- and the Bonk-Schramm embedding theorem \cite{BS-00-gro-hyp-embed} -- providing an algebraic characterization of connected unimodular Lie groups which admit certain embeddings into certain standard product spaces.
\begin{theorem}\label{thmIntro:geomDichoThin} Let $G$ be a connected unimodular Lie group.
The following are equivalent:
\begin{itemize}
\item[(i)] $G$ is algebraically thin;
\item[(ii)] $G$ admits a regular map into  $\R^n\times \HH_\R^m$ for some $n, m \geq 0$;
\item[(iii)] idem with `coarse embedding'; 
\item[(iv)] $G$ admits a quasi-isometric embedding into $P \times \HH_\R^m$ for some connected Lie group $P$ with polynomial growth, and some $m\geq 0$.
\end{itemize} 
\end{theorem}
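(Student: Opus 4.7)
The plan is to establish the cycle
\[
(\mathrm{iv}) \Rightarrow (\mathrm{iii}) \Rightarrow (\mathrm{ii}) \Rightarrow (\mathrm{i}) \Rightarrow (\mathrm{iv}).
\]
Both $(\mathrm{iv}) \Rightarrow (\mathrm{iii})$ and $(\mathrm{iii}) \Rightarrow (\mathrm{ii})$ are essentially formal from the definitions recalled in \S\ref{secIntro:Background}: every quasi-isometric embedding is a coarse embedding, and every coarse embedding between standard metric spaces is a regular map.

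For $(\mathrm{ii}) \Rightarrow (\mathrm{i})$, the idea is to apply the dichotomy of Theorem \ref{thmIntro:unimodularDicho} to both source and target. Observe first that $\R^n \times \HH_\R^m$ is quasi-isometric to the connected unimodular Lie group $\R^n \times SO_0(m,1)$, whose solvable radical $\R^n$ has polynomial growth (hence is NC) and commutes with the rank-one Levi factor $SO_0(m,1)$; by Definition \ref{defn:algthickthin} this group is algebraically thin, and so analytically thin by Theorem \ref{thmIntro:unimodularDicho}. Thus $\Lambda^1_{\R^n \times \HH_\R^m}(r) \lesssim r^a$ for some $a<1$. Since $L^1$-Poincar\'e profiles are monotone under regular maps, the existence of a regular map $G \to \R^n \times \HH_\R^m$ forces the same bound on $\Lambda^1_G$, so $G$ is analytically thin; applying Theorem \ref{thmIntro:unimodularDicho} again, this time in the reverse direction to the connected unimodular group $G$, we conclude that $G$ is algebraically thin.

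For $(\mathrm{i}) \Rightarrow (\mathrm{iv})$, the main input is Proposition \ref{propIntro:thinQIproduct}, which asserts that $G$ is quasi-isometric either to a connected Lie group $P$ of polynomial growth, or to a product $P \times \HH_\bbK^m$ with $\bbK \in \{\R, \C, \HH, \mathbb{O}\}$ and $m \geq 2$. In the first case $G$ quasi-isometrically embeds into $P$ itself (taking the hyperbolic factor trivial); when $\bbK = \R$ the quasi-isometry itself is the desired embedding. For $\bbK \in \{\C, \HH, \mathbb{O}\}$, the space $\HH_\bbK^m$ is Gromov hyperbolic with bounded geometry, so the Bonk--Schramm embedding theorem \cite{BS-00-gro-hyp-embed} provides a quasi-isometric embedding $\HH_\bbK^m \hookrightarrow \HH_\R^N$ for some $N$. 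Taking the product with the identity on $P$ then produces a quasi-isometric embedding $G \simeq P \times \HH_\bbK^m \hookrightarrow P \times \HH_\R^N$, as required.

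The substantive content of the argument is concentrated in the two external inputs: Theorem \ref{thmIntro:unimodularDicho} supplies the sharp Poincar\'e-profile dichotomy, and Proposition \ref{propIntro:thinQIproduct} provides the structural reduction of algebraically thin unimodular Lie groups to a clean product form. Granting these, the main obstacle that remains to be addressed directly is the absorption of the non-real rank-one symmetric spaces into a real hyperbolic factor, which is handled cleanly by Bonk--Schramm; the rest is formal manipulation of regular maps and quasi-isometries.
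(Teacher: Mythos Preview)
Your overall strategy matches the paper's, and the steps $(\mathrm{ii})\Rightarrow(\mathrm{i})$ and $(\mathrm{i})\Rightarrow(\mathrm{iv})$ are fine (the paper cites Corollary~\ref{cor:algthinGeneral} and Theorem~\ref{thm:hyp-times-nilp-upper-bound} directly rather than passing through Theorem~\ref{thmIntro:unimodularDicho} and Proposition~\ref{propIntro:thinQIproduct}, but the content is the same).

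There is, however, a genuine gap in your $(\mathrm{iv})\Rightarrow(\mathrm{iii})$. You claim this is formal because ``every quasi-isometric embedding is a coarse embedding'', but the target spaces are different: $(\mathrm{iv})$ gives a quasi-isometric embedding into $P\times\HH_\R^m$ with $P$ a connected Lie group of polynomial growth, while $(\mathrm{iii})$ asks for a coarse embedding into $\R^n\times\HH_\R^m$. To pass from the former to the latter you must coarsely embed $P$ into some $\R^n$, and this is \emph{not} formal---a general $P$ of polynomial growth (e.g.\ the Heisenberg group) does not quasi-isometrically embed into any $\R^n$. What is needed here is Assouad's embedding theorem~\cite{Assouad}: since $P$ has polynomial growth it is doubling, hence admits a coarse (snowflake) embedding into some $\R^n$; composing, $G\hookrightarrow P\times\HH_\R^m\to\R^n\times\HH_\R^m$ is a coarse embedding. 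This is exactly how the paper handles the step, and the same argument appears in~\S\ref{sectionIntro:nonunimodular}.
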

Although it only applies to unimodular groups, this theorem should be compared with Cornulier's algebraic characterization of connected Lie groups admitting a quasi-isometric embedding into a CAT(0) space \cite{Cornulier_dimcone}.

Note that in (iv) one needs such a $P$ rather than $\R^n$ since, for example, the Heisenberg group does not quasi-isometrically embed into any $\R^n$.

\subsubsection{More characterizations of algebraically thick groups}\label{secIntro:thick}
 The following result is a partial version of Theorem \ref{thmIntro:unimodularDicho} valid without the unimodularity condition.

\begin{theorem}\label{thmIntro:alg thick Implies an thick} Let $G$ be a connected Lie group. If $G$ is algebraically thick, then $\Lambda^p_G(r)\simeq r/\log(r)$ for every $p\in[1,\infty]$. In particular $G$ is analytically thick.
\end{theorem}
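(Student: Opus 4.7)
The upper bound $\Lambda^p_G(r)\lesssim r/\log r$ was already noted in remark~(1) after Corollary~\ref{corIntro:polycyclic}, combining the finite Assouad--Nagata dimension of connected Lie groups with \cite[\S 9]{HumeMackTess}. The content of the theorem is therefore the matching lower bound, uniformly in $p\in[1,\infty]$. My plan is to exploit the monotonicity of $\Lambda^p$ under regular maps from \cite[Theorem~1]{HumeMackTess}, which applies in particular to coarse embeddings between standard metric spaces, and to produce, in each way that $G$ can be algebraically thick, a coarsely embedded \textbf{thick model} $M$ with $\Lambda^p_M(r)\gtrsim r/\log r$ for every $p$.

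By Definition~\ref{defn:algthickthin}, algebraic thickness of $G$ means that at least one of the following holds: (i) the $\R$-rank of $G$ is at least $2$; (ii) $[S_{\mathrm{nc}},R]\neq 1$; or (iii) $R$ is not an NC-group. I would split into these (non-exclusive) cases. In case~(i), $G$ contains a closed semisimple subgroup of $\R$-rank at least $2$, so the associated symmetric space contains a totally geodesic embedded product of two rank-one symmetric spaces, from which I can extract a coarsely embedded product of two regular trees $T\times T$. In case~(ii), a non-compact rank-one simple factor of $S$ acts non-trivially on some ideal of $\mathrm{Lie}(R)$; choosing a one-parameter $\R$-split subgroup acting by a non-identity semisimple automorphism on a suitable invariant subspace of $\mathrm{Lie}(R)$ produces a coarsely embedded copy of $\SOL$, or of a higher-dimensional analogue in the same quasi-isometry class. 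In case~(iii), I would prove a structural lemma: a non-NC connected solvable Lie group contains a closed connected subgroup coarsely equivalent to $\SOL$. The heuristic is that if no closed normal $E\lhd R$ with $R/E$ of polynomial growth admits a contracting element, then analysing the Jordan decomposition of inner derivations on a maximal abelian ideal of $\mathrm{Lie}(R)$ yields a two-dimensional invariant subspace whose eigenvalues have real parts of opposing signs, producing $\SOL$.

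Once the coarse embeddings of $T\times T$ or $\SOL$ into $G$ are in place, I would finish by invoking $\Lambda^p_{T\times T}(r)\simeq\Lambda^p_{\SOL}(r)\simeq r/\log r$ for every $p$: the $p=1$ estimates are in \cite{BenSchTim-12-separation-graphs} and \cite{HumeMackTess} respectively, and I would extend to general $p$ using the same family of test functions, namely plateau functions on exponentially large horocyclic boxes, whose $L^p$-gradient-to-norm ratio is bounded uniformly in $p$ by $1/\log n$ on a box of volume $n$.

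The main obstacle I foresee is case~(iii): turning the negative condition ``$R$ is not NC'' into an explicit coarse embedding. The algebraic analysis of the Jordan form of inner derivations on ideals of $\mathrm{Lie}(R)$ will require careful case work, particularly when $R$ is far from unimodular and the nilradical has high nilpotency class; cases~(i) and~(ii) are comparatively routine once the model profile estimates are in hand.
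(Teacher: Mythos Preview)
Your upper-bound argument and the overall strategy of finding a coarsely embedded ``thick model'' are correct and match the paper. However, your case analysis contains a genuine gap: you assume that $\SOL$ (or $T\times T$) always suffices as the model, and this is false.

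The structural lemma you propose in case~(iii) --- that every non-NC connected solvable Lie group contains a closed subgroup coarsely equivalent to $\SOL$ --- does not hold. The split oscillator group $\Osc=\Heis_3\rtimes_{(1,-1,0)}\R$ is a counterexample: its Lie algebra has basis $X,Y,Z,T$ with $[T,X]=X$, $[T,Y]=-Y$, $[T,Z]=0$, $[X,Y]=Z$, so the only nonzero weights for the adjoint action are $\pm 1$, but the corresponding eigenspaces $\R X$ and $\R Y$ do not span an abelian subalgebra. Hence $\mathfrak{osc}$ contains no copy of any $\mathfrak{sol}_a$, and your heuristic about ``eigenvalues of opposing signs on a maximal abelian ideal'' breaks down precisely when those eigenspaces bracket nontrivially. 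The same obstruction arises in your case~(ii): when $[\mathfrak{s}_{\mathrm{nc}},\mathfrak{r}]\neq 0$, the subalgebra one extracts (via \cite[Proposition~8.2]{CDSW}) can be $\mathfrak{h}_{2n+1}\rtimes\mathfrak{sl}(2,\R)$, which contains $\mathfrak{osc}$ but not necessarily $\mathfrak{sol}$. Your case~(i) also misreads Definition~\ref{def:total real rank}: the $\R$-rank of $G$ is the \emph{sum} of the semisimple rank and the rank of the radical, so $\R$-rank $\geq 2$ does not guarantee a semisimple subgroup of rank $\geq 2$.

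The paper's route avoids these pitfalls by first reducing (Theorem~\ref{thm:reduc}) to a linear group with real-triangulable radical via commability, and then proving the clean dichotomy of Proposition~\ref{prop:algThick}: every such algebraically thick group contains a closed \emph{undistorted} copy of either $\SOL_a$ for some $a>0$ or $\Osc$. The $\SOL_a$ case is then handled via an embedded $\DL(2,2)$ (Theorems~\ref{thm:embSgraph} and~\ref{DLlowerbound}); the $\Osc$ case requires a separate and substantially harder direct computation (Theorem~\ref{thm:HeisExt}), since the paper is explicitly unable to embed $\DL(2,2)$ into $\Osc$. This second model and its bespoke profile computation are the missing ingredient in your proposal.
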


An important ingredient in the proof of Theorem \ref{thmIntro:alg thick Implies an thick} is the following useful characterization of algebraically thick Lie groups. 
 The following proposition says that within a slightly restricted  class of connected Lie groups, there are two types of `minimal' algebraically thick groups:
$\Sol_a=\R^2\rtimes_{(1,-a)}\R$, for $a>0$, and the \textbf{split oscillator group} $\Osc=\Heis_3\rtimes_{(1,-1,0)}\R$. We denote by $\mathfrak{sol}_a$ and $\mathfrak{osc}$ their respective Lie algebras.

  \begin{proposition}\label{prop:algThick} Let $G$ be a connected linear real Lie group whose radical $R$ is real-triangulable. The following are equivalent:
	\begin{itemize}
		\item[(i)] $G$ is algebraically thick;
		\item[(ii)]  $G$ admits a closed undistorted subgroup isomorphic to either $\Sol_a$ for some $a>0$, or $\Osc$;
		\item[(iii)]  $\mathfrak{g}$ has a Lie subalgebra isomorphic to either $\mathfrak{sol}_a$ for some $a>0$, or $\mathfrak{osc}$.
	\end{itemize}
\end{proposition}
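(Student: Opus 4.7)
The plan is to prove (ii) $\Leftrightarrow$ (iii) via standard Lie-theoretic integration, and then establish (i) $\Leftrightarrow$ (iii), which carries the main algebraic content. The direction (ii) $\Rightarrow$ (iii) is immediate by passage to Lie algebras. For (iii) $\Rightarrow$ (ii), given $\mathfrak{h}\subset\mathfrak{g}$ isomorphic to $\mathfrak{sol}_a$ or $\mathfrak{osc}$, integrate to the analytic subgroup $H$; since $\mathfrak{h}$ is exponential solvable, $\exp\colon\mathfrak{h}\to H$ is a diffeomorphism, and the linearity of $G$ forces $H$ to be closed (via a realization inside $\GL_n(\R)$). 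Undistortedness follows by noting that $\mathfrak{h}$ contains an $\R$-diagonalizable element $T$ whose one-parameter subgroup is undistorted in any linear Lie group (standard for $\R$-split elements), and the rest of $H$ is quasi-isometrically embedded via the explicit semidirect product structure of $\Sol_a$ and $\Osc$.

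For (iii) $\Rightarrow$ (i) I argue the contrapositive. Algebraic thinness gives $[\mathfrak{s}_{\mathrm{nc}},\mathfrak{r}]=0$ and hence the decomposition $\mathfrak{g}=\mathfrak{s}_{\mathrm{nc}}\oplus(\mathfrak{r}\rtimes\mathfrak{s}_c)$, with $\mathfrak{s}_{\mathrm{nc}}$ of real rank at most $1$ and $\mathfrak{r}$ NC. Suppose $\mathfrak{h}\subset\mathfrak{g}$ were isomorphic to $\mathfrak{sol}_a$: projecting to each direct factor yields solvable subalgebras, where the projection to $\mathfrak{s}_{\mathrm{nc}}$ lies in an NC Borel $\mathfrak{a}+\mathfrak{n}$ (by the rank-$\leq 1$ hypothesis) and the projection to $\mathfrak{r}\rtimes\mathfrak{s}_c$ is constrained by the NC hypothesis on $\mathfrak{r}$ together with the purely imaginary eigenvalues from compact $\mathfrak{s}_c$. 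A direct bracket analysis on the would-be generators $T,X,Y$ then contradicts the required $+1,-a$ eigenvalue pattern on commuting vectors; the $\mathfrak{osc}$ case is analogous.

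For the main direction (i) $\Rightarrow$ (iii) I split by which clause of algebraic thickness fails. If the $\R$-rank of $\mathfrak{g}$ is at least $2$, pick restricted roots $\alpha,\beta$ of $\mathfrak{s}_{\mathrm{nc}}$ spanning a rank-$2$ subsystem with $\alpha+\beta$ not a root (e.g.\ one simple root and the negative of another in an $A_2$-type subsystem); then $X_\alpha,X_\beta$ commute and, together with a suitable $H\in\mathfrak{a}$, span a copy of $\mathfrak{sol}_a$. If $[\mathfrak{s}_{\mathrm{nc}},\mathfrak{r}]\neq 0$, take an $\mathfrak{sl}_2\subset\mathfrak{s}_{\mathrm{nc}}$ acting nontrivially on an irreducible submodule $V\subset\mathfrak{r}$; the highest and lowest weight vectors of $V$, together with the $\mathfrak{sl}_2$ Cartan generator, yield $\mathfrak{sol}_a$ if they commute in $\mathfrak{r}$ and $\mathfrak{osc}$ when their bracket lies in the weight-zero centralizer. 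If $\mathfrak{r}$ is not NC, the characterization of NC for real-triangulable solvable algebras translates into the statement that the weights of a Cartan $\mathfrak{a}\subset\mathfrak{r}$ on the exponential radical have $0$ in their convex hull; this yields weights $\alpha_i,\alpha_j$ and $H$ with $\alpha_i(H)>0>\alpha_j(H)$, whose weight vectors $X,Y$ generate $\mathfrak{sol}_a$ when $[X,Y]=0$, and, when one can arrange $\alpha_i+\alpha_j=0$ with $[X,Y]\neq 0$, give $\mathfrak{osc}$.

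The main obstacle lies in this last case: converting the abstract convex-hull condition into a concrete $\mathfrak{sol}_a$ or $\mathfrak{osc}$ subalgebra within an arbitrary real-triangulable solvable Lie algebra requires a careful bracket analysis to guarantee the commutativity (for $\mathfrak{sol}_a$) or the centrality of the commutator (for $\mathfrak{osc}$); one likely has to reduce modulo a well-chosen nilpotent ideal and argue by induction on the dimension of the exponential radical to isolate a minimal obstruction to NC.
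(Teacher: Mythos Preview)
Your overall architecture matches the paper's, but there are two genuine gaps in (i)$\Rightarrow$(iii).

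First, your case split is incomplete. When you assume the $\R$-rank of $\mathfrak g$ is at least $2$, you immediately reach for restricted roots of $\mathfrak s_{\mathrm{nc}}$. But by definition $\R\text{-rank}(\mathfrak g)=\R\text{-rank}(\mathfrak s)+\R\text{-rank}(\mathfrak r)$, so the rank-$2$ condition can come entirely from $\mathfrak r$ (with $\mathfrak s_{\mathrm{nc}}$ trivial or rank $0$), or be split as $1+1$ between $\mathfrak s_{\mathrm{nc}}$ and $\mathfrak r$. Neither of these is covered by your root-system argument in $\mathfrak s_{\mathrm{nc}}$. The paper separates the cases differently: one proposition handles the three ``semisimple'' situations ($[\mathfrak s_{\mathrm{nc}},\mathfrak r]\neq 0$; both $\mathfrak s_{\mathrm{nc}}$ and $\mathfrak r$ of positive rank; $\mathfrak s_{\mathrm{nc}}$ of rank $\geq 2$), citing \cite{CDSW} for the first and building $\mathfrak{sol}_1$ inside $\mathfrak{sl}(2,\R)\times\mathfrak{sl}(2,\R)$ or $\mathfrak{sl}(3,\R)$ for the last; a separate proposition then handles the purely solvable case where $\mathfrak r$ itself is algebraically thick (either C or of $\R$-rank $\geq 2$).

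Second, your treatment of that solvable case is only a sketch, and the route you propose---quotienting by a nilpotent ideal and inducting on the dimension of the exponential radical---is not how the paper proceeds and is not obviously workable. The paper's key lemma instead classifies \emph{minimal} algebraically thick real-triangulable Lie algebras: one shows directly that any such minimal algebra is already isomorphic to $\mathfrak{sol}_a$ or $\mathfrak{osc}$. The argument first reduces the exponential radical $\mathfrak e$ to codimension $1$ by projecting weights to a generic line in $\mathfrak g_{\mathrm{ab}}$ (using minimality), then analyses the real grading on $\mathfrak e$ induced by the one-dimensional quotient: minimality forces $\mathfrak e/[\mathfrak e,\mathfrak e]$ to be two-dimensional with weights $s,t$ of opposite sign, and $[\mathfrak e,\mathfrak e]$ to be concentrated in degree $0$, whence $\mathfrak g$ is $\mathfrak{sol}_{-t/s}$ if $s+t\neq 0$ and $\mathfrak{sol}_1$ or $\mathfrak{osc}$ if $s+t=0$. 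This ``pass to a minimal thick subalgebra'' manoeuvre is exactly what circumvents the bracket complications you anticipate.

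Two smaller points. For (iii)$\Rightarrow$(ii), your undistortedness argument is too soft: the paper proves a dedicated theorem (Theorem~\ref{thm:Undistorted}) that any $H=U\rtimes A$ with $U$ simply connected nilpotent, $A\cong\R^r$, and suitable eigenvalue conditions embeds closed and undistorted in any linear connected Lie group, using Guivarc'h's word-length estimates on the exponential radical rather than just undistortion of $\R$-split one-parameter subgroups. For (iii)$\Rightarrow$(i), the paper's route is cleaner than your projected bracket analysis: one shows that algebraically thin connected Lie groups are closed under passing to closed connected solvable subgroups (Lemma~\ref{lem:algebraicallythin=NC}), and since $\Sol_a$ and $\Osc$ are themselves C-groups, no algebraically thin $G$ can contain them.
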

Recall that a \textbf{real-triangulable Lie group} is a connected, simply connected Lie group which admits a continuous faithful triangulable real representation.
To the best of our knowledge, this characterization is new, but its statement and proof are similar to previous works (for instance \cite[Proposition 8.2]{CDSW}). The assumptions that the group is linear and has real-triangulable radical are here only to avoid inessential complications. We shall indeed see that an algebraically thick connected Lie group is quasi-isometric to an algebraically thick connected Lie group of that form (see Theorem \ref{thm:reduc} for a more precise statement).

We recall that connected Lie groups have finite Assouad--Nagata dimension, and therefore by \cite{HumeMackTess}, they satisfy $\Lambda_{G}^p(r) \lesssim r/\log(r)$ for every $p\in[1,\infty]$.
With Proposition \ref{prop:algThick} at hand, the proof of Theorem \ref{thmIntro:alg thick Implies an thick}  boils down to showing that $\Sol_a$ and $\Osc$ both satisfy $\Lambda^p_G(r)\gtrsim r/\log(r)$ (Theorems \ref{DLlowerbound} and  \ref{thm:HeisExt}).
The lower bound in the case $\Osc$ is treated by a direct (involved) computation which uses the definition of Poincar\'e profiles of metric measure spaces from \cite{HumeMackTess}. We proceed more indirectly for $\Sol_a$: indeed, we first prove that the Diestel-Leader graph $\DL(2,2)$ quasi-isometrically embeds into it, and then that $\Lambda^p_{\DL(2,2)}(r)\simeq r/\log(r)$. 

We have the following geometric characterization of unimodular algebraically thick groups. 

\begin{theorem}\label{thmIntro:geomDichoThick}
Let $G$ be a connected unimodular Lie group or a polycyclic group. The following are equivalent:
\begin{itemize}
		\item[(i)] $G$ is algebraically thick;
		\item[(ii)]  either $\DL(2,2)$, or $\Osc$ {\it regularly maps} to $G$;
		\item[(iii)]  idem with  ``{\it coarsely embeds} into $G$'';
		\item[(iv)]  idem with ``{\it quasi-isometrically embeds} into $G$''.
	\end{itemize}
\end{theorem}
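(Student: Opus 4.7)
The plan is to establish the cycle $(iv)\Rightarrow(iii)\Rightarrow(ii)\Rightarrow(i)\Rightarrow(iv)$. The first two implications are immediate from \S\ref{secIntro:Background}: a quasi-isometric embedding is by definition a coarse embedding, and between standard metric spaces every coarse embedding is a regular map.

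For $(ii)\Rightarrow(i)$, I would invoke monotonicity of the $L^1$-Poincar\'e profile under regular maps. Both $\Osc$ and $\DL(2,2)$ are analytically thick with $\Lambda^1(r)\simeq r/\log r$: for $\Osc$ this is the $p=1$ case of Theorem \ref{thm:HeisExt}, and for $\DL(2,2)$ this is Theorem \ref{DLlowerbound}, in each case combined with the universal upper bound $\Lambda^p(r)\lesssim r/\log r$ that follows from finite Assouad--Nagata dimension. A regular map from either of these spaces into $G$ would then force $\Lambda^1_G(r)\gtrsim r/\log r$, so $G$ is analytically thick; Theorem \ref{thmIntro:unimodularDicho} (or Corollary \ref{corIntro:polycyclic} in the polycyclic case) concludes that $G$ is algebraically thick.

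The main implication $(i)\Rightarrow(iv)$ proceeds by a chain of reductions. First, if $G$ is polycyclic then it embeds as a uniform lattice in a connected unimodular solvable Lie group $L$ and is quasi-isometric to $L$, so it suffices to produce the embedding into $L$. Next, given a connected unimodular Lie group (or the above $L$), apply the reduction Theorem \ref{thm:reduc} to pass to a quasi-isometric, algebraically thick connected Lie group $G'$ that is linear with real-triangulable radical. Proposition \ref{prop:algThick} then supplies a closed undistorted subgroup $H\leq G'$ isomorphic to either $\Sol_a$ for some $a>0$, or $\Osc$; undistortedness exactly says that the inclusion, in word metrics, is a quasi-isometric embedding. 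If $H\cong\Osc$ we are done, and if $H\cong\Sol_a$ we precompose with the quasi-isometric embedding $\DL(2,2)\hookrightarrow\Sol_a$ produced as a step in the proof of Theorem \ref{DLlowerbound}. The main obstacle lies in two cornerstone inputs established elsewhere in the paper: the reduction of an arbitrary algebraically thick connected Lie group to one of linear, real-triangulable-radical form, and the explicit quasi-isometric embedding $\DL(2,2)\hookrightarrow\Sol_a$ underpinning the $\Sol_a$ lower bound; granted these, the four-way equivalence assembles cleanly from the general dichotomy.
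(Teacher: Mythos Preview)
Your proof is correct and follows essentially the same route as the paper: the cycle $(iv)\Rightarrow(iii)\Rightarrow(ii)\Rightarrow(i)\Rightarrow(iv)$, with $(i)\Rightarrow(iv)$ via the reduction Theorem~\ref{thm:reduc} and Proposition~\ref{prop:algThick} (together with the $\DL(2,2)\hookrightarrow\Sol_a$ embedding, which is Theorem~\ref{thm:embSgraph} rather than a step inside the proof of Theorem~\ref{DLlowerbound}), and $(ii)\Rightarrow(i)$ via monotonicity of the separation profile and the dichotomy Theorem~\ref{thmIntro:unimodularDicho}. You are in fact slightly more explicit than the paper in spelling out the polycyclic reduction and the $\DL(2,2)\hookrightarrow\Sol_a$ step.
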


\subsubsection{A word on the non-unimodular case}\label{sectionIntro:nonunimodular}
The question whether all non-uni\-modu\-lar algebraically thin groups are analytically thin remains open.  We can prove it though when $G$ is 
{\it a direct product of a group of polynomial growth with a Heintze group}. We shall provide explicit upper and lower bounds on their Poincaré profiles below. For now, let us indicate an indirect argument showing that they are analytically thin:
By Heintze's theorem \cite{Heintze}, a Heintze group admits a negatively curved left-invariant Riemmanian metric, and therefore is Gromov hyperbolic. Applying the Bonk-Schramm embedding theorem, we see that every Heintze group quasi-isometrically embeds into some $\HH_\R^n$. Next, every group of polynomial growth satisfies the doubling property, so by Assouad's embedding theorem we obtain a coarse embedding into some $\R^m$ \cite{Assouad}. Thus, the product of a Heintze group with a group of polynomial growth coarsely embeds into some $\HH^n_\R\times\R^m$, and is therefore analytically thin by Theorem \ref{thmIntro:unimodularDicho}.

More generally, any {\it hypercentral-by-Heintze group} is analytically thin: this is because such a group is a closed subgroup of a direct product of a nilpotent connected Lie group with a Heintze group (see Corollary \ref{cor:hypercentralbyHeintze}). An example is the semi-direct product of $\R^3\rtimes\R$, where the action of $\R$ is through matrices
$\left(\begin{array}{ccc} 1 & t & 0 \\ 0 & 1 & 0 \\ 0 & 0 & e^t \end{array}\right)$ (this appears as a special case of Corollary \ref{cor:NCabelianbyR}).

The smallest example of an algebraically thin (actually an NC-group) for which we are unable to prove analytic thinness is $G=\Heis_3\rtimes_{(1,0,1)}\R$, also isomorphic to $\R^2\rtimes \R^2$, where the first factor acts through matrices  $\left(\begin{array}{cc} e^t & 0 \\ 0 & e^t \end{array}\right)$ and the second through matrices $\left(\begin{array}{cc} 1 & t \\ 0 & 1 \end{array}\right)$.

\subsection{Poincar\'e profiles and obstructions to regular maps}\label{secIntro:calculationsThin}

\subsubsection{Precise calculation of Poincaré profiles of thin groups} 
A slightly disappointing consequence of Theorem \ref{thmIntro:alg thick Implies an thick} is that Poincar\'e profiles do not allow to distinguish between algebraically thick groups. By contrast, they provide very refined invariants for unimodular algebraically thin groups, as shown by the combination of Proposition \ref{propIntro:thinQIproduct} and the following result.

\begin{theorem}\label{thmIntro:profilesDirectProductLie}
Let $X$ be a direct product $P\times H$, where $P$ is a connected Lie group of polynomial growth of degree $d\geq 0$, and $H$ is one of the following:
\begin{itemize}
	\item (a uniform lattice in the group of isometries of) $\HH^m_\bbK$, for some $\bbK\in \{\R,\C,\HH,\mathbb{O}\}$ and $m\geq 2$ with $m=2$ when $\bbK=\mathbb{O}$, and with $Q=(m+1)\dim_\R\bbK - 2$, or more generally
\item a Gromov hyperbolic discrete group whose conformal dimension $Q$ is attained by a metric admitting a $1$-Poincar\'e inequality, or
\item a non-abelian free group of finite rank with $Q=0$.
\end{itemize}
Then 
\begin{equation*}
 \Lambda^p_{X}(r) \simeq \left\{
 \begin{array}{lcl}
 r^{1-\frac{1}{Q+d}} & \textup{if} & 1\leq p < Q
 \\
 r^{1-\frac{1}{Q+d}} \log^{\frac{1}{Q+d}}(r) & \textup{if} & p = Q
 \\
 r^{1-\frac{1}{p+d}} & \textup{if} & Q<p<\infty.
 \end{array}\right.
\end{equation*}
\end{theorem}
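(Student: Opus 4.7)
The goal is to establish matching upper and lower bounds, interpolating between the known behaviours of the two factors. The essential single-factor input is the Poincar\'e profile of the hyperbolic factor $H$ alone, which by \cite{HumeMackTess} (together with the hypothesis that the conformal dimension $Q$ is attained by a metric with a $1$-Poincar\'e inequality) behaves as $\Lambda^p_H(s)\simeq s^{1-1/Q}$ for $p<Q$, as $s^{1-1/Q}\log^{1/Q}(s)$ for $p=Q$, and as $s^{1-1/p}$ for $Q<p<\infty$. The task is then to understand how this profile combines with a polynomial-growth factor $P$ of degree $d$.

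\textbf{Lower bound.} I would build subgraphs of the form $\Gamma=A\times B$, where $A$ is a ball of radius $R$ in $P$ (so $|A|\simeq R^d$, $h^p(A)\simeq 1/R$) and $B\subset H$ has size $s$ chosen to realize $\Lambda^p_H(s)$ (so $h^p(B)\simeq \Lambda^p_H(s)/s$). Since the $L^p$-Poincar\'e constant of a product is comparable to the minimum of those of the factors, one has
\[
|\Gamma|\, h^p(\Gamma) \simeq R^d s \cdot \min\bigl(1/R,\ \Lambda^p_H(s)/s\bigr),
\]
subject to the budget $R^d s\leq r$. For $p<Q$, taking $R\simeq r^{1/(Q+d)}$ and $s\simeq r^{Q/(Q+d)}$ balances both Poincar\'e constants and yields the lower bound $r^{1-1/(Q+d)}$. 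For $p=Q$, the same balancing with the extra $\log^{1/Q}(s)$ factor produces the predicted logarithmic correction $\log^{1/(Q+d)}(r)$. For $Q<p<\infty$, balancing against $\Lambda^p_H(s)\simeq s^{1-1/p}$ gives $r^{1-1/(p+d)}$.

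\textbf{Upper bound.} Given a subgraph $\Gamma\subset P\times H$ with $|\Gamma|\leq r$ and a test function $f$, I would slice $\Gamma$ along the fibers $\Gamma_x\subset H$ of the projection $\pi_P$. On each fiber, the $L^p$-Poincar\'e inequality for $H$ (available since its extremal metric satisfies a $1$-Poincar\'e inequality) controls the vertical oscillation $\|f|_{\Gamma_x}-f_{\Gamma_x}\|_p$ by $\Lambda^p_H(|\Gamma_x|)$ times $\|\nabla f\|_{L^p(\Gamma_x)}$. The averages $x\mapsto f_{\Gamma_x}$ then define a function on $\pi_P(\Gamma)\subset P$, whose oscillation is controlled using the polynomial-growth structure of $P$ (a Poincar\'e inequality on balls, equivalently the known profile of $P$). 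Combining the two contributions via the triangle inequality and optimising over the slicing scale $R$ reproduces the asymptotics witnessed by the lower bound. I expect that this will be packaged as a general product-type inequality for Poincar\'e profiles established earlier in the paper, into which one plugs the single-factor profiles recalled above.

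\textbf{Main obstacle.} The hardest part is the sharpness of the upper bound at the critical index $p=Q$, where a genuine logarithmic correction appears. This requires a quantitatively careful use of the $1$-Poincar\'e property of the extremal metric on $H$, rather than a crude Cheeger-type bound, and the balancing between the horizontal and vertical contributions must be done with some precision in order to match the lower-bound optimisation exactly. Away from the critical index, once a clean product inequality is in hand, the computation is a direct optimisation against the known profiles.
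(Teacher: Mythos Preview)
Your lower bound is essentially the paper's: Lemma~\ref{lem:product-lower-h} gives $h^p(A\times B)\gtrsim h^p(A)\wedge h^p(B)$, and the optimisation over product subgraphs you describe is exactly Proposition~\ref{prop:product-lower-profile} and Theorem~\ref{thm:hyp-x-nilp-lower}.

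Your upper bound, however, has a genuine gap. You propose to slice $\Gamma\subset P\times H$ into fibres $\Gamma_x=\pi_P^{-1}(x)\cap\Gamma\subset H$ and control $\|f|_{\Gamma_x}-f_{\Gamma_x}\|_p$ by a Poincar\'e inequality for $H$. But the $1$-Poincar\'e inequality in the hypothesis lives on the \emph{boundary} $\bdry H$, not on $H$ or on its subgraphs; and in any case $\Gamma_x$ is an \emph{arbitrary} finite subgraph of $H$, which may be badly disconnected and have $h^p(\Gamma_x)$ as small as you like (even zero). So there is no inequality of the form $\|f-f_{\Gamma_x}\|_p\lesssim C(|\Gamma_x|)\|\nabla f\|_p$ available on the fibres. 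The same obstruction hits the second step: once you form fibre averages $x\mapsto f_{\Gamma_x}$, the resulting function lives on a \emph{weighted} subgraph of $P$ (weights $=|\Gamma_x|$), and relating its weighted Poincar\'e constant back to anything about $\Gamma$ is exactly the difficulty the paper flags at the start of \S\ref{sec:capacityProfile}.

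The paper's route is quite different. It introduces \emph{capacity profiles} $\Xi^{p,\alpha}$ (Definition~\ref{def:p-cap}), which dominate $\Lambda^p$ but, crucially, behave well under weighted projection (Lemma~\ref{lem:wt-cap-projection}, Theorem~\ref{thm:wt-prof-product-vnilp}). The projection used is onto the \emph{hyperbolic} factor $H$, not onto $P$: one pushes the counting measure forward to get a weighted subgraph of $H$ with weight bound $k$ coming from the fibre sizes in $P$, and then bounds the weighted capacity of hyperbolic cones directly (Theorem~\ref{prop:wt-prof-hyp-cone}) via a Lipschitz function pulled back from $\bdry H$. The $1$-Poincar\'e inequality on $\bdry H$ is used only for the \emph{lower} bound; the upper bound needs only Ahlfors regularity of the optimal metric on $\bdry H$. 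Your expectation of a ``general product-type inequality for Poincar\'e profiles'' into which one plugs single-factor profiles is precisely what the paper says does \emph{not} exist in the needed form---the replacement by capacity profiles is the missing idea.
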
 
Theorem \ref{thmIntro:profilesDirectProductLie} also applies to the case where $P$ is a finitely generated virtually nilpotent group as any such group is quasi-isometric to a connected nilpotent Lie group.

The proof of this result uses the metric structure of the boundary at infinity $\bdry H$ of $H$.
Recall that the boundary at infinity $\bdry G$ of a Gromov hyperbolic group $G$ admits a `visual metric' which is \textbf{Ahlfors $Q$-regular} for some $Q \geq 1$: the measure of balls of radius $r$ is comparable to $r^Q$.  The \textbf{conformal dimension} of $G$ is the infimum of values of $Q$ so that $\bdry G$ is quasisymmetric to an Ahlfors $Q$-regular space, and is a quasi-isometric invariant of $G$~\cite{Pan-89-cdim}.  We say the conformal dimension of $G$ is \textbf{attained} if this infimum is a minimum. 
 In certain cases one can find a metric on the boundary of a hyperbolic group which admits a `$1$-Poincar\'e inequality' in the sense of Heinonen and Koskela.  This is the case for (uniform lattices in) rank 1 simple Lie groups $\Isom(\HH_\bbK^m)$, where the conformal dimension is $(m+1)\dim_\R\bbK-2$ as in Theorem \ref{thmIntro:profilesDirectProductLie},  and the isometry groups for a family of Fuchsian buildings studied by Bourdon and Bourdon--Pajot, where the conformal dimension can take a dense set of values in $(1,\infty)$ (cf.\ the discussion in \cite[\S 11]{HumeMackTess}).

Note that Theorem \ref{thmIntro:profilesDirectProductLie} is new even when $p=1$ and $H\times P$ is quasi-isometric to $\text{PSL}(2,\R)\times\R$, or equivalently $\HH^2_\R\times\R$.  In this case, the correct lower bound of $r^{1/2}\log^{1/2}(r)$ was found by Benjamini--Schramm--Tim\'ar \cite[Corollary $3.3$]{BenSchTim-12-separation-graphs}. 

Observe that these computations show that the polynomial growth exponent is monotonous under regular maps. For instance, it follows from Theorem 1.12 that
$\HH^2_\R\times\R$ does not regularly map  into $\HH^n_\R$ for any $n\geq 2$, but this does not yet rule out a regular map from $\HH^3_\R$ to $\HH^2_\R\times \R$. We shall see in Theorem \ref{thmIntro:directProductCE} that the quantity $Q$ is also monotonous under regular maps when the domain satisfies the hypotheses of Theorem \ref{thmIntro:profilesDirectProductLie}.

Theorem \ref{thmIntro:profilesDirectProductLie} is a consequence of a more general statement, which we state as two theorems.  
One of them is general upper bound on the Poincaré profile (Theorem \ref{thm:hyp-times-nilp-upper-bound}), and the other is a lower bound for hyperbolic spaces whose boundaries admit a $1$-Poincar\'e inequality (Theorem \ref{thm:hyp-x-nilp-lower} and Corollary \ref{cor:treexpoly-lower}). 
These bounds give information in other cases too, such as Heintze groups, see \S\ref{sec:qu}.

\subsubsection{Obstructions to regular maps}\label{sec:obstructions}

Below is a general non-embeddability result which cannot be solely deduced from Poincaré profile estimations.

\begin{theorem}[Corollary \ref{cor:prod-no-embed}]\label{thmIntro:directProductCE}
Assume $G_1=H_1\times P_1$ and  $G_2=H_2\times P_2$, where for $i=1, 2:$
\begin{itemize}
\item $H_i$ is a non-elementary finitely generated hyperbolic group of conformal dimension $Q_i \geq 0$, and
\item $P_i$ is a locally compact group with polynomial growth of degree $d_i \geq 0$.
\end{itemize}
 If there exists a regular map $G_1\to G_2$, then $d_1 \leq d_2$. Moreover, if $H_1$ has its conformal dimension $Q_1>1$ attained by a metric admitting a $1$-Poincar\'e inequality, then $Q_1\leq Q_2$.
\end{theorem}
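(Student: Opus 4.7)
The strategy is to exploit the monotonicity of Poincar\'e profiles under regular maps, $\Lambda^p_{G_1}(r)\lesssim \Lambda^p_{G_2}(r)$ for every $p\in[1,\infty]$, pairing the lower bounds on $\Lambda^p_{G_1}$ from Theorem~\ref{thmIntro:profilesDirectProductLie} with the general upper bounds on $\Lambda^p_{G_2}$ from Theorem~\ref{thm:hyp-times-nilp-upper-bound}. Recall these latter estimates take the form $\Lambda^p_{G_2}(r)\lesssim r^{1-1/(Q_2+d_2)}$ for $p\leq Q_2$ and $\Lambda^p_{G_2}(r)\lesssim r^{1-1/(p+d_2)}$ for $p\geq Q_2$, and apply to any hyperbolic $H_2$ without the attainment hypothesis.

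For the first conclusion $d_1\leq d_2$, I compare at any $p>\max(Q_1,Q_2)$. On this range both estimates are pure power laws: Theorem~\ref{thmIntro:profilesDirectProductLie} gives the lower bound $r^{1-1/(p+d_1)}\lesssim \Lambda^p_{G_1}(r)$, while the upper bound on $G_2$ is $r^{1-1/(p+d_2)}$. A $\lesssim$-comparison of such power functions forces $p+d_1\leq p+d_2$, equivalently $d_1\leq d_2$. Note that the attainment/$1$-Poincar\'e hypothesis on $H_1$ is not required here, since the relevant lower bound in this range is already supplied by the polynomial growth factor $P_1$ sitting inside $G_1$.

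For the second conclusion $Q_1\leq Q_2$, the attainment/$1$-Poincar\'e hypothesis on $H_1$ becomes essential, as it is what allows Theorem~\ref{thmIntro:profilesDirectProductLie} to deliver the sharp logarithmic refinement at the critical exponent:
\[ \Lambda^{Q_1}_{G_1}(r)\;\gtrsim\; r^{1-\frac{1}{Q_1+d_1}}\log^{\frac{1}{Q_1+d_1}}(r). \]
Supposing for contradiction $Q_1>Q_2$, the exponent $p=Q_1$ lies strictly above the critical value for $G_2$, so the upper bound there is purely polynomial, $\Lambda^{Q_1}_{G_2}(r)\lesssim r^{1-1/(Q_1+d_2)}$. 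When $d_1=d_2$ the polynomial exponents coincide and the log factor breaks monotonicity outright. In the remaining case $d_1<d_2$, I would couple this log-critical comparison with the monotonicity at $p$ slightly above $Q_2$ -- where the flat lower bound on $G_1$ remains $r^{1-1/(Q_1+d_1)}$ while the upper bound on $G_2$ tends to $r^{1-1/(Q_2+d_2)}$ as $p\searrow Q_2^+$, yielding the complementary inequality $Q_1+d_1\leq Q_2+d_2$ -- and synthesise these two comparisons together with the first conclusion $d_1\leq d_2$ to force $Q_1\leq Q_2$, contradicting the assumption.

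The hardest step is this last synthesis for the conformal dimension bound: a naive pointwise Poincar\'e comparison at a single $p$ only yields the combined inequality $Q_1+d_1\leq Q_2+d_2$, and extracting the sharper separate bound $Q_1\leq Q_2$ hinges on the log factor at $p=Q_1$ being a genuine obstruction rather than a correction that is absorbable into the polynomial gap $d_2-d_1$. The attainment/$1$-Poincar\'e hypothesis on $H_1$ is precisely the mechanism responsible: it ensures that $Q_1$ is a true critical value of the profile $p\mapsto\Lambda^p_{G_1}$, and therefore that the logarithmic correction is sharp and unavoidable, which is what drives the argument through.
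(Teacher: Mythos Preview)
Your argument for $d_1\leq d_2$ is essentially the paper's: compare $\Lambda^p$ for $p>\max(Q_1,Q_2)$ using the tree-times-polynomial lower bound in $G_1$ against the upper bound of Theorem~\ref{thm:hyp-times-nilp-upper-bound} for $G_2$. (Strictly speaking you should not invoke Theorem~\ref{thmIntro:profilesDirectProductLie} for this, since its hypotheses on $H_1$ are not assumed; but as you note, the embedded $3$-regular tree in any non-elementary $H_1$ together with Corollary~\ref{cor:treexpoly-lower} supplies the required lower bound $r^{1-1/(p+d_1)}$ for all $p\geq 1$.)

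Your argument for $Q_1\leq Q_2$, however, does not go through. You correctly observe that profile comparison at any single $p$ only yields $Q_1+d_1\leq Q_2+d_2$, but your proposed fix via the logarithmic factor at $p=Q_1$ fails when $d_1<d_2$: the log correction $\log^{1/(Q_1+d_1)}(r)$ is swamped by the polynomial gap $r^{1/(Q_1+d_1)-1/(Q_1+d_2)}$, which grows like a genuine positive power of $r$. Concretely, take $Q_1=3$, $Q_2=2$, $d_1=0$, $d_2=5$: for every $p\in[1,\infty)$ the lower bound on $\Lambda^p_{G_1}$ from Theorem~\ref{thmIntro:profilesDirectProductLie} is dominated by the upper bound on $\Lambda^p_{G_2}$ from Theorem~\ref{thm:hyp-times-nilp-upper-bound}, so no contradiction arises. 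The paper in fact remarks explicitly (see the discussion after Corollary~\ref{corIntro:BuySch}) that the Poincar\'e profile alone is insufficient for the conformal-dimension monotonicity.

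The actual proof of the `moreover' clause is Theorem~\ref{thm:hypQmonotone}, and it works not with the profile function but with \emph{specific} subgraphs. One takes the radius-$t$ balls $\Gamma_t$ in a hyperbolic cone over $\bdry H_1$, which satisfy $|\Gamma_t|\asymp e^{Q_1 t}$ and $h^1(\Gamma_t)\succeq e^{-t}$; pushes them forward under the regular map; projects to a hyperbolic cone $H'$ over an Ahlfors $Q''$-regular model of $\bdry H_2$ with $Q_2<Q''<Q_1$; and observes that since $P_2$ has polynomial (hence subexponential) growth, the projection fibres have size at most $e^{\delta t}$ for arbitrarily small $\delta$. The weighted capacity bound of Theorem~\ref{prop:wt-prof-hyp-cone} then forces $h^1(\Gamma_t)\preceq e^{-(Q_1-\delta')t/Q''}$ for some small $\delta'$, contradicting $h^1(\Gamma_t)\succeq e^{-t}$ once $Q''<Q_1$. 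The point is that by tracking the scale $t$ rather than just the size $r=e^{Q_1 t}$, one decouples the hyperbolic and polynomial contributions --- something the profile function, which records only dependence on $r$, cannot do.
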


Here is a specialization of the above theorem to the family of connected Lie groups from Proposition \ref{propIntro:thinQIproduct}.
\begin{corollary}\label{corIntro:BuySch} If there is a regular map $\HH^{m_1}_{\mathbb K_1}\times\R^{d_1} \to \HH^{m_2}_{\mathbb K_2}\times\R^{d_2}$, then $(m_1+1)\dim_\R(\mathbb K_1)-2 \leq (m_2+1)\dim_\R(\mathbb K_2)-2$ and $d_1\leq d_2$.
\end{corollary}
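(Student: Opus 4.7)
The plan is to reduce the statement to Theorem~\ref{thmIntro:directProductCE} by identifying each rank-one symmetric space $\HH^{m_i}_{\bbK_i}$ with a non-elementary finitely generated Gromov hyperbolic group, up to quasi-isometry. Since regularity of a map is preserved under pre- and post-composition with quasi-isometries, I would first choose a uniform lattice $\Gamma_i < \Isom(\HH^{m_i}_{\bbK_i})$ (arithmetic lattices exist in every rank-one simple real Lie group), and replace $\R^{d_i}$ by $\Z^{d_i}$. The given regular map then induces a regular map $\Gamma_1 \times \Z^{d_1} \to \Gamma_2 \times \Z^{d_2}$.

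Next I would verify the hypotheses of Theorem~\ref{thmIntro:directProductCE}. Each $\Gamma_i$ is non-elementary hyperbolic because it acts geometrically on $\HH^{m_i}_{\bbK_i}$, and $\Z^{d_i}$ is a locally compact group of polynomial growth of degree exactly $d_i$. The conformal dimension of $\Gamma_i$ equals that of its visual boundary $\bdry \HH^{m_i}_{\bbK_i}$, a sphere equipped with the canonical Carnot--Carathéodory metric; by Pansu this equals $Q_i = (m_i+1)\dim_\R(\bbK_i) - 2$ and is \emph{attained} by that metric. The Carnot metric admits a $1$-Poincaré inequality: for $\bbK=\R$ it reduces to the standard Poincaré inequality on the round sphere $S^{m-1}$, while for $\bbK\in\{\C,\HH,\mathbb{O}\}$ the boundary is quasisymmetric to a Carnot group, and Jerison's Poincaré inequality on such groups (reformulated in the upper gradient language by Heinonen--Koskela) supplies the required bound.

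Applying Theorem~\ref{thmIntro:directProductCE} immediately yields $d_1\leq d_2$, and yields $Q_1\leq Q_2$ provided $Q_1>1$. The only exception is $(m_1,\bbK_1)=(2,\R)$, where $Q_1=1$; but in that case $Q_2\geq 1$ is automatic, since every boundary $\bdry \HH^{m_2}_{\bbK_2}$ with $m_2\geq 2$ is a connected space of positive topological dimension, forcing its conformal dimension to be at least one. The main obstacle therefore lies entirely in the results already cited: the corollary itself is just a translation of Theorem~\ref{thmIntro:directProductCE} via the classical analytic description of rank-one symmetric space boundaries, and no genuinely new computation is needed.
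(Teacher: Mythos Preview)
Your proposal is correct and matches the paper's approach: the corollary is stated there simply as a specialization of Theorem~\ref{thmIntro:directProductCE} to rank-one symmetric spaces, with no separate proof given. Your added detail---passing to uniform lattices so that the $H_i$ are finitely generated, invoking the standard fact that $\bdry\HH^m_\bbK$ attains conformal dimension $(m+1)\dim_\R\bbK-2$ with a $1$-Poincar\'e inequality, and noting that the exceptional case $Q_1=1$ (i.e.\ $\HH^2_\R$) is harmless here because any target $\HH^{m_2}_{\bbK_2}$ with $m_2\geq 2$ has $Q_2\geq 1$---fills in exactly what the paper leaves implicit.
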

This corollary answers \cite[Question 5.4]{BenSchTim-12-separation-graphs}, which asked for an obstruction to the existence of a regular map $\HH^3_\R\to\HH^2_\R\times\R$, indeed we show that there is no regular map $\HH^3_\R\to\HH^2_\R\times\R^d$ for any $d$ using the monotonicity of the conformal dimension of the hyperbolic factor.

There are several further points to note about Theorems~\ref{thmIntro:profilesDirectProductLie}, \ref{thmIntro:directProductCE} and Corollary~\ref{corIntro:BuySch}:

\begin{remark} \ 
	\begin{enumerate}
		\item Theorems \ref{thmIntro:profilesDirectProductLie} and \ref{thmIntro:directProductCE} (and the more general upper bound in Theorem \ref{thm:hyp-times-nilp-upper-bound}) are new even in the case of hyperbolic groups (i.e.\ $d=0$, respectively $d_1=d_2=0$), since the technical hypothesis about ``equivariant conformal dimension'' from our previous paper is no longer needed \cite[Corollary 12.6]{HumeMackTess}. 
However for a different class of maps including coarse embeddings, Pansu earlier ruled out maps $H_1\to H_2$ unless $Q_1\leq Q_2$ for groups satisfying the second part of Theorem \ref{thmIntro:directProductCE} \cite[Corollary 1]{Pan-16-coarse-conformal}.
		
		\item One particular case of Theorem~\ref{thmIntro:directProductCE} is that there is no regular map from $F_2\times \Z$ (i.e.\ the product of a $4$-regular tree and a line) to any hyperbolic group.
		\item The fact that $H_1$ is non-elementary is important, since there certainly are coarse embeddings $\R^d \to \HH_\R^{d+1}$ or $\R^d \to \HH_\R^d\times \R$, etc., using horospheres.
		\item The monotonicity of $d$ in Theorem \ref{thmIntro:directProductCE} does not follow from the separation profile alone, and indeed to deduce that $d_1\leq d_2$ above, we will need to consider $L^p$-Poincar\'e profiles with $p>Q_1,Q_2$. 
		\item As already mentioned, the Poincaré profile is not enough to imply the monotonicity of the conformal dimension of the hyperbolic factor in Theorem \ref{thmIntro:directProductCE} and Corollary \ref{corIntro:BuySch}. This is obtained by a different argument based on the same techniques in  \S \ref{sec:beyondProfile}.

		\item For coarse embeddings, obstructions in the case where $d_1>0$ and $d_2=0$ are obtained in \cite{HumeSisto_coarsehyp}. Apart from this case, we believe that the monotonicity of the growth exponent of the polynomial factor is new.

		\item It is natural to ask whether the last statement of Theorem~\ref{thmIntro:directProductCE} holds for $Q_1=1$, that is, can one show there is no regular map $\HH_\R^2 \to F_2 \times \R^n$ for any $n \geq 0$.  We answer this question using different methods in forthcoming work~\cite{HumeMackTess-genasdim}.
	\end{enumerate}
\end{remark}

\subsubsection{Comparison with  `dimension-based' obstructions}\label{sec:comparisonDimResults}
It is worth comparing our results with those obtainable using Gromov's asymptotic dimension and its variants.  
If there is a coarse embedding $X \to Y$ then $\asdim X \leq \asdim Y$, and in fact the same is true for regular maps \cite[\S 6]{BenSchTim-12-separation-graphs}.
Asymptotic dimension does not rule out maps $\HH_\R^k \to \HH_\R^{k-1}\times \R^d$, since for $d \geq 1$ we have $\asdim \HH_\R^k = k \leq k-1+d = \asdim (\HH_\R^{k-1}\times\R^d)$. 
Buyalo--Schroeder \cite{BuSh-07-hyp-tree-bound-below} used a variation on asymptotic dimension to show that if there is a quasi-isometric embedding $\HH^{m_1}_{\bbK_1} \times \R^{d_1} \to \HH^{m_2}_{\bbK_2} \times \R^{d_2}$ we must have monotonicity of the asymptotic dimension of the hyperbolic factors, that is $m_1 \dim_\R (\bbK_1) \leq m_2 \dim_\R(\bbK_2)$.
Thus they can rule out quasi-isometric embeddings $\HH_\R^k \to \HH_\R^{k-1}\times \R^d$, however their variation does not behave well with respect to coarse or regular maps.

Corollary \ref{corIntro:BuySch} applies to regular maps, shows monotonicity of the growth exponent of the Euclidean factor, and when the hyperbolic factor in the domain has large conformal dimension compared to its asymptotic dimension we get bounds stronger than those of Buyalo--Schroeder.
For example, if there is a quasi-isometric embedding $\HH^{2}_\HH \to\HH^m_\R\times \R^d$ then Buyalo--Schroeder get $m \geq 8$, while Corollary \ref{corIntro:BuySch} gives $m\geq 11$.
On the other hand, when the hyperbolic factor in the codomain has large conformal dimension but small asymptotic dimension, Buyalo--Schroeder's bound may be stronger.  
For instance, if there is a quasi-isometric embedding $\HH^{11}_\R\to\HH^m_\C\times \R^d$ then Corollary \ref{corIntro:BuySch} gives $m\geq 5$, while Buyalo--Schroeder can conclude that $m\geq 6$.

\subsubsection{Further results and applications}\label{sec:furtherApp}
Another natural class of groups which satisfies the thick/thin dichotomy are Baumslag-Solitar groups.

\begin{theorem}[\S \ref{sec:lbproducts}]\label{thmIntro:BS}
 For all $p\in[1,\infty)$
\[ 
 \Lambda^p_{\BS(m,n)}(r) \simeq_p \left\{
		 \begin{array}{lcl}
		 r^{\frac12} & \textup{if} & \abs{m}=\abs{n}=1,
		 \\
		 r^{1-\frac{1}{p+1}} & \textup{if} & \abs{m}=\abs{n}\geq 2,
		 \\
		 r/\log(r) & \textup{if} & \abs{m}\neq\abs{n}.
		 \end{array}\right.
\]
\end{theorem}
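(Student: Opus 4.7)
The proof divides into three cases matching the three regimes of the statement.

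\emph{Polynomial case $|m|=|n|=1$.} Both $\BS(1,1)\cong\Z^2$ and the Klein bottle group $\BS(1,-1)$ are virtually $\Z^2$, hence standard spaces of polynomial growth of degree $d=2$. The estimate $\Lambda^p\simeq r^{1/2}$ for every $p\in[1,\infty)$ then follows from the polynomial growth computation in \cite{HumeMackTess}, or equivalently as the degenerate ($H$ trivial) case of Theorem~\ref{thmIntro:profilesDirectProductLie}.

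\emph{Unimodular case $|m|=|n|\geq 2$.} The group $\BS(n,n)$ is a central extension of the virtually free group $\Z/n\ast\Z$ by $\langle a^n\rangle\cong\Z$; this extension splits up to finite index (equivalently, $\BS(n,n)$ acts properly cocompactly on $T_{2n}\times\R$, via its Bass--Serre tree and the coset space of $\langle a^n\rangle$), so $\BS(n,n)$ is quasi-isometric to $F_2\times\Z$. The same conclusion holds for $\BS(n,-n)$. Applying Theorem~\ref{thmIntro:profilesDirectProductLie} with $H=F_2$ (a non-abelian free group, $Q=0$) and $P=\Z$ (polynomial growth of degree $d=1$), and observing that $p>Q=0$ throughout $[1,\infty)$, yields $\Lambda^p\simeq r^{1-1/(p+1)}$.

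\emph{Non-unimodular case $|m|\neq|n|$.} The upper bound $\Lambda^p_{\BS(m,n)}(r)\lesssim r/\log r$ follows because $\BS(m,n)$ acts geometrically on a $2$-dimensional CAT(0) complex (its treebolic complex) and hence has finite Assouad--Nagata dimension; apply \cite[\S 9]{HumeMackTess}. For the matching lower bound, the plan is to construct a quasi-isometric embedding of the Diestel--Leader graph $\DL(2,2)$ into $\BS(m,n)$, then combine the monotonicity of $L^p$-Poincar\'e profiles under regular maps with the already-established estimate $\Lambda^p_{\DL(2,2)}(r)\simeq r/\log r$ (Theorem~\ref{DLlowerbound}). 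Concretely, one exploits the nontrivial height homomorphism $\BS(m,n)\to\R$ sending $a\mapsto 0$ and $t\mapsto\log|n/m|$, together with the Bass--Serre tree action, to realise $\BS(m,n)$ (up to quasi-isometry) as a cocompact piece of a horocyclic product of two regular trees of valences $|m|+1$ and $|n|+1$, inside which a $\DL(2,2)$ subgraph is easily located.

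The main obstacle is the Case 3 lower bound: constructing the quasi-isometric embedding $\DL(2,2)\hookrightarrow\BS(m,n)$ in the non-unimodular regime and verifying its bi-Lipschitz properties along both tree directions. Once that is in place, all other inputs---the polynomial growth profile, Theorem~\ref{thmIntro:profilesDirectProductLie}, the profile estimates for $\DL(2,2)$, and the monotonicity of $\Lambda^p$ under regular maps---are already available, and the remainder is bookkeeping.
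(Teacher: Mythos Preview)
Cases 1 and 2 are fine and match the paper. In Case 3 your upper bound via finite Assouad--Nagata dimension is correct (the paper obtains it either this way via Corollary~\ref{cor:general-upper-bound}, or alternatively via a proper map $\BS(m,n)\to\Aut(T)\times\Aff(\R)$ from \cite{GalJanusz}).

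The lower bound in Case 3, however, contains a genuine error. You claim that $\BS(m,n)$ is, up to quasi-isometry, a cocompact piece of a horocyclic product of two regular trees of valences $|m|+1$ and $|n|+1$. That horocyclic product is precisely $\DL(|m|,|n|)$, and for $|m|\neq|n|$ this is famously not quasi-isometric to any finitely generated group (Eskin--Fisher--Whyte), so the claim cannot hold. The correct geometry of $\BS(1,n)$ is the horocyclic product of an $(n+1)$-regular tree with a copy of $\HH_\R^2$, not with another tree: the normal subgroup $\langle\!\langle a\rangle\!\rangle\cong\Z[1/n]$ contributes a continuous direction, not a second tree. Consequently, locating a $\DL(2,2)$ inside $\BS(m,n)$ is not a matter of bookkeeping; one must produce a Busemann-compatible bi-Lipschitz embedding of a regular tree into $\HH_\R^2$ (this is the content of Proposition~\ref{prop:existenceofbctrees}) and then invoke Proposition~\ref{prop:BusemanProducts} to push $\DL(2,2)$ into the horocyclic product. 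For the remaining groups $\BS(m,n)$ with $|m|,|n|\geq 2$ and $|m|\neq|n|$, the paper uses Whyte's theorem that these are all mutually quasi-isometric, together with the observation that $\BS(1,2)=\langle a^2,t\rangle$ sits undistorted inside $\BS(2,4)$. Your proposed horocyclic-product-of-trees shortcut bypasses exactly the step that carries the work.
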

The lower bound in the case $\abs{m}\neq\abs{n}$ of Theorem \ref{thmIntro:BS} is proved by showing that $\BS(m,n)$ admits a quasi-isometrically embedded copy of $\DL(2,2)$. Theorem \ref{thmIntro:BS} implies that a Baumslag--Solitar group regularly embeds into some hyperbolic group if and only if it is virtually abelian, generalising results for coarse embeddings in \cite{HumeSisto_coarsehyp}.

Next, we observe that the $L^1$-Poincar\'e profile (i.e., the separation profile) distinguishes the non-compact  Thurston geometries, except of course for the quasi-isometric $\HH^2_\R\times\R$ and $\widetilde{\PSL(2,\R)}$:
{\renewcommand{\arraystretch}{1.5}\begin{table}[h]
\begin{tabular}{c||c|c|c|c|c|c}
	$X$ & $\mathbb{S}^2\times\mathbb{R}$ & $\mathbb{H}^3_\R$ & $\mathbb{H}^2_\R\times\mathbb{R}$,\ $\widetilde{\PSL(2,\R)}$  &  $\mathbb{R}^3$ & $\NIL$ & $\SOL$ \\ \hline
	$\Lambda^1_X(r)$ & $1$ & $r^{\frac12}$& $r^{\frac12}\log^{\frac12}(r)$ & $r^{\frac23}$ & $r^{\frac34}$ & $r/\log(r)$
\end{tabular}
\end{table}}

The next result is a direct consequence of the fact that spaces admitting regular maps into analytically thin spaces are themselves analytically thin.

\begin{corollary}\label{corIntro:dichotobstruction} Let $H$ be a locally compact group which contains a closed subgroup isomorphic to any of the following:
\begin{itemize} 
 \item a wreath product $K\wr L$ where $K$ is nontrivial and $L$ is infinite finitely generated;
 \item a Baumslag--Solitar group $\BS(m,n)=\langle a,t\mid ta^mt^{-1}=a^n\rangle$ with $|m|\neq|n|$;
 \item a solvable group of exponential growth;
 \item a uniform lattice in a semisimple Lie group of real rank $\geq 2$.
 \end{itemize}
 Then there is no regular map $H \to X\times N$ whenever $X$ is a bounded degree hyperbolic graph and $N$ a nilpotent group.
\end{corollary}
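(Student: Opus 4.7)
The core strategy is to combine monotonicity of the $L^1$-Poincar\'e profile under regular maps with a clean dichotomy: each source subgroup $G$ in the list will be shown to be analytically thick, while the target $X\times N$ will be shown to be analytically thin. Since $G$ is a compactly generated closed subgroup of $H$, the inclusion $G\hookrightarrow H$ is a coarse embedding (as recalled in the opening of the introduction), hence a regular map. Composition with a hypothetical regular map $H\to X\times N$ would yield a regular map $G\to X\times N$, and by monotonicity of $\Lambda^1$ under regular maps it suffices to exhibit some $a<1$ with
\[
 \frac{r}{\log r}\ \lesssim\ \Lambda^1_G(r)\ \lesssim\ \Lambda^1_{X\times N}(r)\ \lesssim\ r^a,
\]
which is a contradiction for large $r$.

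For the upper bound on the target, I would combine two classical embedding theorems. The Bonk--Schramm theorem quasi-isometrically embeds the bounded-degree Gromov hyperbolic graph $X$ into some real hyperbolic space $\HH_\R^n$, while Assouad's theorem coarsely embeds the doubling nilpotent group $N$ (which has polynomial growth of some degree $d$) into some Euclidean space $\R^d$. Taking products yields a coarse embedding $X\times N\hookrightarrow \HH_\R^n\times\R^d$. The target is algebraically thin (a rank-one semisimple factor commuting with a polynomial growth factor), so by Theorem \ref{thmIntro:unimodularDicho} it is analytically thin; in fact Theorem \ref{thmIntro:profilesDirectProductLie} gives the explicit bound $\Lambda^1_{\HH_\R^n\times\R^d}(r)\simeq r^{1-1/(n-1+d)}$ when $n\geq 2$, and $\lesssim r^{1-1/d}$ when $n=1$. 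Monotonicity under the coarse embedding transfers this polynomial upper bound to $\Lambda^1_{X\times N}$.

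For the lower bound on each source class, I would verify analytic thickness case by case. The Baumslag--Solitar case with $|m|\neq|n|$ is the third line of Theorem \ref{thmIntro:BS}. A uniform lattice in a semisimple Lie group $S$ of real rank at least $2$ is quasi-isometric to $S$, whose radical is trivial but whose Levi factor has rank $\geq 2$, so $S$ is algebraically thick and thus analytically thick by Theorem \ref{thmIntro:unimodularDicho}. For a wreath product $K\wr L$ with $K$ nontrivial and $L$ infinite finitely generated, if $L$ contains an element of infinite order then $K\wr L$ contains the lamplighter $\Z_2\wr\Z$, for which $\Lambda^1\simeq r/\log r$ by Benjamini--Schramm--Tim\'ar, and the torsion case is reduced via the general relation between exponential F\o{}lner function and separation profile. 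Finally, a finitely generated solvable group of exponential growth is either polycyclic (covered directly by Corollary \ref{corIntro:polycyclic}) or, by the structure theory of finitely generated solvable groups, contains a (closed) subgroup commensurable with a lamplighter $\Z_p\wr\Z$ or with some $\BS(1,n)$, $n\geq 2$, both already established as analytically thick.

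The main obstacle I anticipate is the solvable-of-exponential-growth case: the paper's main Lie-theoretic dichotomy is stated for connected unimodular Lie groups, and Corollary \ref{corIntro:polycyclic} handles only the polycyclic discrete setting, so one needs a separate structural step to reduce an arbitrary finitely generated (or compactly generated locally compact) solvable group of exponential growth to a subgroup already treated. Once this containment is in hand the three-term chain above closes the argument in a single line, using only the regular-map monotonicity of $\Lambda^1$ and the coarse embeddings described in the previous paragraph.
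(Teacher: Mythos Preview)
Your overall architecture is exactly the paper's: show each listed subgroup fails to be analytically thin, show $X\times N$ is analytically thin via Bonk--Schramm plus Assouad into some $\HH_\R^n\times\R^d$, and conclude by monotonicity of $\Lambda^1$ under regular maps. The Baumslag--Solitar and higher-rank lattice cases are handled just as the paper does, and your target-side argument is the one the paper sketches in \S\ref{sectionIntro:nonunimodular}.

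There is, however, a genuine gap in the solvable case. Your proposed dichotomy ``either polycyclic, or contains a subgroup commensurable with $\Z_p\wr\Z$ or $\BS(1,n)$'' is not a theorem: no such structural result is known for arbitrary finitely generated solvable groups of exponential growth, and there is no reason to expect it to hold. The paper does \emph{not} attempt any such reduction; instead it simply invokes the result of Le~Coz--Gournay \cite{CozGour}, which shows that for any finitely generated solvable group $G$ of exponential growth and any $\varepsilon>0$ one has $\Lambda^1_G(r_n)\gtrsim r_n^{1-\varepsilon}$ along an unbounded sequence. This is weaker than analytic thickness but is precisely what is needed: it rules out $\Lambda^1_G(r)\lesssim r^a$ for any $a<1$, and the contradiction with the analytically thin target follows.

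A smaller point: your wreath-product argument splits into cases on whether $L$ has an element of infinite order, and the torsion case is left to an unspecified ``relation between exponential F\o{}lner function and separation profile''. The paper avoids this by showing (Theorem~\ref{thm:embSgraph}) that $\DL(2,2)$ quasi-isometrically embeds into \emph{every} such $K\wr L$, using only a bi-infinite geodesic in $L$; combined with Theorem~\ref{DLlowerbound} this gives $\Lambda^1_{K\wr L}(r)\gtrsim r/\log r$ uniformly, with no case distinction.
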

Our results prove that the wreath products, Baumslag-Solitar groups and lattices mentioned above are all analytically thick. Le Coz-Gournay prove that solvable groups of exponential growth are not analytically thin  \cite{CozGour}.

Obstructions of coarse embeddings of the groups $H$ considered in Corollary \ref{corIntro:dichotobstruction} into hyperbolic groups were established in \cite{HumeSisto_coarsehyp}, but as far as we are aware the stronger result of Corollary \ref{corIntro:dichotobstruction} is new even for quasi-isometric embeddings.

\

The monotonicity of the dimension of the Euclidean factor also has applications, for instance it provides a coarse geometric proof of a result in Lorentzian geometry originally due to Zeghib \cite[Theorem 4.2(i)]{Zeg_idcomp}.

\begin{corollary}\label{corIntro:Lorentz}
Let $G$ be the identity component of the isometry group of a compact Lorentz manifold. If $G$ has a closed subgroup $H$ locally isomorphic to $\PSL(2,\R)$, then it has finite center. Moreover, if it has a closed subgroup locally isomorphic to  $\PSL(2,\R) \times\R$, then the abelian factor is compact. 
\end{corollary}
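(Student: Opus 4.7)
The plan is to apply the monotonicity of the Euclidean factor under regular maps (Corollary \ref{corIntro:BuySch}) to a putative closed subgroup of $G=\Isom(M)^0$.

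First, both conclusions reduce to a single claim: $G$ contains no closed subgroup quasi-isometric to $\HH_\R^2\times\R$. Indeed, a connected Lie group locally isomorphic to $\PSL(2,\R)$ has infinite center precisely when it is the universal cover $\widetilde{\PSL(2,\R)}$, which is quasi-isometric to $\HH_\R^2\times\R$. Similarly, a closed subgroup of $G$ locally isomorphic to $\PSL(2,\R)\times\R$ has non-compact abelian factor precisely when it contains, after quotienting $\widetilde{\PSL(2,\R)}\times\R$ by any discrete central subgroup, a non-compact $\R$ direction, making it quasi-isometric to $\HH_\R^2\times\R$ (or a larger product containing it). Since closed connected Lie subgroups of a connected Lie group are undistorted, hence coarsely embedded, such an $H$ would produce a coarse embedding $\HH_\R^2\times\R\hookrightarrow G$.

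Next, I would appeal to structural results on identity components of isometry groups of compact Lorentz manifolds (Adams--Stuck, Zeghib): $G$ is a connected unimodular Lie group (preserving the Lorentz volume form on compact $M$), and its Lie algebra decomposes as a sum of a compact part and a small non-compact part. In particular, $G$ is algebraically thin in the sense of Definition \ref{defn:algthickthin}, so by Proposition \ref{propIntro:thinQIproduct} it is quasi-isometric to $P\times\HH_\bbK^m$ for some polynomial-growth connected Lie group $P$ of degree $d\geq 0$ and some rank-one symmetric-space factor. The crucial Lorentzian input is that, in the cases under consideration, this structural classification forces $d=0$, so that $G$ is quasi-isometric to $\HH_\bbK^m$ itself. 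Composing with the coarse embedding from the previous step yields $\HH_\R^2\times\R\hookrightarrow\HH_\bbK^m$, which contradicts Corollary \ref{corIntro:BuySch} applied with $d_1=1$ and $d_2=0$, completing the proof.

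The main obstacle lies in establishing the sharp structural bound $d=0$ on the polynomial-growth factor of the quasi-isometry type of $G$ in the presence of a local $\PSL(2,\R)$ factor. This bound ultimately rests on algebraic facts about isometry groups of compact Lorentz manifolds, namely the constrained structure of their radicals, which lie outside the coarse-geometric framework of this paper. The novelty of the argument is to package this structural information efficiently and convert it into a contradiction through the monotonicity principle of Corollary \ref{corIntro:BuySch}, in contrast with Zeghib's original dynamical approach.
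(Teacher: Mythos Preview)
Your reduction step is essentially the same as the paper's: both arguments show it suffices to rule out a coarse embedding of $\HH_\R^2\times\R$ into $G$. (One small correction: closed connected subgroups of Lie groups are \emph{not} in general undistorted---think of $\R$ sitting exponentially distorted inside the Heisenberg group---but closed subgroups are always coarsely embedded, which is all you need.)

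The divergence comes at the next step, and your route has a genuine gap. You propose to invoke the Adams--Stuck/Zeghib structural classification of $\mathfrak{g}$, conclude that $G$ is algebraically thin, apply Proposition~\ref{propIntro:thinQIproduct} to get $G$ quasi-isometric to $P\times\HH_\bbK^m$, and then assert that the Lorentzian input forces the polynomial-growth degree $d$ of $P$ to vanish. But that last assertion is exactly what you flag as ``the main obstacle'', and it is not established. The structural classification allows nilpotent (e.g.\ Heisenberg-type) and abelian pieces in the radical of $\mathfrak{g}$, so there is no evident reason the polynomial factor should be trivial; extracting $d=0$ from the classification would require a separate Lorentzian argument, which defeats the purpose of a coarse-geometric proof.

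The paper sidesteps this entirely. Instead of analysing the structure of $G$, it uses a single black-box input: Gromov's observation that for $G$ the identity component of the isometry group of a compact Lorentz manifold, $G$ coarsely embeds into some real hyperbolic space $\HH_\R^n$ (see \cite{Gromov-Rigid,Frances}). Composing with the inclusion of $H$ gives a coarse embedding $\HH_\R^2\times\R\to\HH_\R^n$, and now Corollary~\ref{corIntro:BuySch} (or Theorem~\ref{thmIntro:directProductCE}, or already \cite{HumeSisto_coarsehyp}) applies directly with $d_1=1$, $d_2=0$. No structure theory of $G$ beyond Gromov's coarse embedding is needed.
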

The proof goes as follows: by a fundamental observation of Gromov, $G$ coarsely embeds into some real hyperbolic space (cf.\ \cite{Gromov-Rigid,Frances}). If $H$ had infinite center, then it would be quasi-isometric to $\HH_\R^2\times \R$. So we invoke Corollary \ref{thmIntro:directProductCE} (or  \cite{HumeSisto_coarsehyp}) which implies that $\HH_\R^2\times\R$ does not coarsely embed into any real hyperbolic space. In the second case, we  similarly argue that $G$ would otherwise contain a closed subgroup quasi-isometric to 
$\HH_\R^2\times\R$.

\subsection{Plan of the paper}
\S\ref{sec:AlgebraicDich} is dedicated to all the Lie theoretic results that are needed in the paper. In particular, Theorem \ref{thm:reduc} proves that any connected Lie group is commable to a group of the form (real-triangulable)$\rtimes$(linear semisimple), and that this reduction preserves all the properties that are relevant to us. In \S \ref{sec:nondistotedsubg}, we prove that subgroups isomorphic to $\SOL_a$ and $\Osc$ are always closed and non-distorted. Finally, and most importantly, \S \ref{sec:AlgThick} is dedicated to the proof of Proposition \ref{prop:algThick}. 

 In \S \ref{sec:DL}, we show that $\DL(2,2)$ quasi-isometrically embeds into various groups. In \S \ref{sec:Poincare n/logn}, we prove that the Poincar\'e profiles of $\DL(2,2)$  and $\Osc$ are $\gtrsim r/\log r$; in both cases the proof relies on curve counting arguments. 

In \S \ref{sec:capacityProfile} and \S \ref{sec:product-spaces}, we study direct products of hyperbolic spaces with locally compact groups of polynomial growth. 
Complete Poincar\'e profile calculations for the individual factors appear in our previous work \cite{HumeMackTess} and many of these techniques are also needed to consider products. 
The lower bound follows fairly quickly from our previous work on considering product subgraphs, but the upper bound is much more challenging as we have to find good functions on arbitrary subgraphs of $H\times P$.  This entails developing a theory of `capacity profiles' of weighted graphs arising from projections onto $H$, and a general upper bound formula for products which may be useful in other contexts (Theorem \ref{thm:wt-prof-product-vnilp}); it also results in a connection between conformal dimension and hyperbolic cones (Theorem~\ref{thmIntro:Ahlreg}).
The last part of the section, \S \ref{sec:beyondProfile} is dedicated to the proof of our non-embeddability result Theorem \ref{thmIntro:directProductCE}, which is not a direct consequence of our calculations of Poincar\'e profiles but is based on similar ideas. 
In \S \ref{sec:concludingsteps}, we end the proofs of Theorems \ref{thmIntro:alg thick Implies an thick}, 
\ref{thmIntro:unimodularDicho}, \ref{thmIntro:geomDichoThick}, \ref{thmIntro:geomDichoThin}, and Corollary \ref{corIntro:polycyclic}.
Finally in \S\ref{sec:qu} we raise some open questions.

\subsection{Acknowledgments}
We would like to thanks Charles Frances for communicating Corollary~\ref{corIntro:Lorentz} and its proof to us, Marc Bourdon for comments on a previous version of the paper. We are especially indebted to Yves Cornulier for greatly helping us with the Lie-theoretic aspects. 
We also thank two anonymous referees for their corrections and very helpful  suggestions, particularly regarding the results of \S\ref{sec:AlgebraicDich}.

\section{Lie theoretic results}\label{sec:AlgebraicDich}
\subsection{NC-groups and algebraically thin groups}\label{ssec:NC-groups}
In this subsection we elaborate on the Lie theoretic definition of algebraically thin groups. 
We start by recalling the notion of weight used in the definition of NC-groups. We refer to \cite[\S 1.2]{Varo96} for more details. 
We let $\mathfrak{r}$ be a solvable real Lie algebra. 
Denote by $\ad_\C$ the adjoint action of $\mathfrak{r}$ on $\mathfrak{r}_\C:=\mathfrak{r}\otimes \C$.
A \textbf{root} $\lambda:\mathfrak{r}\to \C$ is a Lie algebra morphism such that \[\bigcap_{y\in \mathfrak{r}}\ker(\ad_\C y -\lambda(y))\neq 0.\] 
A \textbf{weight} is the real part of a root. Note that roots may be viewed as elements of $\Hom(\mathfrak{r}_{\mathrm{ab}},\C)$, where $\mathfrak{r}_{\mathrm{ab}}$ is the abelianization of $\mathfrak{r}$, and that weights are elements of the dual real vector space $\mathfrak{r}_{\mathrm{ab}}^*$ of $\mathfrak{r}_{\mathrm{ab}}$. 
We now extend the usual notion of rank of a semisimple Lie group to arbitrary connected Lie groups, as suggested by a referee.
\begin{definition}\label{def:total real rank}
	The \textbf{$\R$-rank} of a connected solvable Lie algebra $\mathfrak{r}$ is the dimension of the subspace of $\mathfrak{r}_{\mathrm{ab}}^*$ spanned by the weights. 
The $\R$-rank of a connected Lie algebra is the sum of the rank of its semisimple part, and of its solvable radical. Finally the $\R$-rank of a connected Lie group is the $\R$-rank of its Lie algebra.
\end{definition}

\begin{remark}\label{rem:rank0}
	Note that a connected Lie group has real $\R$-rank $0$ if and only if it has polynomial growth  (see for instance \cite{Gui-73-crois-poly}). 
\end{remark}

Our definition of algebraically thin (Definition~\ref{def:algthickthin}) uses the following definition of Varopoulos.

\begin{definition}[{\cite[\S 1.2]{Varo96}}] \label{def:NC}
A solvable Lie algebra has {\bf Property C} if $0$ is in the convex hull of
		non-zero weights; else it has {\bf Property  NC}.  A solvable connected Lie group has Property C (resp.\ NC) if its Lie algebra has C (resp.\ NC). 
\end{definition}
We now show the following trichotomy stated in the introduction.

\begin{proposition}\label{prop:algthinDescrip}
Let $G$ be an algebraically thin connected Lie group with solvable radical $R$ and Levi factor $S$. Then exactly one of the following holds: 
\begin{itemize}
		\item[\textrm{(a)}] $G$ has polynomial growth;
		\item[\textrm{(b)}] $S$ has $\R$-rank $1$, $[S_{\mathrm{nc}},R]=1$ and $R$ has polynomial growth; or
		\item[\textrm{(c)}] $S$ is compact and $R$ is an NC-group with $\R$-rank $1$.
	\end{itemize}
	Moreover $G$ is unimodular if and only if we are in cases (a) or (b).
\end{proposition}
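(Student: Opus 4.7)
The approach is to split into cases according to how the $\R$-rank of $G$ is distributed between the Levi factor and the radical. By Definition \ref{def:total real rank} the $\R$-rank of $G$ equals $\R\text{-rank}(S) + \R\text{-rank}(R)$, and algebraic thinness bounds this sum by $1$. Using Remark \ref{rem:rank0}, which identifies $\R$-rank zero with polynomial growth, together with the classical fact that a connected semisimple Lie group has polynomial growth if and only if it is compact, the three possibilities for the pair $(\R\text{-rank}(S), \R\text{-rank}(R)) \in \{(0,0),(1,0),(0,1)\}$ match one-to-one with the alternatives (a), (b), (c); the extra conditions $[S_{\mathrm{nc}},R]=1$ in (b) and NC for $R$ in (c) come directly from the definition of algebraically thin (and hold trivially in the other cases because a solvable Lie algebra of polynomial growth has only the zero weight). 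Mutual exclusivity is then immediate: (a) versus (b) differ in the $\R$-rank of $S$, (a) versus (c) differ in the growth of $R$, and (b) versus (c) differ in the $\R$-rank of $S$.

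For the unimodularity assertion, case (a) is automatic since polynomial growth implies unimodularity. For case (b), the plan is to use the short exact sequence $0 \to \mathfrak{r} \to \mathfrak{g} \to \mathfrak{s} \to 0$ of $G$-modules and to compute $|\det \Ad_G(g)|$ factor by factor. For $g \in R$, since $\mathfrak{r}$ is an ideal, $\Ad_G(g)$ descends to the identity on $\mathfrak{g}/\mathfrak{r}$, reducing the computation to $|\det \Ad_R(g)| = 1$, which holds because $R$ has polynomial growth and is therefore unimodular. For $g \in S$, the induced action on $\mathfrak{g}/\mathfrak{r} \cong \mathfrak{s}$ is the usual adjoint of the semisimple group $S$ and has determinant $1$; on $\mathfrak{r}$ the non-compact factor $S_{\mathrm{nc}}$ acts trivially by the hypothesis $[S_{\mathrm{nc}},R]=1$, while $S_c$ acts through a compact connected subgroup of $\mathrm{GL}(\mathfrak{r})$, so its determinant is forced to be $1$.

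For case (c), the strategy is to show that the radical $R$ is already non-unimodular, which via the same factorisation propagates non-unimodularity to $G$. Since $R$ has $\R$-rank $1$ there is at least one non-zero weight. Unimodularity of $R$ is equivalent to $\mathrm{tr}(\ad Y) = 0$ for all $Y \in \mathfrak{r}$; by Lie's theorem applied to the solvable algebra $\ad(\mathfrak{r})$ acting on $\mathfrak{r}_{\C}$, the complex trace equals $\sum_i \mu_i(Y)$ with the $\mu_i$ the roots counted with multiplicity, so the real trace is $\sum_i \lambda_i(Y)$ for the corresponding weights $\lambda_i$. If this sum vanished identically, then after discarding the zero weights the remaining non-zero ones would sum to zero in $\mathfrak{r}_{\mathrm{ab}}^*$, displaying $0$ as a convex combination of non-zero weights and contradicting NC. The main technical point of the whole argument is the careful bookkeeping of determinants of $\Ad_G$ on the successive quotients and the passage from the group-level condition $[S_{\mathrm{nc}},R]=1$ to the corresponding triviality of the adjoint action on $\mathfrak{r}$; the trace-to-weight identification is standard but benefits from an explicit reference to Varopoulos's set-up.
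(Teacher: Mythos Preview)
Your trichotomy argument matches the paper's exactly: both split on how the $\R$-rank $\leq 1$ is distributed between $S$ and $R$, and invoke Remark~\ref{rem:rank0} to translate rank zero into polynomial growth.

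For the unimodularity assertion, your treatment of cases (a) and (b) is more explicit than the paper's (which simply calls them ``obvious''), but is correct as written. In case (c) you take a genuinely different route. The paper invokes Lemma~\ref{lem:NC} to obtain an element of $R$ contracting the exponential radical $E$, observes that such a contraction cannot preserve the Haar measure of $E$, and then uses the extension $1\to E\to R\to R/E\to 1$ with $R/E$ unimodular. You instead argue purely at the Lie-algebra level: unimodularity of $R$ forces $\sum_i \lambda_i = 0$ as a functional (the trace-equals-sum-of-weights identity via Lie's theorem), and since the non-zero weights then sum to zero with positive multiplicities, their normalised sum exhibits $0$ as a convex combination of non-zero weights, contradicting NC. Your argument is self-contained and avoids the forward reference to Lemma~\ref{lem:NC} (which itself points to \cite{CorTesIHES}); the paper's argument is more geometric and makes the mechanism of non-unimodularity (a contracting element) explicit. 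Both are short and valid.
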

\begin{proof}
Recall that an algebraically thin connected Lie group satisfies  $[S_{\mathrm{nc}},R]=1$, $R$ is an NC-group, and its $\R$-rank is at most $1$.  Assume that the rank of $S$ is $1$, then the rank of $R$ is $0$, and we are in case (b) by Remark \ref{rem:rank0}.
 If the rank of $G$ is zero, then again Remark \ref{rem:rank0} implies that we are in case (a). Finally, if the rank of $S$ is zero and the rank of $R$ is one, we have case (c). Regarding unimodularity, the only non-obvious statement is that groups of type (c) are never unimodular. This is because the presence of an element $r\in R$ which contracts the exponential radical $E$ of $R$ implies  that the Haar measure of $E$ is not preserved by conjugation by $r$. Since $R/E$ is unimodular, we deduce that $R$ is not. And since $S$ is unimodular, $G$ is not.
\end{proof}

We close this subsection with further examples of algebraically thin groups as in \S\ref{sectionIntro:nonunimodular}; these corollaries are not needed elsewhere in the paper.

We denote by $\mathfrak{e}_\C\subset \mathfrak{r}_\C$ the Lie subalgebra spanned by characteristic subspaces of roots with non-zero real part, and $\mathfrak{e}=\mathfrak{e}_\C \cap \mathfrak{r}$.
We now assume that $\mathfrak{r}$ is the Lie algebra of some solvable connected Lie group $R$. 
By
  \cite[Proposition 5]{Guivarc'hExpRad}, the subgroup $E=\exp(\mathfrak{e})$ is a nilpotent connected subgroup and it coincides with the minimal closed normal subgroup of $R$ such that $R/E$ has polynomial growth. It was later rediscovered by Osin in \cite{Osin_exprad} who named it the \textbf{exponential radical} of $R$. The following lemma is standard: see for instance \cite[Proposition 4.B.5.]{CorTesIHES}.\footnote{Although the assumptions there are slightly more restrictive, the same proof applies.} 
  \begin{lemma}\label{lem:NC} Let $R$ be a solvable connected Lie group.
The following are equivalent: 
\begin{itemize}
	\item[(i)] $R$ is an NC-group.  
\item[(ii)] there is some element of $R$ acting as a contraction on its exponential radical $E$. 
\end{itemize}
\end{lemma}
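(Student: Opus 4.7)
The plan is to prove the equivalence through a direct correspondence between separating hyperplanes for the non-zero weights and elements of $R$ whose adjoint action contracts $\mathfrak{e}$, using that along each characteristic subspace $\mathfrak{r}_\lambda \subseteq \mathfrak{r}_\C$ the action of $\mathrm{Ad}(\exp(y))$ has spectral radius $e^{\mu(y)}$, where $\mu = \mathrm{Re}(\lambda)$.

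\textbf{Direction (i) $\Rightarrow$ (ii).} Suppose $R$ is NC. Since there are only finitely many weights, their convex hull is closed, so by Hahn--Banach separation there exists $y \in \mathfrak{r}_{\mathrm{ab}}$ such that $\mu(y) > 0$ for every non-zero weight $\mu$. Lift $y$ to some $\tilde y \in \mathfrak{r}$. For each root $\lambda$ with $\mathrm{Re}(\lambda) = \mu \neq 0$, the operator $\mathrm{ad}(\tilde y)$ acts on the generalised eigenspace $\mathfrak{r}_\lambda$ as $\lambda(\tilde y)\cdot \mathrm{Id} + N_\lambda$ with $N_\lambda$ nilpotent, hence $\mathrm{Ad}(\exp(-\tilde y)) = \exp(-\mathrm{ad}(\tilde y))$ has spectrum $\{e^{-\lambda(\tilde y)}\}$, of modulus $e^{-\mu(y)} < 1$. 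Thus $\mathrm{Ad}(\exp(-\tilde y))^n \to 0$ on $\mathfrak{e}_\C$, and hence on $\mathfrak{e}$. Because $E$ is a connected nilpotent Lie subgroup on which the conjugation action is determined by its derivative via $\exp$, the group element $\exp(-\tilde y) \in R$ acts as a contraction on $E$.

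\textbf{Direction (ii) $\Rightarrow$ (i).} Let $g \in R$ contract $E$. The conjugation action of $g$ on $E$ being a contraction forces $\mathrm{Ad}(g)^n \to 0$ on $\mathfrak{e}$, so every eigenvalue of $\mathrm{Ad}(g)|_{\mathfrak{e}_\C}$ has modulus strictly less than $1$. Pick any $y \in \mathfrak{r}_{\mathrm{ab}}$ mapping to the image of $g$ in $R/[R,R]$ (here $R/[R,R]$ is a connected abelian Lie group so any element has a logarithm up to the discrete kernel; by passing to a suitable power of $g$ if needed we may assume the lift exists) and lift it to $\tilde y \in \mathfrak{r}$. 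Then $g = \exp(\tilde y)\cdot h$ for some $h \in [R,R]$. Since every root $\lambda$ vanishes on $[\mathfrak{r},\mathfrak{r}]$, the operator $\mathrm{Ad}(h)$ restricted to $\mathfrak{r}_\lambda$ is unipotent, so $\mathrm{Ad}(g)|_{\mathfrak{r}_\lambda}$ still has spectrum $\{e^{\lambda(\tilde y)}\}$. The contraction hypothesis therefore yields $\mathrm{Re}(\lambda(\tilde y)) < 0$, i.e.\ $\mu(\tilde y) < 0$ for every non-zero weight $\mu$. The image of $-\tilde y$ in $\mathfrak{r}_{\mathrm{ab}}$ is then a linear form on $\mathfrak{r}_{\mathrm{ab}}^*$ positive on every non-zero weight, so $0$ is separated from the convex hull of the non-zero weights, proving Property NC.

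\textbf{Main obstacle.} The subtle point is the passage between group-level and Lie-algebra-level statements in the second direction: $g$ need not itself be an exponential, and one must justify both that conjugation on $E$ is faithfully encoded by $\mathrm{Ad}(g)|_\mathfrak{e}$ and that the commutator correction $h \in [R,R]$ does not disturb the eigenvalues along characteristic subspaces. Both are handled by the fact that characters kill $[\mathfrak{r},\mathfrak{r}]$, but care is needed because $\exp: \mathfrak{r}\to R$ may fail to be surjective. This is resolved either by replacing $g$ with a suitable power lying in the image of $\exp$ (which does not affect the contraction property), or by working with the simply connected cover of $R$, for which $\exp$ on the abelianisation is onto; the conclusion descends since NC is a Lie-algebra condition.
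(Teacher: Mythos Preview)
The paper does not prove this lemma itself; it simply records that the result is standard and refers to \cite[Proposition 4.B.5]{CorTesIHES}, noting that the proof there applies verbatim under the present hypotheses. Your argument is a correct, self-contained version of that standard proof: the Hahn--Banach separation in (i)$\Rightarrow$(ii) and the spectral-radius analysis in (ii)$\Rightarrow$(i) are exactly the expected ingredients, and the technical issues you identify in your ``Main obstacle'' paragraph (non-surjectivity of $\exp$ on $R$, possibly needing to pass to a power of $g$ or to the simply connected cover) are genuine and your workarounds are sound.

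One small point in (ii)$\Rightarrow$(i) deserves an extra sentence. From ``$\mathrm{Ad}(h)|_{\mathfrak{r}_\lambda}$ is unipotent'' and ``$\mathrm{Ad}(\exp\tilde y)|_{\mathfrak{r}_\lambda}$ has spectrum $\{e^{\lambda(\tilde y)}\}$'' it does \emph{not} follow in general that their product has spectrum $\{e^{\lambda(\tilde y)}\}$: a matrix with a single eigenvalue times a unipotent matrix can have wildly different spectrum (e.g.\ $\bigl(\begin{smallmatrix}1&0\\1&1\end{smallmatrix}\bigr)\bigl(\begin{smallmatrix}1&1\\0&1\end{smallmatrix}\bigr)$). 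What saves you is that, by Lie--Kolchin applied to the connected solvable group $\mathrm{Ad}(R)$, all the operators $\mathrm{Ad}(g)$ are simultaneously upper-triangularizable on $\mathfrak{r}_\C$; in such a basis the diagonals multiply, and the diagonal of $\mathrm{Ad}(h)$ consists of $1$'s for $h\in[R,R]$. Making this explicit closes the only gap.
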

We recall that a \textbf{contraction} of a locally compact group $G$ is an automorphism $\alpha$ such that $\alpha^n(g)\to 1$ for all $g\in G$, uniformly on compact subsets. Recall that a group is a \textbf{Heintze group} if it is isomorphic to a semidirect product $E\rtimes\R$, where $E$ is either trivial or a simply connected nilpotent Lie group, with every positive element of $\R$ acting as a contraction on $E$.

\begin{corollary}\label{cor:hypercentralbyHeintze}
Let $1\to N\to G\to H\to 1$ be an exact sequence of connected Lie groups, such that $N$ is hypercentral in $G$ (i.e.\ covered by the ascending central series of $G$) and $H$ is a Heintze group. 
Then the diagonal map $G\to G/E \times H$ induces an isomorphism of $G$ onto a closed subgroup of $G/E\times H$, where $E$ is  the exponential radical of $G$. 
\end{corollary}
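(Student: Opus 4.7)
The plan is to show directly that the diagonal homomorphism $\Delta\colon G \to G/E \times H$ sending $g \mapsto (gE, \pi(g))$, where $\pi$ denotes the quotient $G \to H$, is injective and has closed image. Together with the standard fact that an injective continuous homomorphism between connected Lie groups with closed image is a topological embedding, this yields the desired isomorphism of $G$ onto a closed subgroup. Injectivity amounts to $E \cap N = 1$, while closedness will follow once we know $\pi(E)$ is closed in $H$.

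The key step, and the place where the hypotheses really enter, is the Lie-algebra computation $\mathfrak{e} \cap \mathfrak{n} = 0$. Suppose $x$ is a nonzero element of this intersection, and decompose $x = \sum_\lambda x_\lambda$ along the generalized weight spaces of $\ad(\mathfrak{r})$ acting on $\mathfrak{g}_\C$. Since $x \in \mathfrak{e}$, the sum runs over weights with $\Re(\lambda) \neq 0$; fix one with $x_\lambda \neq 0$ and choose $t \in \mathfrak{r}$ with $\Re(\lambda(t)) \neq 0$. The weight decomposition is $\ad(t)$-stable, and on the $V_\lambda$-part $\ad(t)$ equals $\lambda(t)\cdot\mathrm{id}$ plus a nilpotent, so $\ad(t)^k(x)$ retains a nonzero component in $V_\lambda$ for every $k$. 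On the other hand, $x \in \mathfrak{n}$ together with hypercentrality of $N$ in $G$ forces $x$ to lie in some term $Z_k(\mathfrak{g})$ of the upper central series, whence $\ad(t)^k(x) = 0$, a contradiction. Passing to the group, $E \cap N$ is a discrete closed normal subgroup of the connected group $G$, hence central; the exponential radical $E$ (simply connected nilpotent, after passing to the universal cover, which identifies the relevant $N$ and $E$) has no nontrivial compact subgroups, so this central discrete part must be trivial.

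For closedness of $\Delta(G)$, I would first show that $\pi(E)$ is closed in $H$: by the previous step $\pi|_E\colon E \to H$ is injective, and since $E$ is simply connected nilpotent after the cover reduction, every injective Lie-group morphism from $E$ into a Lie group has closed image. Granted this, $\Delta(G)$ is the equalizer of the two continuous maps $G/E \times H \to H/\pi(E)$ defined by $(gE,h) \mapsto \pi(g)\pi(E)$ and $(gE,h) \mapsto h\pi(E)$; equalizers into Hausdorff spaces are closed, and the equality $\Delta(G) =$ equalizer holds because $\pi(g)\pi(E) = h\pi(E)$ means $h = \pi(ge)$ for some $e \in E$, giving $(gE, h) = \Delta(ge)$. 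The chief obstacle in the whole argument is the weight-versus-hypercentrality clash at the Lie-algebra level and the bookkeeping needed to upgrade $\mathfrak{e}\cap\mathfrak{n}=0$ to $E \cap N = 1$ at the group level; once these are in hand, closedness of the image follows essentially formally from the equalizer description.
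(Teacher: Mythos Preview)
Your overall strategy—prove $E\cap N=1$ for injectivity, then argue closedness via an equalizer—is sound, but there are two genuine gaps.

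\textbf{The weight-space claim is not justified.} You assert that any $x\in\mathfrak{e}$ decomposes as $\sum x_\lambda$ with each $\Re(\lambda)\neq 0$. This is false in general: by definition $\mathfrak{e}_\C$ is the Lie subalgebra \emph{generated} by the characteristic spaces $V_\lambda$ with $\Re(\lambda)\neq 0$, and since $[V_\lambda,V_\mu]\subset V_{\lambda+\mu}$, brackets can land in zero-real-part weight spaces. Concretely, in $\mathfrak{osc}=\mathfrak{heis}_3\rtimes\R$ one has $\mathfrak{e}=\mathfrak{heis}_3$, and the central element $c=[e_1,e_2]$ lies in $\mathfrak{e}$ with weight $0$. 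What makes the claim true \emph{here} is precisely that $G$ is NC: if $0$ is not in the convex hull of the nonzero weights, there exists $t$ with $\Re(\lambda(t))>0$ for every root with $\Re(\lambda)\neq 0$, and then every iterated bracket of such $V_\lambda$'s has positive real weight at $t$. But you never establish NC, and establishing it (stability of NC under central, hence hypercentral, extensions together with $H$ being Heintze hence NC) is exactly the paper's first move. The paper then uses the contracting element directly: $\Ad(g)$ contracts $\mathfrak{e}$ while acting unipotently on the hypercenter $\mathfrak{n}$, forcing $\mathfrak{e}\cap\mathfrak{n}=0$—this bypasses the weight decomposition entirely.

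\textbf{The closedness argument contains a false claim.} It is not true that every injective Lie-group morphism from a simply connected nilpotent group has closed image: an irrational line $\R\to\R^2/\Z^2$ is a counterexample. Your equalizer description is fine once you know $\pi(E)$ is closed in $H$, and that does hold here, but for a different reason: one checks that $\pi(E_G)=E_H$ (the exponential radical of $H$), which is closed; equivalently, $EN=\pi^{-1}(E_H)$ is closed in $G$ and the image of the diagonal map is the fibre product $G/E\times_{G/EN}G/N$.
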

\begin{proof} 
	The characterization (i) of NC-groups in Lemma \ref{lem:NC} makes it clear that being NC is stable by central, and therefore by hypercentral extensions. Since Heintze groups are NC, we deduce that $G$ is NC. 
Thus there exists an element of $G$ that contracts $E$, hence $E\cap N=\{1\}$. Therefore, the morphism $G\to G/E\times G/N$ is injective (and obviously has closed image). 
\end{proof}

\begin{corollary}\label{cor:NCabelianbyR}
Let $G$ be a NC-group which is isomorphic to a semi-direct product $\R^n\rtimes \R$. Then $G$ is a closed subgroup of a group of the form $P\times H$, where $P$ is a connected Lie group of polynomial growth, and $H$ is a Heintze group. 
\end{corollary}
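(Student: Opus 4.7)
The plan is to write $G=\R^n\rtimes_\alpha\R$ with $\alpha(t)=\exp(tA)$ for some linear endomorphism $A$ of $\R^n$, and then decompose $\R^n=V_+\oplus V_0\oplus V_-$ as the sum of $A$-invariant subspaces corresponding to complex eigenvalues of $A$ with, respectively, positive, zero, and negative real part. The weights of $G$ (viewed as linear functionals on $\R$) are exactly the maps $t\mapsto \mathrm{Re}(\lambda)\,t$ for $\lambda$ an eigenvalue of $A$. The NC hypothesis says $0$ is not in the convex hull of the non-zero weights, which forces either $V_+=0$ or $V_-=0$. After replacing $\alpha$ by $\alpha(-\,\cdot\,)$ if necessary, assume $V_-=0$ so that $\R^n=V_+\oplus V_0$.

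Next, set $H:=V_+\rtimes_\beta\R$ with $\beta(t):=\alpha(-t)|_{V_+}=\exp(-tA|_{V_+})$. The eigenvalues of $-A|_{V_+}$ all have negative real part, so $\beta(t)$ contracts $V_+$ for every $t>0$; thus $H$ is a Heintze group (possibly equal to $\R$ if $V_+=0$). Set $P:=V_0\rtimes_\gamma \R$ with $\gamma(t):=\alpha(t)|_{V_0}$. Since the eigenvalues of $A|_{V_0}$ are purely imaginary, every weight of $P$ is zero, hence by Guivarc'h's theorem (cf.\ Remark~\ref{rem:rank0}) $P$ is a connected Lie group of polynomial growth.

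Finally, I would define the diagonal map $\phi:G\to H\times P$ by
\[
\phi(v_+ + v_0,\,t)\;=\;\bigl((v_+,-t),\,(v_0,t)\bigr).
\]
A direct computation shows $\phi$ is a homomorphism: the sign flip in the $H$-coordinate is precisely what is needed to intertwine the original action $\alpha(t)|_{V_+}$ with $\beta(-t)$. The map $\phi$ is manifestly injective, and its image is the closed subgroup of $H\times P$ cut out by the single linear equation $s_H=-t_P$ on the two $\R$-factors, hence closed.

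There is no real obstacle here beyond bookkeeping: the content is the decomposition $\R^n=V_+\oplus V_0$ supplied by the NC condition, together with the choice of sign so that the diagonal subgroup is simultaneously isomorphic to $G$, closed, and compatible with the contracting convention in the definition of a Heintze group. Everything else — the classification of weights of $\R^n\rtimes\R$ and the fact that purely imaginary eigenvalues give polynomial growth — was recorded earlier in this section.
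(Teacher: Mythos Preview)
Your proof is correct and follows essentially the same route as the paper's. The paper phrases the decomposition via Lemma~\ref{lem:NC} (finding a contracting element on the exponential radical $E$) and then writes $U=E\oplus N$ and embeds $G$ diagonally into $(E\rtimes D)\times(N\rtimes D)$; you instead read off directly from the weight definition of NC that all non-zero real parts of eigenvalues of $A$ share a sign, and your sign flip in the $H$-coordinate plays the role of the paper's choice of contracting element to orient $\R$---these are cosmetic differences on the same underlying decomposition-then-diagonal-embedding argument.
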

\begin{proof}

Denote $U=\R^n$, and $D=\R$ so that $G=U\rtimes D$, and let $d$ be a non-zero element of $D$ that contracts the exponential radical $E$ of $G$ (provided by Lemma \ref{lem:NC}). 
	By decomposing $U$ into characteristic subspaces of $\ad(d)$, we see that $U$ decomposes as a $D$-equivariant direct sum $U=E\oplus N$, where $N$ is the sum of characteristic subspaces associated to eigenvalues of modulus $1$. Thus $G\cong (E\oplus N)\rtimes D$, which embeds as a closed subgroup of $(E\rtimes D)\times (N\rtimes D)$. Since $d$ contracts $E$, the first factor is a Heintze group, while $N\rtimes D$ has polynomial growth by Remark \ref{rem:rank0}.
\end{proof}

\begin{remark}
Corollaries \ref{cor:NCabelianbyR} and  \ref{cor:hypercentralbyHeintze} provide slightly different kinds of examples: for instance the semi-direct product of $(\C\times \R)\rtimes \R$, where $\R$ acts by rotation on the complex factor and by homothety on the $\R$ factor. This example satisfies the conditions of Corollary \ref{cor:NCabelianbyR} but not of Corollary \ref{cor:hypercentralbyHeintze}: indeed, in this example, $N=\C$, which is not hypercentral.
\end{remark}

\subsection{Reduction to linear, real  triangular by semisimple}
The goal of this section is to prove that any connected Lie group is commable to a linear connected Lie group whose radical is real-triangular (which is unimodular and is an NC-group if and only if $G$ is). 
Recall that two locally compact groups $G$ and $G'$ are \textbf{commable}  (see \cite{Cornulier_comma}) if there exists $n\geq 1$ and a sequence 
 \[G=G_0 -  G_1 - \cdots - G_{n-1} - G_n=G',\] 
 where the $G_i$ are locally compact groups and $G_{i-1}-G_i$ denotes the existence of a proper continuous group homomorphism with cocompact image $G_{i-1}\to G_i$ or $G_i\to G_{i-1}$. We shall call these maps \textbf{commability arrows} associated to the commability from $G$ to $G'$.

 Commability is a natural generalization of commensurability for discrete groups, and like commensurable finitely generated groups, commable locally compact groups are always quasi-isometric.

We say a Lie group $G$ has \textbf{Property V} (for Varopoulos) if $[S_{\mathrm{nc}},R]=1$ where $R$ is the solvable radical and $S_{\mathrm{nc}}$ is the non-compact part of a (any) Levi factor $S$; equivalently the corresponding Lie algebras satisfy $[\mathfrak{s}_{\mathrm{nc}},\mathfrak{r}] = 0$.
 We now state the goal of this subsection.

\begin{theorem}\label{thm:reduc}
Let $G$ be a connected Lie group. Then $G$  is commable to a linear connected Lie group $G'$ with same $\R$-rank of the form  (real-triangular)$\rtimes$(semisimple without compact factor). Moreover each of the following properties is true for $G'$ if and only it is true for $G$:
\begin{itemize}
\item unimodular;
\item Property V;
\item the solvable radical has\footnote{Recall that a solvable connected Lie group has Property R if its roots are purely imaginary, or equivalently if it has polynomial growth \cite{Gui-73-crois-poly}.} Property R;
\item the solvable radical has Property C (or NC);
\item  algebraically thin (or thick).
\end{itemize}
\end{theorem}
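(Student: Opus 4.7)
The plan is to produce $G'$ from $G$ by a finite chain of commability arrows, each of which is an isomorphism at the relevant Lie-theoretic invariants and hence preserves all five listed properties and the $\R$-rank. The chain has three stages: (i) peel off compact pieces, (ii) linearize the semisimple quotient, (iii) real-triangularize the radical.

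For stage (i), the maximal compact normal subgroup $W$ of $G$ has solvable Lie algebra and therefore lies in the radical $R$; hence $G \to G/W$ is a commability arrow under which all five properties descend automatically because the relevant invariants live in the quotient Lie algebra. Next, inside $G/W$ form the closed subgroup $H = R \cdot S_{\mathrm{nc}}$, where $S_{\mathrm{nc}}$ is the non-compact part of a fixed Levi factor $S$. Since $\mathfrak{s}_{\mathrm{nc}}$ is a characteristic ideal of $\mathfrak{s}$ and $R$ is normal, $H$ is a closed connected subgroup, and the coset space $(G/W)/H \cong S/S_{\mathrm{nc}} \cong S_c$ is compact, so $H \hookrightarrow G/W$ is a commability arrow. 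One verifies directly that the $\R$-rank, the bracket $[\mathfrak{s}_{\mathrm{nc}}, \mathfrak{r}]$, unimodularity (using that $S_c$, being compact, automatically preserves the Haar measure on $R$), and Property R/C/NC of $\mathfrak{r}$ are all preserved, because $\mathfrak{r}$ itself is unchanged and $\mathfrak{s}_c$ contributes nothing to the split rank. For stage (ii), the semisimple factor of $H$ may still have a discrete center; a suitable quotient (or cover) by a finite central subgroup makes it linear of adjoint form, and this again preserves all the listed invariants since they are encoded in the Lie algebra.

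Stage (iii) is the substantial step. Given the solvable radical $R$, one builds a \emph{real-triangular shadow} $R'$ by decomposing $\mathfrak{r} \otimes \C$ under the adjoint action of a Cartan subalgebra into generalized weight spaces and modifying the compact toral part acting with purely imaginary eigenvalues so that it acts trivially. The resulting Lie algebra $\mathfrak{r}'$ is real-triangulable, and by construction its roots are precisely the real parts of the roots of $\mathfrak{r}$, so the weights, Property R, Property C/NC, and the $\R$-rank of the radical are all preserved verbatim. At the group level the modification realizes as a commability arrow whose kernel is a compact torus, following the circle of ideas developed in \cite{CorTesIHES}. Property V (preserved by stage (i)) ensures that this modification commutes with the $S_{\mathrm{nc}}$-action, so one obtains $G' = R' \rtimes S_{\mathrm{nc}}$ as the endpoint of the chain, linear and of the required form.

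The main obstacle will be stage (iii): converting the Lie algebra modification into an actual group-level commability arrow while respecting the semisimple Levi action. This requires careful management of the simply-connected covers and of the compact central tori by which one must quotient, together with a check that Property V transfers faithfully along each arrow in both directions so that the two groups agree on the vanishing of $[\mathfrak{s}_{\mathrm{nc}}, \mathfrak{r}]$. Once the $S_{\mathrm{nc}}$-equivariant shadow is in hand, the remaining verifications that unimodularity, Property V, Property R, Property C/NC, and the algebraically thin/thick property all transfer between $G$ and $G'$ reduce to routine Lie-algebraic bookkeeping.
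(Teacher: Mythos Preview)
Your three-stage plan has the right overall architecture, matching the paper's two-step reduction via Propositions~\ref{prop:reducLinear} and~\ref{prop:reducRealTrian}, but stage~(ii) contains a genuine gap. You claim the semisimple factor can be made linear by quotienting (or covering) by a \emph{finite} central subgroup. This fails already for $G=\widetilde{\PSL(2,\R)}$: the kernel $K(G)$ of all finite-dimensional representations equals the full center $\Z$, and no quotient by a finite subgroup is linear. Quotienting by the entire infinite center does yield $\PSL(2,\R)$, but that map is \emph{not proper} (its fibers are infinite discrete), so it is not a commability arrow. The paper's fix (Proposition~\ref{prop:reducLinear}) is substantially more delicate: one chooses a subgroup $Z_1$ of the center intersecting the amenable radical trivially, embeds it as a uniform lattice in a Euclidean group $V$, and passes through a chain of cocompact inclusions to reach $V\times(A_0\rtimes S)$. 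The extra Euclidean factor is essential and cannot be avoided.

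Stage~(iii) also has a logical problem. You write that ``Property~V (preserved by stage~(i)) ensures that this modification commutes with the $S_{\mathrm{nc}}$-action,'' but Property~V is one of the properties whose preservation you must \emph{prove}, not a hypothesis you may invoke; the theorem applies equally to algebraically thick groups with $[\mathfrak{s}_{\mathrm{nc}},\mathfrak{r}]\neq 0$. In the paper's approach (Proposition~\ref{prop:reducRealTrian}) the $S$-equivariance of the arrows is built into Cornulier's construction from~\cite{Cornulier_dimcone}, and the crucial feature forcing preservation of the weight set---hence of $\R$-rank, Property~R, and Property~C/NC---is that each arrow has \emph{coabelian} image in the radical (Remark~\ref{rem:preservation}). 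Preservation of Property~V is then handled by a separate semisimplicity argument (Lemma~\ref{lem:PropVpreserved}), not assumed.
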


We will proceed in two steps: Proposition \ref{prop:reducLinear} treats the defect of linearity of the semisimple part, while Proposition \ref{prop:reducRealTrian} deals with the property that the amenable radical is real-triangular. 
(Given $G$ a connected Lie group, we denote by $\Am(G)$ the  \textbf{amenable radical}\footnote{At the level of Lie algebras, the amenable radical  differs from the solvable radical in that we add the compact semisimple factors to it.} : the maximal normal amenable subgroup of $G$, which turns out to be closed.)
This argument replaces our approach in an earlier version of the paper, following helpful suggestions of the referee.

Let us start recalling a few basic facts about connected Lie groups (see  \cite{OV}), and more specifically about the linearity of connected Lie groups (see \cite{Malcev,Hoch}).
We let $K(G)$ be the intersection of all kernels of continuous  linear finite dimensional representations of $G$. Considering the adjoint representation of a Lie group, we see that $K(G)$ is central. 
\begin{example}\label{ex:psl(2,R)tilde}
A typical example of non-linear simple Lie group is $\widetilde{\PSL(2,\R)}$: the universal cover of $\PSL(2,\R)$. 
\end{example}
When the group $G$ is semisimple, $K(G)$ has finite index in the center $Z(G)$ of $G$, which is discrete.  In particular,  linear connected  Lie groups have finite center.  

Let $T$ be a Levi factor of $\Am(G)$ in $G$: this a Lie subgroup of $G$ which is locally isomorphic to a sum of simple factors of positive rank and such that $G=\Am(G)T$. 
\begin{example}\label{ex:centershit}
Note that $T$ is not necessarily closed. A counterexample is for instance given by the group $G=(\widetilde{\PSL(2,\R)}\times \R/\Z)/Z$, where $Z$ is the cyclic subgroup generated by $(z,t)$, where $t\in \R/\Z$ is irrational and $z$ generates $Z(\widetilde{\PSL(2,\R))}$: here the amenable radical is the image of $\R/\Z$ in $G$, and a Levi factor $T$ is the image of $\widetilde{\PSL(2,\R)}$, their intersection being the dense subgroup of $\R/\Z$ spanned by $t$. We also observe that in this example, $K(G)=\R/\Z$ .   
\end{example}

We shall use the fact that a semisimple Lie group with finite center admits a real-triangulable cocompact subgroup. For instance in $\PSL(2,\R)$, this would be the subgroup of upper triangular matrices, while in $\PSL(2,\C)$, it would the subgroup of upper triangular matrices whose diagonal entries are real.

\begin{proposition}\label{prop:reducLinear}
Let $G$ be a connected Lie group. Then then there exists a connected Lie group
$G'=G''\times V$, commable to $G$, such that
\begin{itemize}
\item $V\cong \R^d$ for some $d\in \N$;
\item $G'$ and $G''$ are (amenable)$\rtimes$(linear semisimple without compact factor);
\item $G$ and $G''$ are locally isomorphic.
\end{itemize}
In particular,
\begin{itemize}
	\item $G$ is unimodular if and only if $G'$ is unimodular; 
\item $\Am_0(G') = \Am_0(G)\times V$ ; 
\item $G/\Am(G)$ and $G'/\Am(G')$ are isomorphic;
\item  $G$ has Property V if and only if $G'$ has Property V.
\end{itemize}

\end{proposition}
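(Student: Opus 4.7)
The plan is to produce $G'$ through commability arrows that absorb the discrete central subgroup of $G$ responsible for the non-linearity of its Levi factor into a central vector group $V \cong \R^d$.

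First I would isolate the obstruction to linearity. Pass to the simply connected cover $\widetilde G = \widetilde R \rtimes (\widetilde S_c \times \widetilde S_{\mathrm{nc}})$, using that Levi decompositions split for simply connected groups. The non-linearity comes entirely from the torsion-free part of $Z(\widetilde S_{\mathrm{nc}})$, a finitely generated discrete abelian group; let $D \cong \Z^d$ be such a subgroup realising it. Since any continuous action of a connected group on a discrete set is trivial, $D$ is centralised by $\widetilde R$ and so lies in $Z(\widetilde G)$; its image (still called $D$) is central in $G$. If $K(G)$ contains a compact central torus $T$, I would first apply the commability arrow $G \to G/T$ (compact kernel) to remove it, and assume henceforth that $K(G)$ has no compact part.

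The main construction: form
\[ G^{\ast} \;=\; (G \times \R^d)/\Delta, \qquad \Delta = \{(\delta, \iota(\delta)) : \delta \in D\}, \]
where $\iota: D \cong \Z^d \hookrightarrow \R^d$ is the standard lattice embedding; note $\Delta \subset Z(G) \times \R^d$ is discrete central. The map $G \hookrightarrow G^{\ast}$, $g\mapsto [g,0]$, is injective and has cokernel $G^{\ast}/G \cong \R^d/\iota(D) = T^d$, a compact torus, so it is a commability arrow. By construction $G^{\ast}$ fits into a central extension
\[ 1 \to \R^d \to G^{\ast} \to G/D \to 1, \]
and $G/D$ has linear Levi factor $\widetilde S_{\mathrm{nc}}/D$ (finite centre), up to compact semisimple components that are amenable and thus absorbed into the amenable radical. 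The key step is to show this central extension splits: by van Est's theorem, $H^2_{\mathrm{cts}}(\widetilde S_{\mathrm{nc}}/D, \R)$ is computed by the invariant cohomology of the contractible symmetric space, hence vanishes; a Hochschild--Serre spectral sequence argument extends this vanishing to the full quotient $G/D$ against its amenable radical (whose continuous cohomology with $\R$-coefficients is controlled by Lie algebra cohomology relative to a maximal compact). The splitting yields $G^{\ast} \cong (G/D) \times \R^d$; set $G'' = G/D$ and $V = \R^d$.

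The remaining properties then follow routinely: unimodularity is preserved since both modifications (compact-kernel quotient and cocompact inclusion) preserve it and $V$ is unimodular; $\Am_0(G') = \Am_0(G) \times V$ because $V$ sits centrally (hence in the amenable radical) while the semisimple quotient of $G''$ has no compact factor by construction; the equality $G/\Am(G) \cong G'/\Am(G')$ holds since both reduce to the adjoint form $\widetilde S_{\mathrm{nc}}/Z(\widetilde S_{\mathrm{nc}})$; and Property V, being the Lie-algebra identity $[\mathfrak s_{\mathrm{nc}}, \mathfrak r]=0$, is clearly invariant. The main obstacle I foresee is the splitting of the central $\R^d$-extension: one must both verify the cohomology vanishing and handle the potential presence of a residual central vector subgroup in $K(G)$, which may require either an additional commability arrow to absorb such a subgroup or a refinement of the construction of $\Delta$.
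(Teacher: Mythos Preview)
Your proposal has a genuine gap at the crucial step: the central extension $1 \to \R^d \to G^\ast \to G/D \to 1$ does \emph{not} split in general, and your cohomological justification is incorrect.

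Take the paper's own example $G = \widetilde{\PSL(2,\R)}$ with $D = Z(G) \cong \Z$. Your $G^\ast = (G \times \R)/\Delta$ has derived subgroup equal to the image of $G \times \{0\}$, which (since $\Delta \cap (G \times \{0\}) = \{1\}$) is isomorphic to $\widetilde{\PSL(2,\R)}$ itself---a non-linear group. So $G^\ast$ cannot be isomorphic to $\PSL(2,\R) \times \R$, whose derived subgroup $\PSL(2,\R)$ is linear. The error in your van Est argument is that $H^2_{\mathrm{cts}}(S,\R) \cong H^2(\mathfrak{s},\mathfrak{k};\R)$ is the cohomology of the complex of \emph{invariant} forms on the symmetric space, not its de Rham cohomology; for $S=\PSL(2,\R)$ this is $H^2(\mathfrak{sl}_2,\mathfrak{so}_2;\R) \cong \R$, generated by the area form on $\HH^2_\R$. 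Whitehead's lemma only gives $H^2(\mathfrak{s};\R)=0$, i.e.\ the Lie algebra extension splits---which it does, since $\mathfrak{g}^\ast = \mathfrak{g} \oplus \R^d$---but the integrated splitting just recovers $G \subset G^\ast$, not $G/D$.

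The paper's proof avoids this obstruction by passing \emph{down} before going up: it first drops to a cocompact subgroup $L_1 = Z_1 \times (A_0 \rtimes P_0)$ where $P_0$ is the real-triangulable Iwasawa-type subgroup of the semisimple part. Being simply connected solvable, $P_0$ carries no $\pi_1$ obstruction, so the product with $Z_1$ genuinely splits. One then embeds $Z_1 \hookrightarrow V$ and goes back up via $V \times (A_0 \rtimes P_0) \subset V \times (A_0 \rtimes S)$ with $S = T/K(T)$ linear. The detour through the solvable cocompact subgroup is exactly what makes the direct product structure appear.
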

\begin{proof}
Write $A= \Am(G)$ and $A_0= \Am_0(G)$, and
let $T$ be a Levi factor of $A_0$ in $G$, i.e.\ a semisimple subgroup of $G$ satisfying $G=A_0\cdot T$, and $A_0\cap T\subset Z(G)$.
	Let $Z_0=K(T)$, so $Z_0$ has finite index in $Z(T)$ and is central in $G$. Note that the amenable radical of $G/A_0$ coincides with the center of $T/(A_0\cap T)$, so that $A\subset A_0Z(T)$.
	
	 In $Z_0$, let $Z_1$ be a maximal subgroup among those intersecting $A_0$ trivially. By maximality, $Z_1 \cdot (Z_0\cap A_0)$ has finite index in $Z_0$ and therefore in $Z(T)$. This implies that the group $A_1=Z_1\cdot A_0\cong Z_1\times A_0$ has finite index in $A_0Z(T)$, and therefore in $A$.	
Thus the semisimple group $G/A_1$ has finite center.

Note also that $Z_1$ is discrete: indeed, it maps injectively to the (discrete) center of the semisimple quotient $G/A_0$. 
Let $P_1=L_1/A_1$ be a closed cocompact real-triangulable subgroup of $G/A_1$. $P_1$ being simply connected, its preimage $L_1/A_0$ under the projection (which is a covering map) $G/A_0\to G/A_1$ is a direct product $ (A_1/A_0)\times P_0$, where $P_0=L_0/A_0$ is isomorphic to $P_1$. 
 In restriction to $T$, the surjection $G\to G/A_0$ yields a surjective morphism $T\to G/A_0$ with discrete kernel, which splits in restriction to the simply connected subgroup $P_0$. Thus lifting $P_0$ we see that 
$L_0=A_0\rtimes P_0$. 

Denote $S=T/Z_0$, which is by definition of $Z_0=K(T)$ the largest linear quotient of $T$. The $T$-action on $A_0$ induces an action of $S$ on $A_0$. Embed $A_1/A_0$ as a uniform lattice in some connected abelian Lie group $V'$, with non-compact factor $V$. We have the following cocompact inclusions:
\begin{equation}\label{eq:inclusions}
G\supset L_1=Z_1\times L_0\subset V'\times L_0\supset V\times L_0 =V\times (A_0\rtimes P_0)\subset V\times(A_0\rtimes S).\end{equation}
Thus $G$ is commable to $G':=V\times G''$, with  $G'':=A_0\rtimes S$. 

	Let  $\mathfrak{a}$ denote the common Lie algebra of $A_0$ and $A_1$, $\mathfrak{p}$ the Lie algebra of $P_0 \cong P_1$, and let $\mathfrak{g}, \mathfrak{s}, \ldots$ denote the Lie algebras of $G, S, \ldots$. We already have $\mathfrak{g}=\mathfrak{a}\rtimes \mathfrak{t}$. Since $T$ and $S$ are locally isomorphic, we deduce that $\mathfrak{g}=\mathfrak{a}\rtimes \mathfrak{s}$, and therefore $G$ and $G''$ are locally isomorphic. More precisely, on the level of Lie algebras, the sequence of inclusions  \eqref{eq:inclusions} first passes through the subalgebra $\mathfrak{a}\rtimes \mathfrak{p}$, then takes a direct product with $\mathfrak{v}'$ (which is subsequently reduced to $\mathfrak{v}$), and finally ends up with $\mathfrak{v}\times (\mathfrak{a}\rtimes \mathfrak{t})=\mathfrak{v}\times \mathfrak{g}$.   

The additional preservation assertions are clear, observing that they are unchanged under taking local isomorphisms and direct products with abelian groups.
 \end{proof}

\begin{examples}
It is instructive to illustrate the proof on the examples discussed above. First the case of $G=\widetilde{\PSL(2,\R)}$: here the group $P_0=L_0$ is the subgroup of upper triangular matrices in $\SSL(2,\R)$. We have $K(G)=\pi_1(\SSL(2,\R))\cong \Z$.
Hence $V=\R$, and we finally get $G'=\R\times \SSL(2,\R)$. 
In Example \ref{ex:centershit}, we leave the reader to check that $G'=G''=\R/\Z\times \SSL(2,\R)$.
\end{examples}

The second reduction step consists in passing from $A\rtimes S$, where $A$ is amenable and $S$ is (linear) semisimple without compact factors, to $N\rtimes S$, where $N$ is real-triangular.

\begin{proposition}\label{prop:reducRealTrian}
Let $G=A\rtimes S$ be a connected Lie group, such that $A=\Am_0(G)$, and $S$ has finite center. Then $G$ is commable to a group of the form $N\rtimes S$, where $N$ is real-triangulable. Moreover, each  commability  arrow  is of the form  
$\rho_i:A_{i}\rtimes S\to A_{i+1}\rtimes S$ (or $\rho_i:A_{i+1}\rtimes S \to A_i \rtimes S$) such that 
\begin{itemize}
\item $\rho_i$ is compatible with the semi-direct decomposition;
\item its restriction to $S$ is the identity;
\item its restriction $A_i\to A_{i+1}$ (or $A_{i+1}\to A_i$) has coabelian image. 
\end{itemize}
\end{proposition}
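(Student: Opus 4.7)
The plan is to produce the commability chain $A \rtimes S = G_0, G_1, \ldots, G_n = N \rtimes S$ in two structural stages, each built from commability arrows that fix $S$ pointwise and modify only the amenable radical using abelian auxiliary pieces.

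\textbf{Stage 1 (reduce to solvable amenable radical).} An amenable connected Lie group $A$ has a maximal compact connected characteristic subgroup $K$, combining its compact semisimple Levi factor and the compact part of the center of its solvable radical. Because $K$ is characteristic in $A$, it is $S$-invariant, hence normal in $G = A \rtimes S$. The quotient $G \to G/K = (A/K) \rtimes S$ is a commability arrow with compact kernel, fixes $S$ pointwise, and is surjective on the $A$-factor (so its image is trivially coabelian). Standard structure theory for amenable connected Lie groups then ensures that $A/K$ is solvable.

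\textbf{Stage 2 (pass from solvable to real-triangulable).} The nilshadow (equivalently, ``real-shadow'') construction associates to any connected solvable Lie group $R$ a canonical simply connected real-triangulable Lie group $N$ together with a compact torus $T_0 \subseteq \Aut(N)$, such that $R$ fits into a commability diagram with $N$ built from the group $N \rtimes T_0$ and auxiliary abelian Lie groups. Concretely, I would first use the standard trick of embedding the finitely generated abelian group $\pi_1(R)$ as a uniform lattice in a simply connected abelian Lie group $V$ to bridge between $R$ and its universal cover $\tilde R$ by two commability arrows, then apply the canonical cocompact embedding $\tilde R \hookrightarrow N \rtimes T_0$ and quotient out $T_0$ to extract $N$. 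Each step is either a cocompact inclusion with abelian cokernel (given by $V$, $V/\pi_1$ or $T_0$) or a quotient by a compact subgroup (trivially coabelian on the target).

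The $S$-equivariance is automatic throughout: since the nilshadow $N$ and the torus $T_0$ are canonically associated to the Lie algebra of $R$, any $S$-action on $R$ extends to $S$-actions on $N$ and $T_0$, and $S$ acts trivially on the abelian auxiliary pieces ($V$ and $V/\pi_1$), which are introduced by the argument rather than being intrinsic to the group.

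\textbf{Main obstacle.} The nilshadow stage is the most delicate: one must verify that $R$, particularly when $R$ has infinite $\pi_1$, is related to $N$ by a chain of genuine commability arrows (proper continuous maps with cocompact image) and not merely by a modification of Lie algebra structure. The interposition of the abelian auxiliary group $V$ to absorb $\pi_1(R)$ as a uniform lattice is essential, and tracking the coabelian cokernels through this bridge is the bulk of the technical work. Once the arrows are set up correctly, the $S$-equivariance follows from the functoriality of the nilshadow construction with respect to Lie algebra automorphisms preserving the solvable structure.
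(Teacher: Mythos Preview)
The paper's proof takes a much shorter route: it simply invokes \cite[Lemma 2.4]{Cornulier_dimcone}, which already furnishes exactly two commability arrows, both cocompact inclusions into a common algebraic-hull type overgroup $H$, namely $A \hookrightarrow H = KA$ with $K \cong H/A$ abelian, and $N \hookrightarrow H$ with $N$ real-triangulable. The only added work is the coabelian condition for the second arrow, handled by a Zariski-density argument: $[A,A]$ is unipotent hence Zariski-closed, and $A$ is Zariski-dense in $H$, so $[H,H]=[A,A]\subset N$ and $H/N$ is abelian. $S$-equivariance is checked directly against Cornulier's construction.

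Your Stage~1 has a genuine gap. The compact semisimple Levi factor of an amenable connected Lie group is in general \emph{not} normal, hence not characteristic, so you cannot quotient by it. In the Euclidean motion group $A = SO(3)\ltimes\R^3$ the Levi factor $SO(3)$ is not normal and the maximal compact normal subgroup is trivial; no quotient of the type you propose yields a solvable group. Nor can you reach solvable by a cocompact inclusion: the obvious arrow $\R^3 \hookrightarrow A$ has cokernel $SO(3)$, which violates the coabelian requirement. Cornulier's approach sidesteps this entirely by \emph{enlarging} to $H$ rather than trying to strip off compact pieces.

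Your Stage~2 also has problems. The nilshadow of a solvable Lie group is \emph{nilpotent}, not real-triangulable; the construction you actually describe (embedding in $N\rtimes T_0$ with $N$ split solvable and $T_0$ a compact torus) is a different one, essentially Cornulier's again. More seriously, routing through the universal cover $\tilde R$ is wrong: $R$ and $\tilde R$ need not be commable at all (take $R=\R/\Z$, $\tilde R=\R$), so no bridge through an abelian $V$ containing $\pi_1(R)$ can connect them by commability arrows. The real-triangulable group commable to $R$ must absorb the compact directions of $R$ and is typically strictly smaller than anything built from $\tilde R$.
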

\begin{proof}
We will use the following easy observation. 
\begin{fact}
Let $M$ be a Zariski dense subgroup of an algebraic group $L$. If $[M,M]$ is Zariski closed in $L$, then $[M,M]=[L,L]$.
\end{fact}	
\begin{proof}[Proof of the fact]
The map $L\times L\to L$ defined by $(g,g')\to [g,g']$ being Zariski continuous, the Zariski closure of $[M,M]$ contains $[L,L]$, since $L$ is the Zariski closure of $M$. 
\end{proof}
	This follows from the proof of \cite[Lemma 2.4]{Cornulier_dimcone}: there Cornulier treats the case where $S=\{1\}$, but one checks that even in presence of a non-trivial $S$, both the commability arrows can be made $S$-equivariant.  The first arrow in his proof is (replacing his $G$ by our $A$) $A \to H=KA$ where $K\cong H/A$ is abelian.  The second arrow is (denoting his $T_1$ by $N$) the cocompact inclusion of $N \to H$. Now $[A,A]$ is unipotent, hence Zariski closed, so $A$ being Zariski dense in $H$ we have $[A,A]=[H,H]$; Cornulier shows $[A,A] \subset N$.  Thus both arrows have coabelian image. The last statement of Proposition \ref{prop:reducRealTrian} follows from the following observation: \cite[Lemma 2.4]{Cornulier_dimcone}  provides maps with coabelian image: indeed, in the argument, $[G,G]$ is unipotent, hence Zariski closed, so $G$ being Zariski dense in $H$, we have $[G,G]=[H,H]$.
\end{proof}

\begin{remark}\label{rem:preservation}
The fact that this reduction preserves unimodularity is clear as commability preserves unimodularity among amenable locally compact groups. Since the commability arrows have coabelian image, the set of non-zero weights of the radical is preserved, thus so are Property R, $\R$-rank, and Property C or NC. We will prove that Property V (recall this is the condition $[S_{\mathrm{nc}},R]=1$) is preserved in the following lemma (\ref{lem:PropVpreserved}). Given these results the fact that this reduction preserves algebraic thinness is simply a consequence of the others. \end{remark}
\begin{lemma}\label{lem:PropVpreserved}
Let $S$ be a semisimple Lie group, and let $R$ and $R'$ be connected solvable Lie groups with $S$-actions. Let $f:R\to R'$ be an $S$-equivariant proper continuous group homomorphism with cocompact coabelian image. Then $R\rtimes S$ has Property V if and only $R'\rtimes S$ does. 
\end{lemma}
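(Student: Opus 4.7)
The plan is to pass to Lie algebras and exploit Weyl's theorem on complete reducibility for representations of the semisimple Lie algebra $\mathfrak{s}_{\mathrm{nc}}$. Write $\mathfrak{r},\mathfrak{r}',\mathfrak{s}$ for the respective Lie algebras and let $\varphi = df\colon \mathfrak{r}\to\mathfrak{r}'$ be the derived $\mathfrak{s}$-equivariant Lie algebra morphism. Property V for $R\rtimes S$ is equivalent to triviality of the $\mathfrak{s}_{\mathrm{nc}}$-action on $\mathfrak{r}$, and analogously for $R'\rtimes S$. So the task reduces to showing that $\mathfrak{s}_{\mathrm{nc}}$ acts trivially on $\mathfrak{r}$ if and only if it acts trivially on $\mathfrak{r}'$.

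The key first step is to show that $\mathfrak{s}_{\mathrm{nc}}$ acts trivially on both $\ker\varphi$ and $\mathfrak{r}'/\varphi(\mathfrak{r})$. Since $f$ is proper, $\ker f$ is compact and normal in the connected solvable group $R$, so its identity component is a compact connected solvable Lie group, hence a torus; its Lie algebra is $\ker\varphi$. Similarly, $f(R)$ is closed (a proper map is closed) and $R'/f(R)$ is a connected compact (cocompactness) abelian (coabelianness) Lie group, i.e.\ a torus with Lie algebra $\mathfrak{r}'/\varphi(\mathfrak{r})$. Because $\Aut$ of a torus is discrete (a subgroup of $\GL(n,\Z)$), the connected group $S$ must act trivially on each of these tori, so $\mathfrak{s}_{\mathrm{nc}}$ acts trivially on $\ker\varphi$ and on $\mathfrak{r}'/\varphi(\mathfrak{r})$.

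With this in hand, both implications follow symmetrically from complete reducibility. For the forward direction, assume $[\mathfrak{s}_{\mathrm{nc}},\mathfrak{r}]=0$; then $\mathfrak{s}_{\mathrm{nc}}$ acts trivially on $\varphi(\mathfrak{r})$ and also on $\mathfrak{r}'/\varphi(\mathfrak{r})$, so Weyl's theorem applied to the short exact sequence of $\mathfrak{s}_{\mathrm{nc}}$-modules
\[
0 \to \varphi(\mathfrak{r}) \to \mathfrak{r}' \to \mathfrak{r}'/\varphi(\mathfrak{r}) \to 0
\]
forces a trivial $\mathfrak{s}_{\mathrm{nc}}$-action on $\mathfrak{r}'$. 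Conversely, from $[\mathfrak{s}_{\mathrm{nc}},\mathfrak{r}']=0$ one deduces a trivial action on $\varphi(\mathfrak{r})$, and combined with the trivial action on $\ker\varphi$, complete reducibility applied to $0 \to \ker\varphi \to \mathfrak{r} \to \varphi(\mathfrak{r}) \to 0$ gives $[\mathfrak{s}_{\mathrm{nc}},\mathfrak{r}]=0$.

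The only real subtlety I foresee is the torus identification of $\ker f$ and of $R'/f(R)$, which crucially uses that both $R$ and $R'$ are solvable (so compact normal connected subgroups and compact abelian connected quotients are tori); once this is phrased at the level of identity components, the rest is a clean application of Weyl's theorem together with the discreteness of automorphism groups of tori.
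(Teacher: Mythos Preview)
Your proof is correct and follows essentially the same approach as the paper's: both rely on Weyl's complete reducibility for $\mathfrak{s}_{\mathrm{nc}}$-modules together with the observation that a connected group acts trivially on a torus (since $\Aut$ of a torus is discrete). The only presentational difference is that the paper first reduces to the cases where $f$ is injective or surjective (noting that one implication is then trivial in each case), whereas you treat the general $f$ in one pass via the two short exact sequences; your version is slightly more symmetric and makes the torus argument more explicit, but the content is the same.
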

\begin{proof}
On refining the sequence of commability arrows, we can assume $f$ is either injective or surjective. In each case, one implication is trivial. Let us treat the non-trivial directions. Assume $f$ is surjective with compact kernel $K$ and $[\mathfrak{s}_{\mathrm{nc}},\mathfrak{r}']=0$. Denote the Lie algebra of $K$ by $\mathfrak{k}$. As $\mathfrak{s}_{\mathrm{nc}}$ is semisimple, $\mathfrak{k}$ admits a vector complement $W$ in $\mathfrak{r}$ which is stable by the adjoint action of  $\mathfrak{s}_{\mathrm{nc}}$. Since $[\mathfrak{s}_{\mathrm{nc}},\mathfrak{r}']=0$, the adjoint action of  $\mathfrak{s}_{\mathrm{nc}}$ on $W$ is trivial.
Moreover, since $S_{\mathrm{nc}}$ acts trivially on $K$, we have $[\mathfrak{s}_{\mathrm{nc}},\mathfrak{k}]= 0$. Combining these two facts, we conclude that $[\mathfrak{s}_{\mathrm{nc}},\mathfrak{r}]=0$.

Suppose now that $f$ is injective, and that $[\mathfrak{s}_{\mathrm{nc}},\mathfrak{r}]=0$ while $[\mathfrak{s}_{\mathrm{nc}},\mathfrak{r}']\neq 0$. By semisimplicity, 
	and since $\mathfrak{r}$ is an ideal in $\mathfrak{r}'$ we have $[\mathfrak{s}_{\mathrm{nc}},\mathfrak{r}'/\mathfrak{r}]\neq 0.$ 
	But then, $S_{\mathrm{nc}}$ acts non-trivially on the compact group $R'/R$: contradiction.
 \end{proof}
 
 \begin{proof}[Proof of Theorem \ref{thm:reduc}]
 Starting with a connected Lie group $G$, Proposition \ref{prop:reducLinear} reduces to the case where $G$ satisfies the assumptions of Proposition \ref{prop:reducRealTrian}. The fact that the properties listed in Theorem \ref{thm:reduc} are preserved 
is obvious as the Lie algebras only differ by an abelian factor.
	 Next, applying Proposition \ref{prop:reducRealTrian} reduces to the case where $G$ is (real-triangular)$\rtimes$(linear semisimple without compact factor). The resulting group is linear by Malcev's theorem \cite{Malcev}: a connected Lie group with solvable radical  $R$ and Levi factor $S$ is linear if and only if both $R$ and $S$ are linear.  The preservation of the relevant properties is justified in Remark \ref{rem:preservation}.
 \end{proof}

\subsection{Algebraically thin unimodular connected Lie groups}

\begin{proposition}\label{prop:algthinRealTriang}
	Let $G=R\rtimes S$, where $R$ is real-triangulable, and  $S=S_{\mathrm{nc}}$ is semisimple with finite center. Then  $G$ is unimodular and algebraically thin if and only if $G=R\times S$, $R$ is simply connected nilpotent, and $S$ has rank $1$ or is trivial.
\end{proposition}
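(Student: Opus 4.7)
The plan is to treat the two directions separately: the forward direction rests on Proposition~\ref{prop:algthinDescrip} together with the hypothesis $S = S_{\mathrm{nc}}$, and its only substantial ingredient is a structural lemma identifying real-triangulable solvable Lie groups of polynomial growth with simply connected nilpotent ones. The converse is a direct verification of Definition~\ref{def:algthickthin}.

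For the forward direction, assume $G$ is unimodular and algebraically thin. Applying Proposition~\ref{prop:algthinDescrip} places us in case (a) or case (b), and in both cases $R$ has polynomial growth. In case (a), $G$ itself has polynomial growth; since $S = S_{\mathrm{nc}}$ is semisimple without compact factors, a non-trivial such $S$ would produce exponential growth (as it would contain a non-compact simple factor), so $S = \{1\}$. In case (b), $S$ already has $\R$-rank $1$ and $[S_{\mathrm{nc}}, R] = 1$. In either situation $[S, R] = 1$, so the semidirect action of $S$ on $R$ is trivial and $G = R \times S$.

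The key step is then to check that a real-triangulable solvable connected Lie group $R$ of polynomial growth is simply connected nilpotent. Simple-connectedness is part of the definition of real-triangulable. For nilpotence, real-triangulability implies (by Lie's theorem applied to a faithful triangulable representation, or equivalently by the standard equivalence with complete solvability) that all roots of $\mathfrak{r}$ are real-valued, and hence coincide with their real parts, i.e.\ with weights. By Remark~\ref{rem:rank0}, polynomial growth corresponds to $\R$-rank zero, meaning there are no non-zero weights, hence no non-zero roots. Thus each $\ad_\C y$ has only zero as an eigenvalue, so $\ad y$ is nilpotent for every $y \in \mathfrak{r}$, and Engel's theorem gives that $\mathfrak{r}$ is nilpotent.

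For the converse, suppose $G = R \times S$ with $R$ simply connected nilpotent and $S$ either trivial or of $\R$-rank $1$. Then $R$ is the solvable radical, $S$ is a Levi factor, and the three conditions of Definition~\ref{def:algthickthin} hold directly: the $\R$-rank of $G$ equals that of $S$, which is at most $1$; $[S_{\mathrm{nc}}, R] \subset [S, R] = 1$ by the direct-product structure; and $R$ is an NC-group since polynomial growth trivially implies NC (with trivial exponential radical). Unimodularity of $G$ follows from unimodularity of $R$ (nilpotent) and of $S$ (semisimple). I expect the main obstacle to be the third paragraph, where one has to combine real-triangulability with polynomial growth---two a priori different conditions on $\mathfrak{r}$---to deduce nilpotence via the Lie-theoretic machinery set up in \S\ref{ssec:NC-groups}.
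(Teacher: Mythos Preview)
Your proof is correct and follows the same route as the paper: invoke Proposition~\ref{prop:algthinDescrip} to land in case (a) or (b), deduce that $R$ has polynomial growth and that the action of $S$ on $R$ is trivial, then conclude that $R$ is simply connected nilpotent. You actually supply more detail than the paper does: the paper asserts without argument that ``triangulable and polynomial growth'' implies simply connected nilpotent, whereas you spell this out via the observation that real-triangulability forces all roots to be real (hence equal to their weights), polynomial growth forces all weights to vanish, and then Engel's theorem finishes. You also verify the converse direction explicitly, which the paper omits as obvious. The paper adds one sentence you do not---checking $R\cap S=\{1\}$ via torsion-freeness of $R$---but this is already implicit in the semidirect-product hypothesis $G=R\rtimes S$, so your omission is harmless.
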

\begin{proof}
Suppose $G$ is unimodular and algebraically thin.  By Proposition~\ref{prop:algthinDescrip} we have two cases. In the first case $G$ has polynomial growth, hence $S$ is trivial and $G=R$, being triangulable and polynomial growth, is simply connected nilpotent.  In the second case, we have $[R,S]=1$, and $R$ has polynomial growth (thus again is simply connected nilpotent). The intersection $R\cap S$ is contained in the (finite) center of $S$. Hence it must be trivial as $R$ is torsion-free.
\end{proof}

\begin{corollary}\label{cor:algthinGeneral}
	Any algebraically thin, unimodular connected Lie group is commable to a direct product $G=R\times S$, where $R$ is simply connected nilpotent, and $S$ is simple of rank $1$ with finite centre, or trivial.
\end{corollary}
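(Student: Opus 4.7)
The plan is to apply Theorem~\ref{thm:reduc} and Proposition~\ref{prop:algthinRealTriang} in succession, adding only a short observation at the end. Starting with $G$ algebraically thin and unimodular, Theorem~\ref{thm:reduc} provides a commability $G\rightsquigarrow G'$ where $G'$ is linear, of the form $R\rtimes S$ with $R$ real-triangulable and $S$ semisimple without compact factor. The list of preserved properties in that theorem guarantees that $G'$ is again unimodular and algebraically thin. Moreover, since $G'$ is linear, so is its Levi factor $S$, hence $S$ has finite centre.

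I would then apply Proposition~\ref{prop:algthinRealTriang} directly to $G'$, whose hypotheses hold verbatim: $R$ is real-triangulable, $S=S_{\mathrm{nc}}$ is semisimple with finite centre, and $G'$ is unimodular and algebraically thin. The conclusion is that $G'$ splits as a direct product $G'=R\times S$ with $R$ simply connected nilpotent, and with $S$ of $\R$-rank one or trivial.

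The remaining point is to upgrade ``semisimple of $\R$-rank one with no compact factor'' to ``simple of rank one''. This is a standard observation: the $\R$-rank of a semisimple Lie algebra equals the sum of the $\R$-ranks of its simple ideals, and a non-compact simple ideal contributes at least one. So in our situation $S$ has a single simple ideal, i.e.\ either $S$ is trivial or $S$ is simple of real rank one (with finite centre, as already noted).

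I do not expect any genuine obstacle here: the heavy lifting is all done by Theorem~\ref{thm:reduc} (the commable reduction to the linear, real-triangulable-by-semisimple case) and by Proposition~\ref{prop:algthinRealTriang} (the structural description in that restricted setting); the corollary is essentially a one-line assembly of these two statements, together with the elementary rank-additivity remark above.
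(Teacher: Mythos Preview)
Your proof is correct and follows exactly the paper's approach: the paper's proof is the single sentence ``We apply Theorem~\ref{thm:reduc} and then Proposition~\ref{prop:algthinRealTriang}.'' Your additional remarks---that linearity of $G'$ forces $S$ to have finite centre (so Proposition~\ref{prop:algthinRealTriang} applies), and that a semisimple group of $\R$-rank one with no compact factor is necessarily simple---merely make explicit what the paper leaves implicit.
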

\begin{proof}
	We apply Theorem \ref{thm:reduc} and then Proposition \ref{prop:algthinRealTriang}.
\end{proof}

\subsection{Non-distortion of certain subgroups in linear  connected Lie groups}\label{sec:nondistotedsubg}
This section gives sufficient conditions for a subgroup of a Lie group to be undistorted. Although we could not find the following theorem in the literature, it is probably known to the experts.  The role it plays in the paper is to ensure that the subgroups $\SOL_a$ and $\Osc$ from Proposition \ref{prop:algThick} are closed and undistorted. 

Let $G$ and $H$ be locally compact compactly generated groups such that $H$ is a closed subgroup of $G$. We say that $H$ is \textbf{undistorted} in $G$ if the inclusion $H\to G$ is a quasi-isometric embedding (with respect to word metrics on both groups). In what follows, we shall use repeatedly the following obvious remark: if $\phi:G\to G'$ is a continuous homomorphism from $G$ to another compactly generated group $G'$ such that $\phi|_H$ is injective and $\phi(H)$ is undistorted in $G'$, then $H$ is undistorted in $G$.

\begin{theorem}\label{thm:Undistorted}
	We consider a group $H=U\rtimes A$, where $U$ is a simply connected nilpotent connected Lie group, and $A\cong \R^r$  and
\begin{itemize}
\item[(i)] for each non-trivial $a\in A$, the action by conjugation of $a$ on $V=U/[U,U]$ has an eigenvalue of modulus distinct from $1$, 
\item[(ii)] there exists some $a_0\in A$ such that all its (possibly complex) eigenvalues on $V$ have modulus distinct from $1$. 
\end{itemize}
	Then for any linear connected Lie group $G$, any injective morphism $f:H\to G$ has closed and undistorted image in $G$.
	\end{theorem}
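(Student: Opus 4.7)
The overall strategy is to reduce to a matrix setting using linearity, extract spectral data from (i) and (ii), then verify closedness and non-distortion separately. Fix a faithful continuous representation $\rho : G \hookrightarrow \GL(W)$ and replace $f$ by $\rho \circ f$; since word metrics on $G$ and $H$ are quasi-isometric to restrictions of left-invariant Riemannian metrics, it suffices to prove bi-Lipschitz equivalence for these. The spectral core is that (ii) forces the zero-real-part generalized eigenspace of $\ad(a_0)$ on $\mathfrak{u}$ to be contained in $[\mathfrak{u},\mathfrak{u}]$ (its projection to $V$ would otherwise give an eigenvalue of modulus $1$ there). Correspondingly, the Jordan decomposition $f(a_0)=g_s g_u$ has $g_s$ with non-trivial $\R$-split part $\{s_t\}_{t\in\R}$ acting on $f(U)$ by conjugation with hyperbolic dynamics modulo $[\mathfrak{u},\mathfrak{u}]$.

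\textbf{Closedness.} First I would show $f(U)$ is closed. Decompose $df(\mathfrak{u}) = \mathfrak{u}^+\oplus \mathfrak{u}^0\oplus \mathfrak{u}^-$ by sign of real part of $\ad(s_t)$-eigenvalues, with $\mathfrak{u}^0\subset [df(\mathfrak{u}),df(\mathfrak{u})]$, and write $f(U)=U^+U^0U^-$ via Baker--Campbell--Hausdorff. Given $f(u_n)\to g$, conjugation by $s_{t}$ for large $|t|$ contracts $u_n^\pm$ to the identity while stabilizing $u_n^0$ inside the closed nilpotent piece of strictly smaller step; iterating on the lower central series forces $g\in f(U)$. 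Closedness of $f(A)$ follows from (i): if $f(a_n)\to 1$ with $a_n$ bounded away from $0$, an extraction would produce a non-trivial $a\in A$ with trivial adjoint action on $V$, contradicting (i). Then $f(H)=f(U)\cdot f(A)$ is closed via the semidirect structure.

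\textbf{Non-distortion.} Fix compact symmetric generating sets $K_A\subset A$ and $K_U\subset U$, so $|u\cdot a|_H \asymp |u|_U + \|a\|$. In the $A$-direction, by (i) the weights of $A$ on $V$ span $A^*$, hence
\[ \log\max\bigl(\|\Ad(f(a))\|_{\mathrm{op}},\,\|\Ad(f(a))^{-1}\|_{\mathrm{op}}\bigr)\gtrsim \|a\|, \]
which yields $|f(a)|_G \gtrsim \|a\|$. In the $U$-direction, pick a Malcev basis $(X_i)$ of $\mathfrak{u}$ adapted to the $\ad(a_0)$-eigenspace decomposition and refining the lower central series, with weights $w_i$; by Guivarc'h's theorem $|u|_U \asymp \max_i |x_i|^{1/w_i}$ for $u=\exp(\sum x_i X_i)$. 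This weighted structure is preserved by $df$, and transporting it via $\exp$ together with BCH yields $|f(u)|_G \gtrsim |u|_U$. Combining the two estimates gives $|f(h)|_G\gtrsim |h|_H$.

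\textbf{Main obstacle.} The principal difficulty lies in the nilpotent direction: nilpotent groups exhibit polynomial word-metric distortion at different rates along different lower-central-series layers, and a generic linear representation can compress these. Condition (ii) is essential here, since the $\R$-split hyperbolic action of $a_0$ furnishes a dilation structure on $U$ compatible with Malcev coordinates, which is then faithfully transported to $f(U)\subset G$ by conjugating with $s_t$, ensuring every direction of $U$ is detected in $G$ at its correct polynomial rate. Closedness of $f(U)$ is equally fragile without (ii), as the irrational line in a torus shows.
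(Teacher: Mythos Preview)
Your proof has a fundamental error at the outset of the non-distortion step. You assert that for $h=ua$ one has $|ua|_H \asymp |u|_U + \|a\|$, where $|u|_U$ is the nilpotent word length with respect to $K_U$. This is false: condition (ii) says exactly that some element of $A$ acts hyperbolically on $V$, which forces $U$ to be \emph{exponentially distorted} in $H$. Concretely, in $\SOL=\R^2\rtimes_{(1,-1)}\R$ the element $(e^n,0;0)$ has $|u|_U\asymp e^n$ but $|u|_H\asymp n$, since one can travel height $n$ in $A$, make a bounded $U$-move, and return. In general $U$ is the exponential radical of $H$ and Guivarc'h's result gives $|u|_H\asymp\log(1+\|\log u\|)$, not the polynomial Malcev expression $\max_i|x_i|^{1/w_i}$ you invoke. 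Since $|f(u)|_G\lesssim |u|_H\ll |u|_U$, your target bound $|f(u)|_G\gtrsim |u|_U$ is not merely unproved but impossible.

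The paper's argument runs at the correct (logarithmic) scale. From (ii) one deduces $U=[H,H]$, so after embedding into $\GL(d,\C)$ and upper-triangularising via Lie's theorem, $f(U)$ lands in the unipotent radical. For unipotent matrices one has the elementary bound $\log(1+\|x\|)\lesssim|\exp(x)|_G$, and this matches $|u|_H\asymp\log(1+\|\log u\|)$ from the exponential-radical estimate, giving $|f(u)|_G\asymp|u|_H$ directly. Your $A$-direction argument is essentially right and close to the paper's, but your ``Main obstacle'' paragraph is misdirected: the genuine subtlety is not polynomial rates along the lower central series of $U$, but rather the exponential compression of $U$ inside $H$ induced by the $A$-action, and verifying that this same compression---no worse---occurs in $G$. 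Your closedness argument is also unnecessary once the bi-Lipschitz bound is in hand, since a bi-Lipschitz image of a complete space is closed.
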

\begin{proof}
On composing with a faithful linear representation, we can suppose that $G=\GL(d,\C)$. 
By Lie's theorem, we can assume that $H$ is contained in the subgroup of upper-triangular matrices. 
We deduce from (ii) that $V$ is contained in (and therefore equal to) the derived subgroup of $V\rtimes A$, since the map $V\to V, v \mapsto [v,a_0]$ is surjective.  Therefore we have $U = [H,H][U,U]$, which implies that $U/[H,H]$ is a perfect group, hence trivial as it is solvable. Hence 
	$U=[H,H]$.  It follows that  $U$ is contained in the subgroup of upper unipotent matrices. Let $\mathfrak g$ be the Lie algebra of $G$, that we equip with a norm $\|\cdot\|$. We denote by $|\cdot|_G$, $|\cdot|_H$ and $|\cdot|_A$ word lengths on respectively $G$, $H$ and $A$ associated to compact generating subsets. We also consider the operator norm $\|\cdot\|_{op}$ of $\GL(d,\C)$ acting on $\C^d$ equipped with the usual Euclidean metric. Note that since $\|\cdot\|_{op}$ is submultiplicative, $|g|_{op}:=\log \max\{\|g\|_{op},\|g^{-1}\|_{op}\}$ satisfies $|gg'|_{op}\leq |g|_{op}+|g'|_{op}$. Hence
\begin{equation}\label{eq:NormOp}
|g|_{op}\lesssim |g|_G.
\end{equation} 
	In particular, a straightforward calculation shows that  for all $x\in \mathfrak g\setminus \{0\}$, $\log\|x\|\lesssim |\exp(x)|_{op}$, from which we deduce that 
\begin{equation}\label{eq:logExp}
\log(1+\|x\|)\lesssim |\exp(x)|_G.
\end{equation}
	On the other hand condition (ii) implies that $U$ is the exponential radical of $H$ (see \cite[Proposition 5]{Guivarc'hExpRad}). 
By the corollary following \cite[Proposition 5]{Guivarc'hExpRad}, we have for all $u\in U$,
\[|u|_H\lesssim \log(1+ \|\log u\|).\]
	Combining this with (\ref{eq:logExp}) and with the obvious inequality $|h|_G\lesssim |h|_H$, valid for all $h\in H$, we deduce that for all $u\in U$ 
\begin{equation}\label{eq:Unondist}
|u|_G\simeq |u|_H\simeq \log(1+ \|\log u\|).
\end{equation}
	A consequence of (i) is that for all $h=ua\in H$, $|h|_{op}\gtrsim |a|_A$. Indeed, note that $\|\cdot\|_{op}$ defines a norm on the vector space of square matrices $\mathbb M(d,\C)$, and as such is bi-Lipschitz equivalent to any other norm. Now, if one considers the norm $\|\cdot\|_1$ consisting of the sum of absolute values of coefficients, one clearly has $\|h\|_{1}=\|a\|_1+\|u\|_1\geq \|a\|_1$. 
On the other hand, we observe that $\log \max\{\|a\|_1, \|a^{-1}\|_1\}\simeq |a|_A$, so  $|h|_{op}\gtrsim |a|_A$ as claimed. 	
	Since there is an obvious projection of $H$ onto $A$, $|a|_A\simeq |a|_H$, so $|h|_{op}\gtrsim |a|_H$. We then deduce from \eqref{eq:NormOp} that
\begin{equation}\label{eq:Znondist}
|h|_G \gtrsim |a|_H.
\end{equation} 
Assume for a contradiction that there exists $h_k=(u_k,a_k)\in H$ such that $|h_k|_G=o(|h_k|_H)$. 
	Then by \eqref{eq:Znondist}, $|a_k|_H=o(|h_k|_H)$, which implies by the triangle inequality that $|h_k|_H\simeq |u_k|_H$, and $|u_k|_G \lesssim |h_k|_G +|a_k|_G \lesssim |h_k|_G+|a_k|_H = o(|h_k|_H)=o(|u_k|_H)$. But the latter contradicts \eqref{eq:Unondist}, so we are done.
\end{proof}

\begin{examples}\label{ex:SOL}
	The class of groups $H$ satisfying the conditions of Theorem \ref{thm:Undistorted} is stable under finite direct product, and contains the examples that are relevant to us: $\SOL_a$ for all $a>0$, and $\Osc$. But it also contains the subgroup of upper triangular matrices  whose diagonal entries are real and positive in $\SSL(d,K)$, for $d\geq 2$ and $K\in \{\R,\C\}$. In this last case $U$ is the upper triangular unipotent matrices and $A$ consists of diagonal matrices with real and positive entries. 
\end{examples}

\subsection{Algebraically thick connected Lie groups}\label{sec:AlgThick}

The main goal of this section is to prove Proposition \ref{prop:algThick}. We will proceed in various steps.
\begin{proposition}\label{prop:SufficientAlthick}
Let $G$ be a linear connected Lie group. If either:
	\begin{itemize}\item $[S_{\mathrm{nc}},R]\neq 1$, or
			\item the $\R$-ranks of both $S_{\mathrm{nc}}$ and $R$ are positive, or 
			\item the $\R$-rank of $S_{\mathrm{nc}}$ is at least 2,
	\end{itemize}
	then $G$ has an undistorted closed subgroup isomorphic to $\SOL_1$ or $\Osc$.
\end{proposition}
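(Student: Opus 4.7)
The plan is to construct the required subgroups at the Lie algebra level and then invoke Theorem~\ref{thm:Undistorted}. Indeed, $\SOL_1$ and $\Osc$ are simply connected with centre either trivial or connected (the central $\R$-factor of $\Heis_3$ in $\Osc$), so any injective Lie algebra homomorphism $\mathfrak{sol}_1\to\mathfrak{g}$ or $\mathfrak{osc}\to\mathfrak{g}$ integrates to an injective continuous group homomorphism, and the image is automatically closed and undistorted in the linear group $G$ by Example~\ref{ex:SOL}. As a preliminary step I would use Theorem~\ref{thm:reduc} to replace $G$ by a commable group whose radical is real-triangulable, which preserves all three hypotheses and the conclusion.

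For case (iii), the restricted root-space decomposition of $\mathfrak{s}_{\mathrm{nc}}$ provides, via a standard maximal-length argument in rank $\geq 2$ root systems, two non-proportional restricted roots $\alpha, \beta$ such that $\alpha + \beta$ is not a root. Taking $H \in \mathfrak{a}$ with $\alpha(H) = 1 = -\beta(H)$ and non-zero $X \in \mathfrak{g}_\alpha$, $Y \in \mathfrak{g}_\beta$, the relation $[X,Y]\in\mathfrak{g}_{\alpha+\beta}=0$ gives $\langle H, X, Y\rangle\cong\mathfrak{sol}_1$. For case (ii) I may assume $[\mathfrak{s}_{\mathrm{nc}}, \mathfrak{r}]=0$, otherwise case (i) applies. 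Real-triangulability of $\mathfrak{r}$ together with positive $\R$-rank yields $H_r\in\mathfrak{r}$ and $Y_r\in\mathfrak{r}\setminus\{0\}$ with $[H_r, Y_r]=-Y_r$, and combining with an $\mathfrak{sl}_2$-triple $\{H_s, E_s, F_s\}\subset\mathfrak{s}_{\mathrm{nc}}$ (normalised so $[H_s, E_s]=2E_s$), the elements $\tilde H := H_s/2 + H_r$, $\tilde X := E_s$, $\tilde Y := Y_r$ span an $\mathfrak{sol}_1$-subalgebra, with $[\tilde X, \tilde Y] = 0$ coming directly from $[\mathfrak{s}_{\mathrm{nc}}, \mathfrak{r}] = 0$.

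Case (i) is the most substantive. First, $[\mathfrak{g}, \mathfrak{r}]\subseteq\mathfrak{n}$ together with Whitehead's lemma applied to $0\to\mathfrak{n}\to\mathfrak{r}\to\mathfrak{r}/\mathfrak{n}\to 0$ shows that $\mathfrak{s}_{\mathrm{nc}}$ acts non-trivially on the nilradical $\mathfrak{n}$. I would then pick an $\mathfrak{sl}_2$-triple $\{H, E, F\}\subset\mathfrak{s}_{\mathrm{nc}}$ corresponding to a non-trivial $\mathfrak{s}_{\mathrm{nc}}$-weight on $\mathfrak{n}$, and take $k$ maximal such that $\mathfrak{sl}_2$ acts non-trivially on $\mathfrak{n}^{(k)}/\mathfrak{n}^{(k+1)}$ in the lower central series. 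Iterated Whitehead on the tail of the filtration then kills the $\mathfrak{sl}_2$-action on $\mathfrak{n}^{(k+1)}$. Lifting a non-trivial $\mathfrak{sl}_2$-irreducible of highest weight $n\geq 1$ to an $\mathfrak{sl}_2$-invariant subspace $\tilde V\subset\mathfrak{n}^{(k)}$ via complete reducibility, I select highest- and lowest-weight vectors $v, w\in\tilde V$ and set $Z := [v,w]\in[\mathfrak{n}^{(k)},\mathfrak{n}^{(k)}]\subseteq\mathfrak{n}^{(k+1)}$. Automatically $[H, Z] = 0$, and because $[v, Z]$ lies in $\mathfrak{n}^{(2k+1)}\subseteq\mathfrak{n}^{(k+1)}$ where $\mathfrak{sl}_2$ acts trivially, the Jacobi identity forces $n[v, Z] = [H, [v,Z]] = 0$, hence $[v, Z] = 0$, and symmetrically $[w, Z] = 0$. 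Setting $H' := H/n$, the span $\langle H', v, w, Z\rangle$ is $\mathfrak{osc}$ if $Z\neq 0$ and $\langle H', v, w\rangle\cong\mathfrak{sol}_1$ if $Z = 0$.

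The main obstacle will be the filtration argument in case (i): picking the correct $k$ so that the weight/Jacobi calculation forces $[v,Z]=[w,Z]=0$, and verifying that Whitehead's lemma can indeed be applied iteratively down the nilpotent filtration to collapse the $\mathfrak{sl}_2$-action on $\mathfrak{n}^{(k+1)}$. A subtler but important ingredient is the preliminary reduction to a real-triangulable radical, required in case (ii) to produce a genuine real root in $\mathfrak{r}$ (and thereby to avoid rotational Heintze-type obstructions to finding the real eigenvector $Y_r$).
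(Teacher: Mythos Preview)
Your preliminary reduction via Theorem~\ref{thm:reduc} is a genuine gap. Commability produces a group $G'$ quasi-isometric to $G$, but the arrows in Propositions~\ref{prop:reducLinear} and~\ref{prop:reducRealTrian} genuinely change the Lie algebra (first adding an abelian direct factor, then altering the radical), so a copy of $\mathfrak{sol}_1$ or $\mathfrak{osc}$ found inside the reduced $\mathfrak{g}'$ need not sit inside the original $\mathfrak{g}$. Since Theorem~\ref{thm:Undistorted} must be applied to $G$ itself, you have to locate the subalgebra in $\mathfrak{g}$, not in a commable replacement. Fortunately this reduction is only invoked in your case (ii); your cases (i) and (iii) go through verbatim without it. For case (ii) the paper does not reduce at all: it simply asserts that a solvable $\mathfrak{r}$ of positive $\R$-rank contains an affine subalgebra $\R\rtimes\R$, and pairs it with one from $\mathfrak{s}_{\mathrm{nc}}$ to build $\mathfrak{sol}_1$.

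Your arguments in cases (i) and (iii) are correct and more self-contained than the paper's. The paper handles case (i) by quoting \cite[Proposition~8.2]{CDSW} to exhibit a subalgebra of the form $\mathfrak{v}_n\rtimes\mathfrak{sl}(2,\R)$ or $\mathfrak{h}_{2n+1}\rtimes\mathfrak{sl}(2,\R)$, from which $\mathfrak{sol}_1$ respectively $\mathfrak{osc}$ is read off; your lower-central-series argument (choosing $k$ maximal with $\mathfrak{sl}_2$ acting non-trivially on $\mathfrak{n}^{(k)}/\mathfrak{n}^{(k+1)}$, then using the Jacobi/weight computation to force $[v,Z]=[w,Z]=0$) is a clean direct route that avoids that citation. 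For case (iii) the paper reduces via \cite[Lemma~1.6.2]{BekdlHarpeVal} to inspecting $\mathfrak{sl}(3,\R)$, $\mathfrak{sp}(4,\R)$, and $\mathfrak{sl}(2,\R)\times\mathfrak{sl}(2,\R)$; your restricted-root argument is again more direct, though ``maximal-length'' is not quite the right description---taking $\alpha$ the highest root and $\beta$ any non-proportional positive root is what guarantees $\alpha+\beta$ is not a root.
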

\begin{proof} Let us first prove the Lie algebra analogue. In the first case, by \cite[Proposition 8.2]{CDSW}, $\mathfrak{g}$ has a subalgebra isomorphic to $\mathfrak{v}_n\rtimes\mathfrak{sl}(2,\R)$ for some irrreducible $n$-dimensional representation $\mathfrak{v}_n$ for  $n\geq 2$, or to the $1$-dimensional central extension
$\mathfrak{h}_{2n+1}\rtimes\mathfrak{sl}(2,\R)$ of  $\mathfrak{v}_{2n}\rtimes\mathfrak{sl}(2,\R)$ for some $n\geq 1$. The first one contains a copy of $\mathfrak{sol}_1$, while the second one contains a copy of $\mathfrak{osc}$.

Let us now assume that $[\mathfrak{s}_{\mathrm{nc}},\mathfrak{r}]= 0$ and both $\mathfrak{s}_{\mathrm{nc}}$ and $\mathfrak{r}$ have positive rank, then they both contain a subalgebra isomorphic to the affine Lie algebra $\R\rtimes \R$. Hence $\mathfrak{g}$ contains a subalgebra isomorphic to $\mathfrak{sol}_1$.

We are left with the case where $\mathfrak{s}_{\mathrm{nc}}$ has $\R$-rank at least 2. We can assume without loss of generality that $\mathfrak{g}$ is equal to its semisimple part. If $\mathfrak{g}$ is not simple, then it contains $\mathfrak{sl}(2,\R)\times \mathfrak{sl}(2,\R)$. Since the latter contains a copy of $\mathfrak{sol}_1$, we are done.
	Otherwise, by \cite[Lemma 1.6.2]{BekdlHarpeVal}, it contains a copy of either $\mathfrak{sl}(3,\R)$ or $\mathfrak{sp}(4,\R)$. Since $\mathfrak{sp}(4,\R)$ contains  a copy of $\mathfrak{sl}(2,\R)\times \mathfrak{sl}(2,\R)$, it has already been treated. Finally we conclude from the fact that $\mathfrak{sl}(3,\R)$ contains a copy of $\mathfrak{sol}_1$.

	Let us now prove the proposition. By simple connectedness and the Lie algebra case, we deduce that there is an injective continuous homomorphism $H\to G$, with $H$ either $\SOL_1$ or $\Osc$. By Proposition \ref{thm:Undistorted} and Examples \ref{ex:SOL}, we deduce that the image is closed and undistorted.
\end{proof}

We now treat the case of real-triangulable Lie groups. We start with the following Lie algebra statement. A Lie algebra is called minimal algebraically thick if it is algebraically thick, i.e.\ has $\R$-rank at least 2 or is C (recall Definition~\ref{def:NC}) but no proper subalgebra satisfies these conditions.

\begin{lemma}\label{lem:SufficientCLieAlg}
A real-triangulable Lie algebra is minimal  algebraically thick if and only if it is isomorphic to $\mathfrak{sol}_a$, for some $a>0$, or $\mathfrak{osc}$.
\end{lemma}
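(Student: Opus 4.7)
The plan is to prove both directions of the stated equivalence.

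For the ``if'' direction, I would verify that $\mathfrak{sol}_a$ (for $a>0$) and $\mathfrak{osc}$ are algebraically thick by reading off their non-zero weights: in each case the abelianization is one-dimensional, the weights are $\{1,-a\}$ or $\{1,-1\}$ respectively, and $0$ lies in their convex hull, so Property C holds. Minimality would then follow by enumerating proper Lie subalgebras by dimension: in dimensions $\leq 2$ any solvable Lie algebra is either abelian or isomorphic to $\mathfrak{aff}(\R)$, both thin. For $\mathfrak{osc}$, the three-dimensional subalgebras are either $\mathfrak{heis}_3$ (nilpotent) or of the form $\R T \oplus \R X_i \oplus \R Z$ (one-sided weights, hence NC); crucially, there is no copy of $\mathfrak{sol}_a$, because the only $\mathrm{ad}_T$-eigenvectors with opposite-sign eigenvalues, $X_1$ and $X_2$, satisfy $[X_1,X_2]=Z\neq 0$.

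For the ``only if'' direction, let $\mathfrak{r}$ be minimal algebraically thick. I would first reduce to rank one: if $\mathfrak{r}$ had $\R$-rank $\geq 2$, pick two linearly independent weights $\lambda_1,\lambda_2\in\mathfrak{r}_{\mathrm{ab}}^*$ and choose $T\in\mathfrak{r}$ with $\lambda_1(T)>0>\lambda_2(T)$. Then $\mathfrak{h}=[\mathfrak{r},\mathfrak{r}]\oplus\R T$ would be a proper Lie subalgebra (since $\dim\mathfrak{r}_{\mathrm{ab}}\geq 2$) of rank one with $0$ in the convex hull of its non-zero weights, hence thick, contradicting minimality. So $\mathfrak{r}=\mathfrak{n}\oplus\R T$ has rank one, with $\mathfrak{n}=[\mathfrak{r},\mathfrak{r}]$ nilpotent by Lie's theorem and $\mathrm{ad}_T$ acting with real eigenvalues of both signs on $\mathfrak{n}$.

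Let $\alpha>0$ be the smallest positive eigenvalue of $\mathrm{ad}_T$ and $\beta<0$ the largest negative, and pick eigenvectors $X\in V_\alpha$, $Y\in V_\beta$. If $[X,Y]\neq 0$ then $\alpha+\beta$ is also an eigenvalue of $\mathrm{ad}_T$, so extremality of $\alpha,\beta$ forces $\alpha+\beta=0$ (else $\alpha+\beta$ would be a positive eigenvalue smaller than $\alpha$, or a negative one larger than $\beta$). Hence either $[X,Y]=0$, in which case $\mathrm{span}(T,X,Y)\cong\mathfrak{sol}_{-\beta/\alpha}$ is a thick subalgebra and by minimality $\mathfrak{r}\cong\mathfrak{sol}_{-\beta/\alpha}$; or $[X,Y]\neq 0$ and, after rescaling $T$, $\alpha=1$, $\beta=-1$, $Z:=[X,Y]\in V_0\setminus\{0\}$.

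In the latter case the goal is $\mathfrak{r}\cong\mathfrak{osc}$. Since $Z\in\mathfrak{n}$, $\mathrm{ad}_Z$ is nilpotent (Engel) and preserves each eigenspace $V_\mu$ (as $[T,Z]=0$); hence one can refine $X,Y$ to lie in $\ker(\mathrm{ad}_Z|_{V_1})$ and $\ker(\mathrm{ad}_Z|_{V_{-1}})$. Minimality rules out $\mathrm{span}(T,X,Y)\cong\mathfrak{sol}_1$ as a proper subalgebra (as $Z\in\mathfrak{r}\setminus\mathfrak{sol}_1$), so for the refined pair $[X,Y]\neq 0$ as well. The argument is then closed by an $\mathfrak{sl}_2$-trick: if $[X,Z]=\lambda X$ with $\lambda\neq 0$, Jacobi would force $[Y,Z]=-\lambda Y$ and $\mathrm{span}(X,Y,Z)\cong\mathfrak{sl}(2,\R)$, contradicting solvability of $\mathfrak{r}$. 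Hence $\lambda=0$, $\mathrm{span}(X,Y,Z)\cong\mathfrak{heis}_3$, and $\mathrm{span}(T,X,Y,Z)\cong\mathfrak{osc}$ is a Lie subalgebra of $\mathfrak{r}$, which by minimality equals $\mathfrak{r}$. The main obstacle lies in making this last step rigorous when the eigenspaces $V_{\pm 1}$ are higher-dimensional: one must iterate the Engel refinement, using minimality at each stage to exclude proper $\mathfrak{sol}_1$-subalgebras, until the $\mathfrak{sl}_2$-trick can be applied to a 3-dimensional subalgebra and so produce the desired Heisenberg nilradical.
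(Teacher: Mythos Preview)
Your rank-one reduction and the $\mathfrak{sol}_a$ case are essentially the paper's argument (the paper phrases it via the exponential radical having codimension~$1$, but the content is the same). The gap you yourself flag in the $\mathfrak{osc}$ case is real, and the circularity is the problem: you define $Z=[X,Y]$, then want to replace $X,Y$ by vectors in $\ker(\mathrm{ad}_Z|_{V_{\pm 1}})$, but this changes $Z$, so there is no reason the new bracket $Z'=[X,Y]$ still kills the new $X,Y$. Your proposed ``iterated Engel refinement'' has no obvious termination mechanism, and the $\mathfrak{sl}_2$-trick is not doing work either, since once you know $\mathrm{ad}_Z$ is nilpotent on $V_1$ you already have $\lambda=0$ for any eigenvector---the issue is that $[X,Z']$ need not be a multiple of $X$ at all when $\dim V_1>1$.

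The paper sidesteps this entirely by working with the full weight-space grading $\mathfrak{e}=\bigoplus_x \mathfrak{e}_x$ and the lower central series, rather than chasing individual eigenvectors. One picks weights $s>0>t$, passes to the graded subalgebra generated by one eigenline in each of $\mathfrak{e}_s,\mathfrak{e}_t$ together with $\mathfrak{a}$; minimality forces this to be all of $\mathfrak{g}$, hence $\mathfrak{e}/\mathfrak{e}^2$ is $2$-dimensional with weights exactly $s,t$. Minimality again forces $\mathfrak{e}^2$ to be concentrated in weight $0$ (otherwise $(\mathfrak{e}_s+\mathfrak{e}^2)\rtimes\mathfrak{a}$ or $(\mathfrak{e}_t+\mathfrak{e}^2)\rtimes\mathfrak{a}$ would be a proper thick subalgebra). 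Now the grading relation $[\mathfrak{e}_x,\mathfrak{e}_y]\subset\mathfrak{e}_{x+y}$ finishes things: if $s+t\neq 0$ then $\mathfrak{e}^2=0$; if $s+t=0$ then $[\mathfrak{e}_s,\mathfrak{e}^2]=[\mathfrak{e}_t,\mathfrak{e}^2]=0$ automatically (no refinement needed), and minimality bounds $\dim\mathfrak{e}_s=\dim\mathfrak{e}_t=1$ and $\dim\mathfrak{e}^2\leq 1$. This global grading argument is what replaces your eigenvector chase, and it is what you should import to close the gap.
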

\begin{proof}
	Clearly these are minimal   algebraically thick. Conversely, assume that $\mathfrak{g}$ is minimal   algebraically thick, and let $\mathfrak{e}$ be its exponential radical.
 Since the Lie algebra is real-triangulable, its roots are real, and this correspond to weights.  Recall that we can see them as elements of the dual of the abelianization $\mathfrak{g}_{\mathrm{ab}}$ of $\mathfrak{g}$. 
 
  Assume first that $\mathfrak{e}$ has codimension at least 2. Since $\mathfrak{e}\subset [\mathfrak{g},\mathfrak{g}]$, we deduce that $\mathfrak{g}_{\mathrm{ab}}$ has dimension at least 2. Fix a Euclidean structure on $\mathfrak{g}_{\mathrm{ab}}$, so that we can identify it with its dual.
 Consider subalgebras containing $[\mathfrak{g},\mathfrak{g}]$ of dimension $1+\dim[\mathfrak{g},\mathfrak{g}]$, which  are in one-to-one correspondence with lines of $\mathfrak{g}_{\mathrm{ab}}$. The effect of passing to such a subalgebra on the weights is to project them orthogonally to the corresponding line. 
	Since there are only finitely many weights, one can find a line so that any non-zero weight remains non-zero when restricted to the line. 
	Moreover, whether  $\mathfrak{g}$ has C, or has $\R$-rank is at least $2$,  we can find a line such that when projecting onto this line we obtain a subalgebra which has C, which contradicts minimality. 	
	 
	Hence $\mathfrak{e}$ has codimension 1, so $\mathfrak{g}=\mathfrak{e}\rtimes \mathfrak{a}$, where $\mathfrak{a}$ is one-dimensional. Recall that weights are elements of $\Hom(\mathfrak{g}_{\mathrm{ab}},\R)=\Hom(\mathfrak{a},\R)\cong \R$. Since $\mathfrak{g}$ is real-triangulable, weights correspond to the eigenvalues of the adjoint action of $\mathfrak{a}$ on $\mathfrak{e}$. Denote $\mathfrak{e}_x$ the characteristic subspace of $\mathfrak{e}$ associated to the weight $x$. Observe that since the adjoint action is by derivations, $[\mathfrak{e}_x,\mathfrak{e}_y]\subset \mathfrak{e}_{x+y}$. In other words, the vector decomposition $\mathfrak{e}=\bigoplus_x \mathfrak{e}_x$, where $x$ run through weights, defines a  real grading of the Lie algebra $\mathfrak{e}$.
	
Write $(\mathfrak{e}^i)_{i\geq 1}$ for the lower central series of $\mathfrak{e}$. These are graded ideals, so in particular the graduation on $\mathfrak{e}$ induces a graduation on $\mathfrak{e}^i/\mathfrak{e}^j$ for all $i<j$ (associated to the corresponding induced $\mathfrak{a}$-action). Moreover the Lie bracket induces for each pair of weights $\alpha,\beta$, and for all $i,j\in \N^*$ a bilinear map \begin{equation}\label{eq:grading}(\mathfrak{e}^i/\mathfrak{e}^{i+1})_\alpha\times (\mathfrak{e}^j/\mathfrak{e}^{j+1})_\beta\to (\mathfrak{e}^{i+j}/\mathfrak{e}^{i+j+1})_{\alpha+\beta}.\end{equation}

	The fact that $\mathfrak{g}$ has C implies that there are both positive and negative weights---say $s$ and $t$. 
Passing to the graded subalgebra generated by one eigenline in degree $s$ and $t$, and using minimality, we see that $\mathfrak{e}^1/\mathfrak{e}^2$ is 2-dimensional with weights $s$ and $t$. 
We claim that $\mathfrak{e}^2$ is concentrated in degree $0$, and therefore that $\mathfrak{e}=\mathfrak{e}_t\oplus\mathfrak{e}_s\oplus\mathfrak{e}^2_0$. 
For if $\mathfrak{e}^2$ contains a negative (resp.\ positive) weight then $(\mathfrak{e}_s + \mathfrak{e}^2) \rtimes a$ (resp.\  $(\mathfrak{e}_t + \mathfrak{e}^2) \rtimes a$) contradicts minimality.

Now if $t+s\neq 0$, then by (\ref{eq:grading}), we have $\mathfrak{e}^2=0$, and so $\mathfrak{g}\cong \mathfrak{sol}_{-t/s}$.
Assume $s+t=0$. Again by by (\ref{eq:grading}) we have $[\mathfrak{e}_s,\mathfrak{e}^2]=[\mathfrak{e}_t,\mathfrak{e}^2]=0$, so $\mathfrak{e}^2$ is centralized by $\mathfrak{e}_s$ and $\mathfrak{e}_t$. By minimality $\mathfrak{e}_s$ and $\mathfrak{e}_t$ are one-dimensional, and $\mathfrak{e}^2$ is either zero or one-dimensional, according to whether $[\mathfrak{e}_s,\mathfrak{e}_t]$ is zero or not. In the first case, $\mathfrak{g}\cong \mathfrak{sol}$, while in the second case, $\mathfrak{e}$ is the Heisenberg group, and  $\mathfrak{g}\cong\mathfrak{osc}$. 
\end{proof}

\begin{proposition}\label{prop:SufficientC}
Let $G$ be an algebraically thick real-triangular Lie group. Then it has an undistorted closed subgroup isomorphic to $\SOL_a$, for some $a>0$, or $\Osc$.
\end{proposition}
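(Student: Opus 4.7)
The plan is to descend to a minimal algebraically thick Lie subalgebra of $\mathfrak{g}$, integrate it, and then invoke the undistortion theorem already proved.

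First, I would record a few structural remarks. Real-triangulability passes to Lie subalgebras: the restriction of a faithful triangulable representation of $\mathfrak{g}$ to any Lie subalgebra $\mathfrak{h}\subseteq \mathfrak{g}$ is again faithful and triangulable. Moreover, since $G$ admits a continuous faithful triangulable real representation, $G$ is linear, simply connected, and solvable (its image sits inside the solvable group of upper triangular matrices). A classical consequence of simple connectedness and solvability is that every connected Lie subgroup of $G$ is closed and simply connected, so analytic subgroups are automatically genuine Lie subgroups of the expected topological type.

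Second, since $\mathfrak{g}$ is algebraically thick and finite-dimensional, I would extract a minimal algebraically thick Lie subalgebra $\mathfrak{h}\subseteq \mathfrak{g}$ by a straightforward induction on dimension: if $\mathfrak{g}$ itself is not minimal, pass to a proper algebraically thick Lie subalgebra and iterate (finite-dimensionality guarantees the process terminates). Since $\mathfrak{h}$ is itself real-triangulable, Lemma \ref{lem:SufficientCLieAlg} classifies it up to isomorphism: either $\mathfrak{h}\cong \mathfrak{sol}_a$ for some $a>0$, or $\mathfrak{h}\cong \mathfrak{osc}$.

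Finally, let $H$ be the analytic subgroup of $G$ with Lie algebra $\mathfrak{h}$. By the first paragraph $H$ is a closed, simply connected Lie subgroup, hence isomorphic (as a Lie group) to $\SOL_a$ or $\Osc$ respectively. Since $G$ is linear and $\SOL_a$, $\Osc$ satisfy the hypotheses of Theorem \ref{thm:Undistorted} (by Examples \ref{ex:SOL}), the inclusion $H\hookrightarrow G$ has closed and undistorted image, which is exactly what we need.

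The only substantive step is the descent to a minimal algebraically thick subalgebra; once that is in hand, everything else is an assembly of prior results. The one subtle point to keep honest is the identification of the analytic subgroup with the simply connected model $\SOL_a$ or $\Osc$—this is where one genuinely needs $G$ to be simply connected solvable rather than just solvable, so that the analytic subgroup is closed and simply connected and hence not a nontrivial quotient of the model.
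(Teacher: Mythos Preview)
Your proof is correct and follows essentially the same route as the paper: descend to a minimal algebraically thick subalgebra via Lemma~\ref{lem:SufficientCLieAlg}, then invoke Theorem~\ref{thm:Undistorted} and Examples~\ref{ex:SOL} to get closedness and undistortion. You supply more detail than the paper does---in particular, you make explicit why real-triangulability passes to subalgebras and why the analytic subgroup is closed and simply connected (so that it really is isomorphic to the model $\SOL_a$ or $\Osc$, not a quotient)---whereas the paper simply points back to the concluding lines of Proposition~\ref{prop:SufficientAlthick}.
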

\begin{proof}
Lemma \ref{lem:SufficientCLieAlg} clearly implies the analogous result
for Lie algebras: since an algebraically thick real triangulable Lie algebra contains a minimal one, we deduce that $\mathfrak{g}$ contains a subalgebra isomorphic to $\mathfrak{sol}_a$, for some $a>0$, or $\mathfrak{osc}$. We conclude as in the end of the proof of Proposition \ref{prop:SufficientAlthick}.
\end{proof}

 Combining  Proposition \ref{prop:SufficientAlthick} and \ref{prop:SufficientC}, we immediately deduce the following result.

\begin{corollary}\label{cor:OneImplicationThick}
Let $G$ be a linear connected Lie group with real-triangulable radical. If it is algebraically thick, then it contains a closed undistorted subgroup isomorphic to $\SOL_a$, for some $a>0$, or $\Osc$.
\end{corollary}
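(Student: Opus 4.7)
The plan is a case split on which defining condition of algebraic thinness — namely $\R$-rank at most $1$, $[S_{\mathrm{nc}},R]=1$, and $R$ being an NC-group — must fail for $G$, combining Propositions \ref{prop:SufficientAlthick} and \ref{prop:SufficientC} together with Theorem \ref{thm:Undistorted}.

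First, I would dispatch the configurations covered directly by Proposition \ref{prop:SufficientAlthick}: if $[S_{\mathrm{nc}},R]\neq 1$, or the $\R$-rank of $S_{\mathrm{nc}}$ is at least $2$, or both $S_{\mathrm{nc}}$ and $R$ have positive $\R$-rank, then that proposition already produces a closed undistorted copy of $\SOL_a$ (for some $a>0$) or $\Osc$ inside $G$, and we are done.

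The residual case is $[S_{\mathrm{nc}},R]=1$, $\R$-rank of $S_{\mathrm{nc}}$ at most $1$, and not both of $S_{\mathrm{nc}}$ and $R$ having positive rank. The key step here is to show that in this configuration the solvable radical $R$, which is real-triangulable by hypothesis, is itself algebraically thick, so that Proposition \ref{prop:SufficientC} applies to $R$. If $R$ has Property C then $R$ is thick by definition. Otherwise $R$ is NC, so the thickness of $G$ must come from the $\R$-rank of $G$ being at least $2$; since that rank equals the sum of the ranks of $S_{\mathrm{nc}}$ and $R$, and the residual constraints force $S_{\mathrm{nc}}$ to be trivial, we get $\R$-rank of $R$ at least $2$, whence $R$ is again thick. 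Applying Proposition \ref{prop:SufficientC} to $R$ then yields an injective continuous homomorphism $f\colon H\to R\hookrightarrow G$ with $H\in\{\SOL_a,\Osc\}$.

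To conclude, rather than transport the non-distortion of the image from $R$ (which is not assumed to be undistorted in $G$), I would apply Theorem \ref{thm:Undistorted} directly to the composition $f\colon H\to G$: by Examples \ref{ex:SOL} both $\SOL_a$ and $\Osc$ satisfy the structural hypotheses $(i)$ and $(ii)$ of that theorem, and $G$ is linear by assumption, so the image of $f$ is automatically closed and undistorted in $G$. The main obstacle, if any, is purely bookkeeping: checking that the three conditions listed in Proposition \ref{prop:SufficientAlthick} together with the thickness of $R$ exhaust algebraic thickness of $G$. The substantive algebraic and geometric content is entirely packaged inside Propositions \ref{prop:SufficientAlthick} and \ref{prop:SufficientC}.
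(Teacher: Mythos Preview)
Your proposal is correct and follows essentially the same route as the paper: case-split according to which hypothesis of Proposition~\ref{prop:SufficientAlthick} fails, and in the residual case show that $R$ itself is algebraically thick so that Proposition~\ref{prop:SufficientC} applies. Your treatment is in fact more careful than the paper's terse proof: you explicitly flag that undistortion of the subgroup in $R$ does not automatically pass to $G$, and resolve this by invoking Theorem~\ref{thm:Undistorted} directly for the embedding $H\to G$ (using linearity of $G$), a point the paper leaves implicit. One tiny wording quibble: it is not the residual constraints alone that force $S_{\mathrm{nc}}$ to have rank $0$, but those constraints together with the conclusion that $\R$-rank of $G$ is at least $2$; your argument makes this clear in substance even if the sentence is slightly compressed.
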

\begin{proof}
The only case not covered by Proposition \ref{prop:SufficientAlthick}
is when $R$ is a $C$-group or has $\R$-rank at least $2$, which is treated in Proposition  \ref{prop:SufficientC}. 
\end{proof}

 We now turn to the converse.
 
 \begin{lemma}\label{lem:NCpassesTo subgroup}
The class of solvable algebraically thin groups is stable under taking closed subgroups.
 \end{lemma}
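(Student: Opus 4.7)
The plan is to reduce the statement to a Lie-algebra fact about restrictions of weights. Since algebraic thinness depends only on the Lie algebra, I would first replace $H$ by its identity component $H_0$; it therefore suffices to treat connected closed subgroups $H\subseteq G$, in which case $\mathfrak{h}\subseteq \mathfrak{g}$ is, by Cartan's theorem and the solvability of $\mathfrak{g}$, a solvable Lie subalgebra.

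Next I would reformulate the hypothesis in terms of weights: by Proposition~\ref{prop:algthinDescrip}, $G$ being solvable and algebraically thin means precisely that every weight of $\mathfrak{g}$ lies on a single closed ray $\{t\lambda : t\geq 0\}$ in $\mathfrak{g}_{\mathrm{ab}}^*$, the degenerate case $\lambda=0$ corresponding to polynomial growth. The conclusion to reach is the analogous conical condition in $\mathfrak{h}_{\mathrm{ab}}^*$.

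The main step—the one I expect to be the real content of the proof—is the claim that every weight of $\mathfrak{h}$ is the restriction to $\mathfrak{h}$ of a weight of $\mathfrak{g}$. To establish this I would use Lie's theorem to decompose $\mathfrak{g}_\C=\bigoplus_\lambda V_\lambda$ into generalized joint eigenspaces for $\ad_\C\mathfrak{g}$, and observe that each $V_\lambda$ is automatically $\ad_\C\mathfrak{h}$-stable. Since $\ad_\C x-\lambda(x)$ is nilpotent on $V_\lambda$ for every $x\in\mathfrak{g}$, the unique generalized $\ad_\C\mathfrak{h}$-weight occurring on $V_\lambda$ is $\lambda|_\mathfrak{h}$. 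Now $\mathfrak{h}_\C\subseteq \mathfrak{g}_\C$ is itself $\ad_\C\mathfrak{h}$-invariant, so its $\mathfrak{h}$-weight-space decomposition refines its intersection with the pieces $V_\lambda$; this forces every $\mathfrak{h}$-root to be of the form $\lambda|_\mathfrak{h}$ for some $\mathfrak{g}$-root $\lambda$.

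To finish, I would take real parts and note that $[\mathfrak{h},\mathfrak{h}]\subseteq [\mathfrak{g},\mathfrak{g}]$, so restriction induces a well-defined $\R$-linear map $\mathfrak{g}_{\mathrm{ab}}^*\to \mathfrak{h}_{\mathrm{ab}}^*$. The image of the ray $\{t\lambda:t\geq 0\}$ under this linear map is again a ray (possibly $\{0\}$), so the weights of $\mathfrak{h}$ sit on a single closed ray in $\mathfrak{h}_{\mathrm{ab}}^*$. This yields $\R$-rank at most $1$ and, when the rank is $1$, Property NC for $\mathfrak{h}$, so that $H$ is algebraically thin. Apart from the eigenspace analysis in the third paragraph, the argument is entirely linear-algebraic.
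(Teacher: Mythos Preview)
Your argument is correct and takes a genuinely different route from the paper's. The paper deduces the lemma from the contraction characterization of Property~NC (Lemma~\ref{lem:NC}): it first asserts that the exponential radical $E'$ of $G'$ is contained in $E$, and then splits into two cases according to whether $G'$ contains an element contracting $E$---if so, that element contracts $E'\subseteq E$ and $G'$ is NC; if not, the rank-$\leq 1$ hypothesis forces $E'=\{1\}$, so $G'$ has polynomial growth. Your approach bypasses the exponential radical and the dynamical lemma entirely, working directly at the level of weights: you prove the clean intermediate fact that every root of $\mathfrak{h}$ is the restriction of a root of $\mathfrak{g}$ via the generalized eigenspace decomposition of $\mathfrak{g}_\C$, and then observe that a linear map sends a closed ray to a closed ray. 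This is more self-contained and makes explicit a restriction-of-weights principle that the paper's proof actually uses tacitly (both in the inclusion $E'\subseteq E$ and in the unspoken rank bound in the contracting case). The paper's version is terser in context because Lemma~\ref{lem:NC} and the exponential-radical machinery are already available, and it fits the dynamical flavour of the surrounding section.
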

 \begin{proof}
	 Recall that algebraically thin solvable Lie groups are precisely NC-groups of $\R$-rank at most $1$ by Proposition~\ref{prop:algthinDescrip}. 
	 The conclusion can be deduced from Lemma \ref{lem:NC}: indeed, if $G'<G$, then the exponential radical $E'$ of $G'$ is contained in $E$, and either $G'$ contains an element that contracts $E$, and therefore $E'$, or it does not. But since the $\R$-rank of $G$ is at most one, the last option implies that $E'=\{1\}$, and therefore that $G'$ has polynomial growth.     
 \end{proof}

 \begin{lemma}\label{lem:algebraicallythin=NC}
 If a connected Lie group is algebraically thin, then so are all its closed connected solvable subgroups.
 \end{lemma}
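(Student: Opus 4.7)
The plan is to use Proposition~\ref{prop:algthinDescrip} to split into the three cases of an algebraically thin connected Lie group $G$, and verify that any closed connected solvable subgroup $H\leq G$ is algebraically thin. Throughout I work at the level of the Lie algebras $\mathfrak{h}\leq\mathfrak{g}$.

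Case (a) is immediate: $\mathfrak{g}$ has Property R (polynomial growth), and restricting the adjoint action to the stable subspace $\mathfrak{h}$ preserves the property that all $\ad$-eigenvalues are purely imaginary, so $\mathfrak{h}$ also has polynomial growth, hence is algebraically thin. In case (b), the hypothesis $[\mathfrak{s}_{\mathrm{nc}},\mathfrak{r}]=0$ with $\mathfrak{r}$ of polynomial growth yields a Lie algebra decomposition $\mathfrak{g}\cong\mathfrak{s}_{\mathrm{nc}}\oplus\mathfrak{p}$ where $\mathfrak{p}:=\mathfrak{r}\rtimes\mathfrak{s}_c$ has polynomial growth (a compact-by-polynomial extension). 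Projecting $\mathfrak{h}$ to each summand gives $\pi_1(\mathfrak{h})\leq\mathfrak{s}_{\mathrm{nc}}$, a solvable subalgebra of a rank-one semisimple Lie algebra, hence contained in a Borel of $MAN$-type (rank-one NC), and $\pi_2(\mathfrak{h})\leq\mathfrak{p}$ of polynomial growth. Thus $\mathfrak{h}$ embeds in the solvable algebraically thin algebra $\pi_1(\mathfrak{h})\oplus\pi_2(\mathfrak{h})$, and Lemma~\ref{lem:NCpassesTo subgroup} concludes.

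The main obstacle is case (c), where $\mathfrak{s}$ is compact semisimple acting nontrivially on the NC rank-one $\mathfrak{r}$. The exponential radical $\mathfrak{e}$ of $\mathfrak{r}$ is characteristic in $\mathfrak{r}$ and hence an ideal of $\mathfrak{g}$, and $\mathfrak{g}/\mathfrak{e}$ has polynomial growth; so all nonzero real parts of eigenvalues of $\ad y$ on $\mathfrak{g}$ come from its action on $\mathfrak{e}$. Since the connected compact group $\exp\mathfrak{s}$ permutes the finite set of weights of $\mathfrak{r}$ and therefore fixes each of them, the weight character $\alpha$ of $\mathfrak{r}$ extends to a character $\tilde\alpha$ of $\mathfrak{g}$ (with $\tilde\alpha(y)=\alpha(y_r)$ for any Levi decomposition $y=y_r+y_s$), and the weight grading $\mathfrak{e}=\bigoplus_i \mathfrak{r}^{(i)}$ is $\mathfrak{g}$-invariant. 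The key claim to establish is that for every $y\in\mathfrak{g}$, the real parts of eigenvalues of $\ad y$ on each $\mathfrak{r}^{(i)}$ are all equal to $c_i\tilde\alpha(y)$. Granting this, every nonzero weight of $\mathfrak{h}$ is a positive multiple of $\tilde\alpha|_{\mathfrak{h}}$, giving rank at most~$1$; NC then holds because either $\tilde\alpha|_{\mathfrak{h}}\equiv 0$ (so $\mathfrak{h}$ has polynomial growth) or some $y\in\mathfrak{h}$ has $\tilde\alpha(y)<0$, providing a contracting element for the exponential radical of $\mathfrak{h}$.

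The difficulty in the key claim is that $\ad y_r$ and $\ad y_s$ do not commute in general, so one cannot directly add their spectra. My plan is to analyse the growth rate of $\Ad(\exp ty)|_{\mathfrak{r}^{(i)}}=e^{t\ad y|_{\mathfrak{r}^{(i)}}}$ as $t\to\infty$: writing $\exp(ty)=r(t)\exp(ty_s)$ with $r(t)\in R$ and integrating the induced ODE $r^{-1}\dot r=\Ad(\exp\tau y_s)\,y_r$ shows that $\log r(t)=t\bar y_r+O(1)$, where $\bar y_r$ is the time-average of $\Ad(\exp\tau y_s)\,y_r$ along the relatively compact one-parameter subgroup in $S$. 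Since $\tilde\alpha$ is $S$-invariant, $\alpha(\bar y_r)=\alpha(y_r)=\tilde\alpha(y)$, so $\|\Ad(r(t))|_{\mathfrak{r}^{(i)}}\|$ grows like $e^{tc_i\tilde\alpha(y)}$ up to sub-exponential error, while $\Ad(\exp ty_s)|_{\mathfrak{r}^{(i)}}$ stays bounded. Comparing with $\|e^{t\ad y|_{\mathfrak{r}^{(i)}}}\|^{1/t}\to e^{\max\Re\lambda}$ (and applying the symmetric argument with $-y$ for the lower bound) yields $\max\Re\lambda=\min\Re\lambda=c_i\tilde\alpha(y)$, proving the claim.
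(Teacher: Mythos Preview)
Cases (a) and (b) are fine, but case (c) has genuine gaps. The paper avoids the case split entirely: any solvable $\mathfrak{h}\subset\mathfrak{g}$ lies in a maximal solvable $\mathfrak{m}=\mathfrak{r}+\mathfrak{n}$, where $\mathfrak{n}$ is maximal solvable in $\mathfrak{s}$; since $\mathfrak{n}$ is NC (a Borel in the rank~$\leq 1$ part $\mathfrak{s}_{\mathrm{nc}}$ direct sum a compact torus, whose adjoint eigenvalues have zero real part), one checks directly that the nonzero weights of $\mathfrak{m}$ restrict to nonzero weights of $\mathfrak{r}$ or of $\mathfrak{n}_{\mathrm{nc}}$, so $\mathfrak{m}$ is NC of $\R$-rank~$\leq 1$, and Lemma~\ref{lem:NCpassesTo subgroup} finishes. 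No dynamics is needed.

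Two claims in your case (c) are false as stated. First, the weight grading $\mathfrak{e}=\bigoplus_i\mathfrak{r}^{(i)}$ is \emph{not} $\ad\mathfrak{g}$-invariant: your argument via the connected compact group $S$ permuting the finite set of weights only gives $\ad\mathfrak{s}$-invariance, but $\ad\mathfrak{e}$ shifts the grading (for $\mathfrak{r}=\Heis_3\rtimes\R$ with the standard dilation and $\mathfrak{s}=\mathfrak{so}(2)$ rotating the degree-$1$ plane, $\ad e_1$ sends $e_2\in\mathfrak{r}^{(1)}$ to $[e_1,e_2]\in\mathfrak{r}^{(2)}$). So ``eigenvalues of $\ad y$ on $\mathfrak{r}^{(i)}$'' is not well-posed. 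Second, $\log r(t)=t\bar y_r+O(1)$ fails already for $R=\R^2\rtimes_{(1,1)}\R$ with $SO(2)$ rotating $\R^2$: with $y_r=(v,a)$, $y_s=\theta$, $v\neq 0$, $a>0$, $\theta\neq 0$, one gets $r(t)=(w(t),at)$ where $w(t)=v\,(e^{(a+i\theta)t}-1)/(a+i\theta)$, and in the exponential chart of $R$ this gives $\log r(t)=\big(at\,w(t)/(e^{at}-1),\,at\big)$, whose $\R^2$-component is $\sim t\cdot v e^{i\theta t}a/(a+i\theta)$, unbounded, while $\bar y_r=(0,a)$. The ODE $r^{-1}\dot r=\Ad(\exp\tau y_s)y_r$ simply does not integrate to a logarithm in a non-abelian $R$, and the Magnus/BCH corrections are not lower order here.
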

 \begin{proof}
This is a statement about Lie algebras. 
Let $\mathfrak{s}$ be the semisimple part of the Lie algebra $\mathfrak{g}$ of $G$, and let $\mathfrak{r}$ be its solvable radical.  Let $\mathfrak{n}$ be a maximal solvable subalgebra of $\mathfrak{s}$: it is of type NC. Let $\mathfrak{m}=\mathfrak{n}+ \mathfrak{r}$.
The condition $[\mathfrak{r},\mathfrak{s}]=0$ ensures that $\mathfrak{m}$ is the direct product of $\mathfrak{n}$ with $\mathfrak{r}$. Besides, by Hahn-Banach, property NC says that there exists $x$ in the Lie algebra such that $\omega(x)>0$ for all for weights $\omega$. Since weights of $\mathfrak{\mathfrak{n}\times \mathfrak{r}}$ are pairs 
$(\omega_n,\omega_r)$, where $\omega_n$ and $\omega_r$ are respectively weights of $\mathfrak{n}$ and $\mathfrak{r}$, we deduce that 
$\mathfrak{m}$ has property NC and has $\R$-rank at most 1. Now every maximal solvable subalgebra of $\mathfrak{g}$ is of this form. Hence combining this with Lemma \ref{lem:NCpassesTo subgroup} proves the lemma. 
 \end{proof}

\begin{proof}[Proof of Proposition \ref{prop:algThick}]
Corollary \ref{cor:OneImplicationThick} readily implies (i)$\implies$(ii), and (ii)$\implies$(iii) is clear. 
Hence we are reduced to proving (iii)$\implies$(i). This results from Lemma \ref{lem:algebraicallythin=NC} together with the fact that neither $\SOL_a$ nor $\Osc$ are algebraically thin.
\end{proof}

\section{Embeddings of Diestel--Leader graphs}\label{sec:DL}
 The goal of this section is to show that for all $a>0$, $\SOL_a$ (and some others groups) contain a quasi-isometrically embedded copy of the Diestel--Leader graph. This will be important when we establish lower bounds on Poincar\'e profiles in \S \ref{sec:Poincare n/logn}. Unfortunately, we are unable to prove it for $\Osc$, so its Poincaré profiles will be established by a direct (much harder) computation in \S \ref{sec:Poincare n/logn}.

\begin{theorem}\label{thm:embSgraph}
	The Diestel--Leader graph $\DL(2,2)$ quasi-isometrically embeds into \begin{itemize}
\item $T\times T$ where $T$ is any tree with minimal vertex degree $\geq 3$,
\item any finitely generated wreath product $H\wr K$ where $H$ is non-trivial and $K$ is infinite,
\item the Baumslag--Solitar group $\BS(m,n)$, whenever $\abs{m}\neq \abs{n}$, and
\item $\SOL_a$ for any $a>0$.
\end{itemize}
\end{theorem}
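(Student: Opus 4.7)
The unifying plan is to exploit the fact that $\DL(2,2)$ is by definition the horocyclic product of two $3$-regular trees $T_3$, and that each of the four target spaces admits (up to quasi-isometry) a horocyclic product description into which a pair of $3$-regular trees embeds quasi-isometrically. In every case I would then check that the inclusion of $\DL(2,2) \hookrightarrow T_3 \times T_3$ is itself a quasi-isometric embedding with respect to the induced product metric, which reduces to the standard comparison between the $\DL$ graph metric and the $\ell^1$-metric restricted to the "opposite Busemann height" subset.

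For $T\times T$ with $\deg(v)\geq 3$, I would fix a boundary point $\xi\in\partial T$ and greedily build an isometric copy of $T_3$ in $T$ by keeping, at each vertex, one edge towards $\xi$ and exactly two edges away from it (this is where the degree hypothesis is used). Taking the product in both factors and restricting to the horocyclic condition yields the desired embedding. For $\SOL_a=\R^2\rtimes_{(1,-a)}\R$, I would use the identification of $\SOL_a$ with a horocyclic product of two hyperbolic planes: viewing $\HH^2$ in the upper half-plane model, $\SOL_a$ is (isometric to) the set of pairs $((x,s),(y,t))\in\HH^2\times\HH^2$ with $s+at=0$ for suitable base points. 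I would then embed $T_3$ isometrically in each $\HH^2$ (as a binary tree aligned with horocycles), choosing vertical spacings $\log 2$ and $(\log 2)/a$ so that corresponding vertices automatically satisfy the horocyclic constraint; the induced map gives the QI embedding.

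For $\BS(m,n)$ with $|m|\neq|n|$, the same template applies using the Farb--Mosher treebolic model: $\BS(m,n)$ is quasi-isometric to $\HH^2\times_{\mathrm{horo}}T_q$ with $q\geq 3$, and embedding $T_3$ into both $T_q$ and $\HH^2$ with matching horocyclic parameters produces a QI copy of $\DL(2,2)$. For $H\wr K$, I would first recall that $\DL(2,2)$ is QI to the lamplighter $\Z/2\wr\Z$, and then construct a QI embedding $\Z/2\wr\Z\to H\wr K$ by choosing a non-trivial $h\in H$ and a QI embedded bi-infinite geodesic $n\mapsto k_n$ in $K$ (which exists in any infinite finitely generated group by a standard compactness/König argument applied to a sequence of lengthening geodesic segments, together with vertex transitivity). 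Defining $(f,n)\mapsto(\tilde f,k_n)$ with $\tilde f(k_i)=h^{f(i)}$ and $\tilde f\equiv e$ elsewhere, the comparison is routine once one writes the word length in a wreath product as a travelling-salesman cost: the TSP over $\{k_i:f(i)\neq 0\}\cup\{k_0,k_n\}$ in $K$ is comparable, via the QI embedding $\Z\hookrightarrow K$, to the TSP over $\mathrm{supp}(f)\cup\{0,n\}$ in $\Z$.

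The main obstacle in each case is the careful matching of scales and the bookkeeping needed to guarantee that the horocyclic constraint is preserved by the chosen embeddings (and, in (ii), that the wreath-product word length really is controlled by a TSP cost in both directions). The case (iii) is additionally non-elementary in that it relies on the Farb--Mosher description of $\BS(m,n)$ for $|m|\neq|n|$; in the homogeneous case $|m|=|n|$ the same construction cannot work since $\BS(n,n)$ is quasi-isometric to $F_n\times\Z$, which explains the hypothesis $|m|\neq|n|$ in the statement.
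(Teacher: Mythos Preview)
Your overall strategy---realise each target as (or as containing) a horocyclic product, embed $T_3$ Busemann-compatibly into each factor, and use that the inclusion $\DL(2,2)\hookrightarrow T_3\times T_3$ is bi-Lipschitz---is exactly the paper's approach, and your treatments of cases (i), (ii) and (iv) match the paper's proof essentially line for line. One small caveat in (iv): the vertical spacings $\log 2$ and $(\log 2)/a$ do not in general both exceed $\log 2$, which is the threshold needed for the $T_3\hookrightarrow\HH^2$ embedding to be bi-Lipschitz (at spacing exactly $\log 2$ distinct branches of the binary tree accumulate, and for $a>1$ the second spacing is even smaller). The paper fixes this by scaling both spacings by a common large factor $\beta$, which preserves the ratio $\alpha=\beta_1/\beta_2$ entering the horocyclic product while pushing both above the threshold; you should do the same.

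Case (iii) has a genuine gap. Your claim that $\BS(m,n)$ is quasi-isometric to a treebolic space $\HH^2\times_{\mathrm{horo}}T_q$ is false when $|m|,|n|\geq 2$: the treebolic space is quasi-isometric to the solvable (hence amenable) group $\BS(1,q)$, whereas $\BS(m,n)$ with $|m|,|n|\geq 2$ contains a nonabelian free subgroup and is nonamenable, so no such quasi-isometry exists. The Farb--Mosher model you invoke only applies to $\BS(1,n)$. The paper's route around this is: first handle $\BS(1,n)$ via the treebolic model (as you suggest), then observe that $\BS(1,2)=\langle a^2,t\rangle$ sits as an undistorted subgroup inside $\BS(2,4)$, and finally invoke Whyte's theorem that all $\BS(m,n)$ with $|m|,|n|\geq 2$ and $|m|\neq|n|$ are pairwise quasi-isometric. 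You need some such two-step argument; the direct horocyclic picture alone does not reach the nonamenable Baumslag--Solitar groups.
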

The first two items are certainly not new, and, as mentioned in the introduction, all are likely known to experts, but proofs are given for completeness.

Firstly, we recall the definition of the Diestel--Leader graph. Given a simplicial tree $T$, $v\in VT$ and $\xi\in\partial T$, for each vertex $w$ let $\gamma_w$ be the unique geodesic ray from $w$ to $\xi$. The \textbf{Busemann function} associated to the triple $(T,v,\xi)$ is defined by $b_{T,v,\xi}:VT\to\R$,
\[
 b_{T,v,\xi}(w)= d_T(v,\gamma_v\cap\gamma_w) - d_T(w,\gamma_v\cap\gamma_w).
\]

Let $\mathcal T_i=(T_i,v_i,\xi_i)$ for $i=1,2$, where each $T_i$ is a simplicial tree, $v_i\in VT_i$ and $\xi_i\in\partial T_i$. Let $h_i=b_{T_i,v_i,\xi_i}$ for $i=1,2$.
The vertex set of the \textbf{Diestel--Leader graph} $\DL(\mathcal T_1,\mathcal T_2)$ is
\[
 \setcon{(x,y)\in VT_1\times VT_2}{h_1(x)+h_2(y)=0}
\]
and two vertices $(x,y)$,$(x',y')$ span an edge if and only if $xx'$ and $yy'$ are edges in $ET_1$ and $ET_2$ respectively. As a shorthand we write $\DL(q_1,q_2)$ when each $T_i$ is a $(q_i+1)$-regular tree. The Diestel--Leader graph $\DL(q,q)$ is a Cayley graph of the lamplighter group $\Z_q\wr\Z$ \cite{Woess-LAMPLIGHTERS_HARMONIC_FUNCTIONS}.

We start with the standard fact that Diestel--Leader graphs are undistorted in the product of trees used to define them.

\begin{lemma}\label{lem:DLbiLip} The inclusion map $\iota:\DL(\mathcal T_1,\mathcal T_2)\to T_1\times T_2$ (defined on vertices) is a bi-Lipschitz embedding with respect to the shortest path metric on $\DL(\mathcal T_1,\mathcal T_2)$ and the $L^1$ product metric on $T_1\times T_2$.
\end{lemma}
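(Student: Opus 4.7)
The approach is to establish the two Lipschitz inequalities separately. The easy direction, $d_{T_1\times T_2}(\iota(p), \iota(q)) \leq 2\, d_{\DL}(p, q)$, is immediate from the definition of $\DL$: every $\DL$-edge projects to a pair of edges in $T_1$ and $T_2$, contributing exactly $2$ to the $L^1$-distance, so the statement follows by summing along any path in $\DL$.

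For the reverse inequality I would construct, for arbitrary $p = (x,y)$ and $q = (x',y')$ in $V\DL$, a path in $\DL$ from $p$ to $q$ of length $\lesssim d_{T_1}(x,x') + d_{T_2}(y,y')$ in two phases. Let $c_2$ denote the apex of $y$ and $y'$ in $T_2$ and write $d_2^+ = d_{T_2}(y, c_2)$, $d_2^- = d_{T_2}(y', c_2)$. In \emph{Phase 1}, I shadow the $T_2$-geodesic from $y$ to $y'$: go up $d_2^+$ steps to $c_2$ and then down $d_2^-$ steps to $y'$, pairing each $T_2$-up-step with a $T_1$-down-step (into any available child) and each $T_2$-down-step with a $T_1$-up-step along the parent ray. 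The constraint $h_1+h_2=0$, combined with the identity $d_2^- - d_2^+ = h_1(x')-h_1(x)$, forces the resulting $T_1$-endpoint $\tilde x'$ to lie at the same $T_1$-height as $x'$. In \emph{Phase 2}, I correct the $T_1$-coordinate from $\tilde x'$ to $x'$ by traversing the $T_1$-geodesic $\tilde x' \to a \to x'$ through their common ancestor $a$, matching each $T_1$-up-step with a $T_2$-down-step into a child of $y'$ and each $T_1$-down-step with the corresponding $T_2$-up-step, so that the $T_2$-coordinate returns to $y'$.

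The length estimate is then quick: Phase 1 has length $d_{T_2}(y,y')$ and displaces the $T_1$-coordinate by at most the same amount, so the triangle inequality yields $d_{T_1}(\tilde x', x') \leq d_{T_1}(x,x') + d_{T_2}(y,y')$, which bounds Phase 2. Summing, the total length is at most $d_{T_1}(x,x') + 2\, d_{T_2}(y,y') \leq 2\, d_{T_1\times T_2}(\iota(p), \iota(q))$, as required. The main technical point to check, and the only real obstacle, is that each prescribed step is genuinely an edge of $\DL$: the ``matched child'' moves in $T_1$ (Phase 1) and in $T_2$ (Phase 2) need the relevant vertex to possess at least one neighbour off its geodesic ray to $\xi_i$. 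This is automatic as soon as both trees have minimum vertex degree at least $2$, which covers every case relevant to Theorem~\ref{thm:embSgraph}.
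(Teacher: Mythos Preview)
Your proof is correct and follows essentially the same strategy as the paper's sketch: first traverse the geodesic in one tree factor while letting the other coordinate drift, then correct the other coordinate by an out-and-back excursion, giving the same constant $2$ in both Lipschitz inequalities. The only difference is that you do the $T_2$-geodesic first and then correct in $T_1$, whereas the paper does $T_1$ first and then corrects in $T_2$; this is a symmetric choice with no substantive effect. Your added remark about needing minimum vertex degree at least $2$ for the ``child'' moves to exist is a genuine point the paper's sketch leaves implicit.
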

\begin{proof}[Sketch of proof] It is clear that $\iota$ is $2$-Lipschitz. 
	For the converse, suppose we have $(s_1,s_2)$ and $(t_1,t_2) \in \DL(\cT_1,\cT_2) \subset T_1 \times T_2$.
	We find a path in $\DL(\cT_1,\cT_2)$ connecting these points by concatenating a path of length $d_{T_1}(s_1,t_1)$ from $(s_1,s_2)$ to some $(t_1,s_2')$, and a path of length at most $d_{T_1}(s_1,t_1)+d_{T_2}(s_2,t_2)$ from $(t_1,s_2')$ to $(t_1,t_2)$.  Hence, $d_{\DL(\cT_1,\cT_2)} \leq 2 d_{T_1\times T_2}$.
\end{proof}

\subsection{Busemann compatible embeddings}
We let $\HH_\R^2=\{(x,y), \; x\in \R, y>0\}$ be the hyperbolic half-plane with boundary $\partial\HH_\R^2 = (\R\times\set{0})\cup\set{\infty}$, and define a Busemann function on $\HH_\R^2$ by $b_{\HH_\R^2,(0,1),\infty}(x,y)=\log(y)$.

Recall that if $T$ is a tree, and $\iota:T\rightarrow \HH_\R^2$ is a bi-Lipschitz  embedding, then $\iota$ extends to a topological embedding (a homeomorphism onto its image) between the Gromov compactifications $\overline{\iota}:\overline{T}\rightarrow \overline{\HH_\R^2}$. 

\begin{definition}\label{defn:compatibletree}
	Let $\alpha>0$, and let $T$ be a tree. Let $(X,v',\infty)$ be either the triple $(\HH_\R^2,(0,1),\infty)$ or a triple of a tree $T'$, a vertex and a point labelled $\infty$ in the boundary of $T'$. Let $h_X$ be the Busemann function associated to the triple $(X,v',\infty)$. A map $\iota:T\rightarrow X$ is called \textbf{$\alpha$-Busemann-compatible}, if there is a vertex $v\in T$ and a point $\xi\in\partial T$ such that
\begin{itemize}
  \item[(i)] $\iota$ is a bi-Lipschitz embedding;
\item[(ii)]  $\overline{\iota}(\xi)=\infty$;
\item[(iii)] $h_X(\iota(z))=\alpha b_{T,v,\xi}(z)$, for all $z\in VT$.
\end{itemize}
\end{definition}

\begin{remark}\label{rem:TtoT'}
It is an easy observation that given a tree $(T',v',\xi')$ where every vertex has degree at least $3$, and any positive integer $k$, there exists a $k$-Busemann-compatible embedding of the $3$-regular tree $(\mathcal{T}_3,v,\xi)$ into $(T',v',\xi')$. The following proposition shows that a similar fact is true replacing $T'$ by $\HH_\R^2$.
\end{remark}

\begin{proposition}\label{prop:existenceofbctrees}
	For all $\alpha>\log(m)$, there exists an $\alpha$-Busemann-compatible embedding of the $(m+1)$-regular tree in $\HH_\R^2$.
\end{proposition}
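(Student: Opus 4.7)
The plan is to construct $\iota$ explicitly via a self-similar inductive rule, exploiting the gap $e^\alpha > m$ to keep distinct branches of $T$ from colliding in $\HH_\R^2$.

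Set $c = e^\alpha$, so $c > m$. Fix a base vertex $v = v_0$ of the $(m+1)$-regular tree $T$ and an end $\xi$, with spine $v_0, v_1, v_2, \ldots \to \xi$. Each vertex has one neighbor toward $\xi$ (its parent) and $m$ remaining neighbors, which I label as children $0, 1, \ldots, m-1$, with the convention that $v_{n-1}$ is always the $0$-th child of $v_n$; off the spine the labeling is arbitrary but fixed. Set $\iota(v) = (0, 1)$ and extend by the rule: if $\iota(w) = (x, y)$ and $w_i$ denotes the $i$-th child of $w$, then $\iota(w_i) := (x + iy/c,\, y/c)$. Induction along the spine gives $\iota(v_n) = (0, c^n)$, and more generally an easy induction shows that the $y$-coordinate of $\iota(w)$ equals $c^{b_T(w)}$; this yields condition (iii) of Definition~\ref{defn:compatibletree} directly, and condition (ii) follows (once the bi-Lipschitz property is established) because the spine is sent to $(0, c^n) \to \infty$ in $\overline{\HH_\R^2}$.

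For the bi-Lipschitz bound (condition (i)), the upper estimate is immediate: for $w$ and its $i$-th child, the formula $\cosh d_{\HH_\R^2} = 1 + \frac{(\Delta x)^2 + (\Delta y)^2}{2 y_1 y_2}$ gives $\cosh d = 1 + (i^2 + (c-1)^2)/(2c)$, bounded in terms of $m$ and $\alpha$. For the lower bound, given distinct $w, w' \in VT$, let $u$ be their join toward $\xi$ and set $a = d_T(w, u)$, $b = d_T(w', u)$, so $D := d_T(w, w') = a+b$. Unwinding the recursive rule, $\iota(w) = (x_u + \Delta,\, c^{b_T(u) - a})$ with $\Delta = \sum_{k=1}^{a} i_k c^{b_T(u) - k}$, where $i_k \in \{0,1,\ldots, m-1\}$ is read along the path from $u$ to $w$, and analogously for $\iota(w')$ with labels $(i'_k)$.

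When $a, b \geq 1$ the definition of the join forces $i_1 \neq i'_1$, and summing the remaining tails as a geometric series produces the key estimate
\[|\Delta - \Delta'| \;\geq\; c^{b_T(u) - 1}\left(1 - \tfrac{m-1}{c-1}\right) \;=\; \tfrac{c-m}{c-1}\, c^{b_T(u)-1},\]
which is strictly positive precisely because $c > m$. Feeding this into the hyperbolic distance formula gives $d_{\HH_\R^2}(\iota(w), \iota(w')) \geq (D-2)\alpha - K$ for some constant $K = K(m, \alpha)$; in the ancestor case ($a = 0$ or $b = 0$) the vertical component alone gives $d_{\HH_\R^2} \geq D\alpha$; and a uniform positive lower bound for small $D$ follows from the same horizontal separation. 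The main obstacle, and the only place the hypothesis $\alpha > \log m$ intervenes, is precisely this separation estimate for $|\Delta - \Delta'|$: if $c \leq m$, the tail terms with $i_k$ up to $m-1$ can overwhelm the leading contribution $(i_1 - i'_1)c^{b_T(u)-1}$, allowing distant branches of $T$ to fold on top of each other in $\HH_\R^2$.
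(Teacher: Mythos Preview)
Your proof is correct and follows essentially the same approach as the paper: both constructions place vertices at heights $c^{b_T(w)}$ (with $c=e^\alpha$) and horizontal positions given by base-$c$ expansions with digits in $\{0,\ldots,m-1\}$, and the decisive lower bound is the same geometric-series estimate $|\Delta-\Delta'|\geq c^{b_T(u)-1}\bigl(1-\tfrac{m-1}{c-1}\bigr)$, positive exactly when $c>m$. The only cosmetic difference is that the paper describes the image set $\Sigma\subset\HH_\R^2$ first and then observes it is an $(m+1)$-regular tree, whereas you define the map recursively on $T$; your treatment of the general (non-equal-height) case via the join is slightly more direct than the paper's reduction to equal heights.
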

\begin{proof}
Fix $m\in\N$ and $\alpha>\log(m)$. Set $t:=e^\alpha>m$. Let $A_{m,t}$ be the subset of $\R$ consisting of finite combinations of the form $\sum_{k\geq 0} a_it^k$, with $a_i\in \{0,\ldots,m-1\}$. 
Now for every $n\in \Z$, define $\Sigma_n$ to be 
\[\Sigma_n=\setcon{(at^n,t^n)}{a\in A_{m,t}}.\]
We now define a graph $T$ whose set of vertices is $\Sigma=\cup_{n\in \Z}\Sigma_n$, and whose edges relate pairs of vertices $(v,v')\in \Sigma_n\times \Sigma_{n+1}$, with $v=(at^n,t^n)$ and $v'=((a-a_0)t^{n},t^{n+1})$. This ensures that the distance between $v$ and $v'$ is bounded by a constant $K$ only depending on $t$ and $m$.  It follows by construction that $T$ is an $(m+1)$-regular tree, and that the restriction of $\alpha^{-1}b_{\HH_\R^2,(0,1),\infty}$ to $T$ coincides with the Busemann function based at $(0,1)\in T$, and pointing towards $\infty$. As already observed the inclusion $\iota:T\to\HH_\R^2$ is $K$-Lipschitz; we will now prove that the choice of $\alpha$ ensures that it is bi-Lipschitz. We start with the case of two points on $T$ at the same Busemann level, whose images in $\HH_\R^2$ are therefore of the form $v=(at^n,t^n)$ and $v'=(a't^n,t^n)$. On applying the hyperbolic isometry of the half-plane that fixes $0$ and $\infty$ and maps $H_n=b_{\HH_\R^2,(0,1),\infty}^{-1}(\alpha n)$ to $H_0$, we can assume that $n=0$. Then their distance in the tree is $2k+2$ where $k$ is the largest integer such that $a_k\neq a'_k$. On the other hand, one has
\[|a-a'|\geq t^k- \sum_{i=0}^{k-1}(m-1)t^i\geq t^k\left(1-\frac{m-1}{t-1}\right).\]
Since $t>m$, there exists $c>0$ only depending on $\alpha$ and $m$ such that $d_{\HH_\R^2}(v,v')\geq c k$, so we are done. The general case can easily be deduced from this one using that the distance between any points in $\Sigma_n$ and $\Sigma_{n+k}$ is at least $\alpha k$.
\end{proof}

\subsection{Horocyclic products}
Let $X,Y$ be spaces with associated Busemann functions $b_X,b_Y$ and let $\alpha>0$. The \textbf{($\alpha$-stretched) horocyclic product of $X$ and $Y$} is defined as
\[
 S_\alpha(X,Y) =\setcon{(z_1,z_2)\in X\times Y}{b_X(z_1)+\alpha b_Y(z_2)=0}
\]
and is equipped with the subspace metric from the $L^1$ product metric on $X\times Y$.

\begin{proposition}\label{prop:BusemanProducts}
Let $\alpha_1,\alpha_2>0$, and let each of $\mathcal X_1=(X_1,v_1,\infty_1)$ and $\mathcal X_2=(X_2,v_2,\infty_2)$ be either $(T,v,\xi)$ with $T$ a tree, $v\in VT$ and $\xi\in\partial T$ or $(\HH_\R^2,(0,1),\infty)$.

Suppose $\mathcal T_1=(T_1,v_1,\xi_1)$ and $\mathcal T_2=(T_2,v_2,\xi_2)$ are two trees of degree at least $3$, with distinguished vertices and points in their boundary, and admitting $\alpha_1$ and $\alpha_2$--Busemann-compatible embeddings into $X_1$ and $X_2$ respectively. Then the Diestel--Leader graph $\DL(\mathcal T_1,\mathcal T_2)$ admits a bi-Lipschitz embedding into the $\alpha$-stretched horocylic product $S_\alpha(X_1,X_2)$ of $X_1$ and $X_2$, where $\alpha=\alpha_1/\alpha_2$.
\end{proposition}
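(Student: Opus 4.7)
The plan is to define the embedding simply as the product of the two Busemann-compatible maps, check it lands inside the horocyclic product using the defining relations, and then verify bi-Lipschitzness by combining Lemma \ref{lem:DLbiLip} with the bi-Lipschitz property of each $\iota_i$.

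Concretely, let $\iota_i : T_i \to X_i$ be the given $\alpha_i$-Busemann-compatible embeddings, so in particular each $\iota_i$ is bi-Lipschitz and $b_{X_i}(\iota_i(z)) = \alpha_i b_{T_i, v_i, \xi_i}(z)$ for every $z \in VT_i$. Define
\[ \Phi : \DL(\mathcal{T}_1, \mathcal{T}_2) \longrightarrow X_1 \times X_2, \qquad \Phi(x,y) = (\iota_1(x), \iota_2(y)). \]
The first step is to verify $\Phi$ takes values in $S_\alpha(X_1, X_2)$. For $(x,y) \in \DL(\mathcal{T}_1, \mathcal{T}_2)$ one has $b_{T_1, v_1, \xi_1}(x) + b_{T_2, v_2, \xi_2}(y) = 0$ by definition of the Diestel--Leader graph, and by Busemann-compatibility
\[ b_{X_1}(\iota_1(x)) + \alpha\, b_{X_2}(\iota_2(y)) = \alpha_1\, b_{T_1,v_1,\xi_1}(x) + \tfrac{\alpha_1}{\alpha_2}\,\alpha_2\, b_{T_2,v_2,\xi_2}(y) = \alpha_1 \cdot 0 = 0, \]
so $\Phi(x,y)$ satisfies the horocyclic constraint.

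Next, I would establish that $\Phi$ is bi-Lipschitz. By Lemma \ref{lem:DLbiLip} the inclusion $\DL(\mathcal{T}_1, \mathcal{T}_2) \hookrightarrow T_1 \times T_2$ (with the $L^1$ product metric on the target) is bi-Lipschitz. Meanwhile, since each $\iota_i$ is bi-Lipschitz, the product map $\iota_1 \times \iota_2 : T_1 \times T_2 \to X_1 \times X_2$ is bi-Lipschitz when both sides carry their $L^1$ product metrics (the bi-Lipschitz constants add under the $L^1$ sum in the obvious way). The metric on $S_\alpha(X_1, X_2)$ is by definition the restriction of the $L^1$ product metric on $X_1 \times X_2$, so composing these two bi-Lipschitz maps gives $\Phi$ as a bi-Lipschitz embedding into $S_\alpha(X_1, X_2)$, as required.

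There is no real obstacle here: all the work has already been done in setting up the Busemann-compatible embeddings (Proposition \ref{prop:existenceofbctrees} and Remark \ref{rem:TtoT'}) and in the bi-Lipschitz comparison of Lemma \ref{lem:DLbiLip}. The only small thing to keep in mind is that $\Phi$ is only defined on vertices of $\DL(\mathcal{T}_1,\mathcal{T}_2)$, but this suffices since $\DL$ is a bounded-valence graph and one can interpret ``bi-Lipschitz embedding'' at the level of vertex sets, or extend $\iota_i$ linearly along edges in the tree case to get a map of geodesic metric spaces.
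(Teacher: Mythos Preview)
Your proof is correct and follows essentially the same approach as the paper's: define the map as the product $\iota_1\times\iota_2$, verify the horocyclic constraint using the Busemann-compatibility relations, and deduce bi-Lipschitzness from Lemma~\ref{lem:DLbiLip} together with the bi-Lipschitz property of each $\iota_i$. The only cosmetic difference is that the paper phrases the final step by factoring through the inclusion $j:S_\alpha(X_1,X_2)\hookrightarrow X_1\times X_2$ and arguing that since $j\circ\Phi$ is bi-Lipschitz and $j$ is an isometric embedding (subspace metric), $\Phi$ itself is bi-Lipschitz---which is exactly what you say directly.
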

\begin{proof}
For $i=1,2$, let $\phi_i:T_i\to X_i$ be an $\alpha_i$--Busemann-compatible embedding, i.e., for all $z_i\in T_i$,
\[
 h_{X_i}(\phi_i(z_i))=\alpha_ib_{\mathcal T_i}(z_i).
\]

We immediately obtain a bi-Lipschitz embedding $\psi=(\phi_1,\phi_2)$ from $T_1\times T_2$ to $X_1 \times X_2$, which by Definition~\ref{defn:compatibletree}(iii) restricts to a (Lipschitz) embedding $\overline{\psi}:\DL(\cT_1,\cT_2)\to S_\alpha(X_1,X_2)$. To see this notice that for $(z_1,z_2)\in \DL(\mathcal T_1,\mathcal T_2)$ we have
\[
 0 = b_{\mathcal T_1}(z_1)+b_{\mathcal T_2}(z_2) = \alpha_1^{-1}h_{X_1}(\phi_1(z_1)) + \alpha_2^{-1}h_{X_2}(\phi_2(z_2)),
\]
so $h_{X_1}(\phi_1(z_1)) + \alpha h_{X_2}(\phi_2(z_2))=0$.

By Lemma \ref{lem:DLbiLip} the embedding of $\DL(\mathcal T_1,\mathcal T_2)$ into $T_1\times T_2$ is bi-Lipschitz, so 
\[
 \psi\circ\iota: \DL(\mathcal T_1,\mathcal T_2)\to X_1\times X_2
\]
is a composition of bi-Lipschitz embeddings and hence, is a bi-Lipschitz embedding. Since $\psi\circ\iota$ equals the composition of $\overline{\psi}$ with the natural embedding $j:S_\alpha(X_1,X_2)\to X_1\times X_2$ and $\overline{\psi}$ and $j$ are both Lipschitz, it follows that $\overline{\psi}$ must be a bi-Lipschitz embedding.
\end{proof}

With these results we can now complete the proof of Theorem \ref{thm:embSgraph}.

\begin{proof}[Proof of Theorem \ref{thm:embSgraph}]
	As is standard, by Lemma \ref{lem:DLbiLip}, $\DL(2,2)$ quasi-isometrically embeds into the product of two trivalent trees, and the trivalent tree isometrically embeds into any tree with minimum degree $\geq 3$. 
 	Next, $\DL(2,2)$ is a Cayley graph of the lamplighter $\Z_2\wr\Z$. Let $h\in H\setminus\{1_H\}$ and let $\gamma$ be a bi-infinite geodesic in a Cayley graph of $K$. The map
\[
 (f,z)\in\Z_2\wr\Z \mapsto (g,\gamma(z))\in H\wr K
\]
	where $g(k)=h$ if $k=\gamma(m)$ for some $m$ and $f(m)=1$, and otherwise $g(k)=1_H$, defines a quasi-isometric embedding of $\Z_2\wr\Z$ into $H\wr K$. Thus $H\wr K$ contains a quasi-isometrically embedded copy of $\DL(2,2)$.

	For $n\geq 2$, $\BS(1,n)$ is quasi-isometric to a horocyclic product of an $(n+1)$-valence tree with a copy of $\HH_\R^2$, so contains a quasi-isometrically embedded copy of $\DL(2,2)$ by Proposition \ref{prop:BusemanProducts}. 
	The groups $\BS(m,n)$ with $\abs{m},\abs{n}\geq 2$ and $\abs{m}\neq\abs{n}$ are all quasi-isometric \cite{Whyte-HigherBS}, and $\BS(2,4)=\langle a,t \mid t^{-1}a^2t=a^4\rangle$ contains $\BS(1,2)=\langle a^2,t\rangle$ as an (undistorted) subgroup.

 	Finally, for $a>0$, we consider 
	\begin{align*}
		\SOL_a = \R^2\rtimes_{(1,-a)}\R 
		& \cong \left\{ ((x,t),(y,t)) \in \left(\R\rtimes_{1}\R\right)\times\left(\R\rtimes_{-a}\R\right) \right\}
		\\ & \cong \left\{ ((x,-t),(y,t)) \in \left(\R\rtimes_{-1}\R\right)\times\left(\R\rtimes_{-a}\R\right) \right\}.
	\end{align*}
	Here, for $b\in \R$, $\R\rtimes_b \R$ indicates the semidirect product where the action is given by $x\cdot \psi(t)=e^{bt}x$;
	when $b\neq 0$ this group admits a left-invariant metric isometric to $\HH_\R^2$.
	Let us fix isometries $\iota_1:\R\rtimes_{-1}\R\to\HH_\R^2, \iota_2:\R\rtimes_{-a}\R\to\HH_\R^2$ so that for each $i=1,2$ we have $b_{\HH_\R^2,(0,1),\infty}(\iota_i(x,t))=\beta_i t$  where $\beta_1=1, \beta_2 = 1/a$. Equipped with a suitable left-invariant metric, $\SOL_a$ is isometric to the $\frac{\beta_1}{\beta_2}$-stretched horocyclic product of $(\HH_\R^2,(0,1),\infty)$ with itself.

	By Remark \ref{rem:TtoT'} and Proposition \ref{prop:existenceofbctrees}, for $\beta>0$ large enough the $3$-regular tree admits a $\beta\beta_i$--Busemann-compatible embedding in $\HH_\R^2$ for each $i=1,2$, hence by Proposition \ref{prop:BusemanProducts}, $\DL(2,2)$ quasi-isometrically embeds into $\SOL_a \cong S_\alpha(\HH_\R^2,\HH_\R^2)$ for $\alpha=\frac{\beta_1}{\beta_2}=\frac{\beta\beta_1}{\beta\beta_2}$.
\end{proof}

\begin{remark}
Given two locally compact groups $G_1,G_2$ with automorphisms $\alpha_1,\alpha_2$ contracting into compact sets, 
	the methods above adapt straightforwardly to show that $\DL(2,2)$ quasi-isometrically embeds into $(G_1\times G_2)\rtimes_{(\alpha_1,\alpha_2^{-1})}\Z$ (cf.~\cite[Definition 1.3]{CorTesIHES}). 
\end{remark}

\section{Poincar\'e profile calculations for analytically thick groups}\label{sec:Poincare n/logn}

Having established the required background on Lie groups and Diestel--Leader graphs, we now begin the main content of this paper.  
In this section we prove: $\DL(2,2)$ and $\Osc$ have $\Lambda^p(r)\simeq r/\log(r)$ for all $p\in[1,\infty]$. 
In both theorems the $p=\infty$ case follows immediately from \cite[Proposition 6.1]{HumeMackTess}, and so by the following proposition it suffices to prove a lower bound of $r/\log(r)$ on $\Lambda^1$.

\begin{proposition}
	\label{prop:think-profiles-general-upper}
	If $X$ is a graph with bounded degree with finite Assouad--Nagata dimension, then $\Lambda_X^p(r) \lesssim r/\log(r)$ for all $p \in [1,\infty)$. 
\end{proposition}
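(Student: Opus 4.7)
The plan is to build, for every finite subgraph $\Gamma\le X$ of order $N:=|V\Gamma|\le r$, a mean-zero test function $f:V\Gamma\to\R$ satisfying $\|\nabla f\|_p/\|f\|_p\lesssim 1/\log N$; applied to every subgraph realising the supremum in the definition of $\Lambda_X^p(r)$, this gives $\Lambda_X^p(r)\le N\,h^p(\Gamma)\lesssim r/\log r$. The structural input from finite Assouad--Nagata dimension $n$ is the standard one: there is $c=c(X)$ such that for every scale $R>0$, $X$ admits a partition into ``clusters'' of diameter at most $cR$ equipped with an $(n+1)$-colouring under which same-coloured clusters are pairwise at distance at least $R$.

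First I would calibrate the scale to $R:=\lfloor\kappa\log N\rfloor$, choosing $\kappa$ small enough in terms of $c$ and the vertex-degree bound of $X$ that each cluster meets $V\Gamma$ in at most $N^{1/2}$ vertices (using only that a ball of radius $cR$ has at most exponentially many vertices in $R$). By pigeonhole some colour $i$ covers at least $N/(n+1)$ vertices of $V\Gamma$, and a greedy balancing splits its clusters into two sub-families $\mathcal{A},\mathcal{B}$ with vertex counts $|A|,|B|\gtrsim N$, where $A:=\bigsqcup_{U\in\mathcal{A}}(U\cap V\Gamma)$ and similarly for $B$; this is possible precisely because every individual cluster contributes at most $N^{1/2}$ vertices. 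For each cluster $U$ set $\phi_U(x):=\max\{0,\tfrac{R}{2}-d(x,U)\}$, a $1$-Lipschitz bump equal to $R/2$ on $U$, whose supports are pairwise disjoint across same-coloured clusters. Writing $\alpha':=\sum_{U\in\mathcal{A}}\sum_{v\in V\Gamma}\phi_U(v)$ and $\beta':=\sum_{U\in\mathcal{B}}\sum_{v\in V\Gamma}\phi_U(v)$, I would take
\[
f\;:=\;\beta'\sum_{U\in\mathcal{A}}\phi_U\;-\;\alpha'\sum_{U\in\mathcal{B}}\phi_U,
\]
whose $V\Gamma$-mean is exactly $\beta'\alpha'-\alpha'\beta'=0$.

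Since supports are pairwise disjoint one has $\|\nabla f\|_\infty\le\max(\alpha',\beta')\lesssim RN$ and $\nabla f$ is supported on at most $N$ vertices, hence $\|\nabla f\|_p\lesssim RN\cdot N^{1/p}$. On the other hand $f\equiv\beta'R/2\gtrsim R^2 N$ on each $U\cap V\Gamma$ with $U\in\mathcal{A}$, so $\|f\|_p^p\gtrsim|A|\,(R^2N)^p\gtrsim N^{p+1}R^{2p}$, giving $\|\nabla f\|_p/\|f\|_p\lesssim 1/R\simeq 1/\log N$ as required. The main obstacle is the balancing step, since clusters can have wildly varying weight in $V\Gamma$ and one needs to control the mean and the $L^p$-mass simultaneously. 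Bounded degree is essential here, as it forces every cluster at scale $\log N$ to carry a negligible fraction of $V\Gamma$, allowing greedy balancing with no logarithmic loss; using $L^1$-mass coefficients rather than vertex-count coefficients then produces mean zero for free. This is essentially the argument of \cite[\S 9]{HumeMackTess}, of which the present statement is a specialisation.
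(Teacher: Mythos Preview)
Your argument is correct and is essentially the same as the paper's: the paper simply observes that finite Assouad--Nagata dimension gives finite measurable dimension with $\gamma(t)\lesssim e^t$ and cites \cite[Proposition~9.5]{HumeMackTess}, while you have reconstructed that argument in detail (and you say as much at the end). The bump-function construction, the pigeonhole on colours, and the Lipschitz gradient estimate are exactly the mechanism behind \cite[\S 9]{HumeMackTess}; your balancing step with the $L^1$-mass coefficients $\alpha',\beta'$ to force mean zero is a clean way to handle that point.
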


\begin{proof}
Note that having finite Assouad--Nagata dimension, $X$ has finite measurable dimension in the sense of \cite[Definition 9.1]{HumeMackTess} with function $\gamma(t)\lesssim e^t$. We conclude thanks to \cite[Proposition 9.5]{HumeMackTess}.
\end{proof}

\begin{corollary}\label{cor:general-upper-bound}
	Let $G$ be a connected Lie group or a Baumslag--Solitar group $\BS(m,n)$.  Then $\Lambda_G^p(r) \lesssim r/\log(r)$ for all $p\in [1,\infty)$, and moreover if $G$ is analytically thick, 
	\[\Lambda_X^p(r) \simeq r/\log(r) \quad \forall p\in[1,\infty].\]
\end{corollary}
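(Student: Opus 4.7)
The plan is to combine Proposition~\ref{prop:think-profiles-general-upper} with the two main theorems of this section (Theorems~\ref{DLlowerbound} and \ref{thm:HeisExt}, which establish $\Lambda^p_{\DL(2,2)} \simeq r/\log(r)$ and $\Lambda^p_{\Osc} \simeq r/\log(r)$ for all $p \in [1,\infty]$) together with the embedding and structural results of \S\ref{sec:AlgebraicDich} and \S\ref{sec:DL}.

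For the upper bound, I will invoke Proposition~\ref{prop:think-profiles-general-upper} after verifying that both connected Lie groups and Baumslag--Solitar groups have finite Assouad--Nagata dimension. For Lie groups this has already been recorded in the remark following Theorem~\ref{thmIntro:unimodularDicho}. For $\BS(m,n)$ the cocompact action on its Bass--Serre tree, whose stabilisers are virtually cyclic, yields finite Assouad--Nagata dimension via a standard Hurewicz-type argument. This immediately gives $\Lambda^p_G(r) \lesssim r/\log(r)$ for all $p \in [1,\infty)$.

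For the matching lower bound under the analytic thickness hypothesis, the strategy is to produce inside $G$ a quasi-isometrically embedded copy of either $\DL(2,2)$ or $\Osc$ and then transfer the lower bound using the monotonicity of Poincar\'e profiles under quasi-isometric embeddings. When $G = \BS(m,n)$, analytic thickness forces $|m| \neq |n|$ (in the other cases Theorem~\ref{thmIntro:BS} gives a strictly smaller profile, incompatible with $r/\log(r)$), and Theorem~\ref{thm:embSgraph} supplies the required embedding $\DL(2,2) \hookrightarrow \BS(m,n)$. When $G$ is a connected Lie group, Theorem~\ref{thmIntro:unimodularDicho} upgrades analytic thickness to algebraic thickness in the unimodular case; Proposition~\ref{prop:algThick} then produces an undistorted closed subgroup isomorphic to $\SOL_a$ or $\Osc$, and Theorem~\ref{thm:embSgraph} quasi-isometrically embeds $\DL(2,2)$ into $\SOL_a$. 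Finally, for the case $p = \infty$, analytic thickness implies at least exponential volume growth (otherwise $\Lambda^1_G \lesssim r^a$ for some $a < 1$), and \cite[Proposition 6.1]{HumeMackTess} then gives both $\Lambda^\infty_G(r) \lesssim r/\log(r)$ and $\Lambda^\infty_G(r) \gtrsim r/\log(r)$.

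The main delicate point is the logical routing from the analytic hypothesis back to the existence of an embedded $\SOL_a$ or $\Osc$ in the Lie group case: the unimodular case passes cleanly through Theorem~\ref{thmIntro:unimodularDicho}, whereas the non-unimodular case must be sequenced carefully to avoid circularity, since this corollary is itself used in the proof of Theorem~\ref{thmIntro:unimodularDicho} elsewhere in the paper. In practice this is handled by first establishing the corollary directly for algebraically thick groups (where Proposition~\ref{prop:algThick} is available unconditionally) and only then pulling back via the dichotomies.
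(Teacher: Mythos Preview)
You have overlooked the key observation that makes the paper's proof essentially a one-liner: by Definition~\ref{def:algthickthin}, \emph{analytically thick} means precisely that $\Lambda^1_G(r)\gtrsim r/\log(r)$. The lower bound for $p=1$ is therefore the \emph{hypothesis}, not something to be established. The paper simply combines this with the upper bound (from Proposition~\ref{prop:think-profiles-general-upper} via finite Assouad--Nagata dimension) and the monotonicity $\Lambda^1_G \lesssim \Lambda^p_G$ for $p\geq 1$ (\cite[Proposition~7.2]{HumeMackTess}) to obtain $\Lambda^p_G(r)\simeq r/\log(r)$ for all $p\in[1,\infty)$. For $p=\infty$, analytic thickness rules out polynomial growth (such groups are analytically thin by \cite[Theorem~7]{HumeMackTess}), so $G$ has exponential growth and \cite[Proposition~6.1]{HumeMackTess} concludes.

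Your route through embeddings of $\DL(2,2)$ or $\Osc$ is not merely longer; it has a genuine gap. For a non-unimodular connected Lie group there is \emph{no} result in the paper that upgrades analytic thickness to algebraic thickness---this is exactly the open Conjecture~\ref{conj:thin/thickLie}. So your plan to locate an undistorted $\SOL_a$ or $\Osc$ via Proposition~\ref{prop:algThick} cannot be executed in that case, and the ``careful sequencing'' you propose at the end does not rescue it: you would only prove the corollary for \emph{algebraically} thick groups, which is a strictly weaker statement at this point of the paper. The circularity you flag (Theorem~\ref{thmIntro:unimodularDicho} is proved in \S\ref{sec:concludingsteps} using, among other things, the present corollary via Theorem~\ref{thm:HeisExt}) is a symptom of this: the corollary is deliberately placed \emph{before} the structural dichotomies and must be proved without them.
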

\begin{proof}
Such a group $G$ has finite Assouad--Nagata dimension (see \cite{HP-13-ANdimension-nilppolyc} for the case of connected Lie groups). Moreover $G$ is large scale equivalent in the sense of \cite[Definition 5.3]{HumeMackTess} to a graph with bounded degree, hence the first statement follows from Proposition \ref{prop:think-profiles-general-upper}. 
	Thus if $G$ is analytically thick it has $\Lambda_G^p(r) \simeq r/\log(r)$ for $p\in[1,\infty)$ by \cite[Propositions 7.2]{HumeMackTess}.
For $p=\infty$, $G$ does not have polynomial growth (as those groups are analytically thin), so it must have exponential growth \cite{Gui-73-crois-poly,Jenkins_Growth}, thus the result follows from \cite[Proposition 6.1]{HumeMackTess}.	
\end{proof}

\subsection{Diestel--Leader graphs}\label{sec:DLprofile}
This section is dedicated to the proof of the following theorem.
\begin{theorem}\label{DLlowerbound}
	For all $p\in[1,\infty]$, the Diestel--Leader graph $X=\DL(2,2)$ satisfies
\[
 \Lambda^p_X(r) \simeq r/\log(r).
\]
\end{theorem}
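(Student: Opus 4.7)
The upper bound $\Lambda^p_X(r) \lesssim r/\log(r)$ for every $p \in [1,\infty)$ is already covered by Corollary~\ref{cor:general-upper-bound}, which applies since $\DL(2,2)$ is quasi-isometric to the lamplighter group $\Z_2 \wr \Z$, and polycyclic-type reasoning (or the horocyclic product structure itself) shows that it has finite Assouad--Nagata dimension. The $p=\infty$ case follows from \cite[Proposition 6.1]{HumeMackTess} because $\DL(2,2)$ has exponential growth. Since $p \mapsto \Lambda^p_X$ is monotone non-decreasing in $p$, the entire theorem reduces to establishing the lower bound
\[
\Lambda^1_X(r) \gtrsim r/\log(r).
\]

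To get this lower bound, the plan is to exhibit an explicit family of finite subgraphs $\Gamma_n \leq X$ with
\[
|V\Gamma_n| \simeq n\, 2^{2n} \qquad \text{and} \qquad h^1(\Gamma_n) \gtrsim \frac{1}{n},
\]
and then evaluate the defining supremum of $\Lambda^1_X$ on these. Writing $N_n = |V\Gamma_n|$, so that $\log N_n \simeq n$, these two estimates will yield $\Lambda^1_X(N_n) \geq N_n\, h^1(\Gamma_n) \gtrsim N_n/\log N_n$, and monotonicity of $\Lambda^1_X$ extends the bound from the sequence $(N_n)$ to all $r$. The natural candidates for $\Gamma_n$ are \emph{horocyclic boxes}: in the wreath product picture, take
\[
\Gamma_n = \bigl\{\, (f,k) \in \Z_2 \wr \Z \; : \; \supp(f) \subseteq [-n,n],\ k \in [-n,n]\,\bigr\},
\]
equivalently, in the description of $\DL(T_1,T_2)$, the intersection of $V\DL(T_1,T_2)$ with truncated dyadic subtrees of $T_1$ and $T_2$ whose Busemann levels range over $[-n,n]$. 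A routine count gives $|V\Gamma_n| \simeq n\, 2^{2n}$, and these subgraphs are Følner in the lamplighter sense.

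The main obstacle is the Cheeger estimate $h^1(\Gamma_n) \gtrsim 1/n$, which I expect to prove by a curve-counting / fiber argument analogous to the one used by Benjamini--Schramm--Tim\'ar for the separation profile of $F \times F$. The idea is to construct a family of $\Theta(2^{2n})$ ``test paths'' in $\Gamma_n$, each of length $\Theta(n)$, obtained by fixing a lamp configuration (resp.\ a pair of leaves in the two trees) and sliding the cursor (resp.\ varying the Busemann level of the pair). These paths are essentially parallel fibers: each edge of $\Gamma_n$ lies on at most $O(1)$ of them, and for any two generic vertices of $\Gamma_n$ the fraction of test paths joining vertices in suitable neighborhoods is bounded below. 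Applying the one-dimensional $L^1$-Poincar\'e inequality along each fiber and averaging then yields, for every zero-mean $\phi : V\Gamma_n \to \R$,
\[
\sum_{v \in V\Gamma_n} |\phi(v)| \;\lesssim\; n \sum_{uv \in E\Gamma_n} |\phi(u)-\phi(v)|,
\]
which is exactly $h^1(\Gamma_n) \gtrsim 1/n$.

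The delicate point in executing this plan is ensuring the test-path family has the right ``near edge-disjointness'' and ``coverage'' properties simultaneously; this is where the horocyclic product structure must be used, and where the precise choice of $\Gamma_n$ (a box, rather than a metric ball, which would not have the same Cheeger behavior) becomes essential. Once $h^1(\Gamma_n) \gtrsim 1/n$ is in hand, the theorem follows immediately from the definition of $\Lambda^1_X$, combined with the upper bound and the monotonicity in $p$ discussed at the start.
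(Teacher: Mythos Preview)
Your reduction steps are correct and match the paper: upper bound via finite Assouad--Nagata dimension, $p=\infty$ via exponential growth, and monotonicity in $p$ to reduce everything to $\Lambda^1$. Your box $\Gamma_n$ is, after reparametrisation, the paper's $\Gamma_k$, and the target $h^1(\Gamma_n)\gtrsim 1/n$ is exactly what is needed. The gap is in the path family. Your ``cursor-slide'' fibers (fix the lamp configuration $f$, vary $k$; equivalently, fix a pair of boundary leaves and slide the Busemann level) are pairwise disjoint geodesic segments: each vertex of $\Gamma_n$ lies on exactly one, and no fiber ever connects vertices with different lamp configurations. Applying the one-dimensional Poincar\'e inequality along each fiber and averaging only controls $\sum_v |\phi(v)-\bar\phi_{\mathrm{fiber}(v)}|$, not $\sum_v |\phi(v)-\phi_{\Gamma_n}|$. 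Concretely, take $\phi(f,k)=f(0)-\tfrac12$: it is constant on every fiber, so your averaged inequality degenerates to $0\le 0$, while $\|\phi\|_1\asymp |\Gamma_n|$. The clause about ``test paths joining vertices in suitable neighbourhoods'' does not rescue this, since $(f,k)$ and $(g,l)$ with $f\neq g$ are neither on nor uniformly near a common fiber. The analogy with $F\times F$ is misleading: there one has two transverse fiber families whose concatenation reaches every pair, whereas the horocyclic product has only the single Busemann direction.

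What is actually required is a path family connecting \emph{arbitrary} pairs, and this is where the paper's argument has real content. It fixes anchor vertices $o_1$ at the top of the $T_1$-box and $o_2$ at the bottom of the $T_2$-box, and for each ordered pair $(v,w)$ at heights $t\ge s$ assigns $2^{k-t+s}$ paths of the shape $v\uparrow(o_1,\ast)\downarrow(\ast,o_2)\uparrow w$: up to the top (hitting $o_1$ in the first tree), down to the bottom (hitting $o_2$ in the second), then up to $w$, the free coordinates $\ast$ giving the multiplicity. The combinatorial core (Claim~2 in the paper) is that each edge lies on at most $2^{2k-t+s}$ of these paths. Summing the telescoping bound $|f(v)-f(w)|\le\sum_{e\in\gamma}|\nabla f(e)|$ over all pairs and all $\gamma$, and invoking this edge-multiplicity estimate, gives $\sum_{v,w}|f(v)-f(w)|\lesssim k\,|V_k|\,\|\nabla f\|_1$, hence $h^1(\Gamma_k)\gtrsim 1/k$. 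The ``up--down--up through common anchors'' routing is precisely the mechanism your sketch is missing.
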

\begin{proof}
	Fix a Busemann function $h$ on the $3$-regular tree $\mathcal{T}_3$.
	Consider copies $T_1, T_2$ of $\cT_3$ so that $V\DL(2,2) \subset VT_1\times VT_2$.
Suppose $k \in \N$ is given.
	Fix $o_1 \in VT_1$ with $h(o_1)=k$ and $o_2 \in VT_2$ with $h(o_2)=0$.
	Consider the induced subgraph $\Gamma_k$ of $\DL(2,2)$ with vertex set
\[
 V_k=\setcon{(x,y)}{\begin{array}{l} d(o_1,x)=k-h(x), \\ d(o_2,y)=-h(y), \\ 0 \leq d(o_1,x),d(o_2,y)\leq k\end{array}}.
\]
For $0\leq t\leq k$, let $V_k^t=\setcon{(x,y)\in V_k}{h(x)=t}$. We call a directed edge $(x,y)(x',y')$ in $\Gamma_k$ an \textbf{up} edge if $h(x')>h(x)$ and a \textbf{down} edge otherwise.

Given a pair of vertices $(x,y)$, $(x',y')$ in $\Gamma_k$ with $h(x)=t\geq h(x')=s$ we assign a family $\mathcal C_{(x,y),(x',y')}$ of $2^{k-t+s}$ paths of length $2k-t+s$ connecting them as follows
\begin{equation}\label{updown}
 c_{z,z'} = (x,y) \uparrow (o_1,z') \downarrow (z,o_2) \uparrow (x',y'),
\end{equation}
where $z'$ varies over the $2^{k-t}$ vertices in the second coordinate $T_2$ satisfying $k=d(o_2,z')=d(o_2,y)+d(y,z')$, and $z$ varies over the $2^s$ vertices in the first coordinate $T_1$ satisfying $k=d(o_1,z)=d(o_1,x')+d(x',z)$. Each path $c_{z,z'}$ is uniquely determined by the two vertices $z,z'$ and the length restriction. This forces the path to split into three parts as indicated in (\ref{updown}): the first and last consisting only of up edges and the second only down edges.

We split the remainder of the proof into three claims.

\textbf{Claim 1:} $\abs{\Gamma_k} = (k+1)2^{k}$.

For each $0\leq t \leq k$, there are $2^k$ pairs $(x,y)$ such that $h(x)=t$: $2^t$ different possibilities for $x$ and $2^{k-t}$ possible $y$. Thus there are $(k+1)2^k$ vertices in total.

\textbf{Claim 2:} Every edge in $E\Gamma_k$ is contained in at most $2^{2k-t+s}$ paths connecting a vertex $(x,y)\in V^t_k$ to a vertex $(x',y')\in V^s_k$.

Fix an up edge $e=(a,b)(a',b')$ so $0\leq h(a) \leq k-1$. For $t\geq s$, denote by $N_e(t,s)$ the number of times $e$ appears with either orientation in one of the chosen paths which starts at some $(x,y)$ where $h(x)=t$ and ends at some $(x',y')$ where $h(x')=s$.

If the edge $e$ appears in the first section of some path in $\mathcal C_{(x,y),(x',y')}$ then $h(a)\geq t$ and $y$ is the unique vertex satisfying $d(o_2,b)=d(o_2,y)+d(y,b)$ and $h(y)=-t$. Moreover, $x$ can be any of the $2^{h(a)-t}$ vertices satisfying $d(o_1,x)=d(o_1,a)+d(a,x)$. For any of these $2^{h(a)-t}$ choices of a pair $(x,y)$ and every choice of $(x',y')$, the edge $e$ appears in exactly $2^{-(h(a')-t)}$ proportion of the paths in $\mathcal C_{(x,y),(x',y')}$. All of this analysis is independent of the choice of $(x',y')$ so for each of the $2^{h(a)-t}2^k$ possible choices of suitable $(x,y),(x',y')$, $e$ appears in $2^{-(h(a')-t)}2^{k-t+s}$ of the paths in $\mathcal C_{(x,y),(x',y')}$.

Thus $e$ appears in the first section of some path at most $2^{2k-t+s-1}$ times. A similar analysis shows that if $h(a)<s$ then $e$ appears in the third section of some path at most $2^{2k-t+s-1}$ times, and if $h(a)\geq s$ then $e$ never appears in the third section.

	We are left to analyse the second section of the paths. If either $h(a)\geq t$ or $h(a)<s$ then the above analysis also holds and $e$ is used $2^{2k-t+s-1}$ times. Otherwise, either none or all of the paths in $\mathcal C_{(x,y),(x',y')}$ contain $e$. In the case where it is all of them, we have $2^{t-h(a')}$ possibilities for $y$, $2^{h(a)-s}$ possibilities for $x'$, $2^{k-t+s}$ choices of the pair $(z,z')$, $2^{k-t}$ choices of $x$ and $2^s$ choices of $y'$. Therefore, $e$ appears at most $2^{2k-t+s-1}$
times as a down edge. Combining these observations, we see that the total number of different paths containing $e$ is at most $2\cdot 2^{2k-t+s-1}$ as required.

\textbf{Claim 3:} $h^1(\Gamma_k) \succeq 1/k$.

	Let $f:V_k\to\R$ be a non-constant function with $\sum_{v\in V_k}f(v)=0$. For an edge $e\in E\Gamma_k$ with endpoints $v, w$, let $|\nabla f(e)| = |f(v)-f(w)|$.  Using the triangle inequality, we have
\begin{align*}
		\sum_{v,w\in V_k} \abs{f(v)-f(w)} 
		 & \leq 2\sum_{t\geq s} \sum_{v\in V_k^t,w\in V_k^s}\frac{1}{2^{k-t+s}} \left( \sum_{\gamma\in\mathcal{C}_{v,w}}\sum_{e\in\gamma} \abs{\nabla f(e)}\right)
		\\ & = 2\sum_{t\geq s}\frac{2^{2k-t+s}}{2^{k-t+s}} \Bigg(\sum_{\substack{v\in V_k^t,\\w\in V_k^s}}  \sum_{\gamma\in\mathcal{C}_{v,w}}\sum_{e\in\gamma} \frac{1}{2^{2k-t+s}}\abs{\nabla f(e)}\Bigg)
		\\ & \leq  2^{k+1}\sum_{t\geq s}\sum_{e\in E\Gamma_k} \abs{\nabla f(e)}.
\end{align*}

	We deduce that 
	\[ \sum_{v,w\in V_k} \abs{f(v)-f(w)} \leq  {2^{k}(k+1)(k+2)} \sum_{e\in E\Gamma_k} \abs{\nabla f(e)}.\]
Now, for every finite graph $\Gamma$ with maximal degree $d$,  and every function $f:V\Gamma\to \R$ we have
\[
 \sum_{e\in E\Gamma} \abs{\nabla f(e)} \leq \frac{d}{2} \sum_{v\in V\Gamma} |\nabla f(v)| = \frac{d}{2}\norm{\nabla f}_1,
 \]
which implies that
	if $\sum_{w\in V_k}f(w)=0$,
\begin{align*}
	\abs{V_k} \|f\|_1 
	& = \abs{V_k} \sum_{v\in V_k} \left| f(v)-\frac{1}{\abs{V_k}}\sum_{w\in V_k} f(w) \right|
	\\ & \leq  \sum_{v,w\in V_k} |f(v)-f(w)| \leq {\frac{d}{2}2^{k}(k+1)(k+2)}\norm{\nabla f}_1
	\\ & = {\frac{d}{2}(k+2)}\abs{V_k}\norm{\nabla f}_1.
\end{align*}
Thus $h^1(\Gamma_k)\succeq 1/k$ as required.

Since for each $k\geq 1$, $\abs{\Gamma_k} \leq \abs{\Gamma_{k+1}} \leq 4\abs{\Gamma_k}$, for every $r\geq 4$ there exists a $k$ such that $\frac{r}{4}\leq\abs{\Gamma_k}\leq r$. Moreover, $\abs{\Gamma_k}=(k+1)2^k$, so $\log_2(r) \geq k$.

Given Proposition~\ref{prop:think-profiles-general-upper}, the proof is now complete, since
\[
 \Lambda^1_X(r) \geq \abs{\Gamma_k}h^1(\Gamma_k) \succeq \frac{r}{\log(r)}.\qedhere
\]
\end{proof}

\begin{corollary}\label{wreathlb} For every non-trivial finitely generated group $H$ and every infinite finitely generated group $K$, the wreath product $G=H\wr K$ satisfies $\Lambda^p_G(r)\gtrsim r/\log(r)$ for all $p\in[1,\infty]$. If, in addition, $H$ is finite and $K$ is virtually cyclic then $\Lambda^p_G(r)\simeq r/\log(r)$ for all $p\in[1,\infty]$.
\end{corollary}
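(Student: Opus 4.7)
The lower bound $\Lambda^p_{H\wr K}(r)\gtrsim r/\log r$ will follow directly from results already assembled in the paper: by the second item of Theorem~\ref{thm:embSgraph}, $\DL(2,2)$ admits a quasi-isometric embedding into $G=H\wr K$ whenever $H$ is nontrivial and $K$ is infinite finitely generated. Such an embedding between bounded degree graphs is a regular map, so monotonicity of Poincar\'e profiles under regular maps (\cite[Theorem~1]{HumeMackTess}) combined with Theorem~\ref{DLlowerbound} will yield
\[ \Lambda^p_{H\wr K}(r)\gtrsim \Lambda^p_{\DL(2,2)}(r)\simeq r/\log(r) \quad\text{for every } p\in[1,\infty]. \]

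For the matching upper bound under the additional hypotheses that $H$ is finite and $K$ is virtually cyclic, I plan to reduce to the case of a lamplighter with finite lamp group over $\Z$ and apply Proposition~\ref{prop:think-profiles-general-upper}. First, pick an infinite normal cyclic subgroup $K_0\trianglelefteq K$ of finite index $n$ (obtained as the normal core of any infinite cyclic finite-index subgroup, which remains infinite cyclic since $K$ is infinite virtually cyclic). The subgroup $(\bigoplus_K H)\rtimes K_0$ then has finite index in $H\wr K$ and is isomorphic to the wreath product $F\wr \Z$ with finite lamp group $F=H^{n}$. Since $G$ is quasi-isometric to $F\wr\Z$, it will suffice to bound the Poincar\'e profile of the latter.

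For $p\in[1,\infty)$, the idea is to show $F\wr\Z$ has finite Assouad--Nagata dimension and then invoke Proposition~\ref{prop:think-profiles-general-upper}. A Cayley graph of $F\wr\Z$ is quasi-isometric to the Diestel--Leader graph $\DL(|F|,|F|)$ (the content of \cite{Woess-LAMPLIGHTERS_HARMONIC_FUNCTIONS} when $F$ is cyclic, and for general finite $F$ it follows either from well-known quasi-isometry invariance results for lamplighters or by constructing a direct embedding in the spirit of the argument used for the second item of Theorem~\ref{thm:embSgraph}). By Lemma~\ref{lem:DLbiLip}, $\DL(|F|,|F|)$ embeds bi-Lipschitzly as a subset of the product $T_{|F|+1}\times T_{|F|+1}$ of two regular trees; finite products of finite-valence trees have finite Assouad--Nagata dimension, and this property is inherited by bi-Lipschitz subspaces. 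For $p=\infty$, the matching upper bound will follow from the exponential volume growth of $F\wr\Z$ via \cite[Proposition~6.1]{HumeMackTess}.

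The only delicate point is verifying finite Assouad--Nagata dimension of $F\wr\Z$ for arbitrary finite $F$: for cyclic $F$ the Woess identification with $\DL(|F|,|F|)$ makes this immediate, but for general finite $F$ one must either quote a quasi-isometry invariance statement or construct the embedding into a product of trees by hand using a bi-infinite geodesic in $\Z$, much as in the proof of Theorem~\ref{thm:embSgraph}(2). Once this is in place, the corollary is a direct assembly of Theorem~\ref{thm:embSgraph}, Theorem~\ref{DLlowerbound}, Proposition~\ref{prop:think-profiles-general-upper}, and \cite[Proposition~6.1]{HumeMackTess}.
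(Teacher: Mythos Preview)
Your proposal is correct and follows essentially the same route as the paper: the lower bound via Theorem~\ref{thm:embSgraph} and Theorem~\ref{DLlowerbound}, and the upper bound (under the extra hypotheses) via finite Assouad--Nagata dimension and Proposition~\ref{prop:think-profiles-general-upper}, with the $p=\infty$ case handled by exponential growth. The paper is simply terser: it asserts directly that $G$ has Assouad--Nagata dimension $1$ when $H$ is finite and $K$ is virtually cyclic, whereas you spell out the finite-index reduction to $F\wr\Z$ and the embedding into a product of trees to obtain finite dimension; both justifications suffice for the conclusion.
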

\begin{proof} For $p=\infty$, $G$ has exponential growth so $\Lambda^\infty_G(r)\simeq r/\log(r)$ by \cite[Proposition 6.1]{HumeMackTess}.

By Theorem \ref{DLlowerbound} and Theorem \ref{thm:embSgraph}
\[
	r/\log(r) \lesssim \Lambda^1_{\DL(2,2)}(r) \lesssim \Lambda^1_G(r).
\]
	Finally, $G$ has Assouad--Nagata dimension $1$ whenever $H$ is finite and $K$ is virtually cyclic, so by Proposition \ref{prop:think-profiles-general-upper}, for all $p\in[1,\infty)$
\[
 r/\log(r) \lesssim \Lambda^1_G(r)\lesssim \Lambda^p_G(r) \lesssim r/\log(r). \qedhere
\]
\end{proof}

\subsection{Poincar\'{e} profiles of $\Osc$}\label{sec:HeisRtimesZ}

Let $\Heis_3$ denote the real Heisenberg group, consider the action $\R\curvearrowright \Heis_3$ given by
\[
 \left(
 \begin{array}{ccc} 
 1 & a & c \\ 
 0 & 1 & b \\ 
 0 & 0 & 1
 \end{array}
 \right)\cdot \psi(k) =
 \left(
 \begin{array}{ccc} 
 1 & e^ka & c \\ 
 0 & 1 & e^{-k}b \\ 
 0 & 0 & 1
 \end{array}
 \right) , 
\]
and construct the corresponding semidirect product $\Osc=\Heis_3\rtimes_\psi\R$. For brevity, we will omit $\psi$ in what follows and we introduce the following shorthands for elements of $\Heis_3$ and $\Osc$ respectively
\[
 (a,b,c):=\left(
 \begin{array}{ccc} 
 1 & a & c \\ 
 0 & 1 & b \\ 
 0 & 0 & 1
 \end{array}
 \right)
 \quad (a,b,c;k):=\left(\left(
 \begin{array}{ccc} 
 1 & a & c \\ 
 0 & 1 & b \\ 
 0 & 0 & 1
 \end{array}
 \right),k\right).
\]
For example, the group operation in $\Osc$ is 
\[ (a,b,c;k)(a',b',c';k')=(e^{k'}a+a',e^{-k'}b+b',c+c'+e^{k'}ab';k+k').\]
In what follows we will work with the cocompact subgroup $G=\Heis_3\rtimes\Z$. To define Poincar\'e profiles on $G$, we use the word metric from the compact generating set $[-1,1]^3\times \{-1,0,1\}$,
and we use the following notion of gradient: 
 Given a function $f:X\to\R$ on a metric space $X$, and $a\geq 2$, we define $|\nabla_af|:X\to \R$ by
\[
 |\nabla_a f|(x)=\sup\setcon{|f(y)-f(y')|}{y,y'\in B(x,a)}.
\]
Full details about Poincar\'e profiles with respect to this notion of gradient of a function are presented in \cite[Sections 3 and 4]{HumeMackTess}.

Our goal is the following:

\begin{theorem}\label{thm:HeisExt}
	For all $p\in [1,\infty]$,
$
 \Lambda^p_{\Osc}(r)\simeq r/\log(r).
$
\end{theorem}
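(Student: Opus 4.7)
The upper bound $\Lambda^p_{\Osc}(r) \lesssim r/\log r$ is immediate from Corollary~\ref{cor:general-upper-bound}: $\Osc$ is a connected Lie group with finite Assouad--Nagata dimension, giving the bound for $p \in [1,\infty)$, while exponential growth of $\Osc$ combined with \cite[Proposition~6.1]{HumeMackTess} handles $p = \infty$. By monotonicity of $\Lambda^p$ in $p$ and \cite[Proposition~7.2]{HumeMackTess}, the matching lower bound reduces to proving $\Lambda^1_{\Osc}(r) \gtrsim r/\log r$.

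I would adapt the three-claim path-counting argument from Theorem~\ref{DLlowerbound}, but working directly in the metric measure space $\Osc$ with its bi-invariant Haar measure (the $\R$-action on $\Heis_3$ expands $a$ by $e^k$, contracts $b$ by $e^{-k}$, fixes $c$, hence has Jacobian $1$, so $\Osc$ is unimodular). In contrast to $\SOL_a$, the Heisenberg twist prevents embedding a convenient Diestel--Leader graph (cf.\ Theorem~\ref{thm:embSgraph}), so a continuous version of the argument is needed. For each $n \in \N$ define the box
\[
V_n = \{(a,b,c;k) \in \Osc : 0 \leq k \leq n,\ |a| \leq e^k,\ |b| \leq e^{n-k},\ |c| \leq e^n\},
\]
with Haar volume $|V_n| \simeq n e^{2n}$ and word-metric diameter $O(n)$ (using that $a$, $b$ have logarithmic and $c$ also logarithmic word length via the commutator trick). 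It then suffices to establish the Poincar\'e inequality $h^1(V_n) \gtrsim 1/n$, as this yields $|V_n|\,h^1(V_n) \gtrsim e^{2n} \simeq |V_n|/\log|V_n|$.

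For the Poincar\'e inequality, to each pair $v = (a,b,c;s)$, $w = (a',b',c';t)$ in $V_n$ (say with $s \geq t$) I would attach a family $\mathcal{C}_{v,w}$ of Lipschitz curves in $V_n$ of length $O(n)$ of the up-down-up shape $v \uparrow (z;n) \downarrow (z';0) \uparrow w$, parameterised by a waypoint $z$ on the top face of $V_n$ (with freedom in $a$) and a waypoint $z'$ on the bottom face (with freedom in $b$). The box $V_n$ is tailored so these curves stay inside it, and the $|c| \leq e^n$ slab absorbs the Heisenberg drift in $c$ accumulated from the $(a,b)$-moves along the curve. A continuous Fubini computation---the analogue of Claim~2 in Theorem~\ref{DLlowerbound}---then bounds the Haar mass of triples $(v,w,\gamma)$ with $\gamma \in \mathcal{C}_{v,w}$ passing through any fixed unit ball in $V_n$, from which the standard averaging inequality yields $h^1(V_n) \gtrsim 1/n$.

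The main technical obstacle is the Heisenberg twist itself: the terminal $c$-value of an up-down-up curve is a bilinear function of its $(a,b)$-waypoints, so the parameterisation of $\mathcal{C}_{v,w}$ must be arranged with some care to ensure both that enough choices of $(z,z')$ yield a curve actually ending at $w$ (given the constraint $c' = c + (\text{Heisenberg adjustment})$), and that the resulting curves spread evenly enough across $V_n$ to deliver the sharp Fubini bound. Once the parameterisation is set up correctly, the remainder is a direct if somewhat lengthy calculation in the $(a,b,c,k)$-coordinates, analogous to the counting in the proof of Theorem~\ref{DLlowerbound}.
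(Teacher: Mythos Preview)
Your framework matches the paper's exactly: the upper bound via Corollary~\ref{cor:general-upper-bound}, the reduction to $\Lambda^1$, and the strategy of proving $h^1 \gtrsim 1/n$ on exponentially growing boxes by a path-counting argument. The gap is in the construction of the curve family $\mathcal{C}_{v,w}$, which you treat as a matter of ``setting up the parameterisation correctly''; in fact this is the entire difficulty, and the three-leg up--down--up structure you propose is too rigid. At the top face $k=n$ the coordinate $a$ is large (of order $e^n$) and $b$ is $O(1)$, so the cheap move there is in $b$, not $a$; likewise at $k=0$ only $a$ can be adjusted cheaply. With one $B$-move at the top and one $A$-move at the bottom, both increments are already forced by the targets $a',b'$, leaving no free parameter at all, and the terminal $c$-coordinate is $c + (\text{top }a)\cdot\Delta b$, which will generically miss $c'$. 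There is no two-waypoint family $\mathcal{C}_{v,w}$ of up--down--up curves from $v$ to $w$.

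What the paper actually does is pass to the cocompact subgroup $\Heis_3 \rtimes \Z$ and build, for each pair $x',y'\in H_t$, a 55-term sequence $x'_0,\ldots,x'_{54}$ in $H_t$ (each consecutive pair differing by an $A(s)$ or $B(s)$ with $|s|\leq 2e^t$), which is then threaded through $G_t$ by inserting $2t$ vertical steps between consecutive terms. The sequence devotes 16 steps at the start and 28 at the end to commutator words $[A(e^t/\sqrt{2}),B(e^t/\sqrt{2})]^{\pm 1}$ that shift $c$ by integer multiples of $\tfrac12 e^{2t}$, and the central block (steps 17--26) introduces an auxiliary length $k$, defined as the square root of a specific combination of the data, so that a four-step excursion $A(-k)B(-k)A(k)B(k)$ absorbs the residual $c$-discrepancy exactly. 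The ``small overlap'' verification (Lemma~\ref{lem:memory}) is then a case-by-case analysis over the 55 positions showing that $g_i^{-1}(\text{point})$ meets each coordinate slice $\Heis^2_{\underline v,\sigma^i}$ in boundedly many points, and the final integral estimate (Lemma~\ref{lem:memory2}) requires computing the Jacobians of the change of variables $(a_{\tau},b_{\tau},c_{\tau})\mapsto x'_i$, which for $i=21,22,23$ involve $\partial k/\partial(\cdot)$ and must be shown to be bounded away from $0$ using the constraint $k\in[\tfrac{1}{\sqrt 2}e^t,e^t]$. None of this machinery is visible in an up--down--up picture, and it is precisely what makes the $\Osc$ case substantially harder than $\mathrm{DL}(2,2)$ or $\R^4\rtimes\Z$.
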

By Corollary~\ref{cor:general-upper-bound} it suffices to prove that $\Lambda^1_G(r)\gtrsim r/\log(r)$.

The proof has four main steps: first we define special families of sequences for each pair of points in chosen subsets of $\Heis_3$, second we show these sequences have ``small overlap'', third from them we construct coarse paths in $G$, and finally we get a lower bound on $\Lambda^1_G$ by controlling the change in functions by their gradient on these paths.

Let us demonstrate the approach with a simpler example that avoids some of the technicalities required. Consider $G=\R^4\rtimes_{(1,-1,1,-1)}\Z$ with the word metric from the compact generating set $[-1,1]^4\times\{-1,0,1\}$.  For each $t$ define  $H_t=[-e^t,e^t]^4\subseteq\R^4$. Given any pair $\underline{a}=(a_1,a_2,a_3,a_4)$, $\underline{b}=(b_1,b_2,b_3,b_4)$ we define a special sequence $S(\underline{a},\underline{b})$ as follows:
\[
 (a_1,a_2,a_3,a_4) \to (b_1,a_2,a_3,a_4) \to (b_1,b_2,a_3,a_4) \to (b_1,b_2,b_3,a_4) \to (b_1,b_2,b_3,b_4)
\]
By ``small overlaps'', we mean that if for some $1\leq n \leq 5$ we know $\underline{c}=(c_1,c_2,c_3,c_4)$ is the $n$th term in the sequence $S(\underline{a},\underline{b})$ then $a_i=c_i$ for $n \leq i \leq 4$ and $b_i=c_i$ for $1 \leq i <n$.  We interpret this as saying that the set of points in $H_t\times H_t$ whose special sequence contains a given $\underline{c}$ in the $n$th term is a ``copy'' of $H_t$.

Next define 
\begin{multline*}
	G_t=\bigcup_{k=-t}^t (\psi(k)H_k,k)=\big\{ (x,y,z,w;k)\in G : x,z\in[-e^{t+k},e^{t+k}],\ \\ y,w\in[-e^{t-k},e^{t-k}],\ -t\leq k \leq t \big\}.
\end{multline*}
For each pair $\underline{a_i}=(x_i,y_i,z_i,w_i;k_i)\in G_t$ for $i=1,2$ we define a discrete path $P(\underline{a_1},\underline{a_2})$ connecting them with adjacent terms at distance $\leq 2$ with respect to the word metric, and having at most $10t+5$ entries in $G_t$.
In the description below of $P(\underline{a_1},\underline{a_2})$, ``$\to_\Z$'' indicates that we move between the two points by applying $\psi^{\pm 1}$ the appropriate number of times. Each application of $\psi^{\pm 1}$ defines a new point on the path.  Observe that all points defined are of the form $(\underline{c}\cdot\psi(k);k)$ for some $-t \leq k \leq t, \underline{c}\in P(\underline{a_1},\underline{a_2})$.
\begin{eqnarray*}
 (x_1,y_1,z_1,w_1;k_1) & \to_{\Z} & (e^{-t-k_1}x_1,e^{t+k_1}y_1,e^{-t-k_1}z_1,e^{t+k_1}w_1;-t) \\
 & \to & (e^{-t-k_2}x_2,e^{t+k_1}y_1,e^{-t-k_1}z_1,e^{t+k_1}w_1;-t) \\
 & \to_\Z & (e^{t-k_2}x_2,e^{-t+k_1}y_1,e^{t-k_1}z_1,e^{-t+k_1}w_1;t) \\
 & \to & (e^{t-k_2}x_2,e^{-t+k_2}y_2,e^{t-k_1}z_1,e^{-t+k_1}w_1;t) \\
 & \to_\Z & (e^{-t-k_2}x_2,e^{t+k_2}y_2,e^{-t-k_1}z_1,e^{t+k_1}w_1;-t) \\
 & \to & (e^{-t-k_2}x_2,e^{t+k_2}y_2,e^{-t-k_2}z_2,e^{t+k_1}w_1;-t) \\
 & \to_\Z & (e^{t-k_2}x_2,e^{-t+k_2}y_2,e^{t-k_2}z_2,e^{-t+k_1}w_1;t) \\
 & \to & (e^{t-k_2}x_2,e^{-t+k_2}y_2,e^{t-k_2}z_2,e^{-t+k_2}w_2;t) \\
 & \to_\Z & (x_2,y_2,z_2,w_2;k_2).
\end{eqnarray*}

Given $f:G_t\to\R$, for any $x=(\underline{a};r), y=(\underline{b};s) \in G_t$, by the triangle inequality we have
\begin{eqnarray*}
	|{f(x)-f(y)}| \leq \sum_{k=-t}^t \sum_{\underline{c}\in S(\underline{a},\underline{b})} |\nabla_2 f|(\underline{c}\cdot\psi(k);k).
\end{eqnarray*}
Therefore,

\begin{eqnarray}\label{Eucnormexample}
	\mu(G_t)\|f-f_{G_t}\|_1 & \leq & \int_{G_t\times G_t} |{f(x)-f(y)}| d\mu(G_t\times G_t) \\ 
	& \leq & (2t+1)^2\int_{H_t\times H_t} \sum_{k=-t}^t \sum_{\underline{c}\in S(\underline{a},\underline{b})} |\nabla_2 f|({\underline{c}}\cdot\psi(k);k)d\mu(H_t\times H_t).\notag
\end{eqnarray}
Now we split $\int_{H_t\times H_t}$ into 8 integrations over the variables $x_i,y_i,w_i,z_i$. We also split into 5 terms coming from the five positions $\underline c$ could take in the sequence $S(\underline{a},\underline{b})$. For the $n$th of these we reorder the integration as follows:
\begin{equation}\label{intpart}
\int_{a_1,\ldots,a_{n-1},b_n,\ldots,b_4} \sum_{k=-t}^t \left(\int_{b_1,\ldots,b_{n-1},a_n,\ldots,a_4} \sum_{\underline{c}= S(\underline{a},\underline{b})_n} |\nabla_2 f|(\underline{c}\cdot\psi(k);k)\right)
\end{equation}
By the ``small overlap'' condition, the bracketed part of the above expression is simply $\int_{\underline{c}\in H_t} |\nabla_2 f|( \underline c\cdot\psi(k);k)$, so ($\ref{intpart}$) is bounded from above by
\[
	\mu(H_t)\int_{G_t} |\nabla_2 f|={\frac{1}{2t+1}}\mu(G_t)\int_{G_t} |\nabla_2 f|.
\]
Combining this with ($\ref{Eucnormexample}$) and cancelling $\mu(G_t)$ we deduce that
\[
 \|f-f_{G_t}\|_1 \leq 5(2t+1)\int_{G_t} |\nabla_2 f|.
\]
We deduce that $h^1(G_t) \geq \frac{1}{5(2t+1)} \geq \varepsilon/\log(\mu(G_t))$ for some $\varepsilon>0$ which is independent of $t$.  Since for any $r$ one can find $t$ with $\mu(G_t)$ comparable to $r$, thus $\Lambda^1_G(r) \gtrsim r/\log(r)$.
\medskip

Before proceeding with the lower bound on $\Osc$ we briefly mention some of the difficulties of generalising our approach to $\R^4\rtimes\Z$. The first, and most obvious, is that the special sequences in $\Heis_3$ are longer than those in $\R^4$ and the ``small overlap'' condition is more involved. It is crucial to our argument that the Haar measure ${d\mu_H}$ on $\Heis_3$ coincides with the Lebesgue measure $dxdydz$ on $\R^3$ allowing us to split integrals. However, in making this change we will have to apply three changes of variables, some of which have non-trivial Jacobians. These also need to be controlled.

\medskip
\noindent{\bf Notations and conventions.}
 In what follows we let $t$ be a positive integer. 
\begin{itemize}
\item Define a subset of $\Heis_3$ as follows:
\[
 H_t=\setcon{(a,b,c)}{-e^{t}\leq a < e^{t},\ -e^{t}\leq b < e^{t},\ 
-2e^{2t}< c \leq 2e^{2t}}\subset\Heis_3.
\]

\item Define a subset of $G=\Heis_3\rtimes \Z$ as follows:
\[
 G_t = \setcon{(a,b,c;k)}
{ \begin{array}{l}
-e^{t+k}\leq a < e^{t+k},\ -e^{t-k}\leq b < e^{t-k} \\
-2e^{2t}< c \leq 2e^{2t},\ -t\leq k\leq t \end{array}},
\]
i.e. $G_t= \bigcup_{k=-t}^t \left((H_t;0) \cdot (1_{\Heis_3};k)\right)=\bigcup_{k=-t}^t (H_t\cdot\psi(k);k)$.
\item We shall consider the following subset of $\Heis_3$: $$S_t=\setcon{A(s),B(s)}{-2e^t\leq s \leq 2e^t}\subseteq \Heis_3,$$ where $A(s)=(s,0,0)$ and $B(s)=(0,s,0)$.
\end{itemize}

\noindent \textbf{Step 1: Defining sequences in $H_t$.}
Let us fix $x'=(a_1,b_1,c_1),y'=(a_2,b_2,c_2)\in H_t$. We will define a sequence $x'=x'_0,\ldots,x'_{54}=y'$ such that $(x'_i)^{-1}x'_{i+1}\in S_t$ for all $i$.

Define $\overline{a_i}\in[0,\frac13 e^t)$ such that $a_i-\overline{a_i}\in \frac13e^t\Z$, and $\overline{b_i}\in[0,\frac13 e^t)$ such that $b_i-\overline{b_i}\in \frac13e^t\Z$; note $-e^t \leq b_i-\overline{b_i} \leq \frac{2}{3}e^t$. 

Define $\overline{c_1}\in(-\frac12e^{2t},0]$ such that $\overline{c_1}-c_1=\frac{l_1}{2}e^{2t}$ for some $l_1\in\{-4,-3,\ldots,3\}$. Finally, define $\overline{c_2}$ such that $c_2-\overline{c_2}=\frac{l_2}{2}e^{2t}$ for some $l_2\in\Z$ and
\begin{equation}\label{eq:c2}
 \frac12e^{2t}< \overline{c_2}-\overline{c_1}-\overline{a_1}(\overline{b_1}-b_1) - \overline{a_2}(b_2-\overline{b_1}) \leq e^{2t}.
\end{equation}
As we shall later see, $\frac{-2}{3}e^{2t} \leq \overline{c_2}\leq\frac{5}{3}e^{2t}$ so $l_2\in \{-7,-6,\ldots,5\}$.
Set $k= (\overline{c_2}-\overline{c_1}-\overline{a_1}(\overline{b_1}-b_1) - \overline{a_2}(b_2-\overline{b_1}) )^{1/2}$.

The first sixteen steps travel from $x'_0=(a_1,b_1,c_1)$ to $x'_{16}=(a_1,b_1,\overline{c_1})$. We have
\[
 x'_{16}=x'_0 C^{l_1}
\]
where $C$ is any cyclic conjugate of the commutator $A(\frac{1}{\sqrt 2}e^t)B(\frac{1}{\sqrt 2}e^t)A(-\frac{1}{\sqrt 2}e^t)B(-\frac{1}{\sqrt 2}e^t)$ and $\abs{l_1}\leq 4$. Now $C^{l_1}$ decomposes as a product of at most 16 elements of $S_t$. Splitting into four cases depending on the signs of $a_1,b_1$, at least one of these paths will remain inside $H_t$, for example, the commutator given above always stays inside $H_t$ when $a_1,b_1\leq 0$. If $x'_i=(a_1,b_1,\overline{c_1})$ occurs for the first time with $i< 16$ then we simply define all terms in the sequence from $x'_i$ to $x'_{16}$ to be equal to $(a_1,b_1,\overline{c_1})$.

The next steps are given by multiplying by suitable $A(s)$ or $B(s)$ in turn:
\begin{eqnarray*}
x'_{17}  & = & (\overline{a_1},b_1,\overline{c_1}) =x'_{16} A(\overline{a_1}-a_1) \\
	x'_{18}  & = & (\overline{a_1},\overline{b_1},\overline{c_1}+\overline{a_1}(\overline{b_1}-b_1))  = x'_{17} B(\overline{b_1}-b_1)\text{, etc.} \\
x'_{19}  & = & (\overline{a_2},\overline{b_1},\overline{c_1}+\overline{a_1}(\overline{b_1}-b_1)) \\
x'_{20}  & = & (\overline{a_2},\overline{b_2},\overline{c_1}+\overline{a_1}(\overline{b_1}-b_1)+\overline{a_2}(\overline{b_2}-\overline{b_1})) \\
x'_{21}  & = & (\overline{a_2}-k,\overline{b_2},\overline{c_1}+\overline{a_1}(\overline{b_1}-b_1)+\overline{a_2}(\overline{b_2}-\overline{b_1})) \\
x'_{22}  & = & (\overline{a_2}-k,\overline{b_2}-k,\overline{c_2}-\overline{a_2}(b_2-\overline{b_2})-k\overline{a_2}) \\
x'_{23}  & = & (\overline{a_2},\overline{b_2}-k,\overline{c_2}-\overline{a_2}(b_2-\overline{b_2})-k\overline{a_2}) \\
x'_{24}  & = & (\overline{a_2},\overline{b_2},\overline{c_2}-\overline{a_2}(b_2-\overline{b_2})) \\
x'_{25}  & = & (\overline{a_2},b_2,\overline{c_2}) \\
x'_{26}  & = & (a_2,b_2,\overline{c_2})
\end{eqnarray*}

It is clear from the definitions given above that the $a$ and $b$ coordinates remain within $H_t$. For the $c$ coordinate, we have: 
\begin{eqnarray*}
	-\frac{13}{18}\exp(2t)  \leq & \overline{c_1}+\overline{a_1}(\overline{b_1}-b_1) & \leq  \frac{1}{3}\exp(2t) \\
-\frac{5}{6}\exp(2t)  \leq & \overline{c_1}+\overline{a_1}(\overline{b_1}-b_1)+\overline{a_2}(\overline{b_2}-\overline{b_1}) & \leq  \frac{4}{9}\exp(2t) 
\end{eqnarray*}
Adding $k^2$ to both sides, and then adding either $-k\overline{a_2}$ or $\overline{a_2}(b_2-\overline{b_2})$ we get:
\begin{eqnarray*}
-\frac13\exp(2t)  \leq & \overline{c_2}-\overline{a_2}(b_2-\overline{b_2}) & \leq  \frac{13}{9}\exp(2t) \\
-\frac23\exp(2t)  \leq & \overline{c_2}-\overline{a_2}(b_2-\overline{b_2})-k\overline{a_2} & \leq \frac{13}{9}\exp(2t) \\
-\frac{2}{3}\exp(2t)  \leq & \overline{c_2} & \leq \frac{5}{3}\exp(2t)
\end{eqnarray*}
and therefore the sequence remains inside $H_t$.

For the final part we apply conjugates as in the first step, except that this time, we only have $|l_2|\leq 7$, meaning as many as 28 steps could be required. As before, we insist on using the full 28 steps (to simplify notation later) so if $x'_i=y$ occurs for the first time at some $i<54$ we simply define $x'_j=y$ whenever $i\leq j\leq 54$. Thus, to each pair of points $x',y'\in H_t$ we have assigned a sequence $x'_0,\ldots,x'_{54}$.  We define $g_i:H_t\times H_t \to H_t$ by $g_i(x',y')=x'_i$.

\noindent \textbf{Step 2: Showing that these sequences have ``small overlap".}
These sequences retain a considerable amount of information about their initial and terminal points. We want to show that for any fixed $v\in H_t$ the set of pairs $(x',y')\in H_t\times H_t$ such that $v$ lies on the sequence connecting $x'$ to $y'$ as defined above is ``small''. We now make this precise.

For all $\underline{v}=(v_1,v_2,v_3)\in\R^3$ and $\sigma=(\sigma_1,\sigma_2,\sigma_3)\in\set{1,2}^3$, we define
\[
 \Heis^2_{\underline{v},\sigma}=\setcon{((a_1,b_1,c_1),(a_2,b_2,c_2))\in(\Heis_3)^2}{a_{\sigma_1}=v_1,\ b_{\sigma_2}=v_2,\ c_{\sigma_3}=v_3}
\]
Recall that for $i\in\{0,\ldots, 54\}, x'\in H_t$ and $y'\in H_t$ the $i$th term in the sequence connecting $x'$ to $y'$ is $g_i(x',y')$.

\begin{lemma}\label{lem:memory}  For each $i$, there exists $\sigma^i=(\sigma^i_1,\sigma^i_2,\sigma^i_3)$ such that $g_i^{-1}(a',b',c')$ intersects each $\Heis^2_{\underline{v},\sigma^i}$ in at most $M=2^93^2$ points.
\end{lemma}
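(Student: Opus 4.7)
The plan is a case analysis of the $55$ indices $i\in\{0,\ldots,54\}$, grouped according to the five natural stages in the construction of the sequence $x'_0,\ldots,x'_{54}$. For each $i$ the task is to read off from $\underline v=g_i(x',y')$ which of the six real numbers $a_1,b_1,c_1,a_2,b_2,c_2$ are already constrained, possibly up to a bounded discrete ambiguity coming from the rounded quantities $\overline{a_j},\overline{b_j},\overline{c_j}$, the integers $l_1\in\{-4,\ldots,3\}$ and $l_2\in\{-7,\ldots,5\}$, and the auxiliary quantity $k$. One then chooses $\sigma^i\in\{1,2\}^3$ so that the three extra equations arising from $(x',y')\in\Heis^2_{\underline v,\sigma^i}$ pin down the remaining coordinates, up to a multiplicity that must be bounded by $M=2^9\cdot 3^2$.

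In Stages 1 and 5 (indices $0$--$16$ and $26$--$54$), the map $g_i$ is a function of $x'$ alone, respectively $y'$ alone, so one takes $\sigma^i=(2,2,2)$, respectively $(1,1,1)$, forcing the untouched half-point to equal $\underline v$. On the active half, the coordinates $a_j,b_j$ are read from $v_1,v_2$ up to a bounded shift in $\{0,\pm e^t/\sqrt{2}\}$ coming from the cyclic-conjugate choice in the commutator $C$, and $c_j$ is determined up to the $8$, respectively at most $16$, values of $l_j$. Stages 2 and 4 (indices $17,18$ and $25,26$) are similar, but with the rounded values $\overline{a_j},\overline{b_j}$ now appearing explicitly as coordinates of $\underline v$; the differences $a_j-\overline{a_j}$ and $b_j-\overline{b_j}$ each contribute at most a factor $6=2\cdot 3$ from the at most six translates in $\overline{a_j}+\tfrac{e^t}{3}\mathbb{Z}\cap[-e^t,e^t)$, giving intersection sizes bounded by $6\cdot 6\cdot 8=288=2^5\cdot 3^2$ (respectively $6\cdot 6\cdot 16$), well within $M$.

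The main obstacle is Stage 3 (indices $19$--$25$), where $g_i$ genuinely mixes the rounded data of both $x'$ and $y'$ via affine expressions and, through $k$, a quadratic one. We treat each of the seven indices separately, verifying that some $\sigma^i$ can be chosen so that after imposing the three equations of $\Heis^2_{\underline v,\sigma^i}$, the remaining system $g_i(x',y')=\underline v$ has no continuous family of solutions, only discrete ambiguities coming from rounding offsets, from $l_1$ and $l_2$, and from the two signs in the square root defining $k$. Once the correct $\sigma^i$ is identified in each case, the count is mechanical: each surviving rounding offset contributes at most $6=2\cdot 3$, $l_1$ contributes at most $8=2^3$, $l_2$ at most $16=2^4$, and the quadratic in $k$ contributes at most $2$. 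Multiplying the worst-case contributions, e.g.\ at step $22$ where $\overline{a_2}-k,\overline{b_2}-k$ and the cubic-in-$\overline{a_j}$ expression in $v_3$ all appear simultaneously, the total never exceeds $M=2^9\cdot 3^2=4608$, completing the proof. The difficulty is thus entirely in finding the correct $\sigma^i$ in Stage 3 and confirming that no continuous degree of freedom survives; the arithmetic of multiplicities, while bulky, is routine.
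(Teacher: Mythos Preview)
Your high-level plan coincides with the paper's: a case analysis over $i$, grouping the commutator stages together and handling the middle indices one by one by choosing $\sigma^i$ and bookkeeping the discrete rounding ambiguities.  So the approach is not new.  What is missing, and what you yourself flag as ``the difficulty'', is the actual execution for the middle indices; at present the proposal asserts rather than verifies that some $\sigma^i$ works there, and two details you do give are off.

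First, the extremal step is $i=21$, not $i=22$, and the equation one meets is quadratic in $k$, not cubic.  At $i=21$ one takes $\sigma^{21}=(1,1,2)$ (fixing $a_1,b_1,c_2$); then $\overline{b_2}=B_{21}$ is known, and writing $\overline{a_2}=A_{21}+k$ the defining relation $k^2=\overline{c_2}-\overline{c_1}-\overline{a_1}(\overline{b_1}-b_1)-\overline{a_2}(b_2-\overline{b_1})$ together with $C_{21}=\overline{c_1}+\overline{a_1}(\overline{b_1}-b_1)+\overline{a_2}(\overline{b_2}-\overline{b_1})$ yields
\[
k^2 + k(b_2-\overline{b_2}) + A_{21}(b_2-\overline{b_2}) - \overline{c_2} + C_{21} = 0,
\]
a monic quadratic in $k$.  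The multiplicity is (at most $6$ choices of $b_2-\overline{b_2}$) $\times$ (at most $8$ values of $\overline{c_2}$ given $c_2$) $\times$ ($2$ roots) $\times$ ($6$ choices of $a_2$ from $\overline{a_2}$) $\times$ ($8$ choices of $c_1$ from $\overline{c_1}$) $= 4608 = M$, so this case is tight.  At $i=22$ one instead takes $\sigma^{22}=(1,1,1)$ and gets a quadratic again, with count $2\cdot6\cdot6\cdot8=576$.

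Second, your blanket bound ``$l_2$ contributes at most $16$'' overshoots: at $i=21$ both an $l_1$-type factor (for $c_1$) and an $l_2$-type factor (for $\overline{c_2}$) appear, and using $16$ for the latter gives $6\cdot16\cdot2\cdot6\cdot8=9216>M$.  The sharper bound needed is that for fixed $c_2$ the number of admissible $\overline{c_2}$ is at most $8$ (indeed at most $5$, since $\overline{c_2}$ ranges over an interval of length $\tfrac{7}{3}e^{2t}$ intersected with a $\tfrac12 e^{2t}$-grid).  Also, in the commutator stages the $c$-coordinate picks up a cross-term $\epsilon a_1 e^t$ with $\epsilon\in\{-1,0,1\}$ in addition to the $l_j$-shift, contributing another factor of $3$ you did not list.
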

\begin{proof}
We will prove this by finding an appropriate $\sigma^i$ in each case and proving a bound on the intersection. As a shorthand, let us write $x'_i=(A_i,B_i,C_i)$.

\noindent\textbf{Case} $i\leq 16$ and $i\geq 26$: For $i\leq 16$, set $\sigma^i=(2,2,2)$, we know that $a'\in\set{a_1-\frac{1}{\sqrt 2}e^t,a_1,a_1+\frac{1}{\sqrt 2}e^t}$, $b'\in\set{b_1-\frac{1}{\sqrt 2}e^t,b_1,b_1+\frac{1}{\sqrt 2}e^t}$ and $c' = c_1+\epsilon a_1e^t+\frac{p}{2}e^{2t}$ for some $\epsilon\in\set{-1,0,1}$ and at most $8$ possible values of $p\in\Z$. Therefore the intersection with each $H_{\overline{v},\sigma^i}$ contains at most $3^3\cdot 8$ points, corresponding to the three choices of $a',b',\epsilon$ and eight choices of $p$ respectively. We argue similarly for stage 7, where $i\geq 28$, except that we must set $\sigma^i=(1,1,1)$.

\noindent\textbf{Case} $i=17$: Set $\sigma^i=(2,2,2)$. Given $(\overline{a_1},b_1,\overline{c_1})$ there are at most 6 possibilities for $a_1$ and 8 possibilities for $c_1$.

\noindent\textbf{Case} $i=18$: Set $\sigma^i=(2,2,2)$. Given $(\overline{a_1},\overline{b_1},\overline{c_1}+\overline{a_1}(\overline{b_1}-b_1))$ there are at most 6 possibilities for $a_1$, 6 for $b_1$ and, for each of these choices, 8 possibilities for $c_1$.

\noindent\textbf{Case} $i=19$: Set $\sigma^i=(1,2,2)$. There are at most $6$ possibilities for $a_2$, $6$ possibilities for $b_1$, and for each of these, $8$ possibilities for $c_1$.

\noindent\textbf{Case} $i=20$: Set $\sigma^i=(1,1,2)$. There are at most $6$ possibilities for $a_2$ and $6$ possibilities for $b_2$. From these we determine $\overline{c_1}$ exactly, and there are $8$ possibilities for $c_1$.

\noindent\textbf{Case} $i=21$: Set $\sigma^i=(1,1,2)$. We have
\[
 k^2 + k (b_2-\overline{b_2}) + A_{21}(b_2-\overline{b_2}) - \overline{c_2} + C_{21} = 0.
\]
	There are at most $6$ possible values of $b_2-\overline{b_2}$ and at most $8$ possible values of $\overline{c_2}$ (given that $c_2$ has been fixed). So solving the quadratic equation, there are at most $96$ possible values of $k$. For each one we determine $\overline{a_2}$ and then $\overline{c_1}$. Then there are at most $6$ possibilities for $a_2$ and $8$ for $c_1$, giving at most $96\cdot 6\cdot 8\leq M$ possibilities.  

\noindent\textbf{Case} $i=22$: Set $\sigma^i=(1,1,1)$. We have
\[
 C_{22} = \overline{c_1} + \overline{a_1}(\overline{b_1}-b_1) + (A_{22}+k)(B_{22}+k-\overline{b_1}) - A_{22}k
\]
which is a monic quadratic in $k$ where we are given all the coefficients. We solve this, giving at most two possibilities for $k$. Using $k$ we determine $\overline{a_2}$ and $\overline{b_2}$ giving six possible $a_2$ and $b_2$ in each case. Finally, we determine $\overline{c_2}$, giving eight possible values of $c_2$.

\noindent\textbf{Case} $i=23$: Set $\sigma^i=(1,1,1)$. We have
\begin{eqnarray*}
 k^2 & = & C_{23}+k\overline{a_2} -\overline{c_1} - \overline{a_1}(\overline{b_1}-b_1)-\overline{a_2}(\overline{b_2}-k+k-\overline{b_1}) \\ & = & C_{23}-\overline{c_1} - \overline{a_1}(\overline{b_1}-b_1)-A_{23}B_{23}+A_{23}\overline{b_1}.
\end{eqnarray*}
Thus we may calculate $k$ exactly, and use this to determine $\overline{b_2}$. There are then $6$ possibilities for $b_2$. For each one, we then calculate $\overline{c_2}$ using the original definition of $k^2$. There are at most $6$ possibilities for $a_2$, and 8 possibilities for $c_2$.

\noindent\textbf{Case} $i=24,25$: The same technique as $i=18,17$ respectively work, except with $\sigma^i=(1,1,1)$.
\end{proof}

\noindent\textbf{Step 3: Defining sequences in $G_t$.}

Fix $x=(a_1,b_1,c_1;r)$ and $y=(a_2,b_2,c_2;s)$ in $G_t$ and let $x'=(e^{-r}a_1,e^rb_1,c_1)$ and $y'=(e^{-s}a_2,e^sb_2,c_2)$ be the corresponding points in $H_t$.
Fix the sequence of points $x'=x'_{0},\ldots,x'_{54}=y'$ in $H_t$ constructed in Step 1.
We now construct a sequence $x=x_0,\ldots,x_m=y$ of points in $G_t$ such that $d_G(x_i,x_{i+1})\leq 2$ for all $i$.

Starting from $x_0=(a_1,b_1,c_1;r)$, define $x_i=(e^{-i}a_1,e^ib_1,c_1;r-i)$ for $0\leq i \leq r$ if $r \geq 0$ and $x_i=(e^{i}a_1,e^{-i}b_1,c_1;r+i)$ for $1\leq i \leq -r$ if $r<0$. Now $x_{|r|}=(x'_{0};0)$. We now define parts of the sequence $(x_j)$ going from $x_{k_i}=(x'_i;0)$ to $x_{k_{i+1}}=(x'_{i+1};0)$ for each $0\leq i < 54$; note that $k_0=|r|$. If $x'_i$ and $x'_{i+1}$ differ by some $A(s)$ then define
\[
x_{k_i+j}=\left\{\begin{array}{rl}
x_{k_i}\cdot (\underline{0};-j) & \textup{if }0\leq j \leq t, 
\\
x_{k_{i+1}}\cdot (\underline{0};-(2t+1)+j) & \textup{if }t+1\leq j \leq 2t+1.
\end{array}
\right.
\]
Note that here $x_{k_i+t+1}=x_{k_i+t}(A(se^{-t});0)$ with $|se^{-t}|\leq 2$.
If $x'_i$ and $x'_{i+1}$ differ by some $B(s)$ then define 
\[
x_{k_i+j}=\left\{\begin{array}{rl}
x_{k_i}\cdot (\underline{0};j) & \textup{if }0\leq j \leq t, 
\\
x_{k_{i+1}}\cdot (\underline{0};2t+1-j) & \textup{if }t+1\leq j \leq 2t+1.
\end{array}
\right.
\]
Note that here $x_{k_i+t+1}=x_{k_i+t}(B(se^{-t});0)$ with $|se^{-t}|\leq 2$.
Finally, to get from $(x'_{54};0)$ to $(a_2,b_2,c_2;s)$ we apply $(\underline{0};s)$ to get the last $\abs{s}$ steps of the sequence, so the entire sequence has length $\abs{r} + 54(2t+1) + \abs{s}$, which is at most a bounded multiple of $t$.

{\noindent\textbf{Step 4: Controlling functions by their gradients.}}
In this step we show the following bound.
Let $\mu_G$ be the Haar measure on $G=\Heis_3\rtimes\Z$; note that this restricts to the Haar measure $\mu_H$ of $\Heis_3$ on each copy $\Heis_3\times\{i\}$.  Recall that $\mu_H$ agrees with Lebesgue measure under the identification of $(a,b,c) \in \Heis_3$ with $(a,b,c) \in \R^3$.
\begin{lemma}\label{lem:memory2}
	There exists a constant $C$ such that for any measurable function $f:G_t\to\R$ we have
\begin{equation}\label{eq:memory}
 \int_{x,y\in G_t} \abs{f(x)-f(y)} d\mu_G^{2}(x,y) \leq Ct^2e^{4t} \int_{z\in G_t} \abs{\nabla_2 f(z)} d\mu_G(z).
\end{equation}

\end{lemma}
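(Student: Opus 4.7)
The plan is to bound $\abs{f(x)-f(y)}$ pointwise using the length-$O(t)$ path constructed in Step 3, then integrate over $(x,y)\in G_t\times G_t$ and apply the small-overlap property of Lemma~\ref{lem:memory}. The triangle inequality along the path, using $d_G(x_j,x_{j+1})\leq 2$, yields $\abs{f(x)-f(y)} \leq \sum_j |\nabla_2 f|(x_j)$. I would split this into three parts: the initial $\psi$-shifts from $x$ down to $(x';0)$, the middle segments visiting $(x'_i;0)$ for $i=0,\ldots,54$, and the final $\psi$-shifts from $(y';0)$ up to $y$. For the middle part it is convenient to relax the bound to the (larger) sum $2\sum_{i=0}^{54}\sum_{k=-t}^{t}|\nabla_2 f|((x'_i\cdot\psi(k);k))$, which dominates the triangle sum up to a constant factor since every visited point appears.

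For the initial and final $\psi$-shifts, right-multiplication by $(\underline{0};\mp\ell)$ preserves $\mu_G$ (as $G=\Heis_3\rtimes\Z$ is unimodular) and sends the relevant subset of $G_t$ into $G_t$ for each $\ell\in\{0,\ldots,t\}$. Integrating over $x,y\in G_t$ and summing over $\ell$ therefore contributes at most $(t+1)\mu_G(G_t)\int_{G_t}|\nabla_2 f|$. Since $\mu_G(G_t)=16(2t+1)e^{4t}$, this contribution is $O(t^2 e^{4t})\int_{G_t}|\nabla_2 f|$.

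For the middle contribution at fixed $(i,k)$, I would change variables $(x,y)\leftrightarrow((x',y'),(r,s))\in H_t^2\times\{-t,\ldots,t\}^2$; this is measure-preserving with trivial Jacobian. The integrand depends only on $(x',y')$, so we extract a factor $(2t+1)^2$ and reduce to $\int_{H_t^2}|\nabla_2 f|((g_i(x',y')\cdot\psi(k);k))\,d\mu_H^2$. Using Fubini, I split $H_t^2$ according to the three coordinates fixed by $\sigma^i$; on each piece (in the three free coordinates) where the flooring operations are locally constant, $g_i$ is a polynomial map with Jacobian bounded uniformly above and below by positive constants, and with image in $H_t$. Combined with the at-most-$M$-to-$1$ count of Lemma~\ref{lem:memory}, this gives $\int_{\text{free}}|\nabla_2 f|((g_i\cdot\psi(k);k))\leq C\int_{H_t}|\nabla_2 f|((w\cdot\psi(k);k))\,d\mu_H(w)=C\int_{\{(z,k)\in G_t\}}|\nabla_2 f|$, and integrating over the three fixed coordinates multiplies by $\mu_H(H_t)=16e^{4t}$.

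Summing the resulting middle bound over $k\in\{-t,\ldots,t\}$ gives $\int_{G_t}|\nabla_2 f|$ (since each level is counted once), and summing over the $55$ values of $i$ introduces only a constant factor. Combined with the prefactor $(2t+1)^2\cdot 16e^{4t}\cdot C$, the total middle contribution is $O(t^2 e^{4t})\int_{G_t}|\nabla_2 f|$, which together with the initial/final contributions gives the lemma. The main technical obstacle is the Jacobian analysis of $g_i$: the six ``linear'' horizontal steps ($i\in\{17,18,19,20,24,25\}$) have unit Jacobian by direct differentiation of the formulas in Step 1, while $g_{21},g_{22},g_{23}$ involve the square-root $k=k(x',y')$ and require a short computation using that $k^2\geq\tfrac12 e^{2t}$ to bound the resulting expressions away from $0$. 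The conjugation blocks $i\in\{0,\ldots,16\}\cup\{26,\ldots,53\}$ can be handled structurally by writing $g_i$ as a product of at most $16$ bounded-distortion translations by elements of $S_t$ on $\Heis_3$, each of which preserves Lebesgue measure.
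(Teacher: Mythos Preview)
Your proposal is correct and follows essentially the same strategy as the paper: bound $|f(x)-f(y)|$ by the triangle inequality along the Step~3 path, then for each index $i$ perform the change of variables $(a_{\tau^i_1},b_{\tau^i_2},c_{\tau^i_3})\mapsto g_i$ with bounded Jacobian and invoke the at-most-$M$-to-$1$ count of Lemma~\ref{lem:memory}. Your organization is slightly cleaner than the paper's in two places: you carry out the rescaling $(x,y)\mapsto((x',y'),(r,s))$ first (Jacobian $1$), which avoids tracking the factors $\exp(r_{\sigma^j_1}-r_{\sigma^j_2})$ that the paper introduces and then cancels, and you handle the initial and final $\psi$-shifts directly via unimodularity rather than absorbing them into the $j=0,54$ terms as the paper does; both are cosmetic differences and the Jacobian computations you outline for $i=17,\ldots,25$ are exactly the ones the paper carries out in its Claim.
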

\begin{proof} 
Using our defining sequences, the left-hand integral in equation ($\ref{eq:memory}$) is bounded from above by
\begin{equation}\label{eq:intG}
\int_{x,y\in G_t}  \sum_{i=0}^{m(x,y)}  \abs{\nabla_2 f(x_i(x,y))}d\mu_G^{2}(x,y),
\end{equation}
where $x_i(x,y)$ is the $i$th term of the defining sequence from $x$ to $y$, and $m(x,y)$ is its last index.
	Define $G_t^k=\setcon{(a,b,c;r)\in G_t}{r=k}$. Let $H_t^k=\setcon{(a,b,c)}{(a,b,c;k)\in G_t}$ which is by definition equal to $H_t\cdot\psi(k)$. Once $k'$ is fixed $(a',b',c';k')$ is in the sequence $x_0,\ldots,x_m$ only if $(e^{-k'}a',e^{k'}b',c')=(a',b',c')\cdot \psi(-k')$ is in the corresponding sequence $x'_{0},\ldots,x'_{54}$, and if a single point appears more than once in the sequence $x_0,\ldots,x_m$ then we can shorten the sequence so that this does not happen. Since $d_G(x_i,x_{i+1})\leq 2$ for all $i$, the left hand expression in ($\ref{eq:intG}$) is bounded from above by
\[ \sum_{r_1, r_2=-t}^t \sum_{k'=-t}^t \sum_{j=0}^{54} \int_{x\in H_t^{r_1}}\int_{y\in H_t^{r_2}} \abs{\nabla_2 f(x'_j\cdot \psi(k');k')}d\mu_H(y)d\mu_H(x)
\]
	where for $x=(e^{r_1}a_1,e^{-r_1}b_1,c_1;r_1), y=(e^{r_2}a_2,e^{-r_2}b_2,c_2;r_2)$ we use the shorthand $x'_j$ to represent the point $g_j((a_1,b_1,c_1),(a_2,b_2,c_2))$, which we recall is the $j$th term of the defining sequence from $(a_1,b_1,c_1)$ to $(a_2,b_2,c_2)$.

Now fix $r_1, r_2$, $k'$ and $j$. Our next goal is to bound the integral \begin{equation}\label{bound1}
\int_{x\in H_t^{r_1}}\int_{y\in H_t^{r_2}} \abs{\nabla_2 f(x'_j\cdot\psi(k');k')}d\mu_H(y)d\mu_H(x)
\end{equation}
in terms of $\int_{z\in G^{k'}_t} \abs{\nabla_2 f(z)} d\mu_G$ using Lemma \ref{lem:memory}. Firstly, by Tonelli's theorem, ($\ref{bound1}$) equals
\begin{equation}\label{eq:splitint}
\int_{(\exp(r_{\sigma^j_1})a_{\sigma^j_1},\exp(-r_{\sigma^j_2})b_{\sigma^j_2},c_{\sigma^j_3})}\int_{(\exp(r_{\tau^j_1})a_{\tau^j_1},\exp(-r_{\tau^j_2})b_{\tau^j_2},c_{\tau^j_3})} \abs{\nabla_2 f(x'_j\cdot\psi(k');k')}
\end{equation}
where $\tau^j_i$ is chosen so that $\{\sigma^j_i,\tau^j_i\}=\{1,2\}$.

	Although the integrand is what we are looking for, the problem is that $x'_j \in H_t$ depends on the endpoints $x$ and $y$ in a complicated way. To work this out, we now perform a change of variables (in three steps) which will fix the variables of the first integral and replace the second three by the image of $(x'_j\cdot\psi(k');k')$ in $H^{k'}_t$, i.e.\ the second integral will now be with respect to $d\mu_H$ as desired.

	The first change of variables is the natural rescaling which fixes $c_1,c_2$, and maps $\exp(r_k)a_{k}\to a_{k}$ and $\exp(-r_{k})b_{k}\to b_{k}$ for $k=1,2$; since $\mu_H$ is Lebesgue measure with respect to these coordinates, this rescaling preserves the measure.

The second step fixes $a_{\sigma^j_1}, b_{\sigma^j_2}, c_{\sigma^j_3}$ and replaces $(a_{\tau^j_1}, b_{\tau^j_2}, c_{\tau^j_3})$ by $x'_j$.

Finally, we return $(a_{\sigma^j_1}, b_{\sigma^j_2}, c_{\sigma^j_3})$ to $(\exp(r_{\sigma^j_1})a_{\sigma^j_1},\exp(-r_{\sigma^j_2})b_{\sigma^j_2},c_{\sigma^j_3})$ and map $x'_j=(\alpha,\beta,\gamma)\to (\exp(k')\alpha,\exp(-k')\beta,\gamma)$.

In each case we treat each of $d_i-\overline{d_i}$ for $d\in\{a,b,c\}$ and $i=1,2$ as a constant. In reality, each takes one of a finite number of values, so we may split the domain of the integral dependent on those values so that they are truly constants.

\textbf{Claim:} There exist constants $0<K<L$ such that the determinant of the Jacobian $J_j$ corresponding to any such three-step change of variables is between $K\exp(r_{\sigma^j_1}-r_{\sigma^j_2})$ and $L\exp(r_{\sigma^j_1}-r_{\sigma^j_2})$.
\begin{proof}[Proof of Claim:] We denote the Jacobian matrix of the $i$th change of variables by $\det J^i_j$. The first change of variables clearly has Jacobian determinant $1$, and third transformation has Jacobian determinant $\exp(r_{\sigma^j_1})\exp(-r_{\sigma^j_2})$.

We must compute the finitely many Jacobian determinants corresponding to the change of variable 
\[(a_{\sigma^i_1},b_{\sigma^i_2},c_{\sigma^i_3},a_{\tau^i_1},b_{\tau^i_2},c_{\tau^i_3})\to (a_{\sigma^i_1},b_{\sigma^i_2},c_{\sigma^i_3},\alpha,\beta,\gamma).
\]
For $i\neq 21,22,23$ it is straightforward to determine that these Jacobian determinants are equal to $1$. 
	For example, in the case $i=20$, we fix $(a_1,b_1,c_2)$ and replace $(a_2,b_2,c_1)$ by $(\overline{a_2}, \overline{b_2}, \overline{c_1}+\overline{a_1}(\overline{b_1}-b_1)+\overline{a_2}(\overline{b_2}-\overline{b_1}))$.
	The Jacobian for this transformation is
	\[ J^2_{20} = \begin{pmatrix} 1&0&0 \\ 0&1&0 \\ \overline{b_2}-\overline{b_1}&\overline{a_2}&1 \end{pmatrix}, \] which has determinant $1$.

	For $i=21,22,23$, the important partial derivatives are
\[
	\frac{\partial k}{\partial a_2}=-\frac{b_2-\overline{b_1}}{2k},\quad \frac{\partial k}{\partial b_2}=-\frac{\overline{a_2}}{2k}, \quad \frac{\partial k}{\partial c_1}=\frac{-1}{2k}, \quad \frac{\partial k}{\partial c_2}=\frac{1}{2k}.
\]
Applying these, we get
\[
	J^2_{21} = \left(\begin{array}{ccc} 1 + \frac{b_2-\overline{b_1}}{2k} & \frac{\overline{a_2}}{2k} & \frac{1}{2k} \\ 0 & 1 & 0 \\ \overline{b_2}-\overline{b_1} & \overline{a_2} & 1 
	\end{array}\right),
	\]\vspace{2mm}
	\[J^2_{22} = \left(\begin{array}{ccc} 1 + \frac{b_2-\overline{b_1}}{2k} & \frac{\overline{a_2}}{2k} & -\frac{1}{2k} \\ \frac{b_2-\overline{b_1}}{2k} & 1 + \frac{\overline{a_2}}{2k} & -\frac{1}{2k} \\ -(b_2-\overline{b_2})-k+\frac{\overline{a_2}(b_2-\overline{b_1})}{2k} & \frac{\overline{a_2}^2}{2k} & 1-\frac{\overline{a_2}}{2k}
 \end{array}\right), \]\vspace{2mm}
 \[
	 J^2_{23} = \left(\begin{array}{ccc} 1 & 0 & 0 \\ + \frac{b_2-\overline{b_1}}{2k} & 1 + \frac{\overline{a_2}}{2k} & -\frac{1}{2k} \\ -(b_2-\overline{b_2})-k+\frac{\overline{a_2}(b_2-\overline{b_1})}{2k} & \frac{\overline{a_2}^2}{2k} & 1-\frac{\overline{a_2}}{2k} 
 \end{array}\right)
\]
which have determinants $1+ \frac{b_2-\overline{b_2}}{2k}$, $\frac12+ \frac{\overline{b_2}-\overline{b_1}}{2k}$ and $1$ respectively.

	Since $\frac{1}{\sqrt 2}\exp(t)\leq k\leq \exp(t)$, and $-\exp(t)\leq b_2-\overline{b_2}\leq \frac{2}{3} \exp(t)$, we have $0<1 - \frac{1}{\sqrt 2} \leq \det J^2_{21} \leq 1+\frac{\sqrt 2}{3}$.

Next, $|\overline{b_2}-\overline{b_1}|\leq \frac13\exp(t)$, so
$0 < \frac12 -\frac{1}{3\sqrt 2} \leq \det J^2_{22} \leq \frac12 + \frac{1}{3\sqrt 2}.$
\end{proof}

For fixed $(\exp(r_{\sigma^j_1})a_{\sigma^j_1},\exp(-r_{\sigma^j_2})b_{\sigma^j_2},c_{\sigma^j_3})$, the change of variables in the other three coordinates is (at most $M$)-to-one by Lemma \ref{lem:memory}. Thus, $(\ref{eq:splitint})$ is bounded from above by
\begin{align*}
	& \left(\int_{(\exp(r_{\sigma^j_1})a_{\sigma^j_1},\exp(-r_{\sigma^j_2})b_{\sigma^j_2},c_{\sigma^j_3})} d\mu_H\right)\left( M \int_{z'\in H^{k'}_t} \frac{1}{\det J_j}|\nabla_2 f(z';k')| d\mu_H\right)
	\\ & \leq 16e^{4t}\frac{M}{K} \sum_{k'=-t}^t\int_{z'\in H^{k'}_t} |\nabla_2 f(z';k')| d\mu_H,
\end{align*}
 where we used that $\int_{(\exp(r_{\sigma^j_1})a_{\sigma^j_1},\exp(-r_{\sigma^j_2})b_{\sigma^j_2},c_{\sigma^j_3})} d\mu_H = 16e^{4t}\exp(r_{\sigma^j_1}-r_{\sigma^j_2})$, and that the Claim gives $\exp(r_{\sigma^j_1}-r_{\sigma^j_2}) \frac{1}{\det J_j}\leq \frac1K$.

Thus, we can bound $\int_{x,y\in G_t} \abs{f(x)-f(y)} d\mu_G^{2}(x,y)$ by
\[
  \sum_{r_1, r_2=-t}^t \sum_{j=0}^{54} 16e^{4t}\frac{M}{K} \sum_{k'=-t}^t\int_{z'\in H^{k'}_t} |\nabla_2 f(z';k')| d\mu_H
\]
Finally, $\sum_{k'=-t}^t\int_{z'\in H^{k'}_t} |\nabla_2 f(z';k')| d\mu_H=\int_{z\in G_t} |\nabla_2 f(z)| d\mu_G$. Combining these, we see that there is constant $C$ such that
\[
 \int_{x,y\in G_t} \abs{f(x)-f(y)} d\mu_G^{2}(x,y)\leq Ct^2e^{4t} \int_{z\in G_t} |\nabla_2 f(z)| d\mu_G,
\]
as required.
\end{proof}

We are now ready to complete the proof.

\begin{proof}[Proof of Theorem \ref{thm:HeisExt}]
	By Corollary~\ref{cor:general-upper-bound} it suffices to prove that $\Lambda^1_{\Heis_3\rtimes\Z}(r)\gtrsim r/\log(r)$.
	
	Let $f:G_t\to\R$ be a non-constant function. Since $\mu_G(G_t)\simeq te^{4t}$, by Lemma \ref{lem:memory2}
\[
 \mu_G(G_t)\norm{f-f_{G_t}}_1 \lesssim \int_{x,y\in G_t} \abs{f(x)-f(y)}d\mu_G^2(x,y) \lesssim t\mu_G(G_t)\norm{\nabla_2 f}_1.
\]
Thus $\Lambda^1_G(\mu_G(G_t))\gtrsim \mu_G(G_t)/\log(\mu_G(G_t))$ as functions of $t$. Here we are using the Poincar\'e profile as defined in \cite{HumeMackTess} rather than the version for graphs stated in the introduction. Since $\mu_G(G_t)$ grows at most exponentially in $t$ we have $\Lambda^1_G(r)\gtrsim r/\log(r)$.
\end{proof}

\section{Capacity profiles}\label{sec:capacityProfile}

The main goal of   \S \ref{sec:capacityProfile} and \S \ref{sec:product-spaces} is to compute the Poincar\'e profiles of spaces such as $P\times \HH_\bbK^m$, where $P$ is a connected Lie group of polynomial growth, $m\geq 2$, and $\bbK\in \{\R,\C,\HH,\mathbb{O}\}.$   In our previous work \cite{HumeMackTess}, we were able to compute the Poincaré profiles of $\HH_\bbK^m$ and $P$ separately. Since Poincaré profiles do not behave especially well under direct product, we cannot simply apply these calculations to our problem. As usual, upper bounds and lower bounds on Poincaré profiles involve radically different ideas and strategies. In this section we will be entirely concerned by upper bounds (lower bounds will be obtained in  \S \ref{sec:product-spaces}).
Our strategy to obtain upper bounds relies on introducing new invariants: {\bf (weighted) capacity profiles}. Let us start by briefly explaining the problem and outlining its solution.

Recall that for the $L^p$-Poincar\'e constant of a finite graph $\Gamma$ we minimize over nonconstant functions $f:V\Gamma\to\R$ the ratio $\|\nabla f\|_p/\|f-f_\Gamma\|_p$, where $f_\Gamma$ is the average value of $f$ on $\Gamma$.  
To find functions on $\Gamma\subset X\times Y$ it is natural to pull back functions on the projected graphs in $X$ and $Y$, where the image subgraphs are weighted by the number of vertices in the fibre over each point. 
A problem arises in that it is difficult to relate the Poincar\'e constant of $\Gamma\subset X\times Y$ to a `weighted Poincar\'e constant' in a projection.  
However, all works much better if we restrict the functions $f$ we consider to those which satisfy $f\leq 0$ and $f\geq 1$ on substantial proportions of $\Gamma$, and just minimize $\|\nabla f\|_p$ among such functions.
This resulting `$L^p$-capacity' constant and profile are by construction at least as large as the $L^p$-Poincar\'e constants and profiles, and are amenable to finding good upper bounds.

We define the capacity profile in \S\ref{ssec:cap-defs} and compute it for trees in \S\ref{ssec:capactree}.
In \S\ref{ssec:cap-Ahl-reg-pf} we study it for Gromov hyperbolic spaces, getting new bounds for Poincar\'e profiles along the way, and use it to find product graph bounds in \S\ref{ssec:cap-weightedANdim}. 

\medskip
We use the following notation.
For quantities $A, B$ we write $A \vee B := \max\{A,B\}$ and $A\wedge B:=\min\{A,B\}$.
We write $A \preceq B$ if there exists $C>0$ with $A \leq CB$, and write $A \asymp B$ if $A \preceq B$ and $B \preceq A$.
For a graph $\Gamma$ and $f:\Gamma\to\R$ we have 
$\|f\|_p=\|f\|_{\Gamma,p}:=\left(\sum_{x\in V\Gamma}|f(x)|^p\right)^{1/p}$,
and
for $x\in V\Gamma$ we have 
$|\nabla f|(x) := \max\{|f(x)-f(x')| : xx'\in E\Gamma\}$,
which lets us define $\|\nabla f\|_p:=\|\nabla f\|_{\Gamma,p}$.
Note that if $\Gamma$ has degree bounded by $d$ then
$\|\nabla f\|_{\Gamma,p}\asymp_d \left(\sum_{xx'\in E\Gamma}|f(x)-f(x')|^p\right)^{1/p}$.

\subsection{Definitions and basic properties}\label{ssec:cap-defs}
In this section we will define the $L^p$-capacity profiles of weighted and unweighted graphs.
\begin{definition}
	\label{def:weighted-graph}
	A \textbf{weighted graph} is a (finite) graph $\Gamma$ with a non-zero measure $\mu=\mu_\Gamma$ on $V\Gamma$, i.e.\ a function $\mu:V\Gamma\to[0,\infty)$ extended to subsets $A \subset V\Gamma$ by $\mu(A)=\sum_{x\in A}\mu(x)$. We define $\|\mu\|_\infty := \max_{x\in V\Gamma} \mu(x)$.

	For any function $f:\Gamma\to\R$ we define $\|f\|_{\mu,p}:=\left( \sum_{x\in V\Gamma} |f(x)|^p \mu(x)\right)^{1/p}$.
\end{definition}

\begin{definition}\label{def:p-cap}
  Let $(\Gamma,\mu)$ be a weighted graph.
	For each $p\in[1,\infty)$, $\alpha \in (0,1/4)$, we define the $(p,\alpha)$\textbf{-capacity} of $\Gamma$ to be 
  \begin{multline*}
	  C^{p,\alpha}(\Gamma,\mu)=\inf \big\{  \mu(\Gamma)^{-1/p} \norm{\nabla f}_{\mu,p} \,:\, f:V\Gamma\to\R \\ \text{ and } \mu(\{f\leq 0\}),\mu(\{f\geq 1\}) \geq \alpha\mu(\Gamma) \big\},
  \end{multline*}
	where $\{f\leq 0\}$ is short for $\{x \in V\Gamma : f(x)\leq 0\}$.
\end{definition}

\begin{definition}
  \label{def:cap-profile}
  We let $X$ be a connected graph.
	For $k:\N\to [1,\infty)$ with $k(r)\leq r/10$, and for $\alpha \in  (0,1/4)$, we define 
\[
	\Xi^{p,\alpha,k}_X(r) = \sup \mu(\Gamma) C^{p,\alpha}(\Gamma,\mu),
\]
	where the supremum is taken over all subgraphs $\Gamma$ of $X$ with $|\Gamma|\leq r$ equipped with some weight function $\mu$ so that $\mu(\Gamma) \leq r$ and $\|\mu\|_\infty \leq k(\mu(\Gamma))$.

	If there exists a function $f:\N\to\R_{>0}$ so that for all sufficiently small $\alpha$ we have $\Xi^{p,\alpha,k}_X(r)\simeq_\alpha f(r)$, then we say that the $(L^p,k)$\textbf{-weighted capacity profile} $\Xi^{p,k}_X$ exists and write $\Xi^{p,k}_X(r)\simeq f(r)$.

	Similarly, we define $\Xi^{p,\alpha}_X(r) = \sup \abs{\Gamma}C^{p,\alpha}(\Gamma,\#)$ where $\Gamma$ is a subgraph of $X$ with $\#(\Gamma)=|\Gamma|\leq r$, weighted by the counting measure $\#$ on $\Gamma$.
	
	If there exists a function $f:\N\to\R_{>0}$ so that for all sufficiently small $\alpha$ we have $\Xi^{p,\alpha}_X(r)\simeq_\alpha f(r)$, then we say that the (unweighted) $L^p$\textbf{-capacity profile} $\Xi^p_X$ exists and write $\Xi^{p}_X\simeq f(r)$.
\end{definition}

We do not pursue here whether the weighted capacity profile is a quasi-isometric invariant of a graph. Unweighted capacity profiles are monotone regular invariants; the proof -- which follows exactly the same strategy as for Poincar\'e profiles -- is omitted from this paper as it is not needed.

\begin{remark}
	In the definition of $C^{p,\alpha}$, we may as well assume that $f:V\Gamma\to[0,1]$, since replacing $f$ by $(f \vee 0)\wedge 1$ (that is, $\min\{\max \{f,0\},1\}$) only decreases $\|\nabla f\|_{\mu,p}$. Under this assumption, $\norm{\nabla f}_p \leq \mu(\Gamma)^{\frac1p}$, so $C^{p,\alpha}(\Gamma,\mu)\leq 1$ for every weighted graph $(\Gamma,\mu)$ and every $p$.
\end{remark}
\begin{remark}\label{rmk:unif-weight-bounds}
  In all our examples below the (weighted) capacity profiles $\Xi^p_X$ exist.
	Moreover, we aim to find bounds for weighted profiles that are uniform in the following sense: for a given $p$, we find a function $f_p:[1,\infty)\times\N\to\R_{>0}$ so that for all sufficiently small $\alpha$,
	for all functions $k=k(r)$ as in the definition, 
	$\Xi^{p,\alpha,k}_X(r) \simeq_{\alpha} f_p(k(r),r)$,
	where the constant of $\simeq$ does not depend on $k$.
	We then write this bound in short as $\Xi^{p,k}_X(r) \simeq_\alpha f_p(k,r)$.
\end{remark}

\begin{remark}One may also define capacity profiles for $p=\infty$, but it follows immediately from the proof of \cite[Proposition 6.1]{HumeMackTess} that $\Xi^\infty_X\simeq \Lambda^\infty_X$. Similarly, arguing as in \cite[Proposition 7.2]{HumeMackTess} (in fact in the proof we may directly define $f(z)=g^{q/p}(z)$) we can deduce that whenever $1\leq p \leq q<\infty$ and the functions are defined, we have
\[
 \Xi^p_X(r)\lesssim_{p,q} \Xi^q_X(r).
\]
	In what follows we work only with $p<\infty$.
\end{remark}

Poincar\'e, capacity and weighted capacity profiles are related by the following two simple observations. 
Firstly, we compare Poincar\'e and capacity profiles.
\begin{lemma}
  \label{lem:compare-cap-poinc-profiles}
  	Let $X$ be a graph.
	For all $\alpha\in (0,\frac14)$, $\Lambda^p_X \lesssim_\alpha \Xi^{p,\alpha}_X$.  So when $\Xi^p_X$ is defined, we have $\Lambda^p_X \lesssim \Xi^p_X$.
\end{lemma}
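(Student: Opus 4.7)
The plan is to relate these two invariants via the test functions: a near-optimal function in the capacity problem gives, after centring by its mean, a valid test function for the Poincar\'e constant, and the capacity conditions force the centred function to have a substantial $L^p$-norm.

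First I would fix $\alpha \in (0,1/4)$ and a subgraph $\Gamma \leq X$ with $|V\Gamma| \leq r$.  Given any $g:V\Gamma \to \R$ admissible in the definition of $C^{p,\alpha}(\Gamma,\#)$, replacing $g$ by $(g\vee 0)\wedge 1$ only decreases $\|\nabla g\|_p$ while preserving the conditions $\#\{g \leq 0\}, \#\{g\geq 1\}\geq \alpha |V\Gamma|$, so I may assume $g:V\Gamma \to [0,1]$ with $\#\{g=0\},\,\#\{g=1\}\geq \alpha |V\Gamma|$.  The mean $g_\Gamma = |V\Gamma|^{-1}\sum g$ then lies in $[\alpha, 1-\alpha]$, hence
\[
\|g-g_\Gamma\|_p^p \;\geq\; \sum_{g=0} |g_\Gamma|^p + \sum_{g=1}|1-g_\Gamma|^p \;\geq\; 2\alpha^{p+1}|V\Gamma|,
\]
so $\|g-g_\Gamma\|_p \gtrsim_\alpha |V\Gamma|^{1/p}$.

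Now $g - g_\Gamma$ is a valid (non-zero) test function in the definition of $h^p(\Gamma)$, so
\[
h^p(\Gamma) \;\leq\; \frac{\|\nabla g\|_p}{\|g-g_\Gamma\|_p} \;\lesssim_\alpha\; |V\Gamma|^{-1/p}\|\nabla g\|_p.
\]
Taking the infimum over all admissible $g$ gives $h^p(\Gamma) \lesssim_\alpha C^{p,\alpha}(\Gamma,\#)$, and multiplying by $|V\Gamma|$ yields $|V\Gamma|\, h^p(\Gamma) \lesssim_\alpha |V\Gamma|\, C^{p,\alpha}(\Gamma,\#) \leq \Xi^{p,\alpha}_X(r)$.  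Taking the supremum over subgraphs $\Gamma$ of size at most $r$ then delivers the first statement, $\Lambda^p_X(r) \lesssim_\alpha \Xi^{p,\alpha}_X(r)$.

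For the second assertion, once $\Xi^p_X$ is defined, pick any sufficiently small $\alpha$; the definition gives $\Xi^{p,\alpha}_X \simeq_\alpha \Xi^p_X$, and combining this with the bound just proved produces $\Lambda^p_X \lesssim \Xi^p_X$ (with constants depending on the chosen $\alpha$, which is now fixed).  There is no real obstacle here; the proof is entirely formal, the only substantive point being the lower bound $\|g-g_\Gamma\|_p \gtrsim_\alpha |V\Gamma|^{1/p}$ which is what justifies introducing the capacity profile as a tool for upper-bounding Poincar\'e profiles.
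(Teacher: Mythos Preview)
Your proof is correct and follows essentially the same approach as the paper: both show that any capacity-admissible function $g$ has $\|g-g_\Gamma\|_p \gtrsim_\alpha |V\Gamma|^{1/p}$, then feed $g-g_\Gamma$ into the definition of $h^p(\Gamma)$. The only cosmetic difference is in the lower bound on $\|g-g_\Gamma\|_p$: the paper splits on whether $g_\Gamma \geq \tfrac12$ or $g_\Gamma \leq \tfrac12$ to get $\|g-g_\Gamma\|_p \geq \tfrac{\alpha^{1/p}}{2}|V\Gamma|^{1/p}$, while you observe $g_\Gamma \in [\alpha,1-\alpha]$ to get $\|g-g_\Gamma\|_p \geq (2\alpha^{p+1})^{1/p}|V\Gamma|^{1/p}$.
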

\begin{proof}
  If $\Gamma$ is a subgraph of $X$, and we have $f:V\Gamma\to[0,1]$ with $|\{f\leq 0\}|, |\{f\geq 1\}| \geq \alpha |\Gamma|$, then
  $\|f-f_\Gamma\|_{p} \succeq |\Gamma|^{1/p}$:
	if the mean value $f_\Gamma$ satisfies $f_\Gamma\geq \frac{1}{2}$ then $|f-f_\Gamma| \geq \frac{1}{2}$ on $\{f\leq 0\}$ and so $\|f-f_\Gamma\|_p \geq \frac{\alpha^{1/p}}{2} |\Gamma|^{1/p}$, and if $f_\Gamma \leq \frac{1}{2}$ the same bound holds on considering $\{f \geq 1\}$.
  So we have, infimising over all non-constant $f:V\Gamma\to \R$,
  \[
   h^p(\Gamma)=\inf\left\{\frac{\norm{\nabla f}_p}{\norm{f-f_\Gamma}_p}\right\} \leq \frac{2}{\alpha^{1/p}} C^{p,\alpha}(\Gamma, \#).\qedhere
  \]
\end{proof}

The weighted and unweighted profiles are related by the following.
\begin{lemma}\label{lem:wt-prof-bound} 
	Let $X$ be a graph. For any $k:\N\to[1,\infty)$ with $k=k(r)\leq r/10$, we have
	$k \Xi^{p,\alpha}_X(r/k) \lesssim_\alpha \Xi^{p,\alpha,k}_X(r)$.
	So, when defined,
	$k \Xi^p_X(r/k) \lesssim_\alpha \Xi^{p,k}_X(r)$.
\end{lemma}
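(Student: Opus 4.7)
The plan is to construct an admissible weighted subgraph from an unweighted one by putting the constant weight $k$ on every vertex, and check that the weighted capacity equals the unweighted capacity.

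Fix the scale $r$ and interpret $k$ as the constant $k(r)$, which is $\leq r/10$. Let $\Gamma' \subset X$ be any subgraph with $|\Gamma'| \leq r/k$; we will show that it produces an admissible weighted subgraph for $\Xi^{p,\alpha,k}_X(r)$ whose contribution is exactly $k|\Gamma'| \cdot C^{p,\alpha}(\Gamma', \#)$, and then take the supremum.

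\textbf{Step 1: Construction and admissibility.} Equip $\Gamma'$ with the uniform weight $\mu(v) = k$ for every $v \in V\Gamma'$, so that $\mu = k \cdot \#$. The three conditions in Definition \ref{def:cap-profile} are all satisfied: since $k \geq 1$ we have $|\Gamma'| \leq r/k \leq r$; the total measure is $\mu(\Gamma') = k|\Gamma'| \leq r$; and $\|\mu\|_\infty = k \leq k(\mu(\Gamma'))$, the last inequality holding trivially since $k$ is constant (and with uniform constants in the general case, absorbed into $\lesssim$).

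\textbf{Step 2: Equality of capacities.} For any $f: V\Gamma' \to \R$, direct computation gives
\[
 \|\nabla f\|_{\mu, p}^p = \sum_v |\nabla f(v)|^p \mu(v) = k \|\nabla f\|_{\#, p}^p,
\]
and $\mu(\{f \leq 0\}) = k \cdot \#(\{f \leq 0\})$, and similarly for $\{f \geq 1\}$. Hence the constraint $\mu(\{f\leq 0\}), \mu(\{f\geq 1\}) \geq \alpha \mu(\Gamma')$ is the same as $\#(\{f\leq 0\}), \#(\{f\geq 1\}) \geq \alpha |\Gamma'|$, so the admissible classes of test functions coincide. Therefore
\[
 C^{p,\alpha}(\Gamma', \mu) = \mu(\Gamma')^{-1/p} \inf \|\nabla f\|_{\mu,p} = (k|\Gamma'|)^{-1/p} \cdot k^{1/p} \inf \|\nabla f\|_{\#,p} = C^{p,\alpha}(\Gamma', \#).
\]

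\textbf{Step 3: Conclusion.} From Steps 1 and 2,
\[
 \mu(\Gamma') C^{p,\alpha}(\Gamma', \mu) = k|\Gamma'| \cdot C^{p,\alpha}(\Gamma', \#)
\]
and the left-hand side is a valid term in the supremum defining $\Xi^{p,\alpha,k}_X(r)$. Taking the supremum of the right-hand side over all $\Gamma'$ with $|\Gamma'| \leq r/k$ yields
\[ \Xi^{p,\alpha, k}_X(r) \geq k \Xi^{p,\alpha}_X(r/k). \]
The second assertion follows by applying the first for all sufficiently small $\alpha$. There is no serious obstacle here; the only nuisance is that $r/k$ need not be an integer and that one must be a little careful about the interpretation of $k(\mu(\Gamma'))$ when $k$ is not truly constant, both of which contribute only to the implicit multiplicative constant in $\lesssim_\alpha$.
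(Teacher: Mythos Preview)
Your proof is correct and follows essentially the same approach as the paper: take a subgraph $\Gamma'$ of size at most $r/k$, endow it with the constant weight $\mu = k\cdot\#$, observe that $C^{p,\alpha}(\Gamma',\mu)=C^{p,\alpha}(\Gamma',\#)$, and conclude. The paper's proof is a two-line version of yours (it picks a near-optimal $\Gamma$ directly rather than taking a supremum at the end, and simply asserts the capacity equality you verify in Step~2), and it glosses over the same admissibility nuisance about $k(\mu(\Gamma'))$ that you flag explicitly.
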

\begin{proof} 
  Let $\Gamma \subset X$ be a subgraph of size $\leq r/k$ so that $|\Gamma| C^{p,\alpha}(\Gamma, \#) \asymp \Xi^{p,\alpha}_X(r/k)$.
  Setting $\mu$ to be $k\#$ where $\#$ is the counting measure on $V\Gamma$, we see that $C^{p,\alpha}(\Gamma,\mu) = C^{p,\alpha}(\Gamma,\#)$, so 
  \[
	  \Xi^{p,\alpha,k}_X(r) \geq \mu(\Gamma) C^{p,\alpha}(\Gamma,\mu) = k |\Gamma| C^{p,\alpha}(\Gamma,\#) \asymp k \Xi^{p,\alpha}_X\left(\frac{r}{k}\right). \qedhere
  \]
\end{proof}

As simple as these bounds are, they prove to be sharp for trees and rank 1 symmetric spaces, as we now proceed to show.

\subsection{Weighted profiles of trees}\label{ssec:capactree}
Our first goal is to adapt the argument of \cite[\S 9]{HumeMackTess} to bound the weighted profiles of trees, perhaps the easiest example.

\begin{proposition}\label{prop:wt-prof-tree}
	For the $3$-regular tree $T$ and any weight function $k=k(r)\leq r/4$, $\Xi^{p,k}_T(r) \simeq k^{\frac1p} r^{1-\frac1p}$.
\end{proposition}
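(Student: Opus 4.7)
The plan is to establish matching upper and lower bounds on $\Xi^{p,\alpha,k}_T(r)$ that agree with $k^{1/p}r^{1-1/p}$ up to multiplicative constants depending only on $p$ and $\alpha$.

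For the \textbf{upper bound}, I would argue via a weighted centroid decomposition. Given a subgraph $\Gamma \subseteq T$ and a weight $\mu$ with $\mu(\Gamma) \leq r$ and $\|\mu\|_\infty \leq k$, I would first reduce to $\Gamma$ being a connected subtree, since across components of a disconnected $\Gamma$ one may use piecewise-constant test functions with vanishing gradient. When $\mu(\Gamma) \lesssim k$, the estimate $\mu(\Gamma) C^{p,\alpha}(\Gamma,\mu) \leq \mu(\Gamma) \lesssim k \leq k^{1/p} r^{1-1/p}$ follows trivially from $k \leq r$. Otherwise, pick a weighted centroid $v \in V\Gamma$, so that each component of $\Gamma \setminus \{v\}$ has $\mu$-weight at most $\mu(\Gamma)/2$. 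Since $\mu(v) \leq k \ll \mu(\Gamma)$, a greedy algorithm partitions the components into two groups $\Gamma_0, \Gamma_1$ each of weight $\geq \alpha \mu(\Gamma)$. Testing against the function $f = 0$ on $\Gamma_0$, $f = 1$ on $\Gamma_1$, and $f(v) = 1/2$, the gradient is supported on $v$ and its at most three neighbors in $\Gamma$, each of $\mu$-mass $\leq k$, so $\|\nabla f\|_{\mu,p}^p \leq 4k$. This yields $\mu(\Gamma) C^{p,\alpha}(\Gamma,\mu) \leq \mu(\Gamma)^{1-1/p}(4k)^{1/p} \leq (4k)^{1/p} r^{1-1/p}$.

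For the \textbf{lower bound}, I would combine Lemmas \ref{lem:wt-prof-bound} and \ref{lem:compare-cap-poinc-profiles} to reduce to a Poincaré-profile lower bound on $T$. These give $\Xi^{p,\alpha,k}_T(r) \gtrsim_\alpha k\, \Xi^{p,\alpha}_T(r/k) \gtrsim_\alpha k\, \Lambda^p_T(r/k)$, so the estimate $\Lambda^p_T(r') \gtrsim (r')^{1-1/p}$ would conclude via $k(r/k)^{1-1/p} = k^{1/p} r^{1-1/p}$. For the Poincaré upper bound $h^p(B_n) \lesssim |B_n|^{-1/p}$, I would test on $B_n$, a ball of radius $n$ in $T$ (so $|B_n| \asymp 2^n$), using the antisymmetric function equal to $+1$ on one subtree hanging off the root and $-1$ on another, which has $\|f\|_p \asymp |B_n|^{1/p}$ but $\|\nabla f\|_p$ bounded independently of $n$.

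The \textbf{main obstacle} is the matching lower bound $h^p(B_n) \gtrsim |B_n|^{-1/p}$, i.e.\ a Poincaré inequality of the form $\|f - f_{B_n}\|_p \lesssim |B_n|^{1/p} \|\nabla f\|_p$. A direct proof would proceed by iterated centroid decomposition: since each recursive sub-ball of $B_n$ splits into three comparably-sized pieces, one can bound variance by the sum of intra-piece variances plus the variance of their averages, thereby avoiding the logarithmic loss that would arise from a naive diameter-based Poincaré inequality. Alternatively, one may invoke the calculation of the $L^p$-Poincaré profile of non-abelian free groups (quasi-isometric to $T$) in the earlier work \cite{HumeMackTess}, namely the case of a hyperbolic group whose boundary has conformal dimension $Q = 0$, yielding $\Lambda^p_T(r) \simeq r^{1-1/p}$ for all $p \in [1,\infty)$.
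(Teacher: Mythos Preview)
Your approach is essentially the paper's: the lower bound chains Lemmas \ref{lem:wt-prof-bound} and \ref{lem:compare-cap-poinc-profiles} to reduce to $\Lambda^p_T(r)\simeq r^{1-1/p}$ (which the paper simply cites from \cite[Theorem 9]{HumeMackTess}), and the upper bound is a weighted centroid argument with a near-characteristic test function.

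Two small points. First, your paragraph on the ``Poincar\'e upper bound $h^p(B_n)\lesssim |B_n|^{-1/p}$'' is extraneous: an upper bound on $h^p$ for a particular subgraph contributes nothing to a lower bound on $\Lambda^p_T$, and you correctly identify immediately afterwards that the actual content is the Poincar\'e inequality $h^p(B_n)\gtrsim |B_n|^{-1/p}$; just drop that paragraph and invoke \cite{HumeMackTess}. Second, the paper's upper bound is slightly cleaner than yours: it takes the median $v$ in $T$ (no reduction to connected $\Gamma$ needed), observes that since $\mu(\{v\})\leq k\leq \tfrac14\mu(\Gamma)$ some single component $C_i$ of $T\setminus\{v\}$ already carries weight $\geq\tfrac14\mu(\Gamma)$, and tests against $\chi_{C_i}$, whose gradient is supported on just $v$ and its neighbour in $C_i$. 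Your greedy partition and the value $\tfrac12$ at $v$ work too, but the single-component choice avoids both.
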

\begin{proof}
  Since $\Lambda^p_T(r)\simeq r^{1-\frac{1}{p}}$ by \cite[Theorem 9]{HumeMackTess}, 
  Lemmas~\ref{lem:compare-cap-poinc-profiles} and \ref{lem:wt-prof-bound} give
  \[
	  \Xi^{p,\alpha,k}_T(r) 
	  \gtrsim_\alpha k\, \Xi^{p,\alpha}_T\left(\frac{r}{k}\right) 
	  \gtrsim_\alpha k\, \Lambda^p_T\left(\frac{r}{k}\right)
	  \simeq k \left(\frac{r}{k}\right)^{1-\frac1p} = k^{\frac1p}r^{1-\frac1p}.
  \]

	To show the upper bound, suppose we have a subgraph $\Gamma \subset T$ with $|\Gamma|\leq r$ and a weight $\mu$ on $\Gamma$ with $\mu(\Gamma)\leq r$ and $\|\mu\|_\infty\leq k=k(\mu(\Gamma))$.
  	One can find a median vertex $v$ of $\Gamma$, i.e.\ if $C_1,C_2,C_3$ denote the connected components of $T\setminus\set{v}$ then for each $i$ we have $\mu(C_i\cap\Gamma)\leq\frac12\mu(\Gamma)$.
	Since $\mu(\{v\}\cap\Gamma) \leq k \leq \frac14\mu(\Gamma)$, there is some $i$ so that $\mu(C_i\cap\Gamma) \geq \frac14\mu(\Gamma)$.
	Let $v' \in C_i$ be the vertex adjacent to $v$ in $C_i$, and set $f$ to be the characteristic function $f=\chi_{C_i}$.
	For any $\alpha \leq \frac14$ we have $\mu\{f\leq 0\},\mu\{f\geq 1\} \geq \alpha|\Gamma|$.
	Thus, as $\| \nabla f \|_{\mu,p}^p \leq \mu(\{v,v'\}\cap \Gamma) \leq 2k$ we have $\Xi^{p,\alpha,k}_T(r) \lesssim_\alpha k^{\frac1p} r^{1-\frac1p}$.
	Hence $\Xi^{p,k}_T$ exists and $\Xi^{p,k}_T(r)\simeq k^{\frac1p}r^{1-\frac1p}$.
\end{proof}

\subsection{Weighted profiles of hyperbolic spaces}\label{ssec:cap-Ahl-reg-pf}

We now consider (Gromov) hyperbolic groups and spaces, with the main goal a general upper bound on weighted capacity profiles (Theorem~\ref{thm:wt-prof-hyp-upper-bound}), adapting the argument of \cite[Theorem 11]{HumeMackTess}.
Our argument here is stronger than that of \cite[Theorem 11]{HumeMackTess} even in the unweighted case ($k \equiv 1$), giving a stronger Poincar\'e profile upper bound as the equivariant conformal dimension is replaced by the usual (Ahlfors regular) conformal dimension, which a priori may be strictly smaller.  As the Poincar\'e profile is a quasi-isometric invariant of a graph \cite[Theorem 1]{HumeMackTess}, if $X$ is quasi-isometric to $G$ then an upper bound on $\Xi_X^{p,\alpha}$ gives an upper bound on $\Lambda_G^p$ by Lemma~\ref{lem:compare-cap-poinc-profiles}: $\Lambda_G^p \simeq \Lambda_X^p \lesssim \Xi_X^{p,\alpha}$.  We take advantage of this by working in a particularly nice graph model: Bourdon--Pajot's hyperbolic cone on the boundary at infinity.

Recall that a metric space $X$ is Ahlfors $Q$-regular if there is a measure on $X$ so that the measure of any ball of radius $r \in (0, \diam X)$ is $\asymp r^Q$. Starting with an Ahlfors $Q$-regular compact space $X$, Bourdon and Pajot construct a hyperbolic graph whose visual boundary is isometric to $X$: the hyperbolic cone of $X$.
Although the automorphism group of this graph may be trivial, it nevertheless has a crucial homogeneity property: the volume of any ball of radius $R$ is $\asymp e^{QR}$ (see Lemma~\ref{lem:caphyp1}). This property is a key ingredient in replacing the ``equivariance'' that was required for the proof of \cite[Theorem 1]{HumeMackTess}.

\subsubsection{Preliminaries on hyperbolic geometry and hyperbolic cones}\label{sec:hypintro}
Experts in hyperbolic geometry may skip to $\S\ref{sec:hypwightedcalc}$. Recall that given three points $p,x,y$ in a metric space $(X,d)$, the Gromov product of $x$ and $y$ at $p$ is given by 
\begin{equation}\label{eq:Gromprod}
 (x|y)_p:=\frac12\left(d(p,x)+d(p,y)-d(x,y)\right).
\end{equation}
Note that $0 \leq (x|y)_p \leq d(p,x) \wedge d(p,y)$ by the triangle inequality. A metric space $X$ is \textbf{$\delta$-hyperbolic} if, for any $p,x,y,z\in X$
\begin{equation}\label{eq:Gromprodhyp}
 (x|z)_p \geq (x|y)_p \wedge (y|z)_p-\delta.
\end{equation}
In a $\delta$-hyperbolic geodesic metric space $X$, given any geodesics $\gamma,\gamma'$ with common start point $p$ and end points $x$ and $y$ respectively, we have that for all $t\leq (x|y)_p$, $d(\gamma(t),\gamma'(t))\leq 2\delta$.

To each proper geodesic hyperbolic space $X$ there is an associated boundary at infinity $\bdry X$, which is a compact space, with a family of visual metrics that are pairwise `quasisymmetric'.  If $X$ admits a geometric group action then the visual metrics are Ahlfors regular.  The Gromov product can be extended to $\overline{X}=X\cup\bdry X$ by setting
	\[
	 (x\mid y)_p :=\sup\liminf_{i,j\to\infty}(x_i|y_j)_p
	\]
	where the supremum is taken over all sequences $(x_i)$ and $(y_j)$ in $X$ with $x=\lim x_i$ and $y=\lim y_j$. Moreover, $\liminf_{i,j\to\infty}(x_i|y_j)_p \geq (x| y)_p-2\delta$ for all such sequences. Finally, given $x,y,z\in \overline X$ and $p\in X$, we have
\begin{equation}\label{eq:Gromprodhypbdry}
 (x|z)_p \geq (x|y)_p \wedge (y|z)_p-2\delta.
\end{equation}

Suppose $(Z,\rho)$ is a compact Ahlfors $Q$-regular metric space with at least two elements, and rescale so that $\diam Z = 1/2$.
Following Bourdon--Pajot~\cite[\S 2.1]{BP-03-lp-besov}	we define a \textbf{hyperbolic cone} on $Z$ to be a graph $X$, with vertex set $VX = \bigsqcup_{t\in\N} X_t$ where each $X_t$ is a maximal $e^{-t}$-separated net in $Z$, and with an edge connecting $z \in X_t$ to $w \in X_u$ if and only if $|t-u|\leq 1$ and $B_Z(z,e^{-t}) \cap B_Z(w,e^{-u}) \neq \emptyset$.
Each $x \in X_t \subset X$ corresponds to a ball $B_Z(x,e^{-t})$ in $Z$.

	The graph $X$ is hyperbolic, and $Z=\bdry X$. Moreover,
	\begin{equation}\label{eq:BPbdrymetric}
	\rho(x,y) \asymp e^{-(x|y)_o}\asymp \diam(A \cup B) 
	\end{equation}
	holds for all $x,y\in VX$ corresponding to balls $A,B$ in $Z$~\cite[Proposition 2.1, Lemma 2.2, Corollary 2.4]{BP-03-lp-besov}, where $o$ is the vertex in $X_0 \subset X$. If $(Z,\rho)$ is the boundary of a proper visual hyperbolic space $Y$, then any hyperbolic cone $X$ of $Z$ is quasi-isometric to $Y$ \cite{BS-00-gro-hyp-embed}.
	
	\subsubsection{Calculating weighted profiles of hyperbolic spaces}\label{sec:hypwightedcalc}
The main theorem of this section \ref{ssec:cap-Ahl-reg-pf} is:

\begin{theorem}\label{prop:wt-prof-hyp-cone}
	For $X$ a hyperbolic cone on a compact Ahlfors $Q$-regular space $Z$ with $Q>0$, for any $\alpha>0$ small enough and any $k=k(r)\leq r/10$ and $p\geq 1$,
\begin{equation*}
	\Xi^{p,\alpha,k}_{X}(r) \lesssim \left\{
 \begin{array}{lcl}
k \left(\frac{r}{k}\right)^{1-\frac{1}{Q}} & \textup{if} & 1\leq p < Q
 \\
k \left(\frac{r}{k}\right)^{1-\frac{1}{Q}} \log^{\frac{1}{Q}}\left(\frac{r}{k}\right) & \textup{if} & p = Q
 \\
k \left(\frac{r}{k}\right)^{1-\frac{1}{p}} & \textup{if} & p>Q.
 \end{array}\right.
\end{equation*}
\end{theorem}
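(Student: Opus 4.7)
The plan is to establish the upper bound on the weighted capacity by exhibiting, for each weighted subgraph $(\Gamma,\mu) \subset X$ with $|\Gamma|,\mu(\Gamma) \leq r$ and $\|\mu\|_\infty \leq k$, an explicit test function $f: V\Gamma \to [0,1]$ satisfying the level set conditions $\mu\{f\leq 0\},\mu\{f\geq 1\}\geq \alpha\mu(\Gamma)$ and achieving the desired bound on $\|\nabla f\|_{\mu,p}$. The technical advantage over \cite[Theorem 11]{HumeMackTess} comes from working in the hyperbolic cone model: the boundary $Z$ carries an honest Ahlfors $Q$-regular metric, and the volume homogeneity of Lemma~\ref{lem:caphyp1} (balls of radius $R$ have volume $\asymp e^{QR}$) plays the role that equivariance under a group action played in the earlier argument.

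The first substantial step is scale selection. I would set $R^\ast \asymp \tfrac{1}{Q}\log(\mu(\Gamma)/k)$, so that any ball $B_X(x,R^\ast)$ has volume $\asymp \mu(\Gamma)/k$, which is precisely the scale at which a maximally weighted such ball would absorb all the mass of $\Gamma$. Using a covering/pigeonhole argument: since each ball of radius $R$ has $\mu$-mass at most $k\cdot e^{QR}$, while for $R$ of order $\diam \Gamma$ some ball captures all of $\Gamma$, the function $R \mapsto \sup_{x}\mu(\Gamma \cap B_X(x,R))$ traverses $[0,\mu(\Gamma)]$ continuously up to jumps of size $\lesssim k e^{QR}$. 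I would locate $x_0 \in X$ and $R \leq R^\ast + O(1)$ such that both $\mu(\Gamma \cap B_X(x_0,R))$ and $\mu(\Gamma \setminus B_X(x_0, 2R))$ are at least $\alpha\mu(\Gamma)$, or else fall into a degenerate concentrated case that can be handled by a direct tree-like bound.

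The test function will be radial: $f(x) = \varphi(d_X(x,x_0))$ for a cutoff $\varphi:[0,\infty) \to [0,1]$ equal to $1$ on $[0,R]$ and $0$ outside some larger radius $R'\leq R^\ast$, whose shape is chosen to optimise the gradient norm. By volume homogeneity, each shell at radius $s$ intersects $\Gamma$ in at most $\asymp e^{Qs}$ vertices, each of weight $\leq k$, so
\[
 \|\nabla f\|_{\mu,p}^p \;\lesssim\; k\sum_{s=R}^{R^\ast} e^{Qs}\,|\varphi(s)-\varphi(s+1)|^p.
\]
Optimising $\varphi$ gives the three regimes. For $p>Q$ the sum is dominated by the outermost shell and $\varphi$ can be a single-step cutoff, producing $\|\nabla f\|_{\mu,p}^p \asymp k\cdot e^{QR^\ast}\cdot e^{(p-Q)R^\ast} \cdot R^{\ast \, -p}$ type terms that telescope to $k(r/k)^{1-1/p}$ after multiplying by $\mu(\Gamma)^{1-1/p}$. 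For $p<Q$, $\varphi$ is spread linearly across all scales so the weighted geometric sum yields $k(r/k)^{1-1/Q}$; and at the critical exponent $p=Q$, the same construction produces a logarithmically divergent sum cut off at scale $R^\ast$, giving the factor $\log^{1/Q}(r/k)$.

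The main obstacle will be executing the scale/center selection in Step 2 cleanly without equivariance: one must rule out pathological configurations where all of $\Gamma$'s mass concentrates on a single shell or far from every potential centre, and ensure the construction produces significant mass both inside and outside the support of the cutoff. A Vitali-type subcovering combined with a weighted median argument, together with an auxiliary reduction to handle the ``spread out'' case (where $\diam \Gamma \gg R^\ast$ and some iterative splitting of $\Gamma$ into disjoint chunks is required), should resolve this. The secondary difficulty is book-keeping across the three $p$-regimes so that the critical case $p=Q$ is handled without a separate argument; this hinges on observing that the sum $\sum_{s=0}^{R^\ast} e^{(Q-p)s}$ transitions from geometric to logarithmic to geometric behaviour precisely at $p=Q$, producing the trichotomy in the statement.
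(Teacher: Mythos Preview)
Your radial-cutoff approach has a genuine gap: it cannot achieve the claimed bounds.  The difficulty is the level-set constraint.  To have $\mu\{f\geq 1\}\geq\alpha\mu(\Gamma)$ with $f(x)=\varphi(d_X(x,x_0))$, the inner ball $B(x_0,R)$ must already carry mass $\geq \alpha\mu(\Gamma)$.  But by Lemma~\ref{lem:caphyp1} any such ball has at most $\asymp e^{QR}$ vertices, hence $\mu$-mass $\leq k e^{QR}$, which forces $R\geq R^\ast-O(1)$.  So the transition annulus $[R,R']\subset[R^\ast-O(1),R^\ast]$ has width $O(1)$, and the outermost shell already has $\asymp e^{QR^\ast}=\mu(\Gamma)/k$ vertices of weight up to $k$.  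Whatever profile $\varphi$ you choose, your own estimate gives
\[
\|\nabla f\|_{\mu,p}^p \;\lesssim\; k\sum_{s=R}^{R^\ast} e^{Qs}\,|\varphi(s)-\varphi(s+1)|^p \;\asymp\; k\,e^{QR^\ast}\;=\;\mu(\Gamma),
\]
which yields only the trivial bound $C^{p,\alpha}(\Gamma,\mu)\lesssim 1$.  (If you try to spread $\varphi$ over the full range $[0,R^\ast]$ you recover at best $\mu(\Gamma)/\log(\mu(\Gamma)/k)$, far from $k(r/k)^{1-1/Q}$.)  The sphere of radius $R^\ast$ about any centre, with counting measure, is an explicit adversarial $\Gamma$ on which no radial cutoff does better.

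The paper's construction is fundamentally different: it splits $\Gamma$ by \emph{direction at infinity}, not by distance from a point.  Using a Bonk--Schramm embedding and a Helly-type median (Lemma~\ref{lem:caphyp2}), one finds $x_0$ and two disjoint asymptotic shadows $H^\pm$ each carrying $\gtrsim\mu(\Gamma)$ mass.  The test function is pulled back from the \emph{boundary}: $\psi(x)\asymp e^{d(x_0,o)}\rho(\eta_x,\bdry H^-)$ where $\eta_x\in Z$ is a boundary projection of $x$.  Because adjacent vertices at depth $t$ past $x_0$ project to boundary points at visual distance $\asymp e^{-(d(o,x_0)+t)}$, the gradient \emph{automatically} decays like $e^{-t}$ inside the cone $V_{x_0}$, while vanishing outside it.  The shell at depth $t$ in $V_{x_0}$ has $\asymp e^{Qt}$ vertices (Lemma~\ref{lem:caphyp-volcone}), so
\[
\|\nabla\psi\|_{\mu,p}^p\;\lesssim\;\sum_{t=0}^{R^\ast} k\,e^{Qt}\,e^{-pt},
\]
and it is \emph{this} sum---starting at $t=0$, with built-in exponential gradient decay---whose behaviour in the three regimes $p\lessgtr Q$ produces the trichotomy.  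The missing idea in your plan is precisely this use of a boundary Lipschitz function; radial functions in a hyperbolic space do not see the conformal structure of $\bdry X$ that drives the exponent $Q$.
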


Using this theorem, we obtain the following analogue of \cite[Theorem 11]{HumeMackTess} for weighted capacity profiles.

\begin{theorem}\label{thm:wt-prof-hyp-upper-bound}
Let $G$ be a finitely generated hyperbolic group with conformal dimension $Q \geq 1$.
	Then for every $\varepsilon>0$, there exists a graph $X$ quasi-isometric to $G$, so that for any $k=k(r) \leq r/10$ and any $\alpha>0$ small enough,
\[
\Xi^{p,\alpha,k}_{X}(r) \lesssim \left\{
 \begin{array}{lcl}
   k \left(\frac{r}{k}\right)^{1-\frac{1}{Q}+\epsilon} & \textup{if} & p \leq Q+\epsilon
 \\
 k \left(\frac{r}{k}\right)^{1-\frac{1}{p}} & \textup{if} & p>Q+\epsilon.
 \end{array}\right.
\]
If the conformal dimension is attained (see discussion following Theorem~\ref{thmIntro:profilesDirectProductLie}), there exists a graph $X$ quasi-isometric to $G$ so that: 
\begin{equation*}
	\Xi^{p,\alpha,k}_{X}(r) \lesssim \left\{
 \begin{array}{lcl}
k \left(\frac{r}{k}\right)^{1-\frac{1}{Q}} & \textup{if} & 1\leq p < Q
 \\
k \left(\frac{r}{k}\right)^{1-\frac{1}{Q}} \log^{\frac{1}{Q}}\left(\frac{r}{k}\right) & \textup{if} & p = Q
 \\
k \left(\frac{r}{k}\right)^{1-\frac{1}{p}} & \textup{if} & p>Q.
 \end{array}\right.
\end{equation*}
\end{theorem}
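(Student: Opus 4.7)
The plan is to reduce Theorem~\ref{thm:wt-prof-hyp-upper-bound} to Theorem~\ref{prop:wt-prof-hyp-cone} by modelling $G$ up to quasi-isometry as a hyperbolic cone on a near-optimal Ahlfors regular metric on $\partial G$. By the definition of conformal dimension, for every $\eta \geq 0$ (with $\eta > 0$ in the non-attained case, $\eta = 0$ in the attained case) there exists a metric $\rho_\eta$ on $\partial G$, lying in the canonical quasisymmetry class, which is Ahlfors $(Q + \eta)$-regular. Given $\varepsilon > 0$, I would choose $\delta \in (0, \varepsilon)$ (or $\delta = 0$ in the attained case) and set $\rho := \rho_\delta$.

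Next, let $X$ be the Bourdon--Pajot hyperbolic cone on $(\partial G, \rho)$ constructed in \S\ref{sec:hypintro}. Then $X$ is a bounded degree, proper, geodesic, visual, Gromov hyperbolic graph, and by \eqref{eq:BPbdrymetric} its visual boundary based at the apex $o$ is bi-Lipschitz to $\rho$; in particular it is quasisymmetric to any visual metric on $\partial G$ inherited from a Cayley graph of $G$. By the boundary characterisation of quasi-isometry between proper, geodesic, visual, Gromov hyperbolic spaces (\cite{BS-00-gro-hyp-embed}; see also \S\ref{sec:hypintro}), $X$ is quasi-isometric to $G$.

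Applying Theorem~\ref{prop:wt-prof-hyp-cone} to this $X$ with regularity exponent $Q' := Q + \delta$ yields $\Xi^{p,\alpha,k}_X(r) \lesssim k(r/k)^{1 - 1/Q'}$ for $p < Q'$, the same bound with an extra $\log^{1/Q'}(r/k)$ factor when $p = Q'$, and $\Xi^{p,\alpha,k}_X(r) \lesssim k(r/k)^{1 - 1/p}$ for $p > Q'$. In the attained case $\delta = 0$ and this is exactly what is claimed. Otherwise, since $\delta < \varepsilon$ and $Q \geq 1$, for any $p \leq Q + \varepsilon$ we have $\max(p, Q') \leq Q + \varepsilon$, and a short computation gives
\[
1 - \frac{1}{\max(p, Q')} \leq 1 - \frac{1}{Q + \varepsilon} \leq 1 - \frac{1}{Q} + \varepsilon,
\]
recovering the first bound; for $p > Q + \varepsilon \geq Q'$ the second bound is immediate, and the $\log^{1/Q'}(r/k)$ factor arising if $p = Q'$ is absorbed into the $\varepsilon$-slack in the exponent (trivial when $r/k$ is bounded, and otherwise $\log^{1/Q'}(r/k) \lesssim (r/k)^{\varepsilon}$). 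The substantive technical work lies entirely in Theorem~\ref{prop:wt-prof-hyp-cone} itself; within this particular step the only delicate point is the case analysis relating the cone exponent $Q'$ to the conformal dimension $Q$ with an $\varepsilon$-loss, and the absorption of the boundary-case logarithm.
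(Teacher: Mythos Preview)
Your proposal is correct and follows essentially the same route as the paper: choose an Ahlfors $Q'$-regular metric on $\partial G$ with $Q'$ equal to (in the attained case) or just above (otherwise) the conformal dimension, take the Bourdon--Pajot hyperbolic cone to obtain $X$ quasi-isometric to $G$, and apply Theorem~\ref{prop:wt-prof-hyp-cone}. Your version is more explicit in invoking \cite{BS-00-gro-hyp-embed} for the quasi-isometry and in spelling out the case analysis absorbing the logarithm into the $\varepsilon$-slack, but the strategy is identical.
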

\begin{proof}[{Proof of Theorem~\ref{thm:wt-prof-hyp-upper-bound}}]
If $\bdry G$ attains its conformal dimension of $Q$, let $(Z,\rho)$ be an Ahlfors $Q$-regular metric space, quasisymmetric to $\bdry G$; without loss of generality, $\diam Z = 1/2$.
	Let $X$ be a hyperbolic cone on $Z$ as above, then the needed bounds on $\Xi^{p,\alpha,k}_X$ follow from Theorem~\ref{prop:wt-prof-hyp-cone}.

	If the conformal dimension of $\bdry G$ is not attained, for any $\epsilon>0$ we can find $Q'>Q$ sufficiently close to $Q$, so that the bounds of Theorem~\ref{prop:wt-prof-hyp-cone} for a hyperbolic cone on an Ahlfors $Q'$-regular space quasisymmetric to $\bdry G$ satisfy the necessary estimates.
\end{proof}
We also get general bounds on Poincar\'e profiles of hyperbolic cones.
\begin{theorem}
	\label{thmIntro:Ahlreg}
	Let $Z$ be an Ahlfors regular compact metric space of conformal dimension $Q\geq 1$ and let $X$ be the hyperbolic cone over $Z$ in the sense of Bourdon--Pajot. Then
\[
 Q \geq \inf\setcon{p\geq 1}{\Lambda^p_X(r)\lesssim r^{1-1/p}}.
\] 
\end{theorem}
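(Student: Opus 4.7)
My plan is to prove the theorem by showing that for every $p > Q$ the Poincar\'e profile satisfies $\Lambda^p_X(r) \lesssim r^{1-1/p}$. This is enough, since it would imply that the set $\setcon{p\geq 1}{\Lambda^p_X(r) \lesssim r^{1-1/p}}$ contains the open interval $(Q,\infty)$, forcing its infimum to be at most $Q$.

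Fix $p > Q$ and choose $\epsilon \in (0, p-Q)$. By the definition of conformal dimension as the infimum of the Ahlfors regular dimensions over the quasisymmetric class of $Z$, there exists an Ahlfors $Q'$-regular compact metric space $Z'$, quasisymmetric to $Z$, with $Q \leq Q' < Q+\epsilon < p$. Let $X'$ be a Bourdon--Pajot hyperbolic cone over $Z'$. Since $p > Q'$, Theorem~\ref{prop:wt-prof-hyp-cone} applied to $X'$ with constant weight $k \equiv 1$ and any sufficiently small $\alpha > 0$ yields
\[
  \Xi^{p,\alpha}_{X'}(r) \lesssim r^{1-1/p},
\]
and Lemma~\ref{lem:compare-cap-poinc-profiles} then gives $\Lambda^p_{X'}(r) \lesssim r^{1-1/p}$.

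It remains to transfer this estimate back to $X$. Since $Z$ and $Z'$ are quasisymmetric compact Ahlfors regular spaces, their Bourdon--Pajot cones $X$ and $X'$ are visual hyperbolic graphs of bounded geometry with quasisymmetrically equivalent Gromov boundaries; by the Bonk--Schramm correspondence \cite{BS-00-gro-hyp-embed}, any such quasisymmetry extends to a quasi-isometry $X \to X'$. Because Poincar\'e profiles are quasi-isometric invariants of bounded-degree graphs \cite[Theorem 1]{HumeMackTess}, we obtain $\Lambda^p_X \simeq \Lambda^p_{X'} \lesssim r^{1-1/p}$, completing the argument.

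The proof is essentially a packaging of results already established in the paper together with the standard boundary-to-space correspondence for Gromov hyperbolic spaces, so there is no serious obstacle. The one point worth highlighting is that the conformal dimension of $Z$ need not be attained, which is exactly why one must approximate by $Q' > Q$ and why the conclusion of the theorem is phrased as an infimum rather than a minimum; in the attained case, one could directly take $Z' = Z$ and remove the intermediate step.
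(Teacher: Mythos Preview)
Your proof is correct and follows essentially the same route as the paper's own argument: pick $p>Q$, approximate by an Ahlfors $Q'$-regular $Z'$ with $Q'<p$, apply Theorem~\ref{prop:wt-prof-hyp-cone} (with $k\equiv 1$) to the cone $X'$ over $Z'$, pass from capacity to Poincar\'e via Lemma~\ref{lem:compare-cap-poinc-profiles}, and transfer back to $X$ using that $X$ and $X'$ are quasi-isometric. The only cosmetic difference is that you spell out the Bonk--Schramm step explicitly, whereas the paper simply asserts the quasi-isometry.
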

\begin{proof}
Given an Ahlfors regular space $Z$ with conformal dimension $Q \geq 1$, let $X$ be a hyperbolic cone over $Z$.
Suppose $Z'$ is an Ahlfors $Q'$-regular space quasisymmetric to $Z$, with hyperbolic cone $X'$.
	Then $X'$ and $X$ are quasi-isometric, so by Lemmas~\ref{lem:compare-cap-poinc-profiles} and \ref{lem:wt-prof-bound} we have
	\[
		\Lambda_X^p(r) \simeq \Lambda_{X'}^p(r) \lesssim \Xi_{X'}^{p,\alpha}(r) \lesssim \Xi^{p,\alpha,1}_{X'}(r).
	\]
	Thus by Theorem~\ref{prop:wt-prof-hyp-cone}, for any $p>Q'$ we have $\Lambda_X^p(r) \lesssim r^{1-1/p}$; since we can take $Q'$ arbitrarily close to $Q$ we are done.
\end{proof}

	The following proof of Theorem~\ref{prop:wt-prof-hyp-cone} adapts and extends the work of \cite[\S 12]{HumeMackTess}; in that paper we only considered spaces on which $G$ acts geometrically, but here we instead use the hyperbolic cone construction of Bourdon--Pajot (cf.\ $\S\ref{sec:hypintro}$).
	The idea is that, given a weighted subgraph of $X$, one can use a Lipschitz function on the boundary to get a good candidate function for the $p$-capacity.  The argument is mainly elementary though somewhat long due to details given; we suggest the reader skips the proofs of the lemmas on a first reading.

\begin{proof}[{Proof of Theorem~\ref{prop:wt-prof-hyp-cone}}]
	As in $\S\ref{sec:hypintro}$, we suppose $(Z,\rho)$ is a compact Ahlfors $Q$-regular metric space with $Q>0$, and rescale so that $\diam Z = 1/2$. Let $X$ be a hyperbolic cone over $Z$ with hyperbolicity constant $\delta$. 
	First we establish some geometric properties of hyperbolic cones.

	A graph $X$ is $C$-\textbf{visual} with respect to a point $x_0 \in X$ if for any $x\in X$, there is a $C$-quasi-geodesic ray (i.e., a $(C,C)$-quasi-isometric embedding) $\gamma:[0,\infty)\to X$ with $\gamma(0)=x_0$ and $x \in \gamma$ (cf.\ (1) in [\S 12, HMT]).

	\begin{lemma}\label{lem:caphyp-vis}
	There exists $C$ so that for any $x_0\in X$, $X$ is $C$-visual with respect to $x_0$.
	\end{lemma}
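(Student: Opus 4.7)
The plan is to show, uniformly in the basepoint $x_0 \in X$, that every vertex $x \in X$ lies on a $C$-quasi-geodesic ray from $x_0$ with $C$ depending only on the hyperbolicity constant $\delta$ of $X$. The strategy is to concatenate a graph geodesic $[x_0, x]$ with a carefully chosen geodesic ray from $x$ into the levels below, then verify the concatenation is a quasi-geodesic using the boundary metric formula (\ref{eq:BPbdrymetric}).

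First, for each $y \in X_t$ I would construct a family of descending geodesic rays $\sigma_y : \N \to X$ starting at $y$. Inductively, given $y = y_0, y_1, \ldots, y_k$ with $y_j \in X_{t+j}$, maximality of the net $X_{t+k+1}$ provides some $y_{k+1} \in X_{t+k+1}$ with $\rho(y_k, y_{k+1}) \leq e^{-(t+k+1)}$; such a choice forces $B_Z(y_k, e^{-(t+k)}) \cap B_Z(y_{k+1}, e^{-(t+k+1)}) \neq \emptyset$, hence an edge between $y_k$ and $y_{k+1}$ in $X$. Since every edge of $X$ changes level by at most one, the resulting sequence is a geodesic ray; its endpoint $\xi_y = \lim_k y_k$ lies in $\partial X = Z$ with $\rho(y, \xi_y) \leq e^{-t}$. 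There is flexibility in the inductive construction: one can arrange $\xi_y$ to be any reasonably ``spread'' point of $B_Z(y, e^{-t})$.

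Given $x_0 \in X_s$ and $x \in X_t$, I would choose $\sigma_x$ so that $\xi_x \in B_Z(x, e^{-t})$ satisfies $\rho(\xi_x, x_0) \gtrsim \rho(x, x_0) \vee e^{-s}$, and set $\gamma = [x_0, x] \cdot \sigma_x$. The upper bound $d(p, q) \leq |s_p - s_q|$ is automatic, so to verify $\gamma$ is a $C$-quasi-geodesic it suffices to show $d(p, y_k) \geq d(x_0, x) - d(x_0, p) + k - O(1)$ for $p \in [x_0, x]$ and $y_k = \sigma_x(k)$. Using $d(o, v) = \ell + O(1)$ for $v \in X_\ell$ together with the identity $d(u,v) = d(o,u) + d(o,v) - 2(u|v)_o$, this reduces to the estimate $(p | y_k)_o = (p | x)_o + O(1)$; by (\ref{eq:BPbdrymetric}) both sides are essentially $-\log \rho(p, \cdot)$, and the choice of $\xi_x$ ensures $\rho(p, y_k) \asymp \rho(p, x)$ uniformly in $k$ because $y_k \to \xi_x$ stays inside a ball around $x$ whose radius is controlled by $\rho(p, x)$.

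The main technical obstacle is the configuration where $x_0$ lies in the ``descendant cone'' of $x$, i.e.\ $s > t$ and $B_Z(x_0, e^{-s}) \subset B_Z(x, 2e^{-t})$: here an arbitrary descending ray from $x$ could re-enter the ball $B_Z(x_0, O(e^{-s}))$ and force $\gamma$ to backtrack through the ancestor chain of $x_0$. This is what necessitates the careful choice of $\xi_x$ above, whose existence is guaranteed by Ahlfors $Q$-regularity of $Z$ with $Q > 0$: the ball $B_Z(x, e^{-t})$ has measure $\asymp e^{-tQ}$, strictly dominating (once $s$ is sufficiently larger than $t$) the measure $\asymp e^{-sQ}$ of any forbidden ball around $x_0$, so descending sequences realizing an endpoint with the required separation from $x_0$ exist.
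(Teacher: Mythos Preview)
Your strategy---concatenate a geodesic $[x_0,x]$ with a descending ray $\sigma_x$ from $x$ whose boundary endpoint $\xi_x$ is chosen to avoid $x_0$---is a genuine alternative to the paper's argument. The paper instead first shows $X$ is $1$-visual from the root $o$ via ancestor chains, and then for general $x_0$ splices the tail of the ray $\gamma_{x_0}$ (from $x_0$ back to level $(x\mid x_0)_o$) onto the tail of $\gamma_x$ (from that level through $x$ to $\bdry X$); a separate case, using a far-away point of $Z$, handles the situation where $x$ lies near $\gamma_{x_0}$. Your route avoids the case split at the cost of having to steer $\xi_x$.

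There is, however, a real gap: your separation requirement $\rho(\xi_x,x_0)\gtrsim \rho(x,x_0)\vee e^{-s}$ is too weak in exactly the configuration you flag. Take $x_0\in X_s$ with $s\gg t$ and $\rho(x,x_0)=0$; your condition only gives $\rho(\xi_x,x_0)\gtrsim e^{-s}$, which allows $\sigma_x$ to shadow $[x,x_0]$ all the way down to level $\approx s$ before diverging, so the concatenation backtracks by $\approx s-t$. In your own reduction this shows up as $(p\mid y_k)_o$ reaching $\approx s$ while $(p\mid x)_o=t$, so the desired estimate $(p\mid y_k)_o=(p\mid x)_o+O(1)$ fails. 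What is actually needed is $(\xi_x\mid x_0)_x=O(1)$, equivalently $\rho(\xi_x,x_0)\gtrsim e^{-t}$: since $(\xi_x\mid x)_o=t+O(1)$ one has $(\xi_x\mid x_0)_x=(\xi_x\mid x_0)_o-(x_0\mid x)_o+O(1)$, and in the descendant case $(x_0\mid x)_o=t$, so bounding $(\xi_x\mid x_0)_o$ by $t+O(1)$ is precisely the right demand. This stronger condition is still available from Ahlfors $Q$-regularity with $Q>0$, but via a fixed-proportion argument at scale $e^{-t}$ (the ball $B_Z(x,e^{-t})$ cannot be covered by $B_Z(x_0,\epsilon e^{-t})$ for small fixed $\epsilon$), not the $e^{-tQ}$ versus $e^{-sQ}$ comparison you wrote, which only kicks in for $s$ much larger than $t$ and in any case yields the wrong scale. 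With the corrected condition one can either push through your $(p\mid y_k)_o$ estimate, or simply invoke the standard fact that in a $\delta$-hyperbolic space the concatenation of two geodesics meeting at $x$ is a uniform quasi-geodesic once the Gromov product at $x$ of the two far ends is bounded.
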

	\begin{proof} 
Firstly we prove the lemma for $x_0=o$. By definition, any $x \in VX$ corresponds to a point $z \in X_t\subset Z$.
		For each $s \in \N$ with $s\neq t$ choose $z_s \in X_s$ so that $z \in B_Z(z_s,e^{-s})$; and set $z_t = x$.
		Then $(z_t)$ describes a geodesic ray from $o$ in $X$ which contains $x$ in its image.  Denote this ray by $\gamma_x$.
		So $X$ is $1$-visual with respect to $o$.

	Now consider general $x_0$ and $x$.		Let $d=(x|x_0)_o$ and note that $d(\gamma_x(d),\gamma_{x_0}(d)) \leq 4\delta$ since by \eqref{eq:Gromprodhyp},
	\begin{align*}
		d-\frac{1}{2}d(\gamma_x(d),\gamma_{x_0}(d))
		& = (\gamma_x(d)|\gamma_{x_0}(d))_o
		\\& \geq (\gamma_x(d)|x)_o \wedge (x|x_0)_o \wedge (x_0|\gamma_{x_0}(d))_o -2\delta 
		= d-2\delta.
	\end{align*}

		There are now two cases.

	\textbf{Case 1:} Suppose $d(x,o)>d+2\delta$.
		We concatenate: the subgeodesic of $\gamma_{x_0}$ from $x_0$ to $\gamma_{x_0}(d)$; a geodesic (of length at most $4\delta$) from $\gamma_{x_0}(d)$ to $\gamma_{x}(d)$; and the subray of $\gamma_x$ from $\gamma_{x}(d)$ to $\gamma_x(\infty)$. 
		This is a $(1,8\delta)$-quasi-geodesic ray starting at $x_0$ and containing $x$, as we now show.
		As the ray is $1$-Lipschitz, it suffices to show that for $t,t' \geq 0$ with $d+t' \leq d(o,x_0)$, writing $y=\gamma_x(d+t)$ and $y'=\gamma_{x_0}(d+t')$, that $d(y,y') \geq t+t'-8\delta$.
		Applying \eqref{eq:Gromprodhyp}, we have
		\begin{align*}
			d=(x|x_0)_o & \geq  (x|y)_o \wedge (y|y')_o \wedge (y'|x_0)_o - 2\delta 
			\\ & = \left( (d+t)\wedge d(x,o)\right) \wedge \left( d+\frac{1}{2}(t+t'-d(y,y'))\right) \wedge (d+t') - 2\delta.
		\end{align*}
		From this, either $t\leq 2\delta$, or $t'\leq 2\delta$, or $d\geq d+\frac{1}{2}(t+t'-d(y,y'))-2\delta$, that is $d(y,y')\geq t+t'-4\delta$ as required.  If $t'\leq 2\delta$ then 
		\[
			d(y,y') \geq d(y,\gamma_x(d)) - d(\gamma_x(d),\gamma_{x_0}(d))-t' \geq t-6\delta \geq t+t'-8\delta,
		\]
		and similarly if $t\leq 2\delta$.

	\textbf{Case 2:} Suppose $d(x,o) \leq d+2\delta$.
		Recall by \eqref{eq:BPbdrymetric} that there exists $C_\rho\geq 1$ so that for all $x,y\in Z, \rho(x,y)\leq C_\rho e^{-(x|y)_o}$.
		Choose an integer $T \geq \log(5C_\rho)+2\delta+3 \geq 3$.
		Since $\diam(Z)=\frac{1}{2}$, and considering $x_0$ as a point in $Z$, we can find $y'\in Z$ with $\rho(x_0,y')\geq  \frac{1}{4}$.
		Choose $y\in X_T \subset VX$ so that $y'\in B_Z(y,e^{-T})$, then $\rho(x_0,y) \geq \frac{1}{4}-e^{-T} \geq \frac{1}{5}$.
		Now $e^{-(x_0|y)_o} \geq \frac{1}{C_\rho}\rho(x_0,y) \geq 1/5C_\rho$ so $(x_0|y)_o \leq \log(5C_\rho) \leq T-2\delta = d(o,y)-2\delta$.
		Thus $x_0, y$ satisfy the hypotheses of Case 1, and so there is a $(1,8\delta)$-quasi-geodesic ray $\beta$ from $x_0$ to $\gamma_{x_0}((x_0|y)_o)$, to $\gamma_y((x_0|y)_0)$, then along $\gamma_y$.

		Since $d=(x|x_0)_o \leq d(o,x_0)$ and
		$d(x,\gamma_{x_0}(d)) \leq d(x,\gamma_x(d))+d(\gamma_x(d),\gamma_{x_0}(d)) \leq 6\delta$, and $(x_0|y)_o \leq \log(5C_\rho)$,
		we have that $x$ is within $C':=6\delta + \log(5C_\rho)$ of the geodesic segment of $\gamma_{x_0}$ from $x_0$ to $\gamma_{x_0}((x_0|y)_o)$.  So adding in to $\beta$ a path of length $\leq 2C'$ to $x$ and back, we get our desired $(1,8\delta+2C')$-quasi-geodesic ray.
		\end{proof}

	The graph $X$ has ``volume entropy'' $Q$ in the following sense (cf.\ [(2), \S12, HMT]).
	\begin{lemma}
		\label{lem:caphyp1}
		There exists $C$ so that for every $R>0$ and $x_0\in X$, we have $\left|QR - \log |B(x_0,R)| \, \right| \leq C$.  
	\end{lemma}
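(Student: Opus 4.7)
The plan is to show $|B(x_0,R)| \asymp e^{QR}$ (which is equivalent to the stated logarithmic bound) by relating distances in the hyperbolic cone $X$ to the geometry of $Z$, and then applying Ahlfors $Q$-regularity.

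First I would record the basic distance estimate in hyperbolic cones: for $x\in X_t$, $y\in X_s$, using the construction (edges connect $X_t$ to $X_t, X_{t\pm 1}$ when the associated balls meet) and visuality (Lemma~\ref{lem:caphyp-vis}) one obtains
\[
 d_X(x,y) = t+s-2(x\mid y)_o + O(1),
\]
where the Gromov product $(x\mid y)_o \in [0,\min(t,s)+O(1)]$ is comparable to $-\log\rho(x,y)$ up to an additive constant via \eqref{eq:BPbdrymetric}. Consequently, for $x_0\in X_t$ and $y\in X_s$, the condition $d_X(x_0,y)\leq R$ forces both $|s-t|\leq R+O(1)$ and
\[
 \rho(x_0,y) \lesssim e^{-(t+s-R)/2}.
\]

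For the upper bound $|B(x_0,R)|\lesssim e^{QR}$ I would fix $s$ with $|s-t|\leq R+O(1)$ and count $X_s\cap B(x_0,R)$. Since $X_s$ is $e^{-s}$-separated, the disjoint balls $B_Z(y,e^{-s}/2)$ of measure $\asymp e^{-sQ}$ sit inside a $Z$-ball of radius comparable to $r:=e^{-(t+s-R)/2}\vee e^{-s}$, which has measure $\lesssim r^Q$ by Ahlfors regularity. This yields at most $\asymp (r e^s)^Q = e^{Q(R-t+s)/2}$ points when $s\geq t-R$, and $O(1)$ otherwise. Summing the resulting geometric series over $s\in[t-R+O(1),\, t+R+O(1)]$, the maximum term at $s=t+R$ dominates and gives the desired bound $\lesssim e^{QR}$.

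For the lower bound $|B(x_0,R)|\gtrsim e^{QR}$ I would look only at level $X_{t+R}$. Any $y\in X_{t+R}$ with $\rho(x_0,y)\leq c e^{-t}$ satisfies $(x_0\mid y)_o\geq t-O(1)$, hence $d_X(x_0,y)\leq (2t+R)-2t+O(1)=R+O(1)$; adjusting $R$ by a constant we get $y\in B(x_0,R)$. By maximality of $X_{t+R}$ as an $e^{-(t+R)}$-separated net, the balls $B_Z(y,e^{-(t+R)})$ cover $B_Z(x_0,ce^{-t})$, so comparing measures via Ahlfors regularity gives $\gtrsim e^{-tQ}/e^{-(t+R)Q}=e^{QR}$ such points.

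The main obstacle is the first step, namely establishing the distance estimate $d_X(x,y)=t+s-2(x\mid y)_o+O(1)$ with constants independent of the base point $x_0$; this requires unpacking the Bourdon--Pajot construction carefully, using visuality (Lemma~\ref{lem:caphyp-vis}) to produce paths ``up to a common ancestor and back down,'' and using hyperbolicity together with \eqref{eq:BPbdrymetric} to show such paths are quasi-geodesic. Once this is in place, the counting arguments reduce cleanly to Ahlfors $Q$-regularity of $Z$ and elementary packing/covering estimates for the nets $X_s$.
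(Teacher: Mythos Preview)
Your argument is correct, and in fact slightly more streamlined than the paper's. The paper partitions $B(x_0,R)$ according to the value of $(z\mid o)_{x_0}$ and then invokes the auxiliary cone-counting Lemma~\ref{lem:caphyp-volcone} at points along a geodesic from $o$ to $x_0$; you instead partition by the level $s=d(o,z)$ and count directly using Ahlfors regularity at each level. Both approaches reduce to the same packing/covering estimates in $Z$, and the lower bound via $X_{t+R}\cap B_Z(x_0,ce^{-t})$ is essentially identical in the two proofs. Your organisation has the advantage that it does not require isolating Lemma~\ref{lem:caphyp-volcone} as a separate statement.

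One correction worth making: what you call ``the main obstacle'' is actually a triviality. Since edges of $X$ only join $X_s$ to $X_{s'}$ with $|s-s'|\leq 1$, one has $d_X(o,x)=t$ exactly for $x\in X_t$, and then
\[
 d_X(x,y)=d_X(o,x)+d_X(o,y)-2(x\mid y)_o = t+s-2(x\mid y)_o
\]
is simply the definition of the Gromov product --- no $O(1)$, no appeal to visuality or quasi-geodesics required. The link to $\rho$ is then exactly \eqref{eq:BPbdrymetric}, which is quoted from Bourdon--Pajot. So your first step costs nothing, and the genuine content lies entirely in the level-by-level counting and the geometric summation, which you carry out correctly (with the minor caveat that when $t+s<R$ one should cap the ``big ball'' radius at $\diam Z$; this only improves the estimate).
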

	
		We first note the following estimate; see Figure~\ref{fig:caphyp-volcone}.
	\begin{figure}
		\def\svgwidth{.7\textwidth}
\begingroup%
  \makeatletter%
  \providecommand\color[2][]{%
    \errmessage{(Inkscape) Color is used for the text in Inkscape, but the package 'color.sty' is not loaded}%
    \renewcommand\color[2][]{}%
  }%
  \providecommand\transparent[1]{%
    \errmessage{(Inkscape) Transparency is used (non-zero) for the text in Inkscape, but the package 'transparent.sty' is not loaded}%
    \renewcommand\transparent[1]{}%
  }%
  \providecommand\rotatebox[2]{#2}%
  \newcommand*\fsize{\dimexpr\f@size pt\relax}%
  \newcommand*\lineheight[1]{\fontsize{\fsize}{#1\fsize}\selectfont}%
  \ifx\svgwidth\undefined%
    \setlength{\unitlength}{170.07874016bp}%
    \ifx\svgscale\undefined%
      \relax%
    \else%
      \setlength{\unitlength}{\unitlength * \real{\svgscale}}%
    \fi%
  \else%
    \setlength{\unitlength}{\svgwidth}%
  \fi%
  \global\let\svgwidth\undefined%
  \global\let\svgscale\undefined%
  \makeatother%
  \begin{picture}(1,0.43716466)%
    \lineheight{1}%
    \setlength\tabcolsep{0pt}%
    \put(0,0){\includegraphics[width=\unitlength,page=1]{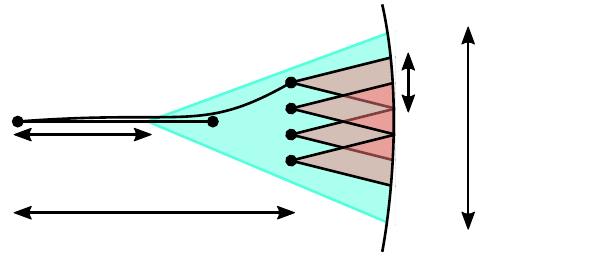}}%
    \put(0.25121122,0.0280086){\color[rgb]{0,0,0}\makebox(0,0)[lt]{\lineheight{1.25}\smash{\begin{tabular}[t]{l}$t$\end{tabular}}}}%
    \put(0.27700359,0.32775225){\color[rgb]{0,0,0}\makebox(0,0)[lt]{\lineheight{1.25}\smash{\begin{tabular}[t]{l}$y$\end{tabular}}}}%
    \put(0.00296278,0.24654731){\color[rgb]{0,0,0}\makebox(0,0)[lt]{\lineheight{1.25}\smash{\begin{tabular}[t]{l}$o$\end{tabular}}}}%
    \put(0.04880041,0.15282391){\color[rgb]{0,0,0}\makebox(0,0)[lt]{\lineheight{1.25}\smash{\begin{tabular}[t]{l}$d(o,y)-C$\end{tabular}}}}%
    \put(0.40888197,0.38355891){\color[rgb]{0,0,0}\makebox(0,0)[lt]{\lineheight{1.25}\smash{\begin{tabular}[t]{l}$z$\end{tabular}}}}%
    \put(0,0){\includegraphics[width=\unitlength,page=2]{cone-AR.pdf}}%
    \put(0.81095424,0.21220968){\color[rgb]{0,0,0}\makebox(0,0)[lt]{\lineheight{1.25}\smash{\begin{tabular}[t]{l}$e^{-d(o,y)+C}$\end{tabular}}}}%
    \put(0.70708741,0.28736996){\color[rgb]{0,0,0}\makebox(0,0)[lt]{\lineheight{1.25}\smash{\begin{tabular}[t]{l}$e^{-t}$\end{tabular}}}}%
  \end{picture}%
\endgroup%

		\caption{Counting points $z\in X_t$ with $(z|y)_o \succeq d(o,y)$}
		\label{fig:caphyp-volcone}
	\end{figure}
	\begin{lemma}\label{lem:caphyp-volcone}
		For any $C \geq 0$ there exists $C' \geq 1$ so that for any $y\in X$ and $t \in \N$ with $t \geq d(o,y)$,
		\[
			\left| \left\{ z \in X_t : (z|y)_o \geq d(o,y)-C \right\}\right|
			\asymp_{C'} e^{Q(t-d(o,y))}.
		\]
	\end{lemma}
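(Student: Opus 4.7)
The plan is to translate the Gromov-product condition on vertices of $X$ into a metric ball containment on the Ahlfors $Q$-regular boundary $Z$, and then to count net points in such a ball using Ahlfors regularity. Write $S := \{z \in X_t : (z|y)_o \geq d(o,y) - C\}$ for the set of interest.

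First I would apply the Bourdon--Pajot formula \eqref{eq:BPbdrymetric} to translate the hyperbolic condition into a boundary condition. Since each $z \in X_t$ sits naturally inside $Z$ via the net structure, the estimate $\rho(z,y) \asymp e^{-(z|y)_o}$ shows that the inequality $(z|y)_o \geq d(o,y) - C$ is equivalent, up to a multiplicative constant $K = K(C,Z) \geq 1$, to $z \in B_Z(y, K e^{-d(o,y)})$. This sandwiches $S$ between $X_t \cap B_Z(y, K^{-1} e^{-d(o,y)})$ and $X_t \cap B_Z(y, K e^{-d(o,y)})$.

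Next I would count the points of $X_t$ in such a ball. Since $X_t$ is a maximal $e^{-t}$-separated subset of $Z$, the balls $B_Z(x, e^{-t}/2)$ for $x \in X_t$ are pairwise disjoint, while the balls $B_Z(x, e^{-t})$ cover $Z$. Combined with the Ahlfors $Q$-regularity of $Z$, a routine packing-and-covering argument yields, for any $y \in Z$ and any $r \geq 2 e^{-t}$,
\[
 |X_t \cap B_Z(y, r)| \asymp r^Q e^{Qt},
\]
with constants depending only on $Z$. Plugging in $r \asymp e^{-d(o,y)}$, which satisfies $r \geq e^{-t}$ by the hypothesis $t \geq d(o,y)$, produces $|S| \asymp e^{Q(t - d(o,y))}$ in the regime where $t - d(o,y)$ exceeds a constant depending on $K$.

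The only step with any real subtlety is the boundary regime $0 \leq t - d(o,y) = O(1)$, where the enclosing boundary ball has radius comparable to the net spacing $e^{-t}$. There the target quantity $e^{Q(t-d(o,y))}$ is itself $O(1)$, so the upper bound still follows from the packing argument applied at radius $2e^{-t}$. The lower bound reduces to showing $|S| \geq 1$, which I would obtain by exhibiting any net point $z \in X_t$ within $e^{-t}$ of $y$ in $Z$: such a $z$ satisfies $(z|y)_o \geq t - O(1) \geq d(o,y) - O(1)$ by \eqref{eq:BPbdrymetric}, hence lies in $S$ after absorbing the additive constant into $C$. No step poses a genuine obstacle; the proof is essentially a dictionary translation between the hyperbolic cone $X$ and its Ahlfors regular boundary $Z$, combined with elementary volume estimates.
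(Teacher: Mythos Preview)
Your proposal is correct and follows essentially the same route as the paper: translate the Gromov-product condition into containment in a boundary ball via \eqref{eq:BPbdrymetric}, then count $e^{-t}$-net points in that ball by Ahlfors regularity. The paper's own proof is actually terser than yours (it writes out only the upper bound explicitly); your extra care with the lower bound and the regime $t-d(o,y)=O(1)$ is fine, though the phrase ``absorbing the additive constant into $C$'' is loose since $C$ is fixed---a cleaner fix is to observe directly that any $z\in X_t$ with $\rho(z,y)<e^{-d(o,y)}$ admits a path to $y$ of length exactly $t-d(o,y)$ through the levels, giving $(z|y)_o=d(o,y)$ and hence $z\in S$ for every $C\geq 0$.
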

	\begin{proof}
		Observe that $(z|y)_o \leq d(o,y)$ always.
		Thus by \eqref{eq:BPbdrymetric} we are counting $z\in X_t$ so that
		\begin{align*}
			\rho(z,y) \leq \diam( B_Z(z,e^{-t}) \cup B_Z(y,e^{-d(o,y)})) & \asymp e^{-(z|y)_o} 
			\asymp e^{-d(o,y)}.
		\end{align*}
		So we are counting a set of $e^{-t}$-separated points in some $B_Z(y, C''e^{-d(o,y)})$: by Ahlfors regularity there are $\preceq e^{-Qd(o,y)}/e^{-Qt}$ of them.
	\end{proof}
	
	\begin{proof}[Proof of Lemma~\ref{lem:caphyp1}]
		It is equivalent to prove that $|B(x_0,R)|\asymp \exp(QR)$.
		We assume $R\in\N$.
	
		By the construction of $X$, any geodesic from $o$ to $x_0$ consists of a sequence of centres of balls $z_t \in X_t$, where $t$ goes from $0$ to $n=d(o,x_0)$ such that $x_0 \in X_n$, so that each $B_Z(z_t, e^{-t}) \cap B_Z(z_{t+1},e^{-(t+1)}) \neq \emptyset$.

		Each $z \in B(x_0,R)$ has $0\leq (z|o)_{x_0}\leq d(x_0,z)\leq R$.
		We partition $B(x_0,R)$ into sets $V_0,V_1,\ldots,V_R$ such that $z \in V_i$ whenever $(z|o)_{x_0}=i$ or $i-\frac{1}{2}$.
		If $z \in V_i$ for some $0 \leq i \leq R$, then $z \in X_{t}$ for some $t$ with 
		\[ d(o,x_0)-i \leq t \leq R+d(o,x_0)-2i +1 \]
		where the first inequality follows from $d(o,x_0)-(z|o)_{x_0}=(z|x_0)_o\leq d(z,o)=t$ and the second from $t+2i -1 \leq d(o,z)+2(z|o)_{x_0} =d(x_0,z)+d(o,x_0)\leq R+d(o,x_0)$.
		Moreover,
		\begin{align*}
			d(o,x_0)-i & = d(o,z_{d(o,x_0)-i})
			\geq (z|z_{d(o,x_0)-i})_o
			\\& \geq  (z|x_0)_o \wedge (x_0|z_{d(o,x_0)-i})_o-\delta
			\\ & \geq ( d(o,x_0)-i )\wedge ( d(o,x_0)-i ) -\delta
			=d(o,x_0)-i-\delta.
		\end{align*}
		Therefore, for given values of $i$ and $t$, by Lemma~\ref{lem:caphyp-volcone} applied when $y=z_{d(o,x_0)-i}$, the number of options for $z$ is $\preceq e^{Q(t-d(o,x_0)+i)}$.  Hence 
		\[
			|B(x_o,R)| =\sum_{i=0}^R |V_i|\preceq \sum_{i=0}^R \sum_{t=d(o,x_0)-i}^{R+d(o,x_0)-2i} e^{Q(t-d(o,x_0)+i)} \preceq \sum_{i=0}^R e^{Q(R-i)} \preceq e^{QR}.
			\]

		On the other hand, by Ahlfors regularity there are $\succeq e^{QR}$ points of $X_{d(o,x_0)+R}$ in $B_Z(x_0, e^{-d(o,x_0)})$, so $|B(x_0,R)| \succeq e^{QR}$ also.
	\end{proof}
	
	Recall from \cite[Definition 12.1]{HumeMackTess} that $A \subseteq X$ is a \textbf{$C$-asymptotic shadow of $x_0 \in X$} if for every $x \in A$ there is a $C$-quasi-geodesic ray $\gamma_x:[0,\infty)\to X$ with $\gamma_x(0)=x_0, \gamma_x(r_x)=x$ for some $r_x$, and $\gamma_x[r_x,\infty) \subseteq A$. 
	In broad terms, the following lemma says that given any weighted subgraph $\Gamma\subset X$, we can find a point $x_0$ and two asymptotic shadows of $x_0$ that are far apart and both containing a substantial part of $\Gamma$.

	\begin{lemma}[{cf.\ \cite[(4), \S12]{HumeMackTess}}] 
		\label{lem:caphyp2}
		There exist (small) $\kappa>0$ and (large) $C>0, R_0 >0$ so that for any $R \geq R_0$ and subgraph $\Gamma \subset X$ weighted by $\mu$ which satisfies $\|\mu\|_\infty \leq \mu(\Gamma)/C$, there exists some $x_0 \in X$ and two $C$-asymptotic shadows $H^\pm \subset X\setminus B(x_0,R)$ of $x_0$ so that $\mu(H^+ \cap \Gamma), \mu(H^- \cap \Gamma) \geq \kappa \mu(\Gamma)-Ce^{QR}$ and so that for any $p^\pm \in H^\pm$ we have $(p^+|p^-)_{x_0} \leq -\log \kappa$. 
	\end{lemma}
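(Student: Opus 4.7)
The plan is to adapt \cite[(4), \S12]{HumeMackTess} to the weighted setting by choosing $x_0$ as a ``bottleneck'' vertex at which the $\mu$-mass, pushed forward to $Z$, first splits into two substantial and separated pieces. The condition $\|\mu\|_\infty \leq \mu(\Gamma)/C$ will be used to force such a splitting scale to exist with uniform quantitative control.

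First, use the $C$-visuality of Lemma~\ref{lem:caphyp-vis} to assign to each $x \in V\Gamma$ a $C$-quasi-geodesic ray $\gamma_x \colon [0,\infty) \to X$ with $\gamma_x(0)=o$ and $x \in \gamma_x$, and set $\xi_x := \gamma_x(\infty) \in Z$. The pushforward $\nu := \sum_{x \in V\Gamma} \mu(x)\delta_{\xi_x}$ is a finite measure on $Z$ of total mass $\mu(\Gamma)$. For $v \in VX$ write $B_v := B_Z(v, e^{-d(o,v)})$; by Lemma~\ref{lem:caphyp-volcone}, $B_v$ contains at most $N = N(Q,\delta)$ level-$(d(o,v)+1)$ sub-balls.

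Second, find a bottleneck ball by a combinatorial descent. Starting at $o$, if some child $v'$ (a level-$(d(o,v)+1)$ vertex with $B_{v'} \subset B_v$) satisfies $\nu(B_{v'}) \geq (1 - (2N)^{-1})\nu(B_v)$, descend to it; otherwise stop and let $x_0 := v$. At the stopping vertex, since the leading sub-ball accounts for at most $(1 - (2N)^{-1})\nu(B_{x_0})$ of the mass, two disjoint level-$(d(o,x_0)+1)$ sub-balls $A^+, A^- \subset B_{x_0}$ can be extracted with $\nu(A^\pm) \geq \kappa \mu(\Gamma)$ for a uniform $\kappa > 0$, separated on $Z$ by $\rho(A^+, A^-) \gtrsim e^{-d(o,x_0)}$. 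The weight bound $\|\mu\|_\infty \leq \mu(\Gamma)/C$ with $C$ large prevents pathological concentration and ensures that the descent terminates with splitting while $\nu(B_{x_0})$ is still a definite fraction of $\mu(\Gamma)$.

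Third, define
\[
	H^\pm := \{ x \in VX \setminus B(x_0,R) : \text{some } C'\text{-quasi-geodesic ray from } x_0 \text{ through } x \text{ ends in } A^\pm \}.
\]
Each $x \in V\Gamma$ with $\xi_x \in A^\pm$ and $x \notin B(x_0, R)$ lies in $H^\pm$: by $\delta$-hyperbolicity applied to the triangle with vertices $o, x_0, \xi_x$, the ray $\gamma_x$ from $o$ to $\xi_x \in A^\pm \subset B_{x_0}$ passes within a bounded distance of $x_0$, and concatenating a bounded detour through $x_0$ with the tail of $\gamma_x$ beyond $x$ yields the required $C'$-quasi-geodesic ray. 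The Gromov-product bound $(p^+|p^-)_{x_0} \leq -\log\kappa$ follows from the boundary-metric formula \eqref{eq:BPbdrymetric} combined with the basepoint change: the separation $\rho(A^+, A^-) \gtrsim e^{-d(o, x_0)}$ from $o$ corresponds to a separation of order $1$ when viewed from $x_0$, giving $(\xi_{p^+} | \xi_{p^-})_{x_0} \leq O(1)$ and hence the same for $(p^+|p^-)_{x_0}$ up to $\delta$-errors. Finally, $|B(x_0,R)| \lesssim e^{QR}$ by Lemma~\ref{lem:caphyp1}, so $\mu(\Gamma \cap B(x_0,R)) \leq \|\mu\|_\infty |B(x_0,R)| \lesssim e^{QR}$ after absorbing $\|\mu\|_\infty$, yielding $\mu(H^\pm\cap\Gamma)\geq \kappa\mu(\Gamma)-Ce^{QR}$ once $R \geq R_0$ is chosen suitably.

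The main obstacle is making the splitting step quantitative: showing that the greedy descent terminates at a scale where $\nu(B_{x_0})$ is a definite fraction of $\mu(\Gamma)$, with two sub-balls each carrying $\gtrsim \mu(\Gamma)$ mass, and uniform $\kappa$. This is the heart of the weighted generalization, with the hypothesis $\|\mu\|_\infty \leq \mu(\Gamma)/C$ entering essentially via a pigeonhole argument to prevent indefinite concentration, and dictating how large $C$ must be chosen in terms of $N(Q,\delta)$ and the target $\kappa$.
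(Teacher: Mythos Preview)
Your approach is genuinely different from the paper's and has a real gap. The paper does not project to the boundary at all; instead it uses the Bonk--Schramm embedding $\psi:X\to\HH_\R^n$, pushes $\mu$ forward to $\psi_*\mu$, and invokes the Helly-type argument of \cite[Lemmas~12.8--12.10]{HumeMackTess} to find a point $x\in\HH_\R^n$ through which every half-space carries $\geq c\,\mu(\Gamma)$, together with a hyperplane whose $\alpha$-cone has small mass (this is where $\|\mu\|_\infty\leq\mu(\Gamma)/C$ is used, to bound the possible atom at $x$). The two complementary half-space regions $V^\pm$ are then pulled back and extended to asymptotic shadows $H^\pm$.

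The problem with your boundary descent is that the map $x\mapsto\xi_x$ collapses the radial direction. Take $\Gamma$ to be $C$ consecutive vertices along a single geodesic ray from $o$, with $\mu$ the counting measure; then $\|\mu\|_\infty=1=\mu(\Gamma)/C$, but every $\xi_x$ equals the same boundary point $z_0$, so $\nu=\mu(\Gamma)\delta_{z_0}$ is a Dirac mass. Your greedy descent then always finds a child ball carrying the full mass and never stops at a splitting scale: the hypothesis $\|\mu\|_\infty\leq\mu(\Gamma)/C$ constrains $\mu$ on $X$, not $\nu$ on $Z$, and does nothing to prevent $\nu$ from being atomic. Yet this $\Gamma$ must be split \emph{radially}, with $x_0$ near its midpoint and $H^\pm$ pointing toward and away from $z_0$ --- information that your pushforward has destroyed. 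The paper's detour through $\HH_\R^n$ is exactly what preserves this radial structure. (A smaller issue: your final bound $\mu(\Gamma\cap B(x_0,R))\leq\|\mu\|_\infty|B(x_0,R)|$ gives $\|\mu\|_\infty e^{QR}$, and $\|\mu\|_\infty$ cannot simply be ``absorbed'' into $C$ since it may be as large as $\mu(\Gamma)/C$; the paper is equally loose here, but it is harmless in the application.)
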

	\begin{proof}
		We adapt the proof of \cite[Proposition 12.2 (4)]{HumeMackTess} to deal with the weight $\mu$ and the absence of a group action, and refer to \cite{HumeMackTess} for further details.
		
		By Bonk--Schramm \cite{BS-00-gro-hyp-embed} there exists a quasi-isometric embedding $\psi:X \to \HH_\R^n$ for some $n$.  Push forward $\mu$ to give a measure $\psi_*\mu$ on $\psi(X)$.

		The Helly's Theorem argument of \cite[Lemma 12.8]{HumeMackTess} applies verbatim to give the existence of $c>0$ and $x \in \HH_\R^n$ so that for any half-space $H$ of $\HH_\R^n$ containing $x$ we have $\psi_*\mu(H) \geq c \mu(\Gamma)$.
		By \cite[Lemma 12.10]{HumeMackTess} there is a point $x_0 \in X$ so that $d(\psi(x_0),x)\leq C_1=C_1(\psi,n)$.
		
		The proof of \cite[Lemma 12.9]{HumeMackTess} goes through nearly verbatim to find a constant $\alpha>0$, and a hyperplane $H \subset \HH_\R^n$ through $x$, so that
		$\psi_*\mu( H^\alpha \setminus\{x\}) \leq \frac{c}{2} \mu(\Gamma)$,
		where $H^\alpha$ is the union of all geodesics through $x$ making an angle of $\leq \alpha$ with $H$.
		Note that we have to remove $x$ from $H^\alpha$ to get the volume bound as the polar coordinates in the proof of \cite[Lemma 12.9]{HumeMackTess} degenerate at $x$.
		As $\psi$ is a quasi-isometry, $|\psi^{-1}(\psi(x))| \leq C_2$ for some $C_2$, so if we assume $C\geq 6C_2/c$ then
		$\psi_*\mu(H^\alpha) \leq \frac{c}{2} \mu(\Gamma) + C_2\mu(\Gamma)/C \leq \frac{2c}{3}\mu(\Gamma)$. 

		Let $V^\pm$ be the two components of $\HH_\R^n \setminus H^\alpha$; by the assumptions on $x$, $\psi_*\mu(V^\pm) \geq \frac{c}{3} \mu(\Gamma)$.
		Let $C_3$ be the visual constant of Lemma~\ref{lem:caphyp-vis}.
		By hyperbolicity and the Morse lemma, there exists $C_4=C_4(C_3,\delta)$ so that if $\gamma:[0,\infty)\to X$ is a $C_3$-quasi-geodesic ray with $\gamma(0)=x_0$, then for any $T \geq t \geq 0$ we have $d(\gamma(T),x_0) \geq d(\gamma(t),x_0)-C_4$ and $(\gamma(T)|\gamma(t))_{x_0} \geq d(\gamma(t),x_0)-C_4$.
		Let $\hat H^\pm := \psi^{-1}(V^\pm) \setminus B(x,R+C_4)$.
		Since $X$ is $C_3$-visual, for any $x\in \hat H^+$ there is a $C_3$-quasi-geodesic $\gamma_x$ with $\gamma_x(0)=x_0$ and $\gamma_x(r_x)=x$ for some $r_x$.  Let $H^+$ be the union of $\gamma_x([r_x,\infty))$ for all $x \in \hat H^+$, and likewise for $H^-$.

		By construction $H^\pm$ are $C_3$-asymptotic shadows of $x_0$ in $X\setminus B(x_0,R)$.  
		By the convexity of $V^-,V^+$ and hyperbolicity, there exists $R_0', C_5$ depending on $\alpha, \psi, C_1$ so that for $R \geq R_0'$ such $x^\pm \in \hat H^{\pm}$ must satisfy $(x^+|x^-)_{x_0} \leq C_5$.  If we fix $R_0 > R_0' \vee (2\delta+C_5)$ then for any $R \geq R_0$ we have that for any such $p^{\pm} \in \gamma_{x^\pm}([r_{\pm},\infty)) \subset H^\pm$ we have 
		\begin{align*}
			C_5 & \geq (x^+|x^-)_{x_0} \geq (x^+|p^+)_{x_0} \wedge (p^+|p^-)_{x_0} \wedge (p^-|x^-)_{x_0} -2\delta
			\\ & \geq  (R+C_4-C_4)  \wedge (p^+|p^-)_{x_0} \wedge (R+C_4-C_4)  -2\delta = (p^+|p^-)_{x_0} -2\delta,
		\end{align*}
		thus $(p^+|p^-)_{x_0} \leq C_5+2\delta$.

		Finally, by Lemma~\ref{lem:caphyp1}
		\[
			\mu(H^+) \geq\mu(\hat H^+) \geq \mu(\psi^{-1}(V^+))-\mu(B(x,R+C_4)) \geq \frac{c}{3}\mu(\Gamma) - C_6 e^{QR}
		\]
		for suitable $C_6$, and similarly for $\mu(H^-)$.
		Set $C= (6C_2/c)\vee C_3 \vee C_6$.
	\end{proof}

	If we are given a weighted subgraph $\Gamma \subset X$ and apply the preceeding lemma to find $x_0$ and $H^\pm$, then the following lemma shows that, roughly speaking, either all of $H^-$ or all of $H^+$ must be on the other side of $x_0$ from $o$.  
	\begin{lemma}
		\label{lem:caphyp3}	
			For $C'=-\log \kappa +\delta $ either 
			$\forall x \in H^-, (x| x_0)_o \geq d(o,x_0)-C'$, or
			$\forall x \in H^+, (x| x_0)_o \geq d(o,x_0)-C'$. 
	\end{lemma}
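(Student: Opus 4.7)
The plan is to argue by contradiction, combining the trifurcation estimate for Gromov products (four-point $\delta$-hyperbolicity) with the classical identity relating Gromov products based at the two endpoints of a geodesic segment.

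First I would note the useful identity: for any $x \in X$,
\[
(x\mid o)_{x_0} + (x\mid x_0)_o = d(o,x_0).
\]
This follows directly from expanding both Gromov products via the definition \eqref{eq:Gromprod}; the $d(x_0,x)$ and $d(o,x)$ terms cancel, leaving $\frac{1}{2}(2d(o,x_0))$. In particular, the hypothesis $(x\mid x_0)_o < d(o,x_0) - C'$ is equivalent to $(x\mid o)_{x_0} > C'$.

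Now suppose for contradiction that the conclusion fails: there exist $x^+ \in H^+$ and $x^- \in H^-$ with $(x^+\mid x_0)_o < d(o,x_0) - C'$ and $(x^-\mid x_0)_o < d(o,x_0) - C'$. By the identity above this gives $(x^+\mid o)_{x_0} > C'$ and $(x^-\mid o)_{x_0} > C'$. Applying $\delta$-hyperbolicity \eqref{eq:Gromprodhyp} with basepoint $x_0$ to the triple $x^+, o, x^-$:
\[
(x^+\mid x^-)_{x_0} \;\geq\; (x^+\mid o)_{x_0} \wedge (o\mid x^-)_{x_0} - \delta \;>\; C' - \delta \;=\; -\log\kappa.
\]
But by Lemma~\ref{lem:caphyp2} we have $(x^+\mid x^-)_{x_0} \leq -\log\kappa$, which is the desired contradiction. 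Thus one of the two $H^\pm$ must have every point satisfying $(x\mid x_0)_o \geq d(o,x_0) - C'$.

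The argument is essentially a bookkeeping exercise with Gromov products, so there is no real obstacle; the only subtlety is choosing the correct value $C' = -\log\kappa + \delta$ so that the strict inequality obtained from hyperbolicity contradicts the non-strict upper bound supplied by Lemma~\ref{lem:caphyp2}.
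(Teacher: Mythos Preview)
Your proof is correct and follows essentially the same contradiction strategy as the paper: both derive $(x^+\mid x^-)_{x_0} > -\log\kappa$ from the assumption and invoke Lemma~\ref{lem:caphyp2}. The only cosmetic difference is that the paper applies hyperbolicity at the basepoint $o$ after a change-of-basepoint identity for $(x^+\mid x^-)$, whereas you first convert each hypothesis via $(x\mid o)_{x_0}+(x\mid x_0)_o=d(o,x_0)$ and then apply hyperbolicity at $x_0$; these are equivalent manipulations, and your version is arguably slightly cleaner.
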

	\begin{proof}
		Indeed, if there exists $x^{\pm} \in H^\pm$ so that $(x^\pm | x_0)_o < d(o,x_0)-C'$
		then as we have $(x^+|x^-)_{x_0}  = d(o,x_0) - (x^+ | x_0)_o - (x^- | x_0)_o + (x^+|x^-)_{o}$ by definition of the Gromov product, by hyperbolicity we see that $(x^+|x^-)_{x_0}$ is at least
		\[
			d(o,x_0) -(x^+ | x_0)_o - (x^- | x_0)_o   + (x^+ | x_0)_o \wedge (x^- | x_0)_o - \delta > C'-\delta,
		\]
		contradicting Lemma \ref{lem:caphyp2}.
	\end{proof}
	
	Given this lemma, without loss of generality we suppose for all $x \in H^-, (x| x_0)_o \geq d(o,x_0)-C'$.

	\begin{lemma}
		\label{lem:caphyp4}
		There exists $\kappa'>0$ depending on $\delta, \kappa$ and the constant of \eqref{eq:BPbdrymetric} so that $\rho(\bdry H^-, \bdry H^+) \geq \kappa' e^{- d(x_0,o)}$.
	\end{lemma}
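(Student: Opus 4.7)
The plan is to convert the interior Gromov product bound $(p^+|p^-)_{x_0} \leq -\log \kappa$ of Lemma \ref{lem:caphyp2} into an upper bound on the Gromov product at the root $o$, and then read off the conclusion from the visual metric estimate \eqref{eq:BPbdrymetric} at infinity. The core input is the elementary basepoint-change identity
\[
(p^+|p^-)_o = (p^+|p^-)_{x_0} + (p^+|x_0)_o + (p^-|x_0)_o - d(o,x_0),
\]
which holds as an exact equation for any $p^\pm \in X$ (expand $2(a|b)_q = d(q,a)+d(q,b)-d(a,b)$ on both sides and check the cancellation). Since $(p^\pm|x_0)_o \leq d(o,x_0)$ by the triangle inequality, and $(p^+|p^-)_{x_0} \leq -\log\kappa$ by Lemma \ref{lem:caphyp2}, this identity immediately yields
\[
(p^+|p^-)_o \leq d(o,x_0) - \log \kappa
\]
uniformly for $p^\pm \in H^\pm$.

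Next I would extend the bound to the boundary. Pick any $\xi^\pm \in \partial H^\pm$ and sequences $p_n^\pm \in H^\pm$ converging to $\xi^\pm$ in $\overline{X}$. Each Gromov product $(p_m^+|p_n^-)_o$ is at most $d(o,x_0) - \log\kappa$, so the same bound passes to $\liminf_{m,n}(p_m^+|p_n^-)_o$, and hence (taking the supremum over sequence choices in the definition of the extended Gromov product) gives $(\xi^+|\xi^-)_o \leq d(o,x_0) - \log\kappa$. As a byproduct this already shows $\partial H^+ \cap \partial H^- = \emptyset$, so the distance $\rho(\partial H^-,\partial H^+)$ is strictly positive a priori.

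Finally, the estimate \eqref{eq:BPbdrymetric}, which extends from vertices of $X$ to $Z = \partial X$ in the standard way with some constant $C_\rho$ depending only on $\delta$, gives $\rho(\xi^+,\xi^-) \geq C_\rho^{-1} e^{-(\xi^+|\xi^-)_o}$ and therefore
\[
\rho(\xi^+,\xi^-) \geq \frac{\kappa}{C_\rho}\, e^{-d(o,x_0)}.
\]
Taking $\kappa' := \kappa/C_\rho$ proves the lemma. There is no serious obstacle here; the argument is a clean manipulation of Gromov products, and in particular Lemma \ref{lem:caphyp3} is not used for this step (it presumably enters only at the later stages of the proof of Theorem \ref{prop:wt-prof-hyp-cone}, when one needs to place the shadows on a specific side of $x_0$ relative to $o$).
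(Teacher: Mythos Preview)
Your approach is essentially the same as the paper's: bound the Gromov product of interior points at $o$ via the bound at $x_0$ from Lemma~\ref{lem:caphyp2}, pass to the boundary, and read off the distance from \eqref{eq:BPbdrymetric}. The paper uses the cruder inequality $(p^+|p^-)_o \leq (p^+|p^-)_{x_0}+d(x_0,o)$ (which is the special case of your basepoint identity after dropping the two nonpositive terms $(p^\pm|x_0)_o - d(o,x_0)$), so your exact identity is a harmless refinement.

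There is one small slip in your limit step. You bound $\liminf_{m,n}(p_m^+|p_n^-)_o$ only for sequences $p_n^\pm \in H^\pm$, but the extended Gromov product $(\xi^+|\xi^-)_o$ is the \emph{supremum} over \emph{all} sequences converging to $\xi^\pm$, not just those in $H^\pm$; so ``taking the supremum'' does not by itself give your bound. The correct passage (used in the paper, and recalled in \S\ref{sec:hypintro}) is that for \emph{any} converging sequences one has $(\xi^+|\xi^-)_o \leq \liminf_{m,n}(p_m^+|p_n^-)_o + 2\delta$, which yields $(\xi^+|\xi^-)_o \leq d(o,x_0)-\log\kappa + 2\delta$. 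This only shifts your constant to $\kappa' = e^{-2\delta}\kappa/C_\rho$, matching the paper's value, and $\kappa'$ still depends only on $\delta,\kappa,C_\rho$ as required.
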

	\begin{proof}
		Let $C_\rho$ be the constant of \eqref{eq:BPbdrymetric}.
		If $a^\pm \in \bdry H^\pm$ satisfy $\kappa' e^{-d(x_0,o)} > \rho(a^+,a^-) \geq \frac{1}{C_\rho}e^{-(a^+|a^-)_o}$ then $(a^+|a^-)_o > d(x_0,o)-\log(C_\rho\kappa')$.
		There exist sequences $(x_i^\pm) \subset H^\pm$ so that $a^+=\lim x_i^+$ and $a^-=\lim x_i^-$, so as discussed in \S\ref{sec:hypintro} we can bound
		\begin{align*}
			(a^+|a^-)_o 
			& \leq 2\delta+\liminf_{i,j\to\infty}(x_i^+|x_j^+)_o 
			\\ & \leq 2\delta+\liminf_{i,j\to\infty}(x_i^+|x_j^+)_{x_0}+d(x_0,o)
			\leq 2\delta-\log \kappa+d(x_0,o)
		\end{align*}
		by Lemma~\ref{lem:caphyp2}, a contradiction for $\kappa'\leq e^{-2\delta}\kappa/C_\rho$.
	\end{proof}
	Recall from Lemma~\ref{lem:caphyp-vis} that for any $x\in X$ there is a $C$-quasi-geodesic ray $\gamma$ from $o$ through $x$.
	Given another point $x_0$, we now find a geodesic ray from $o$ through $x$ that doesn't go any closer to $x_0$ than it has to.
	\begin{lemma}
		\label{lem:caphyp4b}
		There exists $D>0$ so that given any $o, x_0, x \in X$, there exists a geodesic ray $\gamma$ from $o$ with $d(\gamma,x) \leq D$ and $\eta_x:=\gamma(\infty)$ satisfying 
	$|(x_0| \eta_x)_o - (x_0| x)_o|\leq D$.
	\end{lemma}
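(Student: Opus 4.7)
The plan is to build $\gamma$ by extending the geodesic segment from $o$ to $x$ out to a carefully chosen boundary point $\eta_x$. Write $t = d(o,x)$ and identify $x \in X_t$ with the centre of the ball $B_Z(x,e^{-t}) \subset Z$. For any $\eta \in B_Z(x,e^{-t})$, following the recipe in the proof of Lemma~\ref{lem:caphyp-vis}, I obtain a geodesic ray from $o$ through $x$ to $\eta$ at infinity by taking $z_t := x$ and for each $s > t$ picking $z_s \in X_s$ with $\eta \in B_Z(z_s,e^{-s})$, then concatenating with the initial geodesic from $o$ to $x$. Hence $d(\gamma,x) = 0$, and the only task is to select $\eta_x$ so that the Gromov product bound holds.

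The lower bound $(x_0|\eta_x)_o \geq (x_0|x)_o - 2\delta$ is automatic for any such choice. Since $x$ lies on $\gamma$, we have $(x|\eta_x)_o = d(o,x) = t \geq (x_0|x)_o$, so by hyperbolicity \eqref{eq:Gromprodhypbdry}
\[ (x_0|\eta_x)_o \geq (x_0|x)_o \wedge (x|\eta_x)_o - 2\delta = (x_0|x)_o - 2\delta. \]
For the matching upper bound, I use that \eqref{eq:BPbdrymetric} extends to $\bdry X = Z$ since $\rho$ remains a visual metric: $e^{-(x_0|\eta_x)_o} \asymp \rho(x_0,\eta_x)$ and $e^{-(x_0|x)_o} \asymp \rho(x_0,x)$. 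Since $(x_0|x)_o \leq t$ forces $\rho(x_0,x) \gtrsim e^{-t}$, it suffices to choose $\eta_x \in B_Z(x,e^{-t})$ with $\rho(x_0,\eta_x) \gtrsim \rho(x_0,x)$.

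The selection splits into two cases. If $\rho(x_0,x) \geq 2e^{-t}$, the triangle inequality gives $\rho(x_0,\eta) \geq \tfrac{1}{2}\rho(x_0,x)$ for every $\eta \in B_Z(x,e^{-t})$, so any choice works. If instead $\rho(x_0,x) < 2e^{-t}$, Ahlfors $Q$-regularity yields $\mu(B_Z(x,e^{-t})) \asymp e^{-Qt}$ while $\mu(B_Z(x_0,ce^{-t})) \lesssim c^Q e^{-Qt}$, so for a uniform $c>0$ the set $B_Z(x,e^{-t}) \setminus \overline{B_Z(x_0,ce^{-t})}$ has positive measure; any element $\eta_x$ of it satisfies $\rho(x_0,\eta_x) \geq ce^{-t} \gtrsim \rho(x_0,x)$. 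This second case is the only real subtlety and the only place where Ahlfors regularity is essential: without it, $B_Z(x,e^{-t})$ could conceivably cluster arbitrarily near $x_0$, breaking the upper bound.
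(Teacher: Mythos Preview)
Your argument is correct and in fact cleaner than the paper's. You exploit the explicit combinatorics of the hyperbolic cone: since every vertex $x \in X_t$ is itself a point of $Z$, you can build a genuine geodesic ray from $o$ passing \emph{through} $x$ to any prescribed $\eta \in B_Z(x,e^{-t})$, so $d(\gamma,x)=0$; then the Gromov-product control reduces to the metric inequality $\rho(x_0,\eta_x)\gtrsim\rho(x_0,x)$ via \eqref{eq:BPbdrymetric}, which you handle by a clean two-case dichotomy using Ahlfors regularity only when $x_0$ and $x$ are at comparable scale. The paper instead works more intrinsically with quasi-geodesic rays $\alpha,\beta$ through $x_0,x$ and a case split on whether $d(o,x)$ exceeds $(x_0|x)_o$ by a definite amount, then chases Gromov-product inequalities; this yields only $d(\gamma,x)\leq D$ and a longer chain of estimates. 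The paper's route would transfer more readily to an abstract visual hyperbolic space, but in the hyperbolic-cone setting your approach is shorter and gives the stronger conclusion that $x$ lies on $\gamma$.
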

	\begin{proof}
		Given $x\in X$, by Lemma~\ref{lem:caphyp-vis} let $\alpha, \beta$ be $C$-quasi-geodesics from $o$ that contain $x_0, x$, respectively.
		
		By the Morse lemma there exists $C_1=C_1(C,\delta)$ so that $|d(x,o)-(x|\beta(\infty))_o|\leq C_1$.
		If $d(x,o) > (x_0|x)_o+2\delta+C_1$ then we can let $\gamma$ be a geodesic representative of $\beta$ since $d(x,\gamma)$ is bounded, and
		\begin{align*}
			(x_0|\beta(\infty))_o 
			& \geq (x_0|x)_o \wedge (x|\beta(\infty))_o - 2\delta 
			\\& \geq (x_0|x)_o \wedge (d(x,o)-C_1) -2\delta
			= (x_0|x)_o-2\delta,
		\end{align*}
		and 
		\begin{align*} 
			(x_0|x)_o 
			& \geq (x_0|\beta(\infty))_o \wedge (\beta(\infty)|x)_o -2\delta
			\\ & \geq (x_0|\beta(\infty))_o \wedge (d(x,o)-C_1) -2\delta
			= (x_0|\beta(\infty))_o-2\delta.
		\end{align*}

		Otherwise, $d(x,o) \leq (x_0|x)_o+2\delta+C_1 \leq d(x,o)+2\delta+C_1$. 
		Since $Z = \bdry X$ is Ahlfors $Q$-regular with $Q>0$, there exists $\eta_x \in \bdry X$ with $\rho(\eta_x,\alpha(\infty))$ comparable to $e^{-d(o,x)}$, and so there exists $C_2$ so that $|(\eta_x|\alpha(\infty))_o - d(o,x)| \leq C_2$.  Let $\gamma$ be a geodesic ray from $o$ to $\gamma(\infty)=\eta_x$; we want to bound $|(x_0|\eta_x)_o-d(x,o)|$.
		As with $x,\beta$ we have $|d(x_0,o)-(x_0|\alpha(\infty))_o|\leq C_1$, so
		\begin{align*}
			(x_0|\eta_x)_o 
			& \geq (x_0|\alpha(\infty))_o \wedge (\alpha(\infty)|\eta_x)_o  -2\delta
			\\ & \geq (d(o,x_0)-C_1) \wedge (d(o,x)-C_2) -2\delta,
		\end{align*}
		but $d(o,x_0)\geq (x|x_0)_o \geq d(x,o) - 2\delta-C_1$, so $(x_0|\eta_x)_o \geq d(o,x)-4\delta-2C_1-C_2$.
		On the other hand
		\begin{align*}
			d(o,x)+C_2 
			& \geq (\alpha(\infty)|\eta_x)_o
			\geq (\alpha(\infty)|x_0)_o \wedge (x_0|\eta_x)_o -2\delta
			\\ & \geq (d(o,x_0)-C_1) \wedge (x_0|\eta_x)_o -2\delta.
		\end{align*}
		So either $(x_0|\eta_x)_o \leq d(o,x)+C_2+2\delta$ and we are done, or $d(o,x_0)-C_1 \leq d(o,x)+C_2+2\delta$.
		But then $(x_0|\eta_x)_o \leq d(o,x_0) \leq d(x,o)+C_2+2\delta+C_1$.  So in summary $|(x_0|\eta_x)_o-d(o,x)| \leq 4\delta+2C_1+C_2$.

		It remains to bound $d(x,\gamma)$.  Let $p \in \gamma$ be the point with $d(o,p)=d(o,x)$.  Then
		\begin{align*}
			d(o,x)-\frac{1}{2}d(x,p)
			= (x|p)_o 
			\geq (x|x_0)_o \wedge (x_0|\eta_x)_o \wedge (\eta_x|p)_o -4\delta,
		\end{align*}
		and so as $(x|x_0)_o\geq d(x,o)-2\delta-C_1$, $(x_0|\eta_x)_o \geq d(x,o)-4\delta-2C_1-C_2$, and $(\eta_x|p)_o\geq d(p,o)-C_1=d(x,o)-C_1$ we are done.
		\end{proof}

 	We define a function $\psi:X\to\R$ by 
	\[
		\psi(x)=3\kappa'^{-1} \rho(\eta_x,\bdry H^-)e^{ d(x_0,o)} - 1,
	\]
	where each $\eta_x\in \bdry X$ is fixed by Lemma~\ref{lem:caphyp4b}.  (The bounds below work regardless of the choices of $\eta_x$.)
	Up until now, no constants have depended on the choice of $R\geq R_0$ in Lemma~\ref{lem:caphyp2}, and we now find a suitable choice of $R$ to ensure $\psi$ is well-behaved.  As a preliminary step we show that $\psi(x)\geq 1$ outside a cone-like `shadow' of $x_0$.

	\begin{lemma}
		\label{lem:caphyp5a}
		There exists $E$ so that for any $x\in \Gamma$ with $(x|x_0)_o\leq d(x_0,o)-E$ then $\psi(x)\geq 1$.
	\end{lemma}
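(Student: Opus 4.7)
The plan is to show that when $x$ is ``outside a shadow of $x_0$,'' then $\eta_x$ must be quasisymmetrically separated from $\partial H^-$. The desired inequality $\psi(x)\geq 1$ is equivalent to $\rho(\eta_x,\partial H^-)\geq \tfrac{2\kappa'}{3}e^{-d(x_0,o)}$, so it suffices to find an upper bound on $(\eta_x|a)_o$ for every $a\in\partial H^-$ and then invoke \eqref{eq:BPbdrymetric} (extended to boundary points, since $\rho$ is by construction a visual metric on $Z=\partial X$).

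First I would control $\eta_x$: Lemma~\ref{lem:caphyp4b} yields $|(\eta_x|x_0)_o-(x|x_0)_o|\leq D$, so the hypothesis $(x|x_0)_o\leq d(o,x_0)-E$ gives
\[
(\eta_x|x_0)_o \leq d(o,x_0)-E+D.
\]
Next I would control $\partial H^-$: by Lemma~\ref{lem:caphyp3}, every $y\in H^-$ satisfies $(y|x_0)_o\geq d(o,x_0)-C'$, and passing to limits along a sequence $(y_i)\subset H^-$ converging to $a\in\partial H^-$ costs at most $2\delta$ (as noted in \S\ref{sec:hypintro}), so
\[
(a|x_0)_o \geq d(o,x_0)-C'-2\delta.
\]

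The key estimate is then a ``tripod'' argument: for the three (generalised) points $\eta_x,a,x_0$ and basepoint $o$, hyperbolicity on the boundary \eqref{eq:Gromprodhypbdry} implies that the two smallest of $(\eta_x|x_0)_o$, $(a|x_0)_o$, $(\eta_x|a)_o$ differ by at most $2\delta$. Choosing $E$ large enough so that $d(o,x_0)-E+D < d(o,x_0)-C'-2\delta - 2\delta$, i.e.\ $E > D+C'+4\delta$, forces $(\eta_x|x_0)_o$ to be strictly smaller than $(a|x_0)_o$, whence it must be paired with $(\eta_x|a)_o$, yielding
\[
(\eta_x|a)_o \leq (\eta_x|x_0)_o + 2\delta \leq d(o,x_0)-E+D+2\delta.
\]

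Finally, applying the visual-metric comparison on $\partial X$ gives $\rho(\eta_x,a)\succeq e^{-(\eta_x|a)_o}\succeq e^{E-D-2\delta}\,e^{-d(o,x_0)}$, with a constant depending only on $\delta$ and $C_\rho$ in \eqref{eq:BPbdrymetric}. Taking the infimum over $a\in\partial H^-$ and enlarging $E$ once more so that this constant exceeds $\tfrac{2\kappa'}{3}$ yields $\psi(x)\geq 1$. The only delicate point is Step 3: being careful to state the tripod inequality for boundary points (where constants pick up the standard $2\delta$ extensions), so that the chain of inequalities transfers cleanly from $H^-$ to $\partial H^-$ and then to $\rho$. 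No other part of the argument requires more than repeated use of hyperbolicity and the Ahlfors regularity encoded in \eqref{eq:BPbdrymetric}.
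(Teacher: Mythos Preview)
Your proposal is correct and follows essentially the same approach as the paper: both use Lemma~\ref{lem:caphyp4b} to bound $(\eta_x|x_0)_o$ from above, Lemma~\ref{lem:caphyp3} to bound the Gromov product of $x_0$ with points of $H^-$ (or $\partial H^-$) from below, and then hyperbolicity plus the visual-metric comparison to finish. The only cosmetic difference is the order of operations: the paper first bounds $(\eta_x|y)_o$ for $y\in H^-$ and then passes to $\xi\in\partial H^-$, while you first pass $H^-$ to its boundary and then apply the tripod inequality there; this merely shifts where the extra $2\delta$ appears and slightly changes the resulting lower bound on $E$.
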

	\begin{proof}
		If $(x|x_0)_o\leq d(x_0,o)-E$ then for any $y\in H^-$, Lemmas~\ref{lem:caphyp4b} and \ref{lem:caphyp3} give
		\begin{align*}
			d(x_0,o)-E+D
			& \geq (x_0|\eta_x)_o \geq (x_0|y)_o \wedge (y|\eta_x)_o -2\delta
			\\ & \geq (d(x_0,o)-C') \wedge (y|\eta_x)_o -2\delta
			= (\eta_x|y)_o-2\delta,
		\end{align*}
		where the last equality follows by assuming $E>D+C'+2\delta$.
		Therefore if $\xi\in\bdry H^-$, we have $(\eta_x|\xi)_o \leq d(x_0,o)-E+C_1$ for some constant $C_1$.
		Writing $C_\rho$ for the constant of \eqref{eq:BPbdrymetric}, we have
		\begin{equation}\label{eq:caphyp5b}
			\rho(\eta_x,\xi)e^{d(x_0,o)} 
			\geq \frac{1}{C_\rho}e^{E-C_1} \geq \frac{2\kappa'}{3},
		\end{equation}
		fixing a choice of $E>C_1+\log(2C_\rho \kappa'/3)$.
		As $\xi\in \bdry H^-$ was arbitrary, $\psi(x)\geq 1$.  
	\end{proof}
	\begin{lemma}
		\label{lem:caphyp5}
		There exists $R \geq R_0$ (independent of $\Gamma,\mu,x_0,H^\pm$) so that 
		for $x \in H^-, \psi(x)\leq 0$, and
		for $x \in H^+, \psi(x)\geq 1$.
	\end{lemma}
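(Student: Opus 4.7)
The plan is to choose $R$ large enough (depending only on $\delta,\kappa,\kappa',D,C,C',E$ and the constant $C_\rho$ of \eqref{eq:BPbdrymetric}, but independent of $\Gamma,\mu,x_0$, and $d(o,x_0)$) so that for any $x\in H^-$ the point $\eta_x$ is very close to $\bdry H^-$ on the scale $e^{-d(o,x_0)}$; while for $x\in H^+$, either Lemma~\ref{lem:caphyp5a} applies directly, or $\eta_x$ is very close to $\bdry H^+$ on the same scale, so that by Lemma~\ref{lem:caphyp4} it is still far from $\bdry H^-$.

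For $x\in H^-$, I would use that $H^-$ is a $C$-asymptotic shadow of $x_0$ (Lemma~\ref{lem:caphyp2}) to get a $C$-quasi-geodesic ray from $x_0$ through $x$ remaining in $H^-$; by the Morse lemma it is tracked by a genuine geodesic ray $\alpha:[0,\infty)\to X$ from $x_0$ converging to some $\xi \in \bdry H^-$, with $(x|\xi)_{x_0}\geq d(x_0,x)-O(1)$. The key estimate to prove is $(\xi|x_0)_o \geq d(o,x_0)-O(1)$: by the WLOG assumption preceding the lemma, $(o|x)_{x_0}=d(o,x_0)-(x|x_0)_o\leq C'$ is bounded, while $(x|\alpha(t))_{x_0}$ grows large along $\alpha$; the ``tripod'' property of $\delta$-hyperbolic spaces then forces $(o|\alpha(t))_{x_0} \leq C'+O(\delta)$ for large $t$, and passing to the limit yields the desired bound on $(\xi|x_0)_o$. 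Combining this with $(x|\eta_x)_o\geq d(o,x)-O(1)$ from Lemma~\ref{lem:caphyp4b} and the Gromov product identity $(x|\xi)_o=(x|\xi)_{x_0}+(x|x_0)_o+(\xi|x_0)_o-d(o,x_0)$, then applying hyperbolicity, I obtain $(\eta_x|\xi)_o\geq d(o,x_0)+R-O(1)$, where I use $d(o,x)\geq d(o,x_0)+R-2C'$ (which follows from $d(x_0,x)\geq R$ and the bound on $(o|x)_{x_0}$). Finally \eqref{eq:BPbdrymetric} translates this into $\rho(\eta_x,\xi)\leq \frac{\kappa'}{3}e^{-d(o,x_0)}$ for $R$ sufficiently large, hence $\psi(x)\leq 0$.

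For $x\in H^+$, if $(x|x_0)_o\leq d(o,x_0)-E$ then Lemma~\ref{lem:caphyp5a} gives $\psi(x)\geq 1$ directly. Otherwise $(o|x)_{x_0}\leq E$ is bounded, and the same argument applied to the quasi-geodesic ray from $x_0$ through $x$ in $H^+$ produces some $\eta\in\bdry H^+$ with $\rho(\eta_x,\eta)\leq \frac{\kappa'}{3}e^{-d(o,x_0)}$; Lemma~\ref{lem:caphyp4} then yields $\rho(\eta_x,\bdry H^-)\geq \frac{2\kappa'}{3}e^{-d(o,x_0)}$ via the triangle inequality, so $\psi(x)\geq 1$. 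I expect the main obstacle to be the tripod estimate $(\xi|x_0)_o\geq d(o,x_0)-O(1)$ (and analogously for $\eta$): without it one only obtains $(\eta_x|\xi)_o\gtrsim R$ and hence $\rho(\eta_x,\xi)\lesssim e^{-R}$, which is insufficient compared to $\kappa'e^{-d(o,x_0)}$ whenever $d(o,x_0)\gtrsim R$, and this is precisely where our argument must differ from the equivariant setting of \cite{HumeMackTess} in which $x_0$ could be assumed close to $o$.
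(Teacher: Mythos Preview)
Your proposal is correct and follows essentially the same strategy as the paper's proof: in both cases the crux is the estimate $(\eta_x\mid\xi)_o\geq d(o,x_0)+R-O(1)$ for $\xi=\beta(\infty)\in\bdry H^\pm$, obtained from $(x\mid x_0)_o\geq d(o,x_0)-O(1)$ (via the WLOG assumption or the ``otherwise'' branch), $d(x,x_0)\geq R$, and $(x\mid\eta_x)_o\geq d(o,x)-O(1)$. The only minor difference is in how you reach the intermediate bound $(\xi\mid x_0)_o\geq d(o,x_0)-O(1)$: the paper phrases it geometrically by showing $x_0$ lies within bounded distance of a geodesic $\beta'$ from $o$ to $\beta(\infty)$ and then invokes the Morse lemma, whereas you obtain it directly from the four-point inequality (your ``tripod'' step) together with the change-of-basepoint identity for Gromov products---both routes are standard and equivalent up to $O(\delta)$.
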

	\begin{proof}
		We require the following:

		\emph{Claim:} Suppose some $x\in X$ lies in a $C$-quasi-geodesic ray $\beta$ from $x_0$, and that we have $(x|x_0)_o \geq d(o,x_0)-F$ and $d(x_0,x)\geq R$.  Then there exists $C_1=C_1(C,F)$ and $R_1=R_1(C,F)$ so that if $R \geq R_1$ then $(\beta(\infty)|\eta_x)_o \geq d(o,x_0)+R-C_1$.

		To see this, let $\beta'$ be a geodesic from $o$ to $\beta(\infty)$.
		We first show that $d(x_0,\beta')\leq 2F+12\delta$.  Now,
		\[
			F \geq d(o,x_0)-(x|x_0)_o = (x|o)_{x_0}
			\gtrsim (x|\beta(\infty))_{x_0} \wedge (\beta(\infty)|o)_{x_0} -2\delta.
		\]
		As $x$ lies on the quasi-geodesic $\beta$ from $x_0$, the Morse lemma gives a constant $C_2$ so that $(x|\beta(\infty))_{x_0} \geq d(x,x_0)-C_2 \geq R-C_2$.  Thus if $R\geq R_1:=F+C_2+2\delta+1$, the above inequality gives
		$F \geq (\beta(\infty)|o)_{x_0} -2\delta$.
		Then for $q:= \beta'(d(x_0,o))$,
		\begin{align}\label{eq:caphyp5c}
			d(x_0,o)-\frac{1}{2}d(x_0,q)
			& = (x_0|q)_o
			\geq \liminf_{i\to\infty} (x_0|\beta'(i))_o \wedge (\beta'(i)|q)_o -2\delta.
		\end{align}
		As $q \in \beta'$, $(\beta'(i)|q)_o=d(o,q)=d(o,x_0)$ for large $i$.
		Also,
		\begin{align*}
			\liminf_{i\to\infty} (x_0|\beta'(i))_o 
			& = \liminf_{i\to\infty} \left(d(o,x_0)-(o|\beta'(i))_{x_0} \right)
			\\ & \geq d(o,x_0)-(o|\beta'(\infty))_{x_0} -2\delta
		\geq d(o,x_0)-F-4\delta,
		\end{align*}
		so \eqref{eq:caphyp5c} gives
		$d(x_0,\beta') \leq d(x_0,q) \leq 2F+12\delta$.

		Now as $x$ lies on a quasi-geodesic from $x_0$ to $\beta'(\infty)=\beta(\infty)$, and $x_0$ is a bounded distance from $\beta'$, the Morse lemma gives that $x$ is a bounded distance to $\beta'$.  Thus $|(\beta'(\infty)|x)_o -d(o,x)| \leq C_3$ for suitable $C_3$.
		By Lemma~\ref{lem:caphyp4b}, $x$ also lies a bounded distance from a geodesic from $o$ to $\eta_x$, thus again $|(\eta_x|x)_o-d(x,o)| \leq C_4$.
		So together we have
		\[
			(\beta(\infty)|\eta_x)_o \geq (\beta(\infty)|x)_{o} \wedge (x|\eta_x)_o - 2\delta
			\geq d(x,o)-C_3-C_4-2\delta.
		\]
		The claim follows from Lemma~\ref{lem:caphyp3} as
		\[
			d(o,x)=2(x|x_0)_o-d(o,x_0)+d(x,x_0) \geq d(o,x_0)-2C'+R,
		\]
		setting $C_1:=2C'+C_3+C_4+2\delta$.

		\smallskip
		We return to the proof of the lemma.
		Let $F = C' \vee E$ with $C'$ given by Lemma~\ref{lem:caphyp3} and $E$ by Lemma~\ref{lem:caphyp5a}, and fix the resulting $C_1, R_1$ from the claim above.

		For $x \in H^-$, 
		by the definition of asymptotic shadow, $x \in \beta$ for some $C$-quasi-geodesic $\beta$ from $x_0$ to $\beta(\infty) \in \bdry H^-$,  and $(x|x_0)_o \geq d(o,x_0)-F$ by Lemma~\ref{lem:caphyp3}.
		So the claim gives $(\beta(\infty)|\eta_x)_o \geq d(o,x_0)+R-C_1$, thus writing $C_\rho$ for the constant of \eqref{eq:BPbdrymetric}, 
		\[
			\rho(\eta_x,\bdry H^-) e^{d(x_0,o)}
			\leq C_\rho e^{-(\beta(\infty)|\eta_x)_o}
			\leq C_\rho e^{-R+C_1},
		\]
		so provided $R \geq C_1+\log(3C_\rho/\kappa')$ we have $\psi(x) \leq 0$.

		For $x\in H^+$, if
		$(x|x_0)_o \leq d(x_0,o)-E$ then by Lemma~\ref{lem:caphyp5a} we have $\psi(x)\geq 1$.
		So we assume $(x|x_0)_o \geq d(x_0,o) -E \geq d(x_0,o)-F$.
		As $H^+$ is an asymptotic shadow, $x \in \beta$ for a $C$-quasi-geodesic $\beta$ from $x_0$ to $\beta(\infty) \in \bdry H^+$.
		The claim again gives that
		\[
			\rho(\eta_x, \beta(\infty)) e^{d(x_0,o)}
			\leq C_\rho e^{-R+C_1} \leq 3^{-1}\kappa',
		\]
		where the last inequality uses $R \geq C_1+\log(3C_\rho/\kappa')$.
		Thus by Lemma~\ref{lem:caphyp4},
		$\rho(\eta_x,\bdry H^-) \geq \rho(\beta(\infty),\bdry H^-)-\rho(\beta(\infty),\eta_x) \geq (2/3)\kappa' e^{-d(x_0,o)}$ and $\psi(x)\geq 1$ follows.  Setting $R_0 = R_1 \vee \left( C_1 +\log(3C_\rho/\kappa') \right)$ we are done.
	\end{proof}

	We now set $\phi(x) = (\psi(x)\vee 0)\wedge 1$. By the above, $\phi(x) = 0$ on $H^-$ and $\phi(x) = 1$ on $H^+$, and by Lemma~\ref{lem:caphyp2} both $\mu(H^+\cap \Gamma)$ and $\mu(H^-\cap \Gamma)$ are $\geq \kappa \mu(\Gamma)-Ce^{QR} \geq \frac{\kappa}{2}\mu(\Gamma)$, assuming as we may that $\mu(\Gamma) \geq 2Ce^{QR}/\kappa$.  Let $\alpha = \kappa/2$.

	It remains to bound $\|\nabla \phi \|_\mu$.

	If $x$ has $(x|x_0)_o \leq d(x_0,o)-E-1$ where $E$ is the constant of Lemma~\ref{lem:caphyp5a}, then any neighbour $x'$ of $x$ has $(x'|x_0)_o \leq d(x_0,o)-E$, so by Lemma~\ref{lem:caphyp5a} $\phi(x)=\phi(x')=1$.  Thus $|\nabla \phi|(x)=0$.
	So the support of $|\nabla \phi|$ consists of $x$ with $(x | x_0)_0 \geq d(x_0,o)-E$, i.e.\ it is a subset of the cone-like set $V_{x_0}:= \{x\in X : (x|x_0)_o \geq d(x_0,o)-E\}$.

	We also have the bound $|\nabla \phi|(x) \preceq e^{- (d(x,o)-d(x_0,o))}$ as $\rho(\cdot , \bdry H^-)$ is $1$-Lipschitz on the boundary, and if $x$ and $x'$ are adjacent then $(\eta_x|\eta_{x'})_o \geq d(o,x)-C$ so $\rho(\eta_x,\eta_{x'})\preceq e^{-d(o,x)}$.
	Thus
	\begin{equation}\label{eq:caphyp}
		\|\nabla \phi\|_{\mu,p}^p \preceq \sum_{x \in \Gamma \cap V_{x_0}} e^{- (d(x,o)-d(x_0,o)) p} \mu(x)
	\end{equation}
	
	As in \cite[(12.13)]{HumeMackTess} we can optimise this bound: the right-hand side of \eqref{eq:caphyp} is maximized when the measure $\mu$ is all in $V_{x_0}$ with $d(x,o)$ as small as possible for $x$ in its support.
	For this reason, we choose $t$ minimal so that $V_{x_0}' := \{ x \in V_{x_0} : d(o,x) \leq d(x_0,o)-E+t\}$ has $k |V_{x_0}'| \geq \mu(\Gamma)$.
	By Ahlfors regularity (see Lemma~\ref{lem:caphyp-volcone}), we have $|V_{x_0}'| \asymp e^{Qt}$ so $k e^{Qt} \asymp \mu(\Gamma)$.
	Since $e^{-(d(x,o)-d(x_0,o))p}$ decreases as $d(x,o)$ increases, we have
	\begin{equation*}
		\|\nabla \phi\|_{\mu,p}^p 
		\preceq \sum_{x \in V_{x_0}'} k e^{-(d(x,o)-d(x_0,o)) p }
		 \preceq \sum_{i=0}^t k e^{Qi} e^{-ip}.
	\end{equation*}

	Case 1, $p > Q$: We have $\|\nabla \phi\|_{\mu,p}^p \preceq k$ so $C^{p,\alpha}(\Gamma) \preceq k^{1/p}\mu(\Gamma)^{-1/p}$.

	Case 2, $p<Q$: We have 
	\[
		\|\nabla \phi\|_{\mu,p}^p \preceq k e^{t(Q-p)} \asymp k (\mu(\Gamma)/k)^{(Q-p)/Q} = \mu(\Gamma) (\mu(\Gamma)/k)^{-p/Q},
	\]
	and so $C^{p,\alpha}(\Gamma) \preceq (\mu(\Gamma)/k)^{-1/Q}$.

	Case 3, $p=Q$: We have
	\[
		\|\nabla \phi\|_{\mu,p}^p \preceq k t \asymp k \log(\mu(\Gamma)/k)
	\]
	thus  $C^{p,\alpha}(\Gamma) \preceq \mu(\Gamma)^{-1/p} k^{1/p} \log^{1/p}(\mu(\Gamma)/k)$.

	In each case the bounds of Theorem~\ref{prop:wt-prof-hyp-cone} follow.
\end{proof}

\subsection{Weighted profiles and product bounds}\label{ssec:cap-weightedANdim}

Our motivation for bounding weighted profiles is that they give bounds on (unweighted) profiles of products of groups by projecting onto the factors.
As we later see, in the case that the $X$ factor is a hyperbolic group and the $Y$ factor a group of polynomial growth, the resulting upper bounds are sharp.

\begin{theorem}\label{thm:wt-prof-product-vnilp}
	Let $X$ and $Y$ be bounded degree graphs where $Y$ has finite Assouad--Nagata dimension $d$. Let $\kappa$ be the inverse growth function of $Y$, i.e.\ $\kappa(k)=\min\{t:\exists y\in Y \text{ with }|B(y,t)|>k\}$. Then for some $\alpha_0>0$, for all $\alpha \in (0, \alpha_0]$,
\[
\Lambda_{X\times Y}^p(r) \lesssim
	\Xi_{X\times Y}^{p,\alpha/2}(r) \lesssim \max_{m \leq r} \min_{1 \leq k \leq m/100d} \left( \frac{m}{\kappa(k)} + \Xi_X^{p,\alpha,k}(m) \right).
\]
\end{theorem}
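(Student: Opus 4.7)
The first inequality $\Lambda^p_{X\times Y}\lesssim \Xi^{p,\alpha/2}_{X\times Y}$ is immediate from Lemma~\ref{lem:compare-cap-poinc-profiles}, so I focus on the second. The plan is: fix a subgraph $\Gamma\subset X\times Y$ with $|\Gamma|=m\leq r$ and a $k$ in the allowed range, and construct a function $f:\Gamma\to[0,1]$ whose sub- and superlevel sets each contain at least $(\alpha/2)m$ vertices and whose $p$-gradient satisfies $m^{1-1/p}\|\nabla f\|_p\lesssim m/\kappa(k)+\Xi^{p,\alpha,k}_X(m)$. Once produced, this gives $m\cdot C^{p,\alpha/2}(\Gamma,\#)\lesssim m/\kappa(k)+\Xi^{p,\alpha,k}_X(m)$ for every such $\Gamma$, and taking suprema yields the claim.

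The construction exploits the finite Assouad--Nagata dimension of $Y$ at scale $\kappa(k)$: I take a cover (or partition of unity) $\{\phi_\beta:Y\to[0,1]\}$ supported on cells $P_\beta\subset Y$ with $\sum_\beta\phi_\beta\equiv 1$, with each $\phi_\beta$ being $O(1/\kappa(k))$-Lipschitz, the cover having $(d+1)$-multiplicity, and each $P_\beta$ of diameter at most $\kappa(k)-1$, so that $|P_\beta|\leq k$ by the very definition of $\kappa(k)$. The key consequence is that each cell-slab $\Gamma_\beta:=\Gamma\cap(X\times P_\beta)$ projects under $\pi_X$ to a weighted subgraph $(\pi_X(\Gamma_\beta),\mu_\beta)$ of $X$ with total mass $|\Gamma_\beta|$ and $\|\mu_\beta\|_\infty\leq k$, so it falls squarely within the scope of the definition of $\Xi^{p,\alpha,k}_X$.

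By that definition each slab admits a function $g_\beta:\pi_X(\Gamma_\beta)\to[0,1]$ with $\mu_\beta\{g_\beta\leq 0\},\mu_\beta\{g_\beta\geq 1\}\geq\alpha|\Gamma_\beta|$ and $\|\nabla g_\beta\|_{\mu_\beta,p}^{p}\leq|\Gamma_\beta|^{1-p}\Xi^{p,\alpha,k}_X(|\Gamma_\beta|)^{p}$. I then assemble a global function on $\Gamma$ by an expression of the form $f(x,y)=\sum_\beta\phi_\beta(y)\,g_\beta(x)$, after a small adjustment that forces values into $[0,1]$ and the correct bipartition (see below). The edges of $\Gamma$ split into two types: edges moving in the $Y$-direction, whose contribution to $\|\nabla f\|_p^p$ is absorbed by the $O(1/\kappa(k))^p$-Lipschitz bound on the $\phi_\beta$ and an $\ell^p$-counting over $|\Gamma|=m$ vertices, producing the $m^{1/p}/\kappa(k)$ contribution; and edges moving in the $X$-direction, whose contribution is $\lesssim\sum_\beta\|\nabla g_\beta\|_{\mu_\beta,p}^p\leq m^{1-p}\Xi^{p,\alpha,k}_X(m)^{p}$ via the $(d+1)$-multiplicity of the cover and subadditivity of $t\mapsto\Xi^{p,\alpha,k}_X(t)^{p}$ in its mass argument (or, if subadditivity is not available directly, by comparing to the single global weighted subgraph obtained by aggregating the $\mu_\beta$ along a representative $\phi_\beta$).

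The main obstacle I expect is the balancing step. Each local $g_\beta$ only splits its own slab, and a priori the global $f$ could fail to take near-$0$ or near-$1$ values on enough of $\Gamma$. This is where the hypothesis $k\leq m/(100d)$ is essential: since every cell has at most $k$ vertices and the cover has multiplicity $d+1$, no single cell-slab can contain more than $(d+1)k\leq m/100$ vertices of $\Gamma$, so a combinatorial rearrangement (flipping some $g_\beta$ to $1-g_\beta$, then assigning cells to a ``$0$-bloc'' or ``$1$-bloc'' in a way that balances total cell mass up to a constant error) can produce the required global $(\alpha/2)$-split while altering the Dirichlet-type energy by only a constant factor. Putting the two edge contributions together and taking $p$-th roots then yields the claimed bound.
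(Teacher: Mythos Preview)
Your framework (Assouad--Nagata cover of $Y$ at scale $\kappa(k)$, projection of cell-slabs to $X$ with $\|\mu_\beta\|_\infty\leq k$, splitting the gradient into $X$- and $Y$-directions) is the right skeleton, but the two steps you flag as ``the main obstacle'' and ``if subadditivity is not available'' are both genuine gaps, and neither can be repaired within the many-cells-at-once scheme.

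\textbf{Balancing.} Writing $f(x,y)=\sum_\beta\phi_\beta(y)g_\beta(x)$ makes $f(x,y)$ a convex combination of the values $g_\beta(x)$ over the cells containing $y$. For $f(x,y)\leq 0$ you need $g_\beta(x)=0$ for \emph{every} such $\beta$, and likewise for $f(x,y)\geq 1$. Your bloc assignment and flipping can arrange that within the $0$-bloc cells the sets $\{g_\beta=0\}$ add up to mass $\gtrsim\alpha m$, but wherever a $0$-bloc cell overlaps a $1$-bloc cell (and with multiplicity $d+1$ such overlaps are unavoidable), the value of $f$ is dragged away from $0$. The hypothesis $k\leq m/100d$ only guarantees that no single cell is too heavy; it does not separate the two blocs.

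\textbf{Summing energies.} From $\|\nabla g_\beta\|_{\mu_\beta,p}^p\leq |\Gamma_\beta|^{1-p}\,\Xi_X^{p,\alpha,k}(|\Gamma_\beta|)^p$ and $\sum_\beta|\Gamma_\beta|\asymp m$ you want to conclude $\sum_\beta\|\nabla g_\beta\|_{\mu_\beta,p}^p\lesssim m^{1-p}\,\Xi_X^{p,\alpha,k}(m)^p$. This is false in the regime you care about: for instance if $\Xi_X^{p,\alpha,k}(t)\asymp t^{1-1/Q}$ (as for a hyperbolic $X$ with $p<Q$), the left side behaves like $\sum_\beta|\Gamma_\beta|^{1-p/Q}$, which can be made as large as one likes by taking many small cells. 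Your fallback of aggregating the $\mu_\beta$ into a single weight on $X$ destroys the bound $\|\mu\|_\infty\leq k$, since a fibre $\pi_X^{-1}(x)\cap\Gamma$ can have up to $m$ points.

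The paper avoids both problems by a dichotomy rather than a simultaneous assembly. One first passes to a single colour class $V_0$ of the Assouad--Nagata cover carrying mass $\geq m/(d+1)$; its cells are pairwise $c\kappa(k)$-separated, hence genuinely disjoint. Either (b) no cell of $V_0$ carries mass $\geq m/4(d+1)$, in which case the cells can be split into two groups $V_0',V_0''$ of comparable mass and the single Lipschitz function $y\mapsto 1\wedge\frac{1}{c\kappa(k)}d(y,V_0')$, pulled back to $\Gamma$, already gives the $m/\kappa(k)$ bound; or (a) some cell $U$ carries mass $\gtrsim m$, in which case one projects \emph{only} a smooth thickening of $\pi_Y^{-1}(U)\cap\Gamma$ to $X$ (so $\|\mu_X\|_\infty\leq k$ and $\mu_X(\Gamma_X)\asymp m$), obtains a \emph{single} function $g$ from the definition of $\Xi^{p,\alpha,k}_X$, and sets $G=g\circ\pi_X\cdot(\text{cutoff in }Y)^{1+1/p}$. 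With one cell there is no subadditivity issue, and the balancing works because the cell alone already has mass $\asymp m$. The two cases give the two summands in the final bound.
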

\begin{proof}
  The first inequality follows from Lemma~\ref{lem:compare-cap-poinc-profiles}.

Let $d$ be the Assouad--Nagata dimension of $Y$. Suppose $\Gamma \subset X\times Y$ has $|\Gamma| = m \leq r$. Suppose $1 \leq k \leq m/100d$ is given.

Denote by $\delta_\Gamma$ the counting measure on $\Gamma$.
Let $\Gamma_Y = \pi_{Y}(\Gamma)$, $\mu_Y = (\pi_Y)_* \delta_\Gamma$ be the weighted projection of $\Gamma$ onto $Y$.

	Since $Y$ has Assouad--Nagata dimension at most $d$, there exists $c=c(Y)>0$ so that we can decompose $Y$ as $Y=V_0 \cup \cdots \cup V_d$ where each $V_i$ consists of a disjoint union of $c\kappa(k)$-separated sets each of diameter $\leq \kappa(k)/2$.  Without loss of generality,  $\mu_Y(V_0 \cap \Gamma_Y) \geq m/(d+1)$. We observe for future use that for each subset $A\subset Y$ with $\diam A \leq \kappa(k)/2$ we have $|A|\leq |N_{\kappa(k)/2}A| \leq k$, where $N_C(A)$ denotes the $C$-neighbourhood of $A$.

Consider $V_0 \cap \Gamma_Y$.  
	There are two cases: either (a) one of the diameter-$\kappa(k)/2$ subsets of $V_0$ meets $\Gamma_Y$ with weight $\geq m/4(d+1)$, or (b) condition (a) fails.

	In case (b), we can split $V_0 = V_0' \sqcup V_0''$ where we put the components of $V_0$ into $V_0'$ or $V_0''$ in such a way that $\mu(V_0' \cap \Gamma_Y)$ and $\mu(V_0'' \cap \Gamma_Y)$ are both $\geq \alpha m$ for $\alpha=1/4(d+1)$.  It suffices to prove the theorem for this fixed choice of $\alpha$.

	Define $f:Y\to[0,1]$ by $f(\cdot):=  1 \wedge \frac{1}{c\kappa(k)}d( \cdot, V_0')$.
	Let $F:\Gamma \ra [0,1]$ be the composition $F = f \circ \pi_{Y}$.

Since $0 \leq F \leq 1$, and
	$\delta_\Gamma\{F=0\} \geq \mu_Y (V_0'\cap \Gamma_Y) \geq \alpha m$,
	and $\delta_\Gamma\{F=1\} \geq \mu_Y (V_0''\cap \Gamma_Y) \geq \alpha m$,
 $F$ is a candidate for bounding $C^{p,\alpha}(\Gamma)$.
Since $f$ is $\frac{1}{c\kappa(k)}$-Lipschitz, we have $\|\nabla F\|_{\mu,p}^p \preceq \frac{1}{\kappa(k)^p} m$.
	Therefore  $C^{p,\alpha}(\Gamma) \preceq \frac{1}{\kappa(k)}$ and so $\delta_\Gamma(\Gamma)C^{p,\alpha}(\Gamma) \leq m/\kappa(k)$.

Now suppose we are in case (a).
	Let $U\subset V_0$ be the component set with $\mu_Y(U\cap \Gamma_Y) \geq m/4(d+1)$ and diameter $\leq \kappa(k)/2$.
Consider $\hat{U} = \pi_{Y}^{-1}(U) \subset X\times Y$ and its neighbourhood $\hat{U}' = N_{\kappa(k)/2} \hat{U}$.
	We define a weight function $\nu$ on $\Gamma$ by
	$\nu(\cdot) := 0 \vee (1-d(\hat{U},\cdot)2/\kappa(k))$, and note that $\nu$ is zero outside $\hat{U}'$.
	
	Let $\Gamma_X$ be the projection $\Gamma_X=\pi_X(\hat{U}' \cap \Gamma)$
	with weight $\mu_X = (\pi_X)_* \nu$.
	Observe that as $\delta_\Gamma(\Gamma\cap \hat{U})=\mu_Y(U\cap \Gamma_Y)\geq m/4(d+1)$, and $\nu=\delta_\Gamma$ on $\hat{U}$, we have $\mu_X(\Gamma_X) \geq m/4(d+1)$.
	On the other hand, as $\nu \leq 1$, we have $\mu_X(\Gamma_X) \leq \nu(\Gamma) \leq m$.
	Moreover, as the fibres of $(\pi_X)^{-1}(\cdot) \cap \hat{U}'$ have size at most $k$, $\|\mu_X\|_\infty \leq k$.

	For $\alpha>0$ fixed, 
	let $g: \Gamma_X \ra [0,1]$ be a function with $\mu_X\{g=0\}, \mu_X\{g=1\} \geq \alpha \mu_X(\Gamma_X) \geq \alpha m/4(d+1)$, and with
	$\|\nabla g\|_{\mu_\Gamma,p}^p \preceq m C^{p,\alpha}(\Gamma_X)^p \preceq m^{1-p} \Xi^{p,\alpha,k}_X(m)^p$.

Let $G : \Gamma \ra [0,1]$ be defined by the product
\[
	G(\cdot) =  g \circ \pi_X(\cdot) \left(\frac{2}{\alpha} \nu(\cdot) \wedge 1 \right)^{1+\frac1p}.
\]
We bound
	\[
		\delta_\Gamma\{ G=0 \} \geq \delta_\Gamma\{g\circ\pi_X=0\}
		\geq \nu\{g\circ\pi_X=0\}
		= \mu_X\{g=0\} \geq \alpha m.
	\]
	The bound for $\delta_\Gamma\{G=1\}$ is a little more delicate since $\mu_X\{g=1\}\geq \alpha m$ does not immediately give that $\nu\{ z: g\circ\pi_X(z)\nu(z) =1 \} \geq \alpha m$.
	However, we do get that $\nu\{G = 1\} \geq \frac{\alpha}{2} m$.
	Indeed, let $A = \left\{ g\circ\pi_X = 1\right\} \subset \Gamma$.
	Then 
	\begin{align*}
		\alpha m 
		& \leq \mu_X \left\{ g=1 \right\}
		= \nu (A)
		\\& = \nu \left( A\cap\left\{ \nu < \frac{\alpha}{2} \right\}\right) + \nu \left(A\cap\left\{ \nu \geq \frac{\alpha}{2} \right\}\right)
		\\& < \frac{\alpha}{2} \nu(\Gamma) + \nu \left\{ G = 1 \right\}
		\leq \frac{\alpha}{2} m + \nu \left\{ G = 1 \right\},
	\end{align*}
	so $\nu\left\{ G= 1\right\}\geq\frac{\alpha}{2}m$, and
	\[
		\delta_\Gamma\left\{ G = 1\right\}
		\geq \nu \left\{ G = 1\right\}
		\geq \frac{\alpha}{2} m.
	\]

We now bound $\|\nabla G\|_p^p$.
If $z \in \Gamma$, let 
\[ \|\nabla_X G\|(z) = \max\setcon{|G(z)-G(z')|}{zz'\in E\Gamma, \pi_Y(z)=\pi_Y(z')} \]
and similarly let
\[ \|\nabla_Y G\|(z) = \max\setcon{|G(z)-G(z')|}{zz'\in E\Gamma, \pi_X(z)=\pi_X(z')}. \]
Then $\|\nabla G\|=\| \nabla_X G\| \vee \|\nabla_Y G\|$ and so
$
	\|\nabla G\|_p^p 
\preceq \|\nabla_X G\|_p^p \vee \|\nabla_Y G\|_p^p .
$

If $zz'\in E\Gamma$ and $\pi_Y(z)=\pi_Y(z')$, then $\nu(z)=\nu(z')$ and so, using $\nu(z)^{p+1} \leq \nu(z)$, we can bound:
\begin{align*}
	\|\nabla_X G\|_p^p
	& = \sum_{z \in \Gamma} \max_{\substack{z' : zz'\in E\Gamma, \\ \pi_Y(z)=\pi_Y(z')}} |G(z)-G(z')|^p
	\\& \leq \frac{2^{p+1}}{\alpha^{p+1}} \sum_{z \in \Gamma} \max_{\substack{z' : zz'\in E\Gamma, \\ \pi_Y(z)=\pi_Y(z')}} |g\circ\pi_X(z)-g\circ\pi_X(z')|^p \nu(z)^{p+1}
	\\& \preceq_\alpha 
		\sum_{x \in \Gamma_X} \sum_{z \in \pi_X^{-1}(x)} \nu(z)
		\max_{x' \in \Gamma_X: x\sim x'} |g(x)-g(x')|^p 
	\\& =
		\sum_{x \in \Gamma_X} 
		|\nabla g|(x)^p \mu_X(x)
	\\&
	= \|\nabla g\|_{\mu_X,p}^p 
	\preceq m^{1-p} \, \Xi_X^{p,\alpha,k}(m)^p.
\end{align*}

To bound $\|\nabla_Y G\|_p$, we use Matousek's inequality: if $a,b \geq 0$ and $q\geq 1$ then
\[
	|a^q-b^q| \leq q |a-b| \, (a^{q-1}+b^{q-1}) \leq 2q |a-b| \, (a^{q-1} \vee b^{q-1}).
\]
So, using this with $q = 1+\frac1p$, and the fact that $\nu(\cdot)$ is $2/\kappa(k)$-Lipschitz, we have
\begin{align*}
	\|\nabla_Y G\|_p^p
	& = \sum_{x\in \Gamma_X} \sum_{z\in\pi_X^{-1}(x)}  \max_{\substack{z'\in\pi_X^{-1}(x):\\ zz'\in E\Gamma}} |G(z)-G(z')|^p
	\\ & \leq \frac{2^{p+1}}{\alpha^{p+1}} \sum_{x\in \Gamma_X} 
		\sum_{z\in\pi_X^{-1}(x)}  
		\max_{\substack{z'\in\pi_X^{-1}(x):\\ zz'\in E\Gamma}}
		|g(x)|^p |\nu(z)^{1+\frac1p}-\nu(z')^{1+\frac1p}|^p
	\\ & \preceq_{\alpha,p}
		\sum_{x\in \Gamma_X} 
		|g(x)|^p
		\sum_{z\in\pi_X^{-1}(x)}  
		\max_{\substack{z'\in\pi_X^{-1}(x):\\ zz'\in E\Gamma}}
		 |\nu(z)-\nu(z')|^p \, \left( \nu(z) \vee \nu(z') \right)
	\\ & \preceq \kappa(k)^{-p}
		\sum_{x\in \Gamma_X} |g(x)|^p \nu(\pi_X^{-1}(x))
		= \kappa(k)^{-p} \|g\|_{\mu_X,p}^p
	\\ &	\leq \kappa(k)^{-p} \|1\|_{\mu_X,p}^p
		\leq \kappa(k)^{-p} \mu_X(\Gamma_X)
		\leq \kappa(k)^{-p} m.
\end{align*}

So 
\[
  \|\nabla G\|_p^p \preceq m^{1-p} \Xi^{p,\alpha,k}_X(m)^p \vee m \kappa(k)^{-p}
\]
and thus
\begin{align*}
\mu(\Gamma)C^{p,\alpha/2}(\Gamma) 
	& \preceq m \left(\frac{m^{1-p} \Xi^{p,\alpha,k}_X(m)^p \vee m \kappa(k)^{-p}}{m}\right)^{1/p}
 \\ & \asymp \Xi^{p,\alpha,k}_X(m) \vee \frac{m}{\kappa(k)}.
\end{align*}
The proof is finished by varying $k$ to get the best estimate.
\end{proof}

\subsection{Questions on (weighted) capacitance profiles}

Using the same method as for Poincar\'e profiles \cite[Theorem 1]{HumeMackTess}, it is not difficult to prove that the unweighted capacitance profile is monotone under regular maps. We record a number of questions about this.

\begin{question} Does the $(L^p,k)$-weighted capacity profile $\Xi^{p,k}_X$ exist for every bounded degree graph $X$?
\end{question}

\begin{question} Is the $(L^p,k)$-weighted capacity profile (when it exists) monotone under regular maps?
\end{question}

\begin{question} Is there a bounded degree graph $X$ and a $p\in[1,\infty]$ such that $\Xi^{p}_X(r)\not\simeq \Lambda^p_X(r)$?
\end{question}

\section{Poincar\'e profiles of product spaces}
\label{sec:product-spaces}

In this section we prove upper bounds and  lower bounds for Poincar\'e profiles of certain product spaces. The upper bounds, obtained in \S \ref{sec:ubproducts}, are an application of the the results of \S \ref{sec:capacityProfile}. The lower bounds are proved in \S \ref{sec:lbproducts}, exploiting a general lower bound formula for direct product of spaces. Finally, in \S \ref{sec:beyondProfile},  we show our  non-embedding result Theorem~\ref{thmIntro:directProductCE}. Its proof combines Poincaré calculations with further arguments using techniques from \S \ref{sec:capacityProfile}. 

\subsection{Hyperbolic times polynomial growth: general estimates}\label{subsect:productsStatements}

In the particular case of the product of a hyperbolic group with a virtually nilpotent group, we find the following upper and lower bounds, which generalise the corresponding results for hyperbolic groups themselves in~\cite[Theorem 11]{HumeMackTess}.
Recall that a graph $Y$ has \textbf{polynomial growth of degree $d\geq 0$}, if there exist $C\geq 1$ such that for all $y\in Y$ and $r\geq 1$,
\[C^{-1}r^d\leq |B(y,r)|\leq Cr^d.\]
\begin{theorem}\label{thm:hyp-times-nilp-upper-bound}
	Let $G$ be a finitely generated non-elementary hyperbolic group with (Ahlfors regular) conformal dimension $Q$.
Let $Y$ be a graph of  polynomial growth of degree $d\geq 0$.
 Then for every $\varepsilon>0$,
\[
 \Lambda^p_{G\times Y}(r) \lesssim \left\{
 \begin{array}{lcl}
 r^{1-\frac{1}{Q+d}+\epsilon} & \textup{if} & p \leq Q
 \\
 r^{1-\frac{1}{p+d}} & \textup{if} & p>Q.
 \end{array}\right.
\]
If the conformal dimension of $G$ is attained (see discussion following Theorem~\ref{thmIntro:profilesDirectProductLie}), we have:
\[
 \Lambda^p_{G\times Y}(r) \lesssim \left\{
 \begin{array}{lcl}
 r^{1-\frac{1}{Q+d}} & \textup{if} & 1\leq p < Q
 \\
 r^{1-\frac{1}{Q+d}} \log^{\frac{1}{Q+d}}(r) & \textup{if} & p = Q
 \\
 r^{1-\frac{1}{p+d}} & \textup{if} & p>Q.
 \end{array}\right.
\]
\end{theorem}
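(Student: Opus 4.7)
The plan is to combine the weighted capacity profile bound for hyperbolic groups (Theorem~\ref{thm:wt-prof-hyp-upper-bound}) with the general product bound (Theorem~\ref{thm:wt-prof-product-vnilp}), and then optimize the resulting min--max expression.

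First, since $\Lambda^p$ is a quasi-isometry invariant of bounded degree graphs, I will replace $G$ by a graph $X$ quasi-isometric to $G$ supplied by Theorem~\ref{thm:wt-prof-hyp-upper-bound}, for which we have good bounds on the weighted capacity profile $\Xi^{p,\alpha,k}_X$. We may assume $d \geq 1$, since for $d=0$ the graph $Y$ is finite, so $G \times Y$ is quasi-isometric to $G$ and the conclusion follows from Theorem~\ref{thm:wt-prof-hyp-upper-bound} directly. For $d\geq 1$, polynomial growth of degree $d$ gives $\kappa(k)\asymp k^{1/d}$, where $\kappa$ is the inverse growth function of $Y$. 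Theorem~\ref{thm:wt-prof-product-vnilp} then yields
\[ \Lambda^p_{X\times Y}(r) \lesssim \max_{m\leq r} \min_{1\leq k\leq m/100d} \left( m k^{-1/d} + \Xi^{p,\alpha,k}_X(m) \right). \]

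Next, I will substitute the bounds from Theorem~\ref{thm:wt-prof-hyp-upper-bound} and optimize $k$ in each range of $p$. Assume first the conformal dimension of $G$ is attained. For $p>Q$, using $\Xi^{p,\alpha,k}_X(m) \lesssim k^{1/p} m^{1-1/p}$, the two terms $m k^{-1/d}$ and $k^{1/p} m^{1-1/p}$ are balanced by choosing $k = m^{d/(p+d)}$, giving each term $\asymp m^{1-1/(p+d)}$. For $p<Q$, with $\Xi^{p,\alpha,k}_X(m) \lesssim k^{1/Q} m^{1-1/Q}$, the choice $k = m^{d/(Q+d)}$ balances both terms at $\asymp m^{1-1/(Q+d)}$. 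The delicate case is $p=Q$: here the weighted capacity bound carries an extra $\log^{1/Q}(m/k)$ factor, and a naive balancing produces a $\log^{1/Q}(m)$ factor rather than the sharper $\log^{1/(Q+d)}(m)$ claimed. This is the main obstacle.

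To handle $p=Q$, I will refine the choice of $k$ by setting $k = m^{d/(Q+d)} / (\log m)^{\alpha}$ for $\alpha$ to be determined. A direct computation gives
\[ m k^{-1/d} \asymp m^{1-1/(Q+d)} (\log m)^{\alpha/d}, \]
\[ k^{1/Q} m^{1-1/Q} \log^{1/Q}(m/k) \asymp m^{1-1/(Q+d)} (\log m)^{(1-\alpha)/Q}, \]
where I used $\log(m/k) \asymp \log m$. Setting $\alpha/d = (1-\alpha)/Q$ yields $\alpha = d/(Q+d)$, so that both terms become $\asymp m^{1-1/(Q+d)} (\log m)^{1/(Q+d)}$, as required. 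Since all the resulting bounds are non-decreasing in $m$, the outer maximum over $m\leq r$ is attained at $m=r$.

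Finally, in the non-attained case, Theorem~\ref{thm:wt-prof-hyp-upper-bound} provides, for any $\varepsilon>0$, a graph $X$ quasi-isometric to $G$ for which the weighted capacity profile satisfies the sharper rank-$(Q+\varepsilon)$ estimate (without logarithm). Applying the optimization above with $Q$ replaced by $Q+\varepsilon$ and $\varepsilon>0$ chosen small enough relative to the desired $\varepsilon$ in the statement yields the claimed bound $r^{1-1/(Q+d)+\varepsilon}$ for $p\leq Q$, while for $p>Q$ the bound $r^{1-1/(p+d)}$ is unchanged (on choosing $Q+\varepsilon<p$).
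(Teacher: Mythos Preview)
Your proposal is correct and follows essentially the same route as the paper: combine Theorem~\ref{thm:wt-prof-hyp-upper-bound} with Theorem~\ref{thm:wt-prof-product-vnilp}, use $\kappa(k)\asymp k^{1/d}$, and optimize in $k$ case by case; your choice $k=(m/\log m)^{d/(Q+d)}$ at $p=Q$ is exactly the paper's, just reached via a different parametrization. The only cosmetic differences are that you handle $d=0$ separately and phrase the non-attained case as ``replace $Q$ by $Q+\varepsilon$'' rather than tracking an auxiliary $\varepsilon'$ explicitly, but the content is the same.
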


These upper bounds are found using weighted projections and capacity estimates as in \S\ref{sec:capacityProfile}.

The following lower bound is stated in the generality used by \cite[Theorems 10.1, 11.1 and 11.3]{HumeMackTess}; as we do not work directly with the notion of $1$-Poincar\'e inequalities in this paper we refer to \cite{HumeMackTess} for definitions and further references.
\begin{theorem}\label{thm:hyp-x-nilp-lower}
	Let $X$ be a visual Gromov hyperbolic graph with a visual metric on its boundary that is Ahlfors $Q$-regular and admits a $1$-Poincar\'e inequality.
	
	Let $P$ be a connected Lie group (or finitely generated group) with polynomial growth of degree $d\geq 0$. Then
\[
 \Lambda^p_{X\times P}(r) \gtrsim \left\{
 \begin{array}{lcl}
 r^{1-\frac{1}{Q+d}} & \textup{if} & 1\leq p < Q
 \\
 r^{1-\frac{1}{Q+d}} \log^{\frac{1}{Q+d}}(r) & \textup{if} & p = Q
 \\
 r^{1-\frac{1}{p+d}} & \textup{if} & p>Q.
 \end{array}\right.
\] 
\end{theorem}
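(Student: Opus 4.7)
The plan is to exhibit, for each $r$, a ``rectangular'' subgraph $\Gamma=\Gamma_X\times B_P$ of $X\times P$, where $\Gamma_X\subset X$ is one of the subgraphs realizing the hyperbolic lower bounds of~\cite[Theorems 10.1, 11.1, 11.3]{HumeMackTess} and $B_P\subset P$ is a ball in a bounded-degree graph model of $P$. The key step will be to prove a tensorization of Poincar\'e inequalities showing that
\[
h^p(\Gamma_X\times B_P)\;\gtrsim\;\min\bigl(h^p(\Gamma_X),\,h^p(B_P)\bigr);
\]
once this is in hand, the three cases of the theorem follow by setting $m_X:=|\Gamma_X|$ and $m_P:=|B_P|$ with $m_Xm_P\asymp r$ and balancing the two factor Poincar\'e constants.

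For the tensorization, I would argue as follows. Let $f:\Gamma\to\R$ be mean-zero and set $\bar f(x):=|B_P|^{-1}\sum_{y\in B_P}f(x,y)$, regarded as a function on $\Gamma_X$ pulled back to $\Gamma$; its mean over $\Gamma_X$ is $f_\Gamma=0$. The triangle inequality yields $\norm{f-f_\Gamma}_p\le\norm{f-\bar f}_p+\norm{\bar f-f_\Gamma}_p$. Applying the Poincar\'e inequality on $B_P$ fiberwise for each fixed $x$ controls the first term by $h^p(B_P)^{-1}\norm{\nabla_P f}_p$. For the second term, observe that $|\nabla_X\bar f(x)|\le|B_P|^{-1}\sum_y|\nabla_X f(x,y)|$; combining Jensen's inequality with Poincar\'e on $\Gamma_X$ then gives $\norm{\bar f-f_\Gamma}_p\le h^p(\Gamma_X)^{-1}\norm{\nabla_X f}_p$. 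Since $|\nabla f|\ge|\nabla_X f|\vee|\nabla_P f|$ on the product graph, adding the two estimates proves the claim.

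To conclude, I would combine the standard Poincar\'e inequality $h^p(B_P)\gtrsim m_P^{-1/d}$ on a ball in a space of polynomial growth of degree $d$ with the hyperbolic lower bounds of~\cite{HumeMackTess}, which furnish subgraphs $\Gamma_X\subset X$ of size $\asymp m_X$ with
\[
h^p(\Gamma_X)\;\gtrsim\;\begin{cases}m_X^{-1/Q}&\text{if }p<Q,\\ m_X^{-1/Q}\log^{1/Q}(m_X)&\text{if }p=Q,\\ m_X^{-1/p}&\text{if }p>Q.\end{cases}
\]
For $p\ne Q$, equating the two lower bounds subject to $m_Xm_P\asymp r$ gives $m_X\asymp r^{Q/(Q+d)}$ (resp.\ $r^{p/(p+d)}$) and hence $|\Gamma|h^p(\Gamma)\gtrsim r^{1-1/(Q+d)}$ (resp.\ $r^{1-1/(p+d)}$). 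For the critical exponent $p=Q$, the right choice turns out to be $m_X\asymp r^{Q/(Q+d)}(\log r)^{d/(Q+d)}$ and $m_P\asymp r^{d/(Q+d)}(\log r)^{-d/(Q+d)}$: a direct computation checks that both $h^p$-estimates then equal $r^{-1/(Q+d)}(\log r)^{1/(Q+d)}$, yielding the stated bound.

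The main obstacle will be the $L^p$ tensorization step; the balancing is then routine. Two minor points must also be addressed: if $P$ is a connected Lie group rather than a finitely generated group, one replaces it by a bounded-degree cocompact net on which the standard Poincar\'e inequality on balls is inherited; and one must check that the hyperbolic construction in~\cite{HumeMackTess} can be taken of any prescribed size $m_X$ up to constants, which is immediate from their construction in the Bourdon--Pajot hyperbolic cone model.
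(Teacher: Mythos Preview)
Your proposal is correct and follows essentially the same approach as the paper: the tensorization inequality $h^p(A\times B)\gtrsim h^p(A)\wedge h^p(B)$ is exactly the paper's Lemma~\ref{lem:product-lower-h} (proved there by a closely related four-term splitting rather than your two-term $f-\bar f$ decomposition), and your balancing of $m_X$ against $m_P$ reproduces the optimisation carried out in the paper's proof via Proposition~\ref{prop:product-lower-profile}, including the choice $m_X\asymp r^{Q/(Q+d)}(\log r)^{d/(Q+d)}$ at $p=Q$.
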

As we shall see, these lower bounds for $X\times N$ follow fairly easily from the lower bounds of the Poincar\'e profiles for $X$ and $P$.

Together, these bounds give sharp results when the hyperbolic group acts geometrically on a rank 1 symmetric space $\HH_\bbK^m$ or 
a Fuchsian building $I_{m,n}$, $m\geq 5$, $n\geq 3$,
 as studied by Bourdon and Bourdon--Pajot \cite{Bourdon,BP-03-lp-besov}. The isometry group of $I_{m,n}$ admits a uniform lattice $G_{m,n}$ generated by generalized reflections, a presentation\footnote{Note that this presentation is that of  a graph product of cyclic groups $\Z/n\Z$ indexed by an $m$-cycle. 
} of which is given by (see \cite{Bourdon}):
 \[G_{m,n}=\langle s_1, \ldots, s_m \,|\, s_1^n, s_2^n, \ldots, s_m^n, [s_1,s_2],[s_2,s_3],\ldots,[s_{m-1},s_m],[s_m,s_1] \rangle.\] 

In the case of a rank 1 symmetric space $\HH_\bbK^m$ the conformal dimension of the boundary is $Q=(m+1)\dim_R\bbK-2$, while for a group $G_{m,n}$ it is $Q=1+\log(n-1)/\arccosh((m-2)/2)$ \cite[Théorème 1.1.]{Bourdon}.
In both cases there is a metric on the boundary that is Ahlfors $Q$-regular and admits a $1$-Poincar\'e inequality, so the following immediate corollary to Theorems~\ref{thm:hyp-times-nilp-upper-bound} and \ref{thm:hyp-x-nilp-lower} generalises \cite[Theorem 12]{HumeMackTess}. 
\begin{corollary}
	\label{cor:product-general}
	Let $H$ be a finitely generated Gromov hyperbolic group that has its conformal dimension $Q \geq 1$ attained by a metric admitting a $1$-Poincar\'e inequality, and let $P$ be a discrete or connected Lie group with polynomial growth of degree $d\geq 0$.
	Then the group $G=H\times P$ has
\begin{equation*}
 \Lambda^p_{G}(r) \simeq \left\{
 \begin{array}{lcl}
 r^{1-\frac{1}{Q+d}} & \textup{if} & 1\leq p < Q
 \\
 r^{1-\frac{1}{Q+d}} \log^{\frac{1}{Q+d}}(r) & \textup{if} & p = Q
 \\
 r^{1-\frac{1}{p+d}} & \textup{if} & Q<p<\infty.
 \end{array}\right.
\end{equation*}
\end{corollary}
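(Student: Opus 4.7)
The plan is to derive the corollary as an immediate combination of the upper bound in Theorem~\ref{thm:hyp-times-nilp-upper-bound} and the lower bound in Theorem~\ref{thm:hyp-x-nilp-lower}, together with the quasi-isometric invariance of Poincar\'e profiles for standard metric spaces recalled in \S\ref{secIntro:Background}. So the entire proof is a verification that the hypotheses of both theorems are met and that the three regimes $p<Q$, $p=Q$, $p>Q$ give matching bounds.

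For the upper bound, I would first note that because $Q\geq 1$ the hyperbolic group $H$ is necessarily non-elementary (elementary hyperbolic groups have boundary of cardinality at most $2$, hence conformal dimension $0$). When $P$ is a finitely generated group of polynomial growth of degree $d$ (by Gromov, this is the virtually nilpotent case), its Cayley graph is a bounded-degree graph $Y$ of polynomial growth of degree $d$. When $P$ is a connected Lie group of polynomial growth of degree $d$ (by Guivarc'h), $P$ is quasi-isometric to some finitely generated torsion-free nilpotent group, and hence to a bounded-degree graph $Y$ of polynomial growth of degree $d$. In either case $G=H\times P$ is quasi-isometric to $H\times Y$, the assumption that the conformal dimension of $H$ is attained places us in the second (sharper) conclusion of Theorem~\ref{thm:hyp-times-nilp-upper-bound}, and one reads off the three required upper bounds.

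For the lower bound, I would take $X$ to be any Cayley graph of $H$: it is a visual Gromov hyperbolic graph, and its visual boundary is $\partial_\infty H$. By hypothesis, some visual metric on $\partial_\infty H$ is Ahlfors $Q$-regular and admits a $1$-Poincar\'e inequality, so the hypotheses of Theorem~\ref{thm:hyp-x-nilp-lower} hold for $X$. Applying that theorem to $X\times P$, and using again that $X\times P$ is quasi-isometric to $G$, yields lower bounds that exactly match the upper bounds in all three ranges of $p$.

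There is no real obstacle; this is purely a concatenation step. The only subtleties are (i) handling both the discrete and the connected Lie group cases for $P$ by passing to a quasi-isometric bounded-degree graph of polynomial growth when needed for Theorem~\ref{thm:hyp-times-nilp-upper-bound}, and (ii) observing that ``conformal dimension attained by a metric admitting a $1$-Poincar\'e inequality'' is exactly the joint hypothesis needed to trigger the sharper case of Theorem~\ref{thm:hyp-times-nilp-upper-bound} and the hypothesis of Theorem~\ref{thm:hyp-x-nilp-lower} simultaneously. Quasi-isometric invariance of $\Lambda^p$ then closes the argument.
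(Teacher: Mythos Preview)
Your approach is correct and matches the paper's: the corollary is stated there as an immediate consequence of Theorems~\ref{thm:hyp-times-nilp-upper-bound} and \ref{thm:hyp-x-nilp-lower}, and your write-up spells out exactly that concatenation. Two small technical remarks: (i) the assertion that a connected Lie group of polynomial growth is quasi-isometric to a finitely generated nilpotent group is not literally true in general (not every simply connected nilpotent Lie group admits a lattice), but all you actually need is that $P$ is quasi-isometric to a bounded-degree graph $Y$ of polynomial growth of degree $d$, which is automatic since $P$ is a standard space; (ii) the metric attaining the conformal dimension need not itself be a visual metric on a Cayley graph of $H$, but one may take $X$ to be a hyperbolic cone over that metric (then the given metric \emph{is} visual for $X$, and $X$ is quasi-isometric to $H$), which is how the paper handles this throughout \S\ref{ssec:cap-Ahl-reg-pf}.
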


\subsection{Upper bounds for direct products}\label{sec:ubproducts} 
The upper bound on Poincar\'e profiles follows from the capacity profile and product space bounds obtained in \S\ref{sec:capacityProfile}.
\begin{proof}[Proof of Theorem~\ref{thm:hyp-times-nilp-upper-bound}]
	Suppose $Q$ is the conformal dimension of $G$, and $d$ the polynomial growth of $Y$.  As $Y$ has (exactly) polynomial growth it is doubling and so has finite Assouad--Nagata dimension, say $d'$.

	Given $1 \leq p \leq Q$ and $\epsilon>0$, choose $\epsilon'=\epsilon'(d,Q,\epsilon)>0$ as described below, and let $X$ be a graph quasi-isometric to $G$ with $\Xi^{p,\alpha,k}_X(r) \lesssim k(r/k)^{1-\frac{1}{Q}+\epsilon'}$ from Theorem~\ref{thm:wt-prof-hyp-upper-bound}.
	Theorem~\ref{thm:wt-prof-product-vnilp} gives
	\[
		\Lambda_{G\times Y}^p(r) \simeq \Lambda_{X\times Y}^p(r)
		\lesssim \max_{m\leq r} \min_{1 \leq k \leq m/100d'}
		\left( \frac{m}{k^{1/d}} + m^{1-\frac1Q+\epsilon'}k^{\frac1Q-\epsilon'}\right).
	\]
	This is optimised for $m=r$ and $k=r^{(1-\epsilon' Q)/(1-\epsilon' Q + \frac{Q}{d})}$, and gives a bound
	\[
		\lesssim r^{1-\frac{1-\epsilon' Q}{Q+d-\epsilon' Q d}} \leq r^{1-\frac{1}{Q+d}+\epsilon},
	\]
	where we can choose $\epsilon' = \epsilon'(d,Q,\epsilon)>0$ so that the last inequality holds.

	For $p>Q$, if $Q \geq 1$ choose $\epsilon'>0$ so that $p>Q+\epsilon'$ and let $X$ be a graph quasi-isometric to $G$ with $\Xi^{p,\alpha,k}_X(r) \lesssim k(r/k)^{1-\frac{1}{p}}$ from Theorem~\ref{thm:wt-prof-hyp-upper-bound}.  If $Q=0$ then $G$ is quasi-isometric to a $3$-regular tree $X$, so Proposition~\ref{prop:wt-prof-tree} again gives $\Xi^{p,\alpha,k}_X(r) \lesssim k^{\frac{1}{p}}r^{1-\frac{1}{p}}$.  In either case, Theorem~\ref{thm:wt-prof-product-vnilp} gives
	\[
		\Lambda_{G\times Y}^p(r)
		\lesssim \max_{m\leq r} \min_{1 \leq k \leq m/100d'}
		\left( \frac{m}{k^{1/d}} + m^{1-\frac1p}k^{\frac1p}\right),
	\]
	which is optimised for $m=r$ and $k=r^{1/(1+\frac{p}{d})}$, giving
	$\Lambda_{G\times Y}^p(r) \preceq r^{1-\frac{1}{p+d}}$.

	If the conformal dimension is attained then $Q\geq 1$ and the bounds for $1\leq p<Q$ and $p>Q$ follow in a similar way to that above.
	For $p=Q$, Theorems \ref{thm:wt-prof-hyp-upper-bound} and \ref{thm:wt-prof-product-vnilp} give
	\[
		\Lambda_{G\times Y}^p(r)
		\lesssim \min_{1 \leq k \leq m/100d'}
		\left( \frac{r}{k^{1/d}} + r^{1-\frac1Q}k^{\frac1Q}\log^{\frac1Q}(\tfrac{r}{k})\right),
	\]
	which is optimised for $k \asymp (r/\log r)^{d/(Q+d)}$,
	giving the desired bound of
	$\Lambda_{G\times Y}^p(r) \preceq r^{1-\frac{1}{Q+d}}\log^{\frac{1}{Q+d}} r$.
\end{proof}

\subsection{Lower bounds for direct products}\label{sec:lbproducts}

Let us now consider the easier lower bounds for products.
This follows from a generalisation of \cite[Theorem 3.2]{BenSchTim-12-separation-graphs} for Poincar\'{e} profiles.
\begin{proposition}\label{prop:product-lower-profile}
	For $X$ and $Y$ infinite graphs, 
	\[
		\Lambda^p_{X \times Y}(r) \gtrsim \max\setcon{\abs{A}\abs{B} (h^p(A)\wedge h^p(B))}{A\subseteq X,\, B\subseteq Y,\, \abs{A}\abs{B}\leq r}.
	\]
	If $\Lambda^p_X$, and likewise $\Lambda^p_Y$, satisfy the property that for any $r$ there exists $A \subset X$ with $|A|\asymp r$ and $\Lambda^p_X(r) \asymp |A|h^p(A)$, then the bound may be written as:
	\[
		\Lambda^p_{X \times Y}(r) \gtrsim \max_{1\leq k \leq r} \left( \frac{r}{k} \Lambda^p_X(k) \right) \wedge \left( k \Lambda^p_Y\left(\frac{r}{k}\right) \right) .
	\]
\end{proposition}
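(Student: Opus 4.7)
The plan is to show that for any finite subgraphs $A \subseteq X$ and $B \subseteq Y$, the product subgraph $A \times B$ of $X \times Y$ satisfies $h^p(A\times B) \gtrsim h^p(A) \wedge h^p(B)$, which immediately yields the claimed lower bound since $|A\times B| = |A||B|$.

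To bound $h^p(A\times B)$ from below, given a non-constant $f:A\times B \to \R$ with $\sum_{(a,b)} f(a,b)=0$, I would decompose $f = (f - \bar f_A) + \bar f_A$, where $\bar f_A:A\times B \to \R$ is defined by $\bar f_A(a,b) = \frac{1}{|A|}\sum_{a'\in A} f(a',b)$, i.e.\ the fibrewise average over $A$. First, for each fixed $b \in B$, the function $a\mapsto f(a,b)-\bar f_A(a,b)$ has zero sum over $A$, so the Poincaré inequality on $A$ gives $\|f(\cdot,b)-\bar f_A(\cdot,b)\|_{A,p}^p \leq h^p(A)^{-p}\|\nabla_X f(\cdot,b)\|_{A,p}^p$, where $\nabla_X$ denotes the gradient along the $A$-direction. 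Summing over $b\in B$ and using $|\nabla_X f| \leq |\nabla f|$ yields $\|f-\bar f_A\|_{A\times B,p}^p \leq h^p(A)^{-p}\|\nabla f\|_{A\times B,p}^p$.

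Second, $\bar f_A$ depends only on $b$, so I view it as a function $g:B\to\R$, $g(b):=\bar f_A(\cdot,b)$, which has zero sum over $B$ thanks to the hypothesis $\sum_{(a,b)} f = 0$. The Poincaré inequality on $B$ gives $\|g\|_{B,p}^p \leq h^p(B)^{-p}\|\nabla g\|_{B,p}^p$. A routine Jensen/Hölder step shows $|\nabla g|(b)^p \leq \frac{1}{|A|}\sum_{a\in A}|\nabla_Y f|(a,b)^p$, hence $\|\nabla g\|_{B,p}^p \leq \frac{1}{|A|}\|\nabla f\|_{A\times B,p}^p$; multiplying by $|A|$ to pass from $\|g\|_{B,p}^p$ to $\|\bar f_A\|_{A\times B,p}^p$, we obtain $\|\bar f_A\|_{A\times B,p}^p \leq h^p(B)^{-p}\|\nabla f\|_{A\times B,p}^p$.

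Combining the two estimates via the triangle inequality $\|f\|_p \leq \|f-\bar f_A\|_p + \|\bar f_A\|_p$ yields $\|f\|_{A\times B,p} \lesssim (h^p(A)\wedge h^p(B))^{-1}\|\nabla f\|_{A\times B,p}$, i.e.\ $h^p(A\times B)\gtrsim h^p(A)\wedge h^p(B)$. Taking the supremum over admissible $A,B$ with $|A||B|\leq r$ gives the first inequality; the reformulation in terms of $\Lambda^p_X, \Lambda^p_Y$ under the stated hypothesis is then immediate by choosing $A$ (resp.\ $B$) of size $\asymp k$ (resp.\ $\asymp r/k$) with nearly optimal Poincaré constants. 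The only mild subtlety is the Jensen-type bound on $|\nabla g|(b)$, which uses that the combinatorial gradient is $\max$ of differences to neighbours and that $|\sum_a x_a|^p\leq |A|^{p-1}\sum_a|x_a|^p$.
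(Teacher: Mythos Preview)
Your argument is correct and reduces the proposition to the same key lemma as the paper, namely $h^p(A\times B)\gtrsim h^p(A)\wedge h^p(B)$ for finite subgraphs $A\subseteq X$, $B\subseteq Y$. Your proof of this lemma is valid: the fibrewise-average decomposition $f=(f-\bar f_A)+\bar f_A$, followed by Poincar\'e on $A$ in each horizontal slice and Poincar\'e on $B$ for the averaged function, together with the Jensen bound $|\nabla g|(b)^p\leq \frac{1}{|A|}\sum_a|\nabla_Y f|(a,b)^p$, all go through as written.

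The paper's route to the same lemma is slightly different in execution: rather than the two-term splitting via $\bar f_A$, it controls $\|f-f_{A\times B}\|_p^p$ by the double average $\frac{1}{|A||B|}\sum_{(x,y),(x',y')}|f(x,y)-f(x',y')|^p$ (via H\"older) and then telescopes each difference through the four intermediate values $f_{\{x\}\times B}$, $f(x,y')$, $f_{A\times\{y'\}}$, before applying Poincar\'e on the fibres. Your decomposition is a bit more economical and avoids the symmetric double sum; the paper's version is more symmetric in the two factors but introduces four terms rather than two. Both are standard ways to obtain product Poincar\'e inequalities and yield the same constant up to a fixed factor depending on $p$.
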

The result follows immediately from the following lemma concerning Poincar\'e constants of products of finite graphs.

\begin{lemma}\label{lem:product-lower-h}
	For every $p\in[1,\infty)$ there exists a constant $c_p$ such that for all finite graphs $A, B$, $h^p(A \times B) \geq c_p (h^p(A)\wedge h^p(B))$.
\end{lemma}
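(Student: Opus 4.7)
The plan is to decompose an arbitrary mean-zero test function $F : V(A\times B) \to \R$ into an ``averaged'' part, to which we apply the Poincaré inequality on $A$, and a ``fibrewise mean-zero'' part, to which we apply the Poincaré inequality on each $B$-fibre.

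More precisely, given $F$ with $\sum_{(a,b)} F(a,b) = 0$ and $F\not\equiv 0$, define
\[ G(a) := \frac{1}{|B|}\sum_{b\in B} F(a,b), \qquad H(a,b) := F(a,b) - G(a). \]
Then $\sum_{a\in A} G(a) = 0$, and for each fixed $a\in A$ we have $\sum_{b\in B} H(a,b) = 0$. The elementary inequality $|x+y|^p \leq 2^{p-1}(|x|^p+|y|^p)$ yields
\[ \|F\|_p^p \leq 2^{p-1}\bigl(\,\|H\|_p^p + |B|\cdot \|G\|_p^p\,\bigr), \]
so the task reduces to controlling $\|H\|_p^p$ and $|B|\cdot\|G\|_p^p$ by $\|\nabla F\|_p^p$.

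For the $H$-part, fix $a$. Since $H(a,b)-H(a,b')=F(a,b)-F(a,b')$, the gradient of $H(a,\cdot)$ on $B$ is dominated pointwise by the $B$-partial gradient $|\nabla_B F|(a,b)\leq |\nabla F|(a,b)$. Applying $h^p(B)$ on the slice $\{a\}\times B$ and summing over $a$ gives
\[ \|H\|_p^p \leq h^p(B)^{-p}\,\|\nabla F\|_p^p. \]
For the $G$-part, writing $G(a)-G(a') = \frac{1}{|B|}\sum_b (F(a,b)-F(a',b))$ and taking the max over neighbours $a'\sim a$ inside the average, I get $|\nabla G|(a) \leq \frac{1}{|B|}\sum_b |\nabla_A F|(a,b)$; then Jensen's inequality with exponent $p$ together with summation over $a$ produces
\[ \|\nabla G\|_p^p \leq \frac{1}{|B|}\,\|\nabla F\|_p^p. \]
Since $\sum_a G(a) = 0$, the definition of $h^p(A)$ gives $\|G\|_p^p \leq h^p(A)^{-p}\|\nabla G\|_p^p$, so $|B|\cdot \|G\|_p^p \leq h^p(A)^{-p}\,\|\nabla F\|_p^p$.

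Combining these bounds,
\[ \|F\|_p^p \leq 2^{p-1}\bigl(h^p(A)^{-p} + h^p(B)^{-p}\bigr)\|\nabla F\|_p^p \leq 2^p\bigl(h^p(A)\wedge h^p(B)\bigr)^{-p}\|\nabla F\|_p^p, \]
which rearranges to $h^p(A\times B) \geq \tfrac{1}{2}\bigl(h^p(A)\wedge h^p(B)\bigr)$, so one may take $c_p = 1/2$ independent of $p$.

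The argument is essentially a clean splitting plus Jensen; the only step requiring a little care is the interchange between taking $\max_{a'\sim a}$ and $\frac{1}{|B|}\sum_b$ in the gradient estimate for $G$, but this goes through because one has the pointwise domination $|G(a)-G(a')|\leq \frac{1}{|B|}\sum_b |F(a,b)-F(a',b)|$ for each neighbour $a'$ separately, and then one applies Jensen only after taking the maximum.
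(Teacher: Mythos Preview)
Your proof is correct. Both your argument and the paper's apply the Poincar\'e inequalities of $A$ and $B$ on fibres, but the packaging differs: the paper begins from the double-sum representation $\|f-f_{A\times B}\|_p^p \preceq |A\times B|^{-1}\sum_{(x,y),(x',y')}|f(x,y)-f(x',y')|^p$ and telescopes each difference through four intermediate values $f_{\{x\}\times B}$, $f(x,y')$, $f_{A\times\{y'\}}$, whereas you split $F=G+H$ directly into a $B$-fibre average and a fibrewise mean-zero remainder. Your route is a bit cleaner and yields the explicit constant $c_p=\tfrac12$ independent of $p$, while the paper's four-term split incurs a $p$-dependent constant from $|a+b+c+d|^p \preceq_p |a|^p+\cdots+|d|^p$. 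The only delicate point---the ``$\max_{a'\sim a}$ versus $\tfrac{1}{|B|}\sum_b$'' interchange---is handled correctly, since for each fixed neighbour $a'$ one already has $|F(a,b)-F(a',b)|\leq |\nabla_A F|(a,b)$, and the right-hand side does not depend on $a'$.
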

\begin{proof}
	For any $f:A\times B \ra \R$, the triangle and H\"older inequalities give
	\begin{align*}
		& \|f-f_{A\times B}\|^p
		\\ & = \sum_{(x,y)\in A\times B} \bigg|f(x,y)-\frac{1}{|A\times B|} \sum_{(x',y')\in A\times B} f(x',y')\bigg|^p
		\\ & \leq \frac{1}{|A\times B|^p} \sum_{(x,y)\in A\times B} \bigg( \sum_{(x',y')\in A\times B} |f(x,y)-  f(x',y')| \bigg)^p
		\\ & \leq \frac{1}{|A\times B|}\sum_{(x,y)\in A\times B} \sum_{(x',y')\in A\times B} |f(x,y)-f(x',y')|^p .
		\\ \intertext{Elementary inequalities and the definition of $h^p$ applied to fibres $\{x\}\times B$ and $A \times \{y'\}$ show that this is}
		& \preceq_{p} \frac{1}{|A||B|} \sum_{(x,y)\in A\times B} \sum_{(x',y')\in A\times B} 
		\Bigg( \left|f(x,y)-f_{\{x\} \times B} \right|^p+\\& \qquad
			\left|f_{\{x\} \times B} -f(x,y') \right|^p+ 
			\left|f(x,y')-f_{A \times \{y'\}} \right|^p+
			\left|f_{A \times \{y'\}} - f(x',y') \right|^p \Bigg)
			\\ & = 2 \sum_{x\in A} \sum_{y \in B} \left|f(x,y)-f_{\{x\}\times B}\right|^p
				+ 2 \sum_{y'\in B} \sum_{x \in A} \left|f(x,y')-f_{A\times \{y'\}}\right|^p
		\\ & \preceq \sum_{x \in A} h^p(B)^{-p} \sum_{y \in B} |\nabla f(x,y)|^p 
			+ \sum_{y' \in B} h^p(A)^{-p} \sum_{x \in A} |\nabla f(x,y)|^p
			\\ & \preceq \left( h^p(A) \wedge h^p(B) \right)^{-p} \left\| \nabla f \right\|_p^p. \qedhere 
	\end{align*}
\end{proof}

Proposition~\ref{prop:product-lower-profile} has the following consequence, when combined with the upper bound in Theorem~\ref{thm:hyp-times-nilp-upper-bound}.
\begin{corollary}\label{cor:treexpoly-lower}
	Let $T$ be the infinite trivalent tree (quasi-isometric to any non-abelian free group of finite rank), and let $P$ be a discrete or connected Lie group with polynomial growth of degree $d\geq 0$.  Then for $1\leq p < \infty$,
	\[
		\Lambda^p_{T \times P}(r) \simeq r^{1-\frac{1}{d+p}} .
	\]
\end{corollary}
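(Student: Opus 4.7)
The plan is to combine the upper bound provided by Theorem \ref{thm:hyp-times-nilp-upper-bound} with the product lower bound from Proposition \ref{prop:product-lower-profile}, using the previously computed individual Poincar\'e profiles of $T$ and $P$ from \cite{HumeMackTess}.

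For the upper bound, I would view $T$ as being quasi-isometric to a non-abelian free group of finite rank, a non-elementary Gromov hyperbolic group whose boundary is a Cantor set and hence has conformal dimension $Q=0$. Since every $p\in[1,\infty)$ satisfies $p>Q=0$, the ``$p>Q$'' case of Theorem \ref{thm:hyp-times-nilp-upper-bound} applies directly and yields $\Lambda^p_{T\times P}(r)\lesssim r^{1-\frac{1}{p+d}}$, with no $\varepsilon$-loss.

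For the lower bound, the case $d=0$ is trivial, since then $P$ is bounded and so $T\times P$ is quasi-isometric to $T$, reducing the claim to $\Lambda^p_T(r)\simeq r^{1-1/p}$ which is \cite[Theorem 9]{HumeMackTess}. For $d\geq 1$, I will apply the second (stronger) formulation of Proposition \ref{prop:product-lower-profile}. Its hypothesis --- that each of $\Lambda^p_T(r)\simeq r^{1-1/p}$ and $\Lambda^p_P(r)\simeq r^{1-1/d}$ (both computed in \cite{HumeMackTess}) is realised up to multiplicative constants by subgraphs of every prescribed size --- holds because in both cases the optima are attained on balls, available at every scale. The proposition then gives
\[
\Lambda^p_{T\times P}(r)\gtrsim \max_{1\leq k\leq r}\min\!\left(r\,k^{-1/p},\; k^{1/d}\,r^{\,1-1/d}\right),
\]
and an elementary optimisation (equating the two arguments of the minimum, which forces $k\asymp r^{p/(p+d)}$) produces the matching lower bound $r^{1-\frac{1}{p+d}}$.

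There is no serious obstacle here: the corollary is essentially a bookkeeping combination of several ingredients already established in the paper, namely the two individual factor profiles, the general capacity-based upper bound of Theorem \ref{thm:hyp-times-nilp-upper-bound}, and the product lower bound of Proposition \ref{prop:product-lower-profile}. The only minor care needed is to confirm the size-realizability assumption in the sharper version of Proposition \ref{prop:product-lower-profile}, which is routine for both $T$ (balls in the tree) and $P$ (Cayley or Riemannian balls).
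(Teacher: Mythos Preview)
Your proposal is correct and follows essentially the same approach as the paper: the upper bound via Theorem~\ref{thm:hyp-times-nilp-upper-bound} with $Q=0$, and the lower bound via the second form of Proposition~\ref{prop:product-lower-profile} after checking that the profiles of $T$ and $P$ are realized by balls, then optimizing in $k$. Your explicit treatment of the degenerate case $d=0$ is a small but welcome clarification over the paper's version.
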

\begin{proof}
Note that $P$ is quasi-isometric to some bounded degree graph $Y$ with the same degree of growth and whose Poincar\'e profile has same asymptotic behavior.  
	Since $T$ is quasi-isometric to the Cayley graph of a free group on two generators, which has conformal dimension $0$, Theorem~\ref{thm:hyp-times-nilp-upper-bound}  shows the upper bound on $\Lambda_{T \times P}$ for any $p\geq 1$.  The lower bound remains to be shown.

	For the tree $T$, \cite[Theorem 10.1]{HumeMackTess} and its proof show that $\Lambda^p_T(k) \simeq k^{1-1/p}$, and is attained by a ball of size $\asymp k$.
	For a group $P$ with polynomial growth of degree $d$, \cite[Theorem 7]{HumeMackTess} and its proof via \cite[Proposition 9.5]{HumeMackTess} show that $\Lambda^p_P(r/k) \simeq (r/k)^{1-1/d}$, and is attained by a subspace of size $\asymp r/k$. 
	So Proposition~\ref{prop:product-lower-profile} gives
	\[
		\Lambda^p_{T\times P}(r) \gtrsim
		\max_{1 \leq k \leq r} 
		\frac{r}{k}k^{1-1/p} \wedge k \left(\frac{r}{k}\right)^{1-1/d}
		= r \max_{1\leq k\leq r} k^{-1/p} \wedge k^{1/d}r^{-1/d},
	\]
	which is optimised for $k\asymp r^{1/(1+d/p)}$.
\end{proof}

Corollary \ref{cor:treexpoly-lower} is the last ingredient needed to complete the proof of Theorem \ref{thmIntro:BS}.

\begin{proof}[Proof of Theorem~\ref{thmIntro:BS}]
	If $\abs{m}=\abs{n}=1$, then $G=\BS(m,n)$ is commensurable to $\Z^2$, so $\Lambda^p_G(r)\simeq r^{1/2}$ for all $p\in[1,\infty)$ by \cite[Theorem 7]{HumeMackTess}. If $\abs{m}=\abs{n}>1$, then $G=\BS(m,n)$ is commensurable to $F_2 \times \Z$, so $\Lambda^p_G(r)\simeq r^{1-\frac{1}{p+1}}$ by Corollary \ref{cor:treexpoly-lower}. Finally, there is a regular map from $\DL(2,2)$ to $\BS(m,n)$ whenever $\abs{m}\neq\abs{n}$ by Theorem \ref{thm:embSgraph}, so $\Lambda^p_G(r)\simeq r/\log(r)$ for all $p\in[1,\infty)$.
By \cite[Theorem 1]{GalJanusz} there is a proper (and hence regular) map $G\to \Aut(T)\times\Aff(\R)$ where $T$ is a bounded valence tree. Since $\Aut(T)$ is quasi-isometric to a $3$-regular tree, and $\Aff(\R)$ has finite Assouad--Nagata dimension by \cite{HP-13-ANdimension-nilppolyc}, we have $\Lambda^p_G(r)\lesssim \Lambda^p_{\Aut(T)\times\Aff(\R)}(r)\lesssim r/\log(r)$ for all $p\in[1,\infty)$.
\end{proof}

We can now also show the lower bound for products of certain hyperbolic groups with groups of polynomial growth.

\begin{proof}[Proof of Theorem~\ref{thm:hyp-x-nilp-lower}]
	By \cite[Theorems 10.1, 11.1 and 11.3]{HumeMackTess} and their proofs, we have
\[
 \Lambda^p_{X}(r) \gtrsim \left\{
 \begin{array}{lcl}
 r^{1-\frac{1}{Q}} & \textup{if} & 1\leq p < Q
 \\
 r^{1-\frac{1}{Q}} \log^{\frac{1}{Q}}(r) & \textup{if} & p = Q
 \\
 r^{1-\frac{1}{p}} & \textup{if} & p>Q.
 \end{array}\right.,
\] 
and in each case the lower bound is attained by a set of size $\asymp r$.
So by Proposition~\ref{prop:product-lower-profile}
	for $p<Q$ we have
	\[
		\Lambda_{X \times N}(r) \gtrsim
		\max_{1 \leq k \leq r} 
		\frac{r}{k}k^{1-1/Q} \wedge k \left(\frac{r}{k}\right)^{1-1/d}
		\asymp r^{1-1/(Q+d)},
	\]
	on taking the optimal $k \asymp r^{1/(1+d/Q)}$.
	For $Q>p$, as in Corollary~\ref{cor:treexpoly-lower}, we have
	\[
		\Lambda_{X\times N}(r) \gtrsim 
		\max_{1 \leq k \leq r} 
		\frac{r}{k}k^{1-1/p} \wedge k \left(\frac{r}{k}\right)^{1-1/d}
		\asymp r^{1-1/(p+d)}.
	\]
For $p=Q$ we have
	\begin{align*}
		\Lambda^Q_{X\times N}(r) 
		& \gtrsim 
		\max_{1 \leq k \leq r} 
		\left( \frac{r}{k}k^{1-1/Q}\log^{1/Q}k\right) \wedge \left( k \left(\frac{r}{k}\right)^{1-1/d} \right) 
		\\ & = \max_{1 \leq k \leq r}\left( rk^{-1/Q}\log^{1/Q}k\right) \wedge \left( r^{1-1/d}k^{1/d}\right).
	\end{align*}
	The optimal value of $k$ is approximately
	$k \asymp r^{Q/(Q+d)} \log^{d/(Q+d)} r$,
	giving
	\[
		\Lambda^Q_{X\times N}(r)
		\gtrsim
		r^{1-1/d} \left(r^{Q/(Q+d)} \log^{d/(Q+d)}r\right)^{1/d}
		= r^{1-\frac{1}{Q+d}} \log^{\frac{1}{Q+d}} r. \qedhere
	\]
\end{proof}

We also have completed the proof of Theorem~\ref{thmIntro:profilesDirectProductLie}.
\begin{proof}[Proof of Theorem~\ref{thmIntro:profilesDirectProductLie}]
	The bounds follow from Corollary~\ref{cor:product-general} in the first two cases, and from Corollary~\ref{cor:treexpoly-lower} in the third case.
\end{proof}

\subsection{Beyond profiles}\label{sec:beyondProfile}
A more careful analysis of specific weighted subgraphs yields the following result.

\begin{theorem}\label{thm:hypQmonotone}
	Let $X$ be a hyperbolic graph whose boundary has a visual metric that is Ahlfors $Q$-regular for some $Q>1$ and admits a $1$-Poincar\'e inequality (hence has conformal dimension $Q$). 
	Let $Y$ be a graph with subexponential growth and let $H$ be a visual hyperbolic graph.  If there is a regular map $X\to H\times Y$, then the conformal dimension of $H$ is at least $Q$.
\end{theorem}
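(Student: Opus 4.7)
The plan is to argue by contradiction. Suppose $H$ has conformal dimension $Q_H<Q$, and fix $p$ with $Q_H<p<Q$. The goal is to produce a scale-dependent contradiction by comparing the $(p,\alpha)$-capacity of a large ball in $X$ with the capacity of its image in $H\times Y$.

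First I would establish, on the $X$-side, a lower bound for capacities that mirrors the lower bound on the Poincar\'e profile. Working in a Bourdon--Pajot hyperbolic cone model for $X$ over $\partial X$ (using the Ahlfors $Q$-regular, $1$-Poincar\'e visual metric), take a sequence of balls $B_n = B(o,R_n)$ with $|B_n|\asymp e^{QR_n}=:n$. Adapting the modulus-on-the-boundary arguments of \cite[\S 10, \S 11]{HumeMackTess}, the $1$-Poincar\'e inequality on $\partial X$ yields a lower bound of the shape
\[
|B_n|\,C^{p,\alpha}(B_n,\#) \;\gtrsim\; n^{1-\frac{1}{Q}}
\]
for some small $\alpha>0$ and all $p\leq Q$. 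The key point, beyond what Poincar\'e profiles provide, is that the test function achieving the lower bound can be chosen to separate two ``macroscopic'' halves of $B_n$, so capacity rather than just Poincar\'e constant is controlled.

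Next I would transfer this estimate to $H\times Y$ via the regular map $\phi$, which we may assume is $M$-Lipschitz and $m$-to-one. Set $\Gamma_n := \phi(B_n)$ with pushforward weight $\mu_n := \phi_*\#$. A standard pullback argument shows that any capacity-test function on $(\Gamma_n,\mu_n)$ pulls back to a test function on $B_n$ with comparable $L^p$-energy, giving $C^{p,\alpha}(\Gamma_n,\mu_n)\gtrsim C^{p,\alpha}(B_n,\#)$. Now project to the hyperbolic factor: let $\bar{\Gamma}_n=\pi_H(\Gamma_n)$ with weight $\bar{\mu}_n=(\pi_H)_*\mu_n$. Again test functions pull back with preserved energy, so $C^{p,\alpha}(\Gamma_n,\mu_n)\leq C^{p,\alpha}(\bar{\Gamma}_n,\bar{\mu}_n)$. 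Since $\Gamma_n$ has diameter $O(R_n)=O(\log n)$, each fibre $\pi_H^{-1}(h)\cap\Gamma_n$ lies in a $Y$-ball of radius $O(\log n)$; subexponential growth of $Y$ then gives $\|\bar{\mu}_n\|_\infty\leq k_n:=n^{o(1)}$, while $\bar{\mu}_n(\bar{\Gamma}_n)\asymp n$.

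Finally, Theorem~\ref{thm:wt-prof-hyp-upper-bound} applied to $H$ with $p>Q_H$ (after replacing $H$ by a quasi-isometric hyperbolic cone over a nearly-optimal Ahlfors regular model of $\partial H$) gives
\[
\bar{\mu}_n(\bar{\Gamma}_n)\,C^{p,\alpha}(\bar{\Gamma}_n,\bar{\mu}_n)\;\lesssim\;k_n\left(\frac{\bar{\mu}_n(\bar{\Gamma}_n)}{k_n}\right)^{\!1-\frac{1}{p}}\;=\;n^{1-\frac{1}{p}+o(1)}.
\]
Chaining the inequalities produces $n^{1-1/Q}\lesssim n^{1-1/p+o(1)}$, i.e.\ $\tfrac{1}{p}\leq \tfrac{1}{Q}+o(1)$, which contradicts $p<Q$ for $n$ large enough. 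The contradiction rules out $Q_H<Q$.

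The main obstacle is the first step: upgrading the Poincar\'e-profile lower bound of \cite[Theorems 10.1, 11.1, 11.3]{HumeMackTess} to a genuine $(p,\alpha)$-capacity lower bound. In those theorems the test functions produced from the $1$-Poincar\'e inequality on $\partial X$ have large $L^p$-oscillation but need not satisfy the two-sided threshold condition $\mu\{f\leq 0\},\mu\{f\geq 1\}\geq \alpha\mu$ required by the definition of $C^{p,\alpha}$. Overcoming this requires a careful choice of the ``target'' curve families on $\partial X$ (say, joining two well-separated boundary balls shadowing the ball $B_n$) so that the associated modulus-minimizing functions automatically separate substantial portions of $B_n$; this is where the $1$-Poincar\'e assumption is essential, as it forces boundary curve families joining positive-measure sets to carry sufficient $p$-modulus.
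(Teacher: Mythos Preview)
Your proposal is essentially the paper's proof, and it is correct once you realise that your ``main obstacle'' is not an obstacle at all: Lemma~\ref{lem:compare-cap-poinc-profiles} gives $h^p(\Gamma)\lesssim_\alpha C^{p,\alpha}(\Gamma,\#)$ for any finite graph, so the Poincar\'e-constant lower bounds of \cite[Theorems~10.1, 11.1]{HumeMackTess} are automatically capacity lower bounds --- no new modulus argument on $\partial X$ is needed. The paper exploits this by working with $p=1$ throughout: it takes the subgraphs $\Gamma_t$ with $h^1(\Gamma_t)\succeq e^{-t}$ from \cite[(11.2)]{HumeMackTess}, transfers to a thickened image $\Gamma_t'\subset H'\times Y$ via the Poincar\'e-constant monotonicity of \cite[Corollary~5.10]{HumeMackTess}, switches from $h^1$ to $C^{1,\alpha}$ \emph{only after} arriving in $H'\times Y$, projects to $H'$ (your projection step is exactly Lemma~\ref{lem:wt-cap-projection}), and then applies the $p=1<Q''$ case of Theorem~\ref{prop:wt-prof-hyp-cone}. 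Your variant with a chosen $p\in(Q_H,Q)$ and the $p>Q_H$ case of Theorem~\ref{thm:wt-prof-hyp-upper-bound} works equally well once the capacity lower bound is obtained for free; the paper's choice $p=1$ is marginally cleaner only because the needed lower bound is literally a line in \cite{HumeMackTess} and no auxiliary exponent has to be selected.
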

Recall that a hyperbolic space $H$ is \textbf{visual} if every point in $H$ is within a uniformly bounded distance from a geodesic ray from a base point of $H$.  Any discrete hyperbolic group is visual.
\begin{proof}
It suffices to assume that $X$ is visual by discarding any irrelevant points.

	Following the proof of \cite[Theorem 11.1]{HumeMackTess}, we can replace $X$ by hyperbolic cone graph $\Gamma$ (quasi-isometric to $X$) which has the following key property: 
	for each $t \in \N$, there is a subgraph $\Gamma_t$ in the ball of radius $t$ about the base point of $\Gamma$ which has $\asymp e^{Qt}$ vertices, and so that $h^1(\Gamma_t) \succeq e^{-t}$, by \cite[(11.2)]{HumeMackTess}.
	There is a subtlety here: \cite[(11.2)]{HumeMackTess} bounds $h^1_C(\Gamma_t)$ from below, where the gradient of $f:V\Gamma_t\to \R$ in the definition of $h^1_C$ is 
	$|\nabla_C f|(x)=\max\{|f(x')-f(x'')| : x',x''\in B(x,C)\cap V\Gamma_t\}$.
	Spaces with Poincar\'e inequalities are quasi-convex, so any two points at distance $C$ in $\Gamma_t$ can be connected by a path in $\Gamma_t$ of uniformly bounded length.  Thus the argument of \cite[Proposition 4.3]{HumeMackTess} gives that $h^1_C(\Gamma_t) \asymp h^1(\Gamma_t)$, essentially by the triangle inequality.

Suppose for a contradiction that there is a regular map $f: \Gamma \to H\times Y$ as in the statement of the theorem, where the conformal dimension of $H$ equals $Q' < Q$.  
	Choose $Q'' \in (Q', Q)$, then there is an Ahlfors $Q''$-regular space quasisymmetric to $\bdry H$, and let $H'$ be a hyperbolic cone on that space.  By \cite{BS-00-gro-hyp-embed}, $H'$ is quasi-isometric to $H$, so we get a regular map $f': \Gamma \to H' \times Y$.
	Now \cite[Corollary 5.10]{HumeMackTess} gives that $h^1(\Gamma_t) \preceq h^1_C(f'(\Gamma_t))$ for a constant $C$.
	Taking a suitable neighbourhood $\Gamma_t':=[f'(\Gamma_t)]_M$ for some $M \geq 1$ of the image $f'(\Gamma_t)$ (essentially to make it connected), as in \cite[\S 5.2]{HumeMackTess}, we have $h^1_C(f'(\Gamma_t)) \asymp h^1(\Gamma_t')$ again by the argument of \cite[Proposition 4.3]{HumeMackTess}.
	Lemma~\ref{lem:compare-cap-poinc-profiles} gives $h^1(\Gamma_t') \preceq C^{1,\alpha}(\Gamma_t',\#)$, where $\#$ is the counting measure, and $\alpha>0$ is any value small enough.

	We require the following lemma.
	\begin{lemma}\label{lem:wt-cap-projection}
	The projection $\pi:H' \times Y \to H'$ is monotone with respect to capacities: if $\Gamma' \subset V(H' \times Y)$ and $\pi_*\#$ is the push-forward measure on $\pi(\Gamma')$, 
		then $C^{p,\alpha}(\Gamma', \#) \leq C^{p,\alpha}(\pi(\Gamma'),\pi_*\#)$.
	\end{lemma}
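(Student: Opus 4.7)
The plan is straightforward: given any test function $g$ realising (or nearly realising) $C^{p,\alpha}(\pi(\Gamma'),\pi_*\#)$, pull it back to $f := g\circ \pi$ on $\Gamma'$, check that $f$ is an admissible test function for $C^{p,\alpha}(\Gamma',\#)$, and show that the relevant ratio $\mu(\Gamma)^{-1/p}\|\nabla f\|_{\mu,p}$ does not increase.

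First I would verify the mass and level-set conditions. By definition of the pushforward, $(\pi_*\#)(\pi(\Gamma')) = \#(\Gamma')$. Moreover, since $f(a,b) = g(\pi(a,b)) = g(a)$ depends only on the $H'$-coordinate,
\[
\#\{(a,b)\in\Gamma': f(a,b)\leq 0\} = \sum_{a\in\pi(\Gamma'),\, g(a)\leq 0} \#(\pi^{-1}(a)\cap\Gamma') = (\pi_*\#)\{g\leq 0\},
\]
and likewise for $\{f\geq 1\}$. Thus the lower bound $(\pi_*\#)\{g\leq 0\}, (\pi_*\#)\{g\geq 1\}\geq \alpha\,(\pi_*\#)(\pi(\Gamma'))$ transfers immediately to $\#\{f\leq 0\}, \#\{f\geq 1\}\geq\alpha\,\#(\Gamma')$, making $f$ admissible.

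The key inequality is the bound on the gradient norm. Any edge of $\Gamma'$ is of one of two types: either it comes from an edge of $H'$ (and the $Y$-coordinate is fixed) or from an edge of $Y$ (and the $H'$-coordinate is fixed). On edges of the second type $f$ is constant, so they contribute nothing. On edges of the first type $|f(a,b)-f(a',b)| = |g(a)-g(a')|$, and $a,a'\in\pi(\Gamma')$ are adjacent in the induced subgraph, hence $|g(a)-g(a')|\leq |\nabla g|(a)$. Consequently $|\nabla f|(a,b)\leq |\nabla g|(a)$ for every $(a,b)\in\Gamma'$, and summing with the counting measure,
\[
\|\nabla f\|_{\#,p}^p = \sum_{(a,b)\in\Gamma'} |\nabla f|(a,b)^p \leq \sum_{a\in\pi(\Gamma')} |\nabla g|(a)^p \,\#(\pi^{-1}(a)\cap\Gamma') = \|\nabla g\|_{\pi_*\#,p}^p.
\]

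Combining these two observations, for any admissible $g$ we obtain an admissible $f$ with
\[
\#(\Gamma')^{-1/p}\|\nabla f\|_{\#,p} \leq (\pi_*\#)(\pi(\Gamma'))^{-1/p}\|\nabla g\|_{\pi_*\#,p},
\]
and infimising over $g$ gives the lemma. There is essentially no obstacle here; the only mild care needed is in the bookkeeping of how edges of the product subgraph $\Gamma'$ project to edges of the induced subgraph on $\pi(\Gamma')$, but the fact that the graph structure on $H'\times Y$ changes only one coordinate at a time makes this transparent.
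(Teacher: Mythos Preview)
Your proof is correct and follows exactly the same approach as the paper's: pull back an admissible test function along $\pi$, note that the level-set measures are preserved by the pushforward identity, and use the pointwise bound $|\nabla(g\circ\pi)|(x)\leq|\nabla g|(\pi(x))$ to compare gradient norms. The paper's version is simply more terse, asserting the pointwise gradient inequality without spelling out the edge-type decomposition you give.
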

	\begin{proof}
		For any function $h: V(\pi\Gamma') \to \R$,
		$\pi_*\# (\{ h \leq 0\}) = \#(\{ h\circ\pi \leq 0\})$
		and
		$\pi_*\# (\{ h \geq 1\}) = \#(\{ h\circ\pi \geq 1\})$.
		Meanwhile, $\| \nabla (h \circ \pi)\|_{\#,p}^p \leq \| \nabla h \|_{\pi_*\#, p}^p$ as for each $x \in V\Gamma'$, $|\nabla(h\circ\pi)|(x) \leq |\nabla h|(\pi(x))$.		
	\end{proof}
	Combining our results so far, we have for each $t$ that
	\[
		e^{-t} \preceq h^1(\Gamma_t) \preceq h^1(\Gamma_t') \preceq C^{1,\alpha}(\Gamma_t', \#) \preceq C^{1,\alpha}(\pi(\Gamma_t'),\pi_*\#).
	\]

	Since $\Gamma_t$ lies in a ball of radius $t$, $\Gamma_t'$ also lies in a ball of radius $\leq At$ for some constant $A \geq 1$ depending on $f'$.
	As $Y$ has subexponential growth, for each $\delta>0$ there exists $R_\delta$ so that every ball in $Y$ of radius $R\geq R_\delta$ contains at most $\exp(\delta R)$ points.
	We fix $\delta=\delta(Q,Q'',A)$ later in the proof, and assume $t \geq R_{\delta}$,
	so the fibres of $\pi|_{\Gamma_t'}:\Gamma_t' \to \pi(\Gamma_t')$ have at most $k:=\exp(\delta At)$ points in them.

	Therefore, for this value of $k$ Theorem~\ref{prop:wt-prof-hyp-cone} gives for $\alpha$ small enough that
	\begin{align*}
		C^{1,\alpha}(\pi(\Gamma_t'), \pi_*\#) 
		& \leq \pi_*\#(\pi(\Gamma_t'))^{-1} \Xi^{1,\alpha,k}_{H'}( \pi_*\#(\pi(\Gamma_t')))
		\\ & \preceq \exp(-Qt) k^{\frac{1}{Q''}} \exp(Qt)^{1-\frac{1}{Q''}}
		\\ & = \exp\left( -Qt+\frac{\delta A t}{Q''} +Qt - \frac{Q}{Q''}t \right).
	\end{align*}
	Since $Q''<Q$, we can choose $\delta>0$ so that $(Q-\delta A)/Q''>A'>1$ for some $A'$.
	But then as $e^{-t} \preceq e^{-A't}$ we have a contradiction.
\end{proof}

\begin{remark} Intuitively what is happening here is that $\cut(\Gamma_t) \asymp e^{(Q-1)t}$, while the weighted projection $\pi(\Gamma_t')$ in $H$ can be cut with weight $\preceq e^{(Q'-1)tQ/Q'}$, up to a subexponential factor, and cuts of $\pi(\Gamma_t')$ can be lifted back to cuts of $\Gamma_t$ (cf.\ Lemma~\ref{lem:wt-cap-projection}).

	This argument does not work when $Q=1$ because the cut sets of balls grow too slowly. For example, this argument cannot rule out a regular map $f:\HH_\R^2 \to T\times \Z^d$: considering balls $\Gamma_t=B(t)$ of volume $e^t$, using \cite[Theorem 11.3]{HumeMackTess} and Proposition~\ref{prop:wt-prof-tree} the argument gives
	\[
		te^{-t} \preceq h^1(\Gamma_t) \preceq C^{1,\alpha}(\pi(\Gamma_t')) \preceq e^{-t} k \preceq e^{-t} t^d,
	\]
	which is no contradiction.
\end{remark}

We are now able to prove the following non-embedding result, of particular interest for the products of rank 1 symmetric spaces $\HH^m_\bbK$ or Fuchsian buildings $G_{m,n}$ with nilpotent groups.
\begin{corollary}[{Theorem~\ref{thmIntro:directProductCE}}]
	\label{cor:prod-no-embed}
Assume $G_1=H_1\times P_1$ and  $G_2=H_2\times P_2$, where for $i=1, 2:$
\begin{itemize}
\item $H_i$ is a non-elementary finitely generated hyperbolic group of conformal dimension $Q_i \geq 0$, and
\item $P_i$ is a locally compact group with polynomial growth of degree $d_i \geq 0$.
\end{itemize}
 If there exists a regular map $G_1\to G_2$, then $d_1 \leq d_2$. Moreover, if $H_1$ has its conformal dimension $Q_1>1$ attained by a metric admitting a $1$-Poincar\'e inequality, then $Q_1\leq Q_2$.
\end{corollary}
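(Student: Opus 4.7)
The plan is to treat the two conclusions independently; both reduce to monotonicity of Poincar\'e profiles under regular maps combined with the sharp estimates already at our disposal. The starting observation is that $H_1$, being non-elementary hyperbolic, contains a quasi-isometrically embedded non-abelian free subgroup (quasi-isometric to a regular tree $T$ of degree $\geq 3$), while trivially the slice inclusion $H_1\hookrightarrow H_1\times P_1=G_1$ at any chosen basepoint of $P_1$ is a quasi-isometric embedding, hence a regular map between standard metric spaces. Post-composing either embedding with the hypothesised regular map $G_1\to G_2$ yields two auxiliary regular maps
\[
	T\times P_1\longrightarrow G_2\qquad\text{and}\qquad H_1\longrightarrow G_2,
\]
to which we apply, respectively, the product Poincar\'e profile bounds and the conformal-dimension rigidity of Theorem~\ref{thm:hypQmonotone}.

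For the monotonicity of the Euclidean factor, fix any $p\geq 1$ with $p>\max\{Q_1,Q_2\}$. Replacing $P_1$ by a bounded-degree graph quasi-isometric to it with the same polynomial growth degree $d_1$ (the proof of Corollary~\ref{cor:treexpoly-lower} only needs this QI graph model), that corollary gives the sharp lower bound
\[
\Lambda^p_{T\times P_1}(r)\simeq r^{1-\tfrac{1}{p+d_1}},
\]
while Theorem~\ref{thm:hyp-times-nilp-upper-bound} gives the matching upper bound
\[
\Lambda^p_{G_2}(r)\lesssim r^{1-\tfrac{1}{p+d_2}}.
\]
Monotonicity of $L^p$-Poincar\'e profiles under the first of the two auxiliary regular maps forces the left-hand side to be $\lesssim$ the right-hand side, and comparing the polynomial exponents yields $d_1\leq d_2$.

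For the monotonicity of the conformal dimension of the hyperbolic factor (under the extra hypotheses on $H_1$), I would apply Theorem~\ref{thm:hypQmonotone} to the second auxiliary regular map $H_1\to H_2\times P_2$, with $X=H_1$, $H=H_2$, $Y=P_2$. All hypotheses are directly met: $H_1$ is a visual hyperbolic graph whose boundary carries an Ahlfors $Q_1$-regular visual metric admitting a $1$-Poincar\'e inequality, with $Q_1>1$; $H_2$, being a finitely generated hyperbolic group, is visual; and $P_2$ has subexponential growth as its growth is polynomial of degree $d_2$. The conclusion of Theorem~\ref{thm:hypQmonotone} is exactly that the conformal dimension of $H_2$ is $\geq Q_1$, i.e.\ $Q_1\leq Q_2$.

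The essentially formal nature of this corollary masks the real technical difficulty, which has already been resolved upstream in Theorem~\ref{thm:hypQmonotone}: comparing $\ell^1$-Poincar\'e constants of the distinguished subgraphs $\Gamma_t\subset X$ with the weighted capacities of their projections to a hyperbolic cone over a lower-dimensional Ahlfors-regular gauge for $\partial H_2$, which is where the $\Xi^{p,\alpha,k}_{H'}$ estimate of Theorem~\ref{prop:wt-prof-hyp-cone} is combined with a subexponential bound on fibre sizes. Once that machinery is granted, the present deduction only requires feeding in the tree and slice embeddings, so I do not anticipate any further obstacle.
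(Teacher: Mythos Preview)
Your proof is correct and follows essentially the same route as the paper's own proof: embed a tree into $H_1$ to get $T\times P_1\hookrightarrow G_1\to G_2$, compare the lower bound $\Lambda^p_{T\times P_1}(r)\simeq r^{1-\frac{1}{p+d_1}}$ from Corollary~\ref{cor:treexpoly-lower} against the upper bound $\Lambda^p_{G_2}(r)\lesssim r^{1-\frac{1}{p+d_2}}$ from Theorem~\ref{thm:hyp-times-nilp-upper-bound} for $p>Q_2$ to conclude $d_1\leq d_2$; then use the slice $H_1\hookrightarrow G_1\to G_2=H_2\times P_2$ and invoke Theorem~\ref{thm:hypQmonotone} for the conformal-dimension statement. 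The only cosmetic difference is that you take $p>\max\{Q_1,Q_2\}$ whereas $p>Q_2$ suffices (the tree lower bound holds for all $p\geq 1$), but this is harmless.
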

\begin{proof}
	Any such $H_1$ contains a quasi-isometrically embedded $3$-regular tree, so by the monotonicity of $\Lambda^p$ and Corollary~\ref{cor:treexpoly-lower} we have $\Lambda^p_{G_1}(r) \gtrsim r^{1-\frac{1}{p+d_1}}$ for $p \geq 1$.
	Theorem~\ref{thm:hyp-times-nilp-upper-bound} applied to $G_2$ gives for $p > Q_2$ that $\Lambda^p_{G_2}(r) \lesssim r^{1-\frac{1}{p+d_2}}$, so the $d_1 \leq d_2$ conclusion follows.  
	The `moreover' statement is given by Theorem \ref{thm:hypQmonotone}.
\end{proof}

\section{Concluding steps}\label{sec:concludingsteps}

In this section we complete the proofs of most of the theorem stated in the introduction:
Theorems \ref{thmIntro:alg thick Implies an thick}, 
\ref{thmIntro:unimodularDicho},  \ref{thmIntro:geomDichoThin}, \ref{thmIntro:geomDichoThick} and Corollary  \ref{corIntro:polycyclic},.

\begin{proof}[Proof of Theorem  \ref{thmIntro:alg thick Implies an thick}]
Let $G$ be an algebraically thick connected Lie group. By Theorem \ref{thm:reduc}, we can assume that $G$ is linear and its radical is real-triangulable. Then by Proposition \ref{prop:algThick}, we deduce that it has a closed subgroup isomorphic to either $\SOL_a$, $a>0$ or $\Osc$.
By Proposition \ref{thm:embSgraph}, if it contains $\SOL_a$, then it contains a coarsely embedded copy of $\DL(2,2)$, and it is analytically thick by Theorem \ref{DLlowerbound}. If it contains $\Osc$, then it is analytically thick by Theorem \ref{thm:HeisExt}.
	We conclude by Corollary~\ref{cor:general-upper-bound}.
\end{proof}

\begin{proof}[Proof of Theorem \ref{thmIntro:unimodularDicho}]
By  Theorem  \ref{thmIntro:alg thick Implies an thick}, and since algebraic thinness is by definition the negation of algebraic thickness, it is enough to prove that if $G$ is a connected unimodular Lie group that is algebraically thin, then it is analytically thin. 
By Corollary \ref{cor:algthinGeneral}, $G$ either has polynomial growth, in which case this follows from \cite[Theorem 8.1.]{HumeMackTess}, or we may assume it is a direct product $R \times S$ of group $R$ with polynomial growth and a $\R$-rank $1$ simple Lie group $S$ with finite center. Picking a uniform lattice $\Gamma$ in $S$, $G$ is therefore quasi-isometric to $R \times \Gamma$, and we conclude by Theorem \ref{thm:hyp-times-nilp-upper-bound}.
\end{proof}

\begin{proof}[Proof of Corollary \ref{corIntro:polycyclic}]
	Recall that a polycyclic group has a finite index subgroup that embeds as a uniform lattice in a solvable unimodular connected Lie group $G$. If $\Gamma$ (or equivalently $G$) is algebraically thin, then by Proposition \ref{prop:algthinDescrip}, $G$ must have polynomial growth, and so is analytically thin \cite{HumeMackTess}.  If $\Gamma$ is algebraically thick the conclusion follows from Theorem~\ref{thmIntro:unimodularDicho}. 
\end{proof}

\begin{proof}[Proof of Theorem \ref{thmIntro:geomDichoThin}]
Assouad's embedding theorem gives (iv)$\implies$(iii), and (iii)$\implies$(ii) is obvious.
The separation profile is monotonous under regular maps, and $P\times\HH_\R^n$ is analytically thin by Theorem \ref{thm:hyp-times-nilp-upper-bound}, so we deduce that  (ii)$\implies$(i).
Finally, assume $G$ is algebraically thin, then by Corollary \ref{cor:algthinGeneral}, it is quasi-isometric to a direct product (with one factor possibly empty) $P\times S$, where $P$ has polynomial growth, and $S$ is simple of rank $1$ with finite center. Having finite center, $S$ is Gromov-hyperbolic, so by Bonk-Schramm's theorem \cite{BS-00-gro-hyp-embed}, it quasi-isometrically embeds into $\HH_\R^n$ for any large enough $n$. Hence (i)$\implies$(iv).  \end{proof}

\begin{proof}[Proof of Theorem \ref{thmIntro:geomDichoThick}]
(iv)$\implies$(iii)$\implies$(ii) are obvious. By Theorem \ref{thm:reduc}, we can assume that $G$ is linear and that its radical is real-triangulable. Then (i)$\implies$(iv) follows from Proposition \ref{prop:algThick}. Let us prove (ii)$\implies$(i). Since the separation profile is monotonous under regular maps, and is $\simeq n/\log n$ for  $\SOL_a$, $a>0$ and $\Osc$, we deduce that $G$ is analytically thick, and we conclude using Theorem \ref{thmIntro:unimodularDicho}.
\end{proof}

\section{Questions}\label{sec:qu}
This work raises many natural questions. We selected a few of them below.

\subsection{Thick/thin dichotomies}
A natural question that we left open is: what happens for non-unimodular connected Lie groups?  
We risk the following conjecture.
\begin{conjecture}\label{conj:thin/thickLie}
	Theorem \ref{thmIntro:unimodularDicho} holds in full generality (without assuming unimodular).
\end{conjecture}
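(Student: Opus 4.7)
The plan is to show the only missing implication, namely that every non-unimodular algebraically thin connected Lie group is analytically thin; the converse direction and the sharp $\Lambda^p(r)\simeq r/\log(r)$ for algebraically thick groups are already known without the unimodularity hypothesis (Theorem~\ref{thmIntro:alg thick Implies an thick}). By Proposition~\ref{prop:algthinDescrip}, any non-unimodular algebraically thin $G$ falls into case (c): its Levi factor $S$ is compact and its solvable radical $R$ is an NC-group of $\R$-rank $1$. Since $S$ is compact, $G$ is quasi-isometric to $R$, and Theorem~\ref{thm:reduc} further lets us assume $R$ is linear, real-triangulable, and simply connected. Hence the whole problem reduces to proving that every simply connected real-triangulable solvable NC-group of $\R$-rank $1$ is analytically thin.

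The approach I would take is to establish the following structural statement, then appeal to known estimates:
\begin{quote}
\emph{Every simply connected real-triangulable solvable NC-group $R$ of $\R$-rank $1$ admits a coarse embedding into a direct product $H\times N$, where $H$ is a Heintze group and $N$ is a simply connected nilpotent Lie group.}
\end{quote}
Granting this, the Bonk--Schramm embedding theorem \cite{BS-00-gro-hyp-embed} quasi-isometrically embeds $H$ into some $\HH_\R^n$, while Assouad's theorem coarsely embeds $N$ into some $\R^d$. Composing, we obtain a coarse embedding $R \hookrightarrow \HH_\R^n \times \R^d$. The target is algebraically thin and unimodular, so Theorem~\ref{thm:hyp-times-nilp-upper-bound} yields $\Lambda^p(\HH_\R^n \times \R^d)(r) \lesssim r^{1-1/(n-1+d)+\varepsilon}$ for every $\varepsilon>0$ and $p<n-1$. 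Since $L^p$-Poincar\'e profiles are monotone under regular maps, and coarse embeddings between compactly generated locally compact groups are regular, this bound transfers to $R$, proving analytic thinness.

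The strategy for the structural statement is to use the NC-rank-one hypothesis to pick an element $r_0 \in R$ contracting the exponential radical $E$ of $R$, and to decompose the nilradical $\mathfrak{n}$ of $\mathfrak{r}$ as an $\ad(r_0)$-equivariant vector space sum $\mathfrak{n} = \mathfrak{e} \oplus \mathfrak{n}_0$, where $\mathfrak{e}$ collects the characteristic subspaces with nonzero real part of the eigenvalues and $\mathfrak{n}_0$ the purely imaginary (including zero) ones. One then considers the product of the two natural maps $R \to R/E$ (with polynomial-growth target $N$) and $R \to \langle E, r_0 \rangle$ (a Heintze group $H$); the diagonal map is injective by construction, and the task is to verify it is a coarse embedding, which reduces to controlling how the $\mathfrak{e}$-coordinates of a word in $R$ grow when the $\mathfrak{n}_0$-coordinates are cancelled. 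The special cases where this decomposition splits as a Lie algebra direct sum are exactly those handled by Corollaries~\ref{cor:hypercentralbyHeintze} and~\ref{cor:NCabelianbyR}.

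The hard part will be the non-split case, already present for the smallest open example $\Heis_3 \rtimes_{(1,0,1)} \R$, where $[E,\mathfrak{n}_0]$ is a nonzero subspace of $\mathfrak{e}$, so the putative embedding $R\to (R/E)\times H$ has to absorb the nontrivial commutator term. One line of attack is to enlarge $N$ to include a controlled lift of $E/[E,E]$ so that the shearing produced by the unipotent interaction is visible in the polynomial-growth factor, and then use Guivarc'h's length estimates on exponential radicals \cite{Guivarc'hExpRad} (as in the proof of Theorem~\ref{thm:Undistorted}) to verify that distances in $R$ are comparable to the sum of distances in the two factors. If one cannot achieve an exact coarse embedding this way, an alternative is to bypass the embedding and prove the Poincar\'e profile upper bound $r^{1-\eta}$ for some $\eta>0$ directly, by adapting the capacity-profile machinery of \S\ref{sec:capacityProfile} to weighted projections onto the Heintze quotient $R/\ker(\mathrm{flow\ of\ }r_0 \text{ mod }E)$, using the finite Assouad--Nagata dimension of $R$ to bound fibres.
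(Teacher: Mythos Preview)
The statement you are attempting to prove is a \emph{conjecture} in the paper, not a theorem: the authors explicitly leave it open and give no proof. So there is nothing to compare your argument to on the paper's side.

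That said, your proposal is not a proof but a research outline, and its central structural claim is precisely where the difficulty lies. You assert that every real-triangulable NC-group of $\R$-rank $1$ coarsely embeds into a product $H\times N$ with $H$ Heintze and $N$ nilpotent. The paper singles out $G=\Heis_3\rtimes_{(1,0,1)}\R$ as the smallest algebraically thin group for which analytic thinness is unknown, and notes (in \S\ref{sec:qu}) that a Lie algebra computation shows this group is \emph{not a subgroup} of any such product. Your claim is weaker (coarse embedding rather than closed inclusion), so it is not literally ruled out, but you yourself identify this exact example as the ``hard part'' and offer only speculative lines of attack: enlarging $N$ to absorb commutator terms, invoking Guivarc'h-type length estimates, or bypassing the embedding entirely via a direct capacity-profile argument. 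None of these is carried out, and each contains a genuine obstacle. In particular, the decomposition $\mathfrak{n}=\mathfrak{e}\oplus\mathfrak{n}_0$ you propose is only $\ad(r_0)$-equivariant as a vector space, not as a Lie algebra, and the map $R\to\langle E,r_0\rangle$ you write down is not a group homomorphism in the non-split case, so it is unclear what ``the diagonal map'' even means there.

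In short: the reduction to case (c) and the observation that a coarse embedding into $\HH_\R^n\times\R^d$ would suffice are correct and already in the paper (\S\ref{sectionIntro:nonunimodular}). What remains is exactly the open problem.
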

As we mentioned in \S \ref{sectionIntro:nonunimodular}, an interesting example to look at is the semidirect product $G=\Heis_3\rtimes_{(1,0,1)}\R$. Such a group is non-unimodular, algebraically thin, and one can check by a Lie algebra computation that it is not a subgroup of a direct product of a Heintze group by a group of polynomial growth. Our conjecture would imply that $\Lambda_G^1(r)\lesssim r^{\alpha}$ for some $\alpha<1$.

\begin{question}\label{qu:algthickthin}
Is every finitely generated group either analytically thick or analytically thin?
\end{question}
We expect the answer to this question to be negative in general, more precisely, groups containing a very rapidly growing sequence of expanders, and certain elementary amenable lacunary hyperbolic groups (whose separation profiles were considered in \cite{HumSepExp} and \cite{HumeMack} respectively) are likely to provide counterexamples. However, there is at present no obvious candidate for a finitely presented counterexample. Despite this, there are likely to be many natural situations where this result does hold. For instance, a popular conjecture  \cite[Question 1.1]{BestQ} asserts that a group $G$ with finite classifying space and no $\Z^2$ subgroup either contains a subgroup isomorphic to $\BS(m,n)$ with $\abs{m}\neq\abs{n}$ (and therefore is analytically thick) or is hyperbolic (so analytically thin).

It follows from \cite{CozGour} that for every finitely generated solvable group with exponential growth, and every $\varepsilon>0$ one has $\Lambda^p_G(r_n)\gtrsim r_n^{1-\varepsilon}$ on an unbounded sequence $(r_n)$. In particular, a finitely generated solvable group is analytically thin if and only if it has polynomial growth.
In view of our result for polycyclic groups, a positive answer to the following question seems plausible.
\begin{question}\label{question:thin/thckSolvable} Does the analytically thick/thin dichotomy holds for the class of solvable finitely generated groups? In other words, does the bound $\Lambda^p_G(r)\gtrsim r/\log(r)$ hold for all finitely generated solvable groups of exponential growth? \end{question}

Here is another reasonable class of groups where to expect a positive answer:

\begin{question} 
Does the analytically thick/thin dichotomy holds for the class of linear finitely generated groups? 
 \end{question}

 \

\subsection{Groups admitting an embedding of $\DL(2,2)$}
A positive answer to the following question would in particular implies a positive answer to Question \ref{question:thin/thckSolvable}:
\begin{question} 
Does $\DL(2,2)$ regularly map to any finitely generated solvable group with exponential growth?
 \end{question} 
 We suspect the answer is negative, a possible counter-example being (a lattice in) the group $\Osc$.
Note that a positive answer for this specific example would provide an alternative (potentially less technical) proof of Theorem  \ref{thm:HeisExt}.

\subsection{Conformal dimension}

It is natural to wonder whether Theorem \ref{thmIntro:Ahlreg} is sharp in the following sense.
\begin{question} Let $Z$ be a connected non-discrete Ahlfors-regular metric space and let $X$ be a hyperbolic cone over $Z$. Is
\[\inf\setcon{p\geq 1}{\Lambda^p_{X}(r)\simeq r^{1-\frac{1}{p}}}\]
equal to the (Ahlfors regular) conformal dimension of $Z$?
\end{question}
As an example, take $Z$ to be the middle-thirds Sierpinski carpet, whose Ahlfors-regular conformal dimension $Q$ is currently unknown. Let $X$ be a hyperbolic cone over $Z$. By \cite{GladShum}, we know that $\Lambda^1_X(r)\not\simeq r^{1-1/Q}$, however the question above is still open in this case.

 Another natural example to consider would be the case of Heintze groups.  More generally, 
\begin{question} What are the Poincar\'e profiles of Heintze groups $E \rtimes \R$?
\end{question}
For example, what are the Poincar\'e profiles of $G = \R^2\rtimes_{(1,2)} \R$?  We have, see Figure~\ref{fig:heintze-ex}:
\begin{alignat*}{3}
	r^{1/2} \lesssim \ & \Lambda_G^p(r) && \lesssim r^{2/3} && \text{ if } 1 \leq p \leq 2 \\
		r^{1- \frac{1}{p}} \lesssim \ & \Lambda_G^p(r) && \lesssim r^{2/3} && \text{ if } 2 \leq p < 3 \\
	r^{2/3} \lesssim \ & \Lambda_G^p(r) && \lesssim r^{2/3}\log^{1/3}(r) \quad && \text{ if } p=3 \\
	 & \Lambda_G^p(r) && \simeq r^{1- \frac{1}{p}} && \text{ if } 3<p<\infty;
\end{alignat*}
here the lower bounds come from the embedded $\R^2$ and embedded binary trees, and the upper bounds follow as in Theorem~\ref{thm:hyp-times-nilp-upper-bound} from $\bdry G$ attaining its conformal dimension $3$, but the exact asymptotics remain unclear.
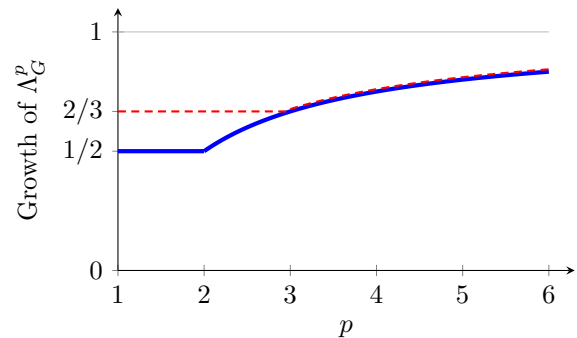
\begin{figure}\label{fig:heintze-ex}
\begin{tikzpicture}
 \begin{axis}[
	axis lines = left,
    xmin = 1, xmax = 6.3,
    ymin = 0, ymax = 1.1,
    xtick distance = 1,
	ytick={0,0.5,0.6666667,1},
	 yticklabels={$0$,$1/2$,$2/3$,$1$},
    width = 0.6\textwidth,
    height = 0.4\textwidth,
    xlabel = {$p$},
    ylabel = {Growth of $\Lambda_G^p$},]
 
\addplot[
    domain = 1:6,
    samples = 200,
    smooth,
    thin,
    lightgray,
] {1};
\addplot[
    domain = 1:2,
    samples = 200,
    smooth,
    ultra thick,
    blue,
] {1/2};
\addplot[
    domain = 2:6,
    samples = 200,
    smooth,
    ultra thick,
    blue,
] {1-(1/x)};
\addplot[
    domain = 1:3,
    samples = 200,
    smooth,
    thick,
	densely dashed,
    red,
] {2/3};
\addplot[
    domain = 3:6,
    samples = 200,
    smooth,
    thick,
	densely dashed,
    red,
] {1-(1/x)+0.01};

\end{axis}\end{tikzpicture}
	\caption{Growth rates of $\Lambda_G^p$: upper bound red dashed, lower bound thick blue}
\end{figure}

\def\cprime{$'$}

\end{document}